\theoremstyle{plain}
\newtheorem{thm}{Theorem}[section]
\newtheorem{prop}[thm]{Proposition}
\newtheorem{lemma}[thm]{Lemma}
\newtheorem{cor}[thm]{Corollary}
\theoremstyle{remark}
\newtheorem{remark}[thm]{Remark}
\theoremstyle{definition}
\newtheorem{defn}[thm]{Definition}
\newtheorem{ex}[thm]{Example}
\newcommand{\con}{%
  \mathord{
    \mathchoice
    {\raisebox{1ex}{\scalebox{.7}{$\frown$}}}
    {\raisebox{1ex}{\scalebox{.7}{$\frown$}}}
    {\raisebox{.7ex}{\scalebox{.5}{$\frown$}}}
    {\raisebox{.7ex}{\scalebox{.5}{$\frown$}}}
  }
}
\newcommand{\is}{\sqsubseteq}
\newcommand{\sis}{\sqsubset}
\newcommand{\dom}{\mbox{dom}}
\newcommand{\range}{\mbox{range}}
\newcommand{\up}{\mathchoice%
    {\mbox{$\displaystyle\downarrow$}}%
    {\mbox{$\displaystyle\downarrow$}}%
    {\mbox{$\scriptstyle\downarrow$}}%
    {\mbox{$\scriptscriptstyle\downarrow$}}%
} 
\newcommand{\ups}{\mathchoice%
    {\rotatebox[origin=c]{-90}{$\displaystyle\rightarrowtail$}}%
    {\rotatebox[origin=c]{-90}{$\displaystyle\rightarrowtail$}}%
    {\rotatebox[origin=c]{-90}{$\scriptstyle\rightarrowtail$}}%
    {\rotatebox[origin=c]{-90}{$\scriptscriptstyle\rightarrowtail$}}%
} 
\newcommand{\down}{\mathchoice%
    {\mbox{$\displaystyle\uparrow$}}%
    {\mbox{$\displaystyle\uparrow$}}%
    {\mbox{$\scriptstyle\uparrow$}}%
    {\mbox{$\scriptscriptstyle\uparrow$}}%
} 
\newcommand{\downs}{\mathchoice%
    {\rotatebox[origin=c]{90}{$\displaystyle\rightarrowtail$}}%
    {\rotatebox[origin=c]{90}{$\displaystyle\rightarrowtail$}}%
    {\rotatebox[origin=c]{90}{$\scriptstyle\rightarrowtail$}}%
    {\rotatebox[origin=c]{90}{$\scriptscriptstyle\rightarrowtail$}}%
}
\newcommand{\im}{\mbox{im}}
\newcommand{\rank}{\mbox{rank}}
\newcommand{\On}{\mathrm{On}}
\newcommand{\card}{\mathrm{Card}}
\newcommand{\ot}{\mathrm{ot}}
\newcommand{\cmark}{\ding{51}}%
\newcommand{\xmark}{\ding{55}}%
\newcommand{\MM}{\mathcal{F}}
\newcommand{\PP}{\mathcal{A}}
\newcommand{\RR}{\mathcal{L}}
\newcommand{\CC}{\mathcal{Q}}
\newcommand{\AAA}{\mathcal{B}}
\newcommand{\Btree}{\mathsf{B}}
\newcommand{\Qtree}{\mathsf{Q}}
\newcommand{\Atree}{\mathsf{A}}
\newcommand{\Qhattree}{\mathsf{C}}
\newcommand{\col}{\mathbf{c}}
\newcommand{\arity}{\mathbf{a}}
\newcommand{\OA}{\mathcal{C}}
\newcommand{\amrs}{\tilde{\OA}_0}%{(\AAA,\MM)^\RR_0}
\newcommand{\amr}{\tilde{\OA}}%{(\AAA,\MM)^\RR}
\newcommand{\amri}{\tilde{\OA}_\infty}%{(\AAA,\MM)^\RR_\infty}
\newcommand{\PS}{\mathfrak{I}}
\newcommand{\DS}{\mathfrak{D}}
\newcommand{\FS}{\mathfrak{F}}
\newcommand{\scat}{\mathscr{S}}
\newcommand{\scatt}{\mathscr{U}}
\newcommand{\sscat}{\mathscr{M}}
\newcommand{\sscatt}{\mathscr{T}}
\newcommand{\pscat}{\mathscr{P}}
\newcommand{\ctbl}{\mathscr{C}}
\newcommand{\Trank}{\mbox{rank}}
\newcommand{\Srank}{\mbox{rank}_\scatt}
\newcommand{\Troot}[1]{\mbox{root}(#1)}
\newcommand{\Qhat}{2^{<\omega}_\perp}
\newcommand{\pth}{\mathbb{M}}
\newcommand{\SSR}[3]{\mathrm{SSR}(#1;#2,#3)}
\newcommand{\obj}[1]{\mathrm{obj}{(#1)}}
\newcommand{\mor}[1]{\mathrm{hom}{(#1)}}
\newcommand{\homm}{\mathrm{hom}}
\newcommand{\AC}[1]{\mathrm{A}_{#1}}
\newcommand{\CH}[1]{\mathrm{C}_{#1}}
\newcommand{\ra}{\gamma}
\newcommand{\barity}[1]{\mathbf{b}(#1)}
\newcommand{\clos}[1]{\overline{#1}}
\renewcommand{\leq}{\leqslant}
\renewcommand{\geq}{\geqslant}
\definecolor{g0}{gray}{0.4}
\definecolor{g1}{gray}{0.5}
\definecolor{g2}{gray}{0.6}
\definecolor{g3}{gray}{0.7}
\tikzset{->-/.style={decoration={
  markings,
  mark=at position .63 with {\arrow{stealth}}},postaction={decorate}}}
  \tikzset{-<-/.style={decoration={
  markings,
  mark=at position .5 with {\arrow{stealth reversed}}},postaction={decorate}}}
\begin{document}

\title{On better-quasi-ordering classes of partial orders}
\author{Gregory McKay}
\address{School of Mathematics,
University of East Anglia,
Norwich Research Park,
Norwich,
NR4 7TJ,
UK}
\email{g.mckay@uea.ac.uk}
\thanks{This work was carried out as part of the author's Ph.D. thesis at the University of East Anglia in the UK}
\keywords{Better-quasi-order, well-quasi-order, scattered, structured trees, interval.}
\subjclass[2010]{Primary 06A07; Secondary 03E05, 06A06, 05C05}
%03E05=Other comb set theory
%
%
%
%

\begin{abstract}
We provide a method of constructing better-quasi-orders by generalising a technique for constructing operator algebras that was developed by Pouzet. We then generalise the notion of $\sigma$-scattered to partial orders, and use our method to prove that the class of $\sigma$-scattered partial orders is better-quasi-ordered under embeddability. 
This generalises theorems of Laver, Corominas and Thomass\'{e} regarding $\sigma$-scattered linear orders and trees, countable forests and $N$-free partial orders respectively. In particular, a class of \emph{countable} partial orders is better-quasi-ordered whenever the class of indecomposable subsets of its members satisfies a natural strengthening of better-quasi-order.
\end{abstract}
\maketitle

\tableofcontents
\section{Introduction}\label{Section:Intro}
Some of the most striking theorems in better-quasi-order (bqo) theory are that certain classes of partial orders, often with colourings, are bqo under embeddability. Indeed, the notion of bqo was first used by Nash-Williams to prove that the class $\mathscr{R}$ of rooted trees of height at most $\omega$ has no infinite descending chains or infinite antichains under the embeddability quasi-order \cite{NWInfTrees}. %Laver explored the coloured versions of such trees in \cite{LaverFrOTconj}, expanding Nash-Williams' method, he proved that $\mathscr{R}$ preserves bqo. (That is to say that if $Q$ is bqo, then the class of trees of $\mathscr{R}$ coloured by $Q$ is also bqo under a natural embeddability ordering, see Definition \ref{Defn:Preservesbqo}.)

Another contribution from Nash-Williams comes in \cite{nashwilliamsseqs}. He proved that if $Q$ is bqo, then the class $\tilde{Q}$ of transfinite sequences of members of $Q$ is bqo (see \cite{simpson}). This theorem can be viewed as an embeddability result on a class of coloured partial orders; since it is equivalent to saying that the ordinals \emph{preserve bqo} (which is to say that if $Q$ is bqo, then the class of ordinals coloured by $Q$ is also bqo under a natural embeddability ordering; see Definition \ref{Defn:Preservesbqo}).  In fact Nash-Williams proved a technical strengthening of this statement, equivalent to saying that the ordinals are \emph{well-behaved} (see Definition \ref{Defn:WellBehaved}). This is an important generalisation of bqo that will be crucial in this paper, because it is much more useful than bqo or even preserving bqo when constructing large bqo classes.
%
%, Laved proved that if $Q$ is a bqo, then $\On$ is well-behaved (a technical strengthening of preserving bqo, see Definition \ref{Defn:WellBehaved}) and 

Perhaps the most well known result of this kind comes from Laver, who proved that the class $\sscat$ of $\sigma$-scattered linear orders preserves bqo, a positive result for a generalisation of Fra\"{i}ss\'{e}'s conjecture \cite{LaverFrOTconj}. A few years after his paper on $\sigma$-scattered linear orders, Laver also showed that the class $\sscatt$ of $\sigma$-scattered trees preserves bqo \cite{LaverClassOfTrees}. 
Initially we notice that there should be some connection between the theorems of $\sigma$-scattered linear orders and $\sigma$-scattered trees. In each, we first take all partial orders of some particular type (linear orders, trees) that do not embed some particular order (namely $\mathbb{Q}$ and $2^{<\omega}$% - notice that these are the minimal elements of the increasing unions of smaller structures
). In both cases, the class of countable unions of these objects turn out to preserve bqo.%, and in the first case it is known that they are well-behaved. 

We prove a general theorem of this type (Theorem \ref{Thm:MLPisWB2}) which states that given well-behaved classes $\mathbb{L}$ and $\mathbb{P}$ of linear orders and partial orders respectively, the class of `generalised $\sigma$-scattered partial orders' $\sscat^\mathbb{L}_\mathbb{P}$ will be well-behaved (see Definition \ref{Defn:PPL}). We define our general `scattered' partial orders to be those orders $X$ such that:
\begin{itemize}
	\item Every indecomposable subset of $X$ is contained in $\mathbb{P}$. (See Definition \ref{Defn:Indecomp}.)
	\item Every chain of intervals of $X$ under $\supseteq$ has order type in $\clos{\mathbb{L}}$. (See definitions \ref{Defn:Lclosure} and  \ref{Defn:Interval}.)
	\item No `pathological' order $2^{<\omega}$, $-2^{<\omega}$ or $\Qhat$ embeds into $X$. (See definitions \ref{Defn:2^<omega} and \ref{Defn:Qhat}).
\end{itemize}
Our class $\sscat^\mathbb{L}_\mathbb{P}$ is then the class of \emph{countable increasing unions} or \emph{limits} of such $X$.

Applying this theorem with classes known to be well-behaved yields generalisations of many other known results in this area. %Van Engelen, Miller and Steel proved that the class $\scat$ of scattered linear orders is well-behaved \cite{VMS}, which in turn was generalised to $\sscat$ by K\v{r}\'{i}\v{z} \cite{Kriz}. 
For example, Corominas showed that the class of countable $\ctbl$-trees\footnote{Here $\ctbl$ is the class of countable linear orders.} preserves bqo \cite{Corominas} (see Definition \ref{Defn:LTree}) and Thomass\'{e} showed that the class
of countable $N$-free partial orders preserves bqo \cite{Nfree} (see Definition \ref{Defn:Nfree}). 
We summarise known results as applications of Theorem \ref{Thm:MLPisWB2} in the following table.\footnote{Here $\AC{2}$ and $\CH{2}$ are the antichain and chain of size 2  respectively.} In each case Theorem \ref{Thm:MLPisWB2} tells us that the given class is well-behaved.\footnote{In the cases of $\scatt^{\On}$, $\sscatt^{\On}$ and $\sscatt^{\ctbl}$ the constructed $\sscat^\mathbb{L}_\mathbb{P}$ is actually a larger class of partial orders.}
\begin{center}
  \begin{tabular}{|c| c || c | c | c | c |}
    \hline
Class & Description  & $\mathbb{P}$ & $\mathbb{L}$ & Limits \\ \hline\hline
 	$\mathbb{N}$&Finite numbers under injective maps & $1$, $\AC{2}$  & 1 &\xmark \\ \hline
 	%$\On$ & $\bullet\perp \bullet$  & 1 &\cmark \\ \hline
    $\scat$ & Scattered linear orders \cite{LaverFrOTconj} & $1$, $\CH{2}$  & $\On\cup \On^*$ &\xmark \\ \hline
    $\sscat$ & $\sigma$-scattered linear orders \cite{LaverFrOTconj} & $1$, $\CH{2}$ & $\On\cup \On^*$ &\cmark \\ \hline
    $\scatt^{\On}$ & Scattered trees \cite{LaverClassOfTrees} & $1$, $\CH{2}$, $\AC{2}$ & $\On$ & \xmark \\ \hline
    $\sscatt^{\On}$ & $\sigma$-scattered trees \cite{LaverClassOfTrees} & $1$, $\CH{2}$, $\AC{2}$ & $\On$ & \cmark \\
    \hline
    $\sscatt^{\ctbl}$ & Countable $\mathscr{L}$-trees \cite{Corominas} & $1$, $\CH{2}$, $\AC{2}$ & $\ctbl$ & \cmark \\
    \hline
    $\ctbl_{\{1,\AC{2},\CH{2}\}}$ & Countable $N$-free partial orders \cite{Nfree}& $1$, $\CH{2}$, $\AC{2}$ & $\ctbl$ & \cmark\\
    \hline
  \end{tabular}
\end{center}

Applying this theorem with the largest known well-behaved classes $\mathbb{L}$ and $\mathbb{P}$ gives that some very large classes of partial orders are well-behaved (Theorem \ref{Thm:POTHM}). For example, let $\mathbb{P}$ be the set of indecomposable partial orders of cardinality less than some $n\in \omega$, and $\mathbb{L}=\sscat$. Then for $n>2$ the well-behaved class $\sscat^\mathbb{L}_\mathbb{P}$ contains the $\sigma$-scattered linear orders, $\sigma$-scattered $\sscat$-trees, countable $N$-free partial orders, and generalisations of such objects.

Crucial to the ideas in this paper are those of constructing objects with so called `structured trees'. Put simply, these are trees with some extra structure (usually a partial order) given to the set of successors of each element. Embeddings between structured trees are then required to induce embeddings of this extra structure.

Theorems on structured trees also appear throughout the literature on bqo theory (cf. \cite{Nfree, Corominas, Kriz, PouzetApps, Kruskal, LouveauStR}). The rationale for their usefulness is explained by Pouzet in \cite{PouzetApps}. %As mentioned in this paper, Kruskal proved that the set of finite trees structured with $\omega$ is bqo \cite{Kruskal}. 
His method is to take a `simple' class of objects (e.g. partial orders) and a bqo class of multivariate functions sending a list of objects to a new object (so called `operator algebras'). Closing the class under these functions then yields a new class, which one can prove to be bqo. The crucial step is to show that this construction can be encoded as a structured tree, contained inside a class which is known to preserve bqo. Pouzet's method however was limited in that the structured trees that he used were only `chain-finite' (i.e. those trees for which every chain is finite).

Since then, larger classes of structured trees have been shown to be preserve bqo. In particular, using a modification of the Minimal Bad Array Lemma (see \cite{simpson}), K\v{r}\'{i}\v{z} managed to prove that if $Q$ is well-behaved then $\mathscr{R}_Q$ (the class of $Q$-structured trees of $\mathscr{R}$, see Definition \ref{Defn:StructTrees}) is well-behaved \cite{Kriz}. %Using our construction, we will also prove a bqo theorem on structured trees (Theorem \ref{Thm:TLPWell-Behaved}).
We give a generalisation of Pouzet's method, that incorporates these larger classes of structured trees. This allows for iterating functions over a general linear order and for taking countable limits. Using this coding, these more complex classes can be shown to be bqo (even well-behaved).

% structured with ordinals is bqo. Ultimately, this is what Corominas used to prove that class of $\ctbl$-trees is bqo (see \cite{PouzetApps, Corominas}). 
%
% However  Most recently Thomass\'{e} used that a particular subclass of $\AC{2}$-structured $\ctbl$-trees preserves bqo in order to prove his theorem on $N$-free partial orders \cite{Nfree} (here $\AC{2}$ is the antichain of size $2$). %Namely the countable trees for which any chain has an infimum.
%
%In \cite{PouzetApps}, Pouzet also describes how structured trees can be used to code functions that construct more complicated orders, . 

So we aim to show that some large classes of partial orders are well-behaved, the main theorem being Theorem \ref{Thm:MLPisWB2} and its main application Theorem \ref{Thm:POTHM}. The general method of the proof will be as follows. In Section \ref{Section:OpConstruction} we define an operator algebra construction, similar to ideas explained in \cite{PouzetApps} but with our two generalisations. We also give an example of how to construct partial orders. %The idea is to take a `simple' class of objects and from it generate a new class of objects by iteratively applying certain functions. 
In Section \ref{Section:StructTrees} we encode this construction in terms of structured trees. In Section \ref{Section:WBConstruction} we prove that such a construction, under the correct conditions, will be bqo. In Section \ref{Section:StructuredRTrees}, using the structured tree theorem of K\v{r}\'{i}\v{z} from \cite{Kriz} (Theorem \ref{Thm:Kriz}), in conjunction with our construction theorem, we construct a more general well-behaved class of `$\sigma$-scattered structured $\mathbb{L}$-trees'. This serves to supercharge the construction theorem. In Section \ref{Section:PO} we prove a generalisation of Hausdorff's theorem on scattered order types, which characterises the class of partial orders that we constructed as precisely $\sscat^\mathbb{L}_\mathbb{P}$. This completes the proof of our main theorem, that this class is well-behaved. Finally in Section \ref{Section:Cors} we explain how this result expands all known bqo results on embeddability of coloured partial orders.

\section{Preliminaries}
\subsection{Basic bqo theory}
\begin{defn}
If $A$ is an infinite subset of $\omega$, let $[A]^\omega=\{X\subseteq A\mid  |X|=\aleph_0\}$ and $[A]^{<\omega}=\{X\subseteq A\mid |X|< \aleph_0\}$. We equate $X\in [A]^\omega$ with the increasing enumeration of elements of $X$.
\end{defn}

\begin{defn}
\begin{itemize}
	\item A class $Q$ with a binary relation $\leq_Q$ on $Q$ is called a \emph{quasi-order} whenever $\leq_Q$ is transitive and reflexive.% We will write $\leq$ for $\leq_Q$ when this is unambiguous. 
	\item If $Q$ is a quasi-order with $\leq_Q$ antisymmetric, then we call $Q$ a \emph{partial order}.
	\item For $a,b\in Q$ we write $a<_Q b$ iff $a\leq_Q b$ and $b\not \leq_Q a$. We write $a\perp_Q b$ and call $a$ and $b$ \emph{incomparable} iff $a\not \leq_Q b$ and $b\not \leq_Q a$.% We will write simply $<$ and $\perp$ when this is unambiguous. 
	\item We write $\leq$, $<$ and $\perp$ in place of $\leq_Q$, $<_Q$ and $\perp_Q$ when the context is clear.
%	\item A quasi-order $Q$ is called a \emph{well-quasi-order} (wqo) if $Q$ has no infinite antichains and no infinite descending sequences with respect to $\leq$.
%\end{itemize}
%\end{defn}
%\begin{defn}
%\begin{itemize}
	\item A function $f:[\omega]^\omega \rightarrow Q$ is called a \emph{$Q$-array} (or simply an \emph{array}) if $f$ is continuous (giving $[\omega]^\omega$ the product topology and $Q$ the discrete topology). 
	\item An array $f:[\omega]^\omega \rightarrow Q$ is called \emph{bad} if $\forall X\in [\omega]^\omega$ we have
	$$f(X)\not \leq f(X\setminus\{ \min X\}).$$
	\item An array $f:[\omega]^\omega \rightarrow Q$ is called \emph{perfect} if $\forall X\in [\omega]^\omega$ we have
	$$f(X) \leq f(X\setminus\{ \min X\}).$$
	\item A quasi-order $Q$ is called a \emph{better-quasi-order} (bqo) if there is no bad $Q$-array.
%	\item A $Q$-array $f$ is called \emph{good} if it is not bad.

\end{itemize}

\end{defn}
\begin{remark}
We note that we could replace `continuous' in the definition of a $Q$-array with `Borel measurable' and this would make no difference to the definition of bqo (see \cite{simpson}). We can also consider bad arrays with domain $[A]^\omega$ for some $A\in [\omega]^\omega$.%, since existence of such an array would imply existence of a bad array as in the definition.% We also note that bqo implies wqo, and we will not use wqo again in this paper.
\end{remark}

The following is a well-known Ramsey-theoretic result due to Galvin and Prikry.
\begin{thm}[Galvin, Prikry \cite{GalvinPrikry}]\label{Thm:GalvinPrikry}
Given $X\in [\omega]^\omega$ and a Borel set $B$ in $[X]^\omega$, there exists $A\in [X]^\omega$ such that either $[A]^\omega\subseteq B$ or $[A]^\omega \cap B=\emptyset$.
\end{thm}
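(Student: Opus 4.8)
The plan is to prove the statement through Nash--Williams' combinatorial lemma on open sets together with a transfinite induction up the Borel hierarchy, organised via the notion of a \emph{completely Ramsey} set. Throughout, $s$ denotes a finite subset of $\omega$ and $A$ an infinite one with $\max s < \min A$; write $[s,A]$ for the collection of infinite $Z \subseteq s\cup A$ whose first $|s|$ elements are exactly those of $s$, and write $A/t$ for the set of elements of $A$ lying strictly above every element of a finite set $t$. Call $\mathcal{B} \subseteq [\omega]^\omega$ \emph{completely Ramsey} if for every such pair $(s,A)$ there is $B \in [A]^\omega$ with $[s,B] \subseteq \mathcal{B}$ or $[s,B] \cap \mathcal{B} = \emptyset$. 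Since $[\emptyset, A] = [A]^\omega$, a completely Ramsey set satisfies the conclusion of the theorem; and since the whole development relativises to $[X]^\omega$ (equivalently, transport a Borel subset of $[X]^\omega$ to $[\omega]^\omega$ along the increasing enumeration of $X$), it suffices to prove that every Borel $\mathcal{B} \subseteq [\omega]^\omega$ is completely Ramsey.

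First I would prove the combinatorial core: every open $\mathcal{B}$ is completely Ramsey. It is cleanest to work in the Ellentuck topology, whose basic open sets are exactly the $[s,A]$ and which refines the usual product topology, so it is enough to treat its open sets. Fix $(s,A)$, and for finite $t \supseteq s$ with $t\setminus s \subseteq A$ call $(t, A/t)$ an \emph{acceptor} if $[t, A/t] \subseteq \mathcal{B}$ and a \emph{rejector} if no $A' \in [A/t]^\omega$ makes $(t, A')$ an acceptor. The key lemma is that if $(t,C)$ is a rejector then some $C' \in [C]^\omega$ makes $(t\cup\{n\}, C'/(t\cup\{n\}))$ a rejector for every $n \in C'$; otherwise a short diagonalisation produces $D$ with $[t,D] \subseteq \mathcal{B}$, contradicting that $(t,C)$ rejects. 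Feeding this lemma into a fusion construction yields $B \in [A]^\omega$ such that for every $t \supseteq s$ with $t \setminus s \subseteq B$ the pair $(t, B/t)$ is an acceptor or a rejector, with rejectors passing to all one-point extensions inside $B$. If $(s,B)$ is an acceptor, done. If $(s,B)$ is a rejector, then by propagation $(t, B/t)$ rejects for every initial segment $t \supseteq s$ of every $Z \in [s,B]$; were some such $Z$ in $\mathcal{B}$, (Ellentuck-)openness would place an Ellentuck neighbourhood of $Z$ inside $\mathcal{B}$, contradicting that the corresponding $(t, B/t)$ rejects. Hence $[s,B] \cap \mathcal{B} = \emptyset$.

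Next I would show the completely Ramsey sets form a $\sigma$-algebra. Closure under complements is immediate from the symmetry of the definition. For countable unions, take $\mathcal{B} = \bigcup_n \mathcal{B}_n$ with each $\mathcal{B}_n$ completely Ramsey and fix $(s,A)$; build a decreasing chain $A \supseteq A_0 \supseteq A_1 \supseteq \cdots$ of infinite sets and integers $m_0 < m_1 < \cdots$ with $m_k \in A_k$ so that $A_{k+1}$ decides each $\mathcal{B}_n$ with $n \leq k$ over all $t \supseteq s$ with $t\setminus s \subseteq \{m_0,\dots,m_k\}$, and set $B = \{m_k : k \in \omega\}$. If at some stage some such $t$ has $[t,B/t] \subseteq \mathcal{B}_n$, a further thinning of $B$ below $t$ witnesses $[s,B']\subseteq \mathcal{B}$; otherwise every $\mathcal{B}_n$ is decided negatively on every relevant piece and one checks $[s,B]\cap\mathcal{B}_n = \emptyset$ for all $n$, so $[s,B]\cap\mathcal{B} = \emptyset$. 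Thus the completely Ramsey sets contain the open sets and form a $\sigma$-algebra, hence contain all Borel sets; by the first paragraph this proves the theorem. (Alternatively one could invoke Ellentuck's theorem — completely Ramsey equals the Baire property in the Ellentuck topology — together with the fact that Borel sets have the Baire property, but the direct fusion arguments are elementary and self-contained.)

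The step I expect to be the main obstacle is the fusion argument for closure under countable unions: one must organise the bookkeeping so that the single diagonal set $B$ simultaneously decides every $\mathcal{B}_n$ on every initial segment of every member of $[s,B]$, and then correctly read off the alternative — in particular propagating a local positive decision $[t, B/t] \subseteq \mathcal{B}_n$ to an honest global inclusion $[s,B'] \subseteq \mathcal{B}$. The open-set lemma of the second paragraph is a milder instance of the same technique, and the remaining points (reduction to $s = \emptyset$, relativisation to $[X]^\omega$, closure under complements) are routine.
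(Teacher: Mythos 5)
The paper offers no proof of this theorem at all---it simply cites Galvin--Prikry and Simpson---so the only question is whether your argument stands on its own. Its architecture (Ellentuck basic sets $[s,A]$, combinatorial forcing for open sets, then a $\sigma$-algebra argument for the completely Ramsey sets) is the classical one; the open-set lemma, closure under complements, the reduction to $s=\emptyset$, and the relativisation to $[X]^\omega$ are all fine as sketched.

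The gap is exactly where you predicted it, and the repair you propose does not work. In the countable-union step, a local positive decision $[t,B/t]\subseteq\mathcal{B}_n$ with $t\supsetneq s$ cannot be upgraded to $[s,B']\subseteq\mathcal{B}$ by ``a further thinning of $B$ below $t$'': whatever infinite $B'$ you pass to, $[s,B']$ contains sets that omit elements of $t\setminus s$, and about those sets you know nothing. The upgrade is impossible in general: take $\mathcal{B}=\mathcal{B}_0=[t_0,C]$ for a fixed $t_0\supsetneq s$; this is Ellentuck-open, hence completely Ramsey, and positive local decisions occur at $t_0$, yet no $[s,B']$ is contained in $\mathcal{B}$ --- the correct horn there is the negative one, obtained by avoiding a point of $t_0\setminus s$. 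So the case split ``some positive local decision gives the positive horn, otherwise the negative horn'' is not a valid dichotomy. The standard repair keeps your fusion but adds one more step: arrange the bookkeeping so that for every $Z\in[s,B]$ and every $n$, all sufficiently long initial segments $t$ of $Z$ are decided for $\mathcal{B}_n$ relative to a tail containing $Z\setminus t$; then $\mathcal{B}$ agrees on $[s,B]$ with the Ellentuck-open set $\bigcup\{[t,B/t]: t \mbox{ positively decided for some } \mathcal{B}_n\}$, and a second application of your open-set lemma to this open set and the pair $(s,B)$ yields the required dichotomy for $\mathcal{B}$. Equivalently, take the route you mention only parenthetically: show that completely Ramsey \emph{null} sets are closed under countable unions (there only the negative horn must propagate, which fusion does deliver) and then run the Baire-property decomposition as in Ellentuck's theorem. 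With that step replaced, your proof becomes the standard correct argument.
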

\begin{proof}
See \cite{GalvinPrikry} or \cite{simpson}.
\end{proof}
\begin{thm}[Nash-Williams \cite{NWInfTrees}]\label{Thm:BadOrPerfect}
If $f$ is a $Q$-array, then there is $A\in [\omega]^\omega$ such that $f\restriction [A]^\omega$ is either bad or perfect.
\end{thm}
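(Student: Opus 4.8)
The plan is to reduce the statement to a single application of the Galvin--Prikry theorem (Theorem~\ref{Thm:GalvinPrikry}) to the set
$$B = \{X \in [\omega]^\omega : f(X) \leq_Q f(X \setminus \{\min X\})\}.$$
If $B$ is Borel, Galvin--Prikry produces $A \in [\omega]^\omega$ with either $[A]^\omega \subseteq B$ or $[A]^\omega \cap B = \emptyset$. Since $X\subseteq A$ implies $X\setminus\{\min X\}\subseteq A$, the set $[A]^\omega$ is closed under the map $X \mapsto X \setminus \{\min X\}$, so the first alternative says precisely that $f\restriction[A]^\omega$ is perfect, and the second that $f\restriction[A]^\omega$ is bad. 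Thus the whole content lies in checking that $B$ is Borel.

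First I would note that we may assume the relevant part of $Q$, namely $\im f$, is countable: since $Q$ carries the discrete topology and $f$ is continuous, $\{f^{-1}(q) : q \in Q\}$ is a partition of the second countable (hence Lindel\"of) space $[\omega]^\omega$ into open sets, so only countably many pieces are nonempty and $\im f$ is countable. This also sidesteps any set-theoretic worry about $Q$ possibly being a proper class.

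Next, the shift map $\sigma : [\omega]^\omega \to [\omega]^\omega$, $\sigma(X) = X \setminus \{\min X\}$, is continuous --- in terms of increasing enumerations it is just the left shift on Baire space --- so $g := f \circ \sigma$ is again a continuous map into the discrete space $Q$. Each $f^{-1}(q)$ and each $g^{-1}(q')$ is then clopen, and
$$B = \bigcup \{\, f^{-1}(q) \cap g^{-1}(q') : q,q' \in \im f,\ q \leq_Q q' \,\}$$
is a countable union of clopen sets, hence open; its complement is such a union as well, so $B$ is in fact clopen, and in particular Borel. Applying Theorem~\ref{Thm:GalvinPrikry} to $B$ and reading off the two cases as above finishes the proof.

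As for difficulty, there is essentially no serious obstacle here: the argument is a direct reduction to Galvin--Prikry once continuity has been packaged correctly. The one point deserving a moment's care is the translation of the topological hypothesis defining a $Q$-array into the Borelness of $B$ --- that is, noting that $\sigma$ is continuous and that $\im f$ is countable --- after which everything is formal.
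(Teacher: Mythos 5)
Your argument is correct and essentially identical to the paper's: both define the same set $B$, observe it is open (the paper via $B=(f\times(f\circ S))^{-1}(\leq)$ with $Q\times Q$ discrete, you via a union of clopen pieces, with a harmless extra Lindel\"of remark), and then apply Galvin--Prikry and read off the two alternatives. No substantive difference.
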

\begin{proof}
Let $B=\{X\in[\omega]^\omega\mid f(X)\leq f(X\setminus \{\min X\})\}$. If $B$ is Borel, then by Theorem \ref{Thm:GalvinPrikry} we will be done. %Since $f$ is continuous we have that for each $X\in [\omega]^\omega$ there is a non-empty finite initial segment $s_X\sis X$ such that $\forall Y\in [\omega]^\omega$ with $s_X\is Y$ we have $f(Y)=f(X)$.
%
%Pick an arbitrary $X\in B$ and let $Y\in [\omega]^\omega$ be such that $s_X\cup s_{X\setminus \{\min X\}}\sis Y$. Then $s_X\sis Y$ and $s_{X\setminus \{\min X\}}\sis Y\setminus \{\min Y\}$. Hence $f(X)=f(Y)$ and $f(X\setminus \{\min X\})=f(Y\setminus \{\min Y\})$, which means that $Y\in B$. So we have proved that for every $X\in B$ there is a finite non-empty initial segment $t=s_X\cup s_{X\setminus \{\min X\}}$ of $X$ such that if $t\sis Y$ then $Y\in B$. Hence $B$ is open, and therefore Borel.
Let $S:[\omega]^\omega\rightarrow [\omega]^\omega$ be the function $S(X)=X\setminus \{\min X\}$ and let $g=f\times (f\circ S)$. Then $g$ is continuous, since $f$ and $S$ are continuous. We also have that $B=g^{-1}(\leq)$, considering the relation $\leq$ as a subset of the discrete space $ Q\times Q$. Therefore $B$ is open and we are done.
\end{proof}
\begin{defn}
Given two sets $x$ and $y$ we write $x\sqcup y$ for the disjoint union of $x$ and $y$. And given a set $X$ of sets, we define $\bigsqcup_{x\in X}x$ as the disjoint union of the sets in $X$.
\end{defn}
\begin{defn}Let $Q_0$ and $Q_1$ be quasi-orders, we define new quasi-orders:
\begin{itemize}
	\item $Q_0\cup Q_1=(Q_0\sqcup Q_1,\leq)$ where for $p,q\in Q_0\cup Q_1$, we have $p\leq q$ iff both $p$ and $q$ are in the same $Q_i$ for $i\in \{0,1\}$, and $p\leq_{Q_i}q$;

	\item $Q_0\times Q_1=\{\langle q_0,q_1\rangle\mid q_0\in Q_0, q_1\in Q_1\}$ where for $\langle p_0,p_1\rangle,\langle q_0,q_1\rangle\in Q_0\times Q_1$ we have $$\langle p_0,p_1\rangle\leq \langle q_0,q_1\rangle \mbox{ iff }(p_0\leq_{Q_0}q_0)\wedge(p_1\leq_{Q_1}q_1).$$
\end{itemize}
\end{defn}
\begin{thm}\label{Thm:U bqo}
If there is a bad $Q_0\cup Q_1$-array $f$ then there is $A\in[\omega]^\omega$ such that $f\restriction [A]^{\omega}$ is either a bad $Q_0$-array, or a bad $Q_1$-array.
\end{thm}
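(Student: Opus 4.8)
The plan is to split the domain of $f$ according to which summand the value lands in, use Galvin--Prikry to homogenise, and then check that badness is inherited by the restriction. This mirrors the use of Theorem \ref{Thm:GalvinPrikry} in the proof of Theorem \ref{Thm:BadOrPerfect}: the key point is that the relevant subset of $[\omega]^\omega$ is Borel because $f$ is continuous.

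Concretely, first I would set $B_i = f^{-1}(Q_i) = \{X\in[\omega]^\omega \mid f(X)\in Q_i\}$ for $i\in\{0,1\}$. Since $f$ is continuous and $Q_0\cup Q_1=(Q_0\sqcup Q_1,\leq)$ carries the discrete topology, each $Q_i$ is a clopen subset of $Q_0\cup Q_1$, so each $B_i$ is clopen — in particular Borel — in $[\omega]^\omega$, and $[\omega]^\omega = B_0\sqcup B_1$. Applying Theorem \ref{Thm:GalvinPrikry} to $B_0$ yields $A\in[\omega]^\omega$ with either $[A]^\omega\subseteq B_0$ or $[A]^\omega\cap B_0=\emptyset$, i.e. $[A]^\omega\subseteq B_1$; fix the $i$ with $[A]^\omega\subseteq B_i$.

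Finally I would verify that $g := f\restriction[A]^\omega$ is a bad $Q_i$-array. It takes values in $Q_i$ by the choice of $A$, and it is continuous, being the restriction of a continuous map (identifying $[A]^\omega$ with $[\omega]^\omega$ via the increasing enumeration, as in the conventions already set up). For badness, let $X\in[A]^\omega$; then $X\setminus\{\min X\}\in[A]^\omega$ as well since $A$ is infinite, so both $f(X)$ and $f(X\setminus\{\min X\})$ lie in $Q_i$. By the definition of the order on $Q_0\cup Q_1$, for two elements of the same summand $Q_i$ the relations $\leq_{Q_i}$ and $\leq_{Q_0\cup Q_1}$ agree; since $f$ is bad, $f(X)\not\leq f(X\setminus\{\min X\})$ in $Q_0\cup Q_1$, hence $g(X)\not\leq_{Q_i} g(X\setminus\{\min X\})$. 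Thus $g$ is a bad $Q_i$-array, as required. There is no real obstacle here: the only points needing a word of care are that a restriction of an array is again an array and that the order on a summand is the restriction of the order on the disjoint union, both of which are immediate.
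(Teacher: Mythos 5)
Your proposal is correct and is essentially the paper's own proof: the paper simply says to apply Theorem \ref{Thm:GalvinPrikry} with $B=f^{-1}(Q_0)$, and you have spelled out the same argument, adding the routine verifications (Borelness of $f^{-1}(Q_i)$, continuity of the restriction, and agreement of $\leq_{Q_i}$ with the order on $Q_0\cup Q_1$ within a summand) that the paper leaves implicit.
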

\begin{proof}
Apply Theorem \ref{Thm:GalvinPrikry} with $B=f^{-1}(Q_0)$.
\end{proof}
\begin{thm}[Nash-Williams \cite{NWInfTrees}]\label{Thm:times bqo}
If there is a bad $Q_0\times Q_1$-array $f$, then there is $A\in[\omega]^\omega$ and either a bad $Q_0$-array, or a bad $Q_1$-array $g$ with $\dom(g)=[A]^{\omega}$ and such that $g(X)$ is either the first or second component of $f(X)$ for all $X\in [A]^\omega$.
\end{thm}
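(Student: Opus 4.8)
The plan is to reduce the two-variable statement to the one-variable dichotomy of Theorem \ref{Thm:BadOrPerfect}, applying it once per coordinate. Write $\pi_i\colon Q_0\times Q_1\to Q_i$ for the two projections and set $f_i=\pi_i\circ f$ for $i\in\{0,1\}$, so that $f(X)=\langle f_0(X),f_1(X)\rangle$ for all $X$. Since $Q_0\times Q_1$ carries the discrete topology each $\pi_i$ is continuous, hence each $f_i$ is a $Q_i$-array. The whole argument then runs on these two component arrays.

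First I would apply Theorem \ref{Thm:BadOrPerfect} to $f_0$ to obtain $A_0\in[\omega]^\omega$ with $f_0\restriction[A_0]^\omega$ bad or perfect. If it is bad, we are done with $A=A_0$ and $g=f_0\restriction[A_0]^\omega$, whose values are the first components of the corresponding values of $f$. Otherwise $f_0\restriction[A_0]^\omega$ is perfect, and I would apply Theorem \ref{Thm:BadOrPerfect} again to the $Q_1$-array $f_1\restriction[A_0]^\omega$, getting $A\in[A_0]^\omega$ with $f_1\restriction[A]^\omega$ bad or perfect; if bad, we are done with $g=f_1\restriction[A]^\omega$. The one remaining case is that both components are perfect on $[A]^\omega$, which I would rule out: perfectness is inherited when passing to $[A]^\omega$ (if $h$ is perfect on $[B]^\omega$ and $X\in[A]^\omega$ with $A\in[B]^\omega$, then both $X$ and $X\setminus\{\min X\}$ lie in $[B]^\omega$, so $h(X)\leq h(X\setminus\{\min X\})$), so $f_0\restriction[A]^\omega$ is also perfect; then by the definition of the product order $f(X)\leq f(X\setminus\{\min X\})$ for every $X\in[A]^\omega$, and since a restriction of a bad array to some $[A]^\omega$ is still bad, this contradicts badness of $f$.

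I do not expect a genuine obstacle here; this is the standard ``coordinatewise'' argument for products of bqos. The only points needing a word of care are the continuity of the projections (immediate, the target being discrete) and the fact that both \emph{badness} and \emph{perfectness} of an array are preserved under restriction to a subdomain $[A]^\omega$, which is what lets the two successive applications of Theorem \ref{Thm:BadOrPerfect} cooperate.
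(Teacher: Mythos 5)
Your argument is exactly the paper's: split $f$ into its component arrays $f_0,f_1$, apply Theorem \ref{Thm:BadOrPerfect} twice with successive restrictions, and note that if both components were perfect on the final $[A]^\omega$ then $f$ itself would be perfect there, contradicting badness. The only difference is that you spell out the (easy) preservation of badness and perfectness under restriction, which the paper leaves implicit; the proof is correct.
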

\begin{proof}
Define the $Q_0$-array $f_0$ and the $Q_1$-array $f_1$ so that for every $X\in [\omega]^\omega$ we have $$f(X)=\langle f_0(X),f_1(X)\rangle.$$ 
Now apply Theorem \ref{Thm:BadOrPerfect} twice to restrict so that $f_0$ and $f_1$ are both either bad or perfect. Then either we are done or they are both perfect, which contradicts that $f$ was bad.
%By Theorem \ref{Thm:BadOrPerfect}, let $A\in[\omega]^\omega$ be such that $f_0\restriction [A]^\omega$ is bad or perfect. If it is bad then we are done, so suppose that $f_0\restriction [A]^\omega$ is perfect. Now let $B\in[A]^\omega$ be such that $f_1\restriction [B]^\omega$ is bad or perfect. Again if it is bad we are done, so assume that $f_1\restriction [B]^\omega$ is perfect.
%
%Let $X\in [B]^\omega$, we have for both $i\in\{0,1\}$ that $f_i(X)\leq f_i(X\setminus \{\min X\})$. Therefore we have that $f(X)\leq f(X\setminus \{\min X\})$ so $f$ is perfect, which contradicts that $f$ was bad.
\end{proof}

\subsection{Concrete categories}

Usually we will be interested in quasi-ordering classes of partial orders under embeddability, however we can keep the results more general with no extra difficulty by considering the notion of a \emph{concrete category}. The idea is to add a little more meat to the notion of a quasi-order, considering classes of structures quasi-ordered by existence of some kind of embedding. This allows us to generate more complicated orders by colouring the elements of these structures with a quasi-order; enforcing that embeddings must increase values of this colouring. Then we can construct complicated objects from simple objects in a ranked way by iterating this colouring process, and the notion of \emph{well-behaved} allows us to reduce back down through the ranks. We shall now formalise these notions, similar to the definitions of \cite{Kriz} and \cite{wqoforbpl}.
%\begin{defn}
%A \emph{concrete system} is a quasi-ordered class $Q$, such that for any quasi-order $A$, we assign a class $Q(A)$ with a function $J:Q(A)\rightarrow \mathcal{P}(A)$. %blah blah 
%\end{defn}

%concrete systems can be made from \emph{concrete categories} as is done in \cite{Kriz} and \cite{wqoforbpl}.
\begin{defn}\label{Defn:QOcat}
A \emph{concrete category} is a pair $\mathcal{O}=\langle \obj{\mathcal{O}},\mor{\mathcal{O}}\rangle$ such that:
\begin{enumerate}
	\item \label{item:QOcat1} each $\gamma\in \obj{\mathcal{O}}$ has an associated \emph{underlying set} $U_\gamma$;
	\item \label{item:QOcat2} for each $\gamma,\delta\in \obj{\mathcal{O}}$ there are sets of \emph{embeddings} $\homm_\mathcal{O}(\gamma,\delta)$ consisting of some functions from $U_\gamma$ to $U_\delta$;
	\item \label{item:QOcat3} $\homm_\mathcal{O}(\gamma,\gamma)$ contains the identity on $\gamma$ for any $\gamma\in \mathcal{O}$;
	\item \label{item:QOcat4} for any $\gamma,\delta,\beta\in \mathcal{O}$, if $f\in \homm_\mathcal{O}(\gamma, \delta)$ and $g\in \homm_\mathcal{O}(\delta, \beta)$ then $f\circ g\in \homm_\mathcal{O}(\gamma,\beta)$;
	\item \label{item:QOcat5} $\mor{\mathcal{O}}=\{\homm_\mathcal{O}(\gamma,\delta)\mid \gamma,\delta\in \obj{\mathcal{O}}\}$.
\end{enumerate}
Elements of $\obj{\mathcal{O}}$ are called \emph{objects} and elements of $\mathcal{O}(\gamma,\delta)$ are called $\mathcal{O}$-\emph{morphisms} or \emph{embeddings}. To simplify notation we write $\gamma\in \mathcal{O}$ for $\gamma\in \obj{\mathcal{O}}$ and equate $\gamma$ with $U_\gamma$ .
\end{defn}
\begin{remark}
Similar definitions to \ref{Defn:QOcat} appear in \cite{Kriz}, \cite{wqoforbpl} and \cite{LouveauStR}. The first two enjoy a more category theoretic description, and the last is in terms of structures and morphisms.
\end{remark}

With the following definition, concrete categories will turn into quasi-ordered sets under \emph{embeddability}; (\ref{item:QOcat3}) and (\ref{item:QOcat4}) of Definition \ref{Defn:QOcat} guaranteeing the reflexivity and transitivity properties respectively. This allows us to consider the %wqo and 
bqo properties of concrete categories.
\begin{defn}
For $\gamma,\delta \in \mathcal{O}$, we say that $$\gamma \leq_\mathcal{O} \delta\mbox{ iff }\homm_\mathcal{O}(\gamma,\delta)\neq \emptyset$$ i.e. $\gamma \leq_\mathcal{O} \delta$ iff there is an embedding from $\gamma$ to $\delta$. If $f\in \homm_\mathcal{O}(\gamma,\delta)$ then we call $f$ a \emph{witnessing embedding} of $\gamma\leq_\mathcal{O}\delta$.
\end{defn}
\begin{ex}\label{Ex:POcat}
Let $\obj{\mathcal{P}}$ be the class of partial orders. For any two partial orders $x,y$, let:
\begin{enumerate}
\item  $U_x=x$,
\item $\homm_\mathcal{P}(x,y)=\{\varphi:x\rightarrow y \mid (\forall a,b\in x), a\leq_x b \longleftrightarrow \varphi(a)\leq_y\varphi(b)\},$
\item $\mor{\mathcal{P}}=\{\homm_\mathcal{P}(\gamma,\delta)\mid \gamma,\delta\in \obj{\mathcal{P}}\}$,
\item $\mathcal{P}=\langle \obj{\mathcal{P}},\mor{\mathcal{P}}\rangle$.

\end{enumerate}
The category $\mathcal{P}$ of partial orders with embeddings is then a pragmatic example of a concrete category, and the order $\leq_\mathcal{P}$ is the usual embeddability ordering on the class of partial orders. We keep this example in mind since the majority of concrete categories used in this paper are either subclasses of $\mathcal{P}$ or are derived from $\mathcal{P}$. We note that all $\mathcal{P}$-morphisms are injective.
\end{ex}
%We will focus on partial orders under embeddability, although the theorems will be kept as general as possible, by considering concrete categories as often as possible.
\begin{defn}
Given a quasi-order $Q$ and a concrete category $\mathcal{O}$, we add \emph{colours} to $\mathcal{O}$, by defining the new concrete category $\mathcal{O}(Q)$ %=\langle \obj{\mathcal{O}(Q)},\mor{\mathcal{O}(Q)}\rangle$ 
as follows.
\begin{itemize}
	\item The objects of $\mathcal{O}(Q)$ are pairs $\langle \gamma, c\rangle$ for $\gamma\in \mathcal{O}$ and $c:\gamma\rightarrow Q$.
	
	%$Q$-\emph{coloured} elements of $\mathcal{O}$, $$\obj{\mathcal{O}(Q)}=\{\langle \gamma,c\rangle\mid \gamma\in \mathcal{O}, c:\gamma \rightarrow Q\}.$$

%We order $\mathcal{O}_Q$ by letting $\langle \gamma,\col_{\gamma}\rangle \leq \langle \delta,\col_\delta\rangle$ iff 
	\item For $\langle \gamma,c\rangle\in \obj{\mathcal{O}(Q)}$, we let $V_{\langle \gamma,c\rangle}=V_\gamma$, we call $c$ a $Q$-\emph{colouring} of $\gamma$ and for each $v\in \gamma$, we call $c(v)$ the \emph{colour} of $v$. To simplify notation we equate $\langle \gamma, c \rangle$ with $\gamma$, and write $\col_\gamma=c$ and $\gamma\in \mathcal{O}(Q)$.
	\item We define morphisms of $\mathcal{O}(Q)$ from $\gamma$ to $\delta$ to be embeddings $\varphi:\gamma\rightarrow \delta$ such that 
$$\col_\gamma(x)\leq_Q \col_\delta(\varphi(x))$$
for every $x\in \gamma$.% We call such embeddings \emph{colour preserving}.
%	\item We let
%$\mor{\mathcal{O}(Q)}=\{\mathcal{O}(Q)(\gamma,\delta)\mid \gamma,\delta\in \mathcal{O}_Q\}.$
\end{itemize}
 	Given $\gamma\in \mathcal{O}(Q)$ and $v\in \gamma$, we will sometimes write $\col(v)$, to be read as `the colour of $v$', in place of $\col_\gamma(v)$ when this is unambiguous.

\end{defn}

%
%%%%%%%%%%%%%PICTURE?

%We note that if $Q$ is a concrete category then $Q(A)$ is also a concrete category since it has well-defined objects and morphisms. We then have that $Q(A)(B)=Q(A\times B)$.
We are now able to define the bqo preservation properties mentioned in Section \ref{Section:Intro}, allowing us to pass from bad $\mathcal{O}(Q)$-arrays to bad $Q$-arrays.

\begin{defn}\label{Defn:Preservesbqo}
Let $\mathcal{O}$ be a concrete category, then $\mathcal{O}$ \emph{preserves bqo} iff $$Q\mbox{ is a bqo }\longrightarrow \mathcal{O}(Q)\mbox{ is a bqo.}$$
\end{defn}

Unfortunately, this simple definition fails to be particularly useful. Given a bad $\mathcal{O}(Q)$-array, preservation of bqo ensures the existence of a bad $Q$-array, but no link between these two arrays is guaranteed. The following definition remedies this situation and is extremely important for the rest of the paper.
\begin{defn}\label{Defn:WellBehaved}
Let $\mathcal{O}$ be a concrete category, then $\mathcal{O}$ is \emph{well-behaved} iff for any quasi-order $Q$ and any bad array $f:[\omega]^\omega \rightarrow \mathcal{O}(Q),$ there is an $M\in [\omega]^\omega$ and a bad array $$g:[M]^\omega\rightarrow Q$$ such that for all $X\in [M]^\omega$ there is some $v\in f(X)$ with $$g(X)=\col_{f(X)}(v).$$
We call $g$ a \emph{witnessing bad array} for $f$.
\end{defn}
Warning: this notion of well-behaved is the same as from \cite{Kriz}; it is different from the definition of well-behaved that appears in \cite{wqoforbpl} which is in fact equivalent to Louveau and Saint-Raymond's notion of reflecting bad arrays \cite{LouveauStR}.
%$Q$ coloured partial orders are useful because they allow us to construct more complex bqo orders. Actually, we will always use the stronger notion of well-behaved.
\begin{lemma}\label{Lemma:FinitePosWB}
Let $\mathbb{P}$ be a finite set of finite partial orders, then $\mathbb{P}$ is well-behaved.
\end{lemma}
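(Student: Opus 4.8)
The plan is to reduce the statement to the simpler fact that a finite quasi-order preserves bqo in the strong (well-behaved) sense, and then handle the colours. First I would unwind the definitions: suppose $Q$ is a quasi-order and $f:[\omega]^\omega\to\mathbb{P}(Q)$ is a bad array; I must produce $M\in[\omega]^\omega$ and a bad array $g:[M]^\omega\to Q$ with $g(X)=\col_{f(X)}(v)$ for some $v\in f(X)$. Since $\mathbb{P}$ is a \emph{finite} set of \emph{finite} partial orders, there is a uniform bound $n$ on the size of the underlying sets, and only finitely many possible underlying partial orders up to isomorphism. The natural first move is to apply Theorem \ref{Thm:GalvinPrikry} (Galvin--Prikry) finitely many times, or equivalently the colouring $X\mapsto(\text{isomorphism type of the underlying order of }f(X))$, to pass to $A\in[\omega]^\omega$ on which the underlying partial order of $f(X)$ is a fixed $p\in\mathbb{P}$ with a fixed underlying set, say $\{0,1,\dots,k-1\}$, independent of $X$.

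Next I would strip off the colouring structure. On $[A]^\omega$, the array $f$ now looks like an element of $p$ together with a colouring $c_X:p\to Q$; since $p$ has exactly $k$ elements, this is naturally an element of the $k$-fold product $Q\times Q\times\dots\times Q$ (one coordinate per vertex of $p$), and I would define $h:[A]^\omega\to Q^k$ by $h(X)=\langle c_X(0),\dots,c_X(k-1)\rangle$; this $h$ is continuous since $f$ is and the isomorphism has been fixed. The key point is that a $\mathbb{P}(Q)$-embedding $\varphi:f(X)\to f(X\setminus\{\min X\})$ would in particular be an order-embedding $p\to p$; since $p$ is finite, there are only finitely many such self-embeddings. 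If $f\restriction[A]^\omega$ is bad, then for every $X$, \emph{no} order-self-embedding $\varphi$ of $p$ satisfies $c_X(v)\leq_Q c_{X\setminus\{\min X\}}(\varphi(v))$ for all $v$. Applying Galvin--Prikry once more (colouring $X$ by the set of $\varphi$'s that are order-embeddings of $p$ — but that set is fixed, so this is automatic) I can then, for each fixed self-embedding $\varphi$ of $p$, use Theorem \ref{Thm:times bqo} (or rather its proof, iterated over the finitely many coordinates and finitely many $\varphi$) to extract from $h$ a bad single-coordinate array.

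More precisely, the cleanest route: since $\mathbb{P}(Q)$ embeds (as a concrete category, after the restriction to $A$) into a finite union of finite products of copies of $Q$ indexed by the pairs (underlying order $p$, self-embedding $\varphi$), I would invoke Theorem \ref{Thm:U bqo} to reduce to one summand, then Theorem \ref{Thm:times bqo} repeatedly to reduce a bad array into $Q^k$ to a bad array into a single coordinate $Q$, while tracking — as Theorem \ref{Thm:times bqo} explicitly allows — that the output value is one of the coordinates $c_X(v)$ of the input, i.e. of the form $\col_{f(X)}(v)$. That coordinate-tracking is exactly what upgrades "preserves bqo" to "well-behaved", and the finiteness of $p$ is what makes the product finite so that the iteration terminates.

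The main obstacle is bookkeeping rather than depth: I must make sure that badness of $f$ genuinely forces badness of the extracted $Q$-array, which requires that the \emph{only} way $f(X)\leq_{\mathbb{P}(Q)}f(X\setminus\{\min X\})$ can fail is through one of the finitely many self-embeddings $\varphi$ failing the colour inequality — and conversely that if after all the reductions some coordinate array were perfect, I could reassemble a $\mathbb{P}(Q)$-embedding and contradict badness of $f$. Getting the quantifier order right (first fix the underlying order, then argue that a perfect array in \emph{each} relevant coordinate for \emph{some} fixed $\varphi$ yields a perfect $\mathbb{P}(Q)$-array) is the delicate part, but it is handled exactly as in the proof of Theorem \ref{Thm:times bqo}, just iterated over a finite index set supplied by the finiteness hypothesis on $\mathbb{P}$.
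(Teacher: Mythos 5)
Your proposal is correct and follows essentially the same route as the paper: apply Galvin--Prikry to fix the underlying finite partial order, then run the bad-or-perfect dichotomy (equivalently, iterate Theorem \ref{Thm:times bqo}) coordinate-by-coordinate on the colours, observing that if every coordinate array were perfect then the identity map of $p$ would witness $f(X)\leq f(X\setminus\{\min X\})$, so some coordinate array is bad and is automatically a witnessing bad $Q$-array. The only remark is that your bookkeeping over all self-embeddings of $p$ (and the ``finite union of products indexed by $(p,\varphi)$'' framing, which is not literally a disjoint union in the sense of Theorem \ref{Thm:U bqo}) is unnecessary: badness need only be contradicted through the identity embedding, which is exactly how the paper argues.
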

\begin{proof}
Let $Q$ be an arbitrary quasi-order and let $f$ be a bad $\mathbb{P}(Q)$-array. Then since $\mathbb{P}$ is finite, we can repeatedly apply the Galvin and Prikry Theorem \ref{Thm:GalvinPrikry} to find $A\in [\omega]^\omega$ such that for each $X,Y\in [A]^\omega$, we have that $f(X)$ and $f(Y)$ have the same underlying finite partial order $P$. Then applying Theorem \ref{Thm:BadOrPerfect} at most $|P|$ many times, restrict in turn so that the colours of each point of $f(X)$ give either a bad array or a perfect array. They cannot all be perfect otherwise $f$ would also be perfect on some restriction to an infinite set. Therefore one of these arrays is bad, and this is clearly a witnessing array for $f$.
\end{proof}

\begin{prop}\label{Prop:WB->Preserves}
$\mathcal{O}$ is well-behaved $\longrightarrow$ $\mathcal{O}$ preserves bqo $\longrightarrow \mathcal{O}$ is bqo.
\end{prop}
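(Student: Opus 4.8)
The plan is to prove the two implications separately, the first being the substantive one and the second being almost immediate.

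For the implication ``well-behaved $\longrightarrow$ preserves bqo'', suppose $\mathcal{O}$ is well-behaved and let $Q$ be a bqo. I want to show $\mathcal{O}(Q)$ is bqo, so suppose toward a contradiction that there is a bad $\mathcal{O}(Q)$-array $f:[\omega]^\omega\rightarrow \mathcal{O}(Q)$. By the definition of well-behaved, there is an $M\in[\omega]^\omega$ and a bad array $g:[M]^\omega\rightarrow Q$ (with each $g(X)$ being the colour of some vertex of $f(X)$, though we only need that $g$ is a bad $Q$-array). Since bad arrays may be taken to have domain $[A]^\omega$ for any $A\in[\omega]^\omega$ (as noted in the remark following the definition of bad array), a bad array with domain $[M]^\omega$ witnesses that $Q$ is not bqo, contradicting our assumption. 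Hence $\mathcal{O}(Q)$ is bqo.

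For the implication ``preserves bqo $\longrightarrow$ bqo'', suppose $\mathcal{O}$ preserves bqo. Take $Q$ to be the one-element quasi-order $\mathbf{1}$, which is trivially bqo. Then $\mathcal{O}(\mathbf{1})$ is bqo by hypothesis. But colouring every object of $\mathcal{O}$ by the unique element of $\mathbf{1}$ identifies $\mathcal{O}$ with a sub-concrete-category of $\mathcal{O}(\mathbf{1})$ on which $\leq_{\mathcal{O}(\mathbf{1})}$ restricts to $\leq_{\mathcal{O}}$: a morphism in $\mathcal{O}(\mathbf{1})$ is just an $\mathcal{O}$-morphism satisfying the vacuous colour condition. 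Thus every $\mathcal{O}$-array is (after this identification) an $\mathcal{O}(\mathbf{1})$-array, and a bad one would remain bad; since $\mathcal{O}(\mathbf{1})$ has no bad array, neither does $\mathcal{O}$, so $\mathcal{O}$ is bqo.

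I do not anticipate a serious obstacle here; the only point requiring a little care is the book-keeping in the first implication about the domain of the witnessing bad array — one must invoke the remark that bad arrays on $[A]^\omega$ suffice to refute bqo, rather than insisting on domain $[\omega]^\omega$. Everything else is unwinding definitions.
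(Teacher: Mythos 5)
Your proof is correct and follows essentially the same route as the paper: contrapositive via the witnessing bad $Q$-array for the first implication, and specialising to the singleton quasi-order $1$ (with $\mathcal{O}(1)$ identified with $\mathcal{O}$) for the second. The extra care you take about the domain $[M]^\omega$ is fine and consistent with the paper's remark that bad arrays may be taken on $[A]^\omega$.
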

\begin{proof}
If $\mathcal{O}$ is well-behaved then given a bad $\mathcal{O}(Q)$-array $f$, we have a bad $Q$-array. If $Q$ were bqo this would give a contradiction and hence there is no such bad array $f$.

Now let $1=\{0\}$ be the singleton quasi-order, clearly then $1$ is bqo. Thus if $\mathcal{O}$ preserves bqo then $\mathcal{O}(1)$ is bqo. Clearly $\mathcal{O}(1)$ is isomorphic to $\mathcal{O}$, therefore $\mathcal{O}$ is also bqo.%Suppose there were a bad $\mathcal{O}$-array $g$, then for $X\in [\omega]^\omega$ define $\col_{g(X)}(v)=0$ for all $v\in g(X)$. Then let $g'(X)=\langle g(X),\col_{g(X)}\rangle$, so we have that if $g(X)\not \leq g(X)$ then $g'(X)\not \leq g'(X)$. But then $g'$ is a bad $Q(1)$-array, which is a contradiction since $Q(1)$ was bqo.
\end{proof}
\begin{remark}Note that the converse $\mathcal{O}$ is bqo $\longrightarrow$ $\mathcal{O}$ preserves bqo does not hold. For a counter example let $Z$ be the partial order consisting of points $0_n$ and $1_n$ for $n\in \omega$; ordered so that for $a,b\in Z$, we have $a\leq b$ iff there is some $n\in \omega$ such that $a\in\{0_n,0_{n+1}\}$ and $b=1_n$. Then $\{Z\}$ is clearly bqo but it does not preserve bqo.

It is not known whether or not the other converse holds, i.e. is it the case that $$\mathcal{O}\mbox{ preserves bqo }\longrightarrow \mathcal{O}\mbox{ is well-behaved}?$$ This is an interesting technical question, which was asked by Thomas in \cite{wqoforbpl}.
\end{remark}
%Preserving bqos is an important definition historically, however it will not be used in this paper from now on, we opt instead for well-behaved. 

\begin{defn}\label{Defn:Colours^q}
%=\{\langle \gamma,\col_\gamma\rangle\in Q(A)\}=
We give some notation for the set of all of possible coloured copies of some $\gamma\in \obj{\mathcal{O}}$. Given a concrete category $\mathcal{O}$, a quasi-order $Q$ and $\gamma\in \obj{\mathcal{O}}$ we define $$Q^\gamma=\{\langle \gamma,c\rangle\mid c:\gamma \rightarrow Q\}\subseteq \mathcal{O}(Q).$$
%For $\delta \subseteq \gamma$, and $\gamma_0=\langle \gamma,c\rangle \in Q^\gamma$, define $$\gamma_0 \restriction \delta =\langle \delta,c \restriction \delta\rangle\in Q^{\delta}.$$
If $\gamma_0,\gamma_1\in \mathcal{O}(Q)$ are such that there is some $\gamma\in \mathcal{O}$ with $\gamma_0,\gamma_1\in Q^\gamma$, then we say that $\gamma_0$ and $\gamma_1$ \emph{have the same structure}.
\end{defn}

\subsection{Partial orders}
\begin{defn}
We define $\card$ as the class of cardinals, $\On$ as the class of ordinals and $\On^*=\{\alpha^*:\alpha\in \On\}$, where $\alpha^*$ is a reversed copy of $\alpha$ for every $\alpha\in \On$.
\end{defn}
\begin{thm}[Nash-Williams \cite{nashwilliamsseqs}]\label{Thm:OnWB}
$\On$ is well-behaved.
\end{thm}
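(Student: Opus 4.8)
The plan is to prove this by induction on ordinals, using the Minimal Bad Array Lemma in the background via a clever coding. Suppose for contradiction that $\On$ is not well-behaved, so there is a quasi-order $Q$ and a bad array $f:[\omega]^\omega\to\On(Q)$ for which no witnessing bad $Q$-array exists. Among all such pairs $(Q,f)$, and among all bad arrays $f$ witnessing the failure, I would pass to one that is \emph{minimal} in a suitable sense: specifically, consider the ordinal $\sup\{\ot(f(X)) : X\in[\omega]^\omega\}$, or better, use the Galvin--Prikry theorem (Theorem 2.5) to first stabilise the order type structure as much as possible. The key structural observation is that each $f(X)$ is an ordinal coloured by $Q$, i.e. a transfinite sequence from $Q$, so $f(X)$ canonically decomposes: either $\ot(f(X))$ is a successor $\alpha+1$, in which case $f(X)=\gamma_X \con \langle q_X\rangle$ splits as an initial ordinal piece plus a last colour $q_X\in Q$; or $\ot(f(X))$ is a limit, in which case $f(X)$ is a sup of shorter coloured ordinals.

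First I would handle the stabilisation: by Theorem 2.5 applied to the Borel partition of $[\omega]^\omega$ according to whether $\ot(f(X))$ is zero, a successor, or a limit, restrict to some $A\in[\omega]^\omega$ on which one case holds uniformly. The zero case is impossible since then $f(X)$ is the empty order and badness fails trivially. In the successor case, write $f(X) = \gamma_X \con \langle c(X)\rangle$; the map $X\mapsto c(X)$ is a $Q$-array. Apply Theorem 2.7 to make it bad or perfect; if bad, it is a witnessing array and we are done. If perfect, then combined with the fact that removing the last element gives a shorter coloured ordinal, one derives that the array $X\mapsto \gamma_X$ (of strictly smaller order types) must still be bad — using that $\gamma \con \langle q\rangle \leq \delta\con\langle q'\rangle$ whenever $\gamma\leq\delta$ and $q\leq q'$, and that $f$ is bad — and then invoke the inductive hypothesis, or rather the minimality of our choice, to extract a contradiction. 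The limit case is the delicate one: here I would use continuity of the array $f$ (every array is continuous) together with a Nash-Williams-style argument on fronts/barriers, writing each $f(X)$ as a concatenation indexed along a cofinal sequence and applying a diagonal/tree argument to reduce to shorter ordinals.

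The honest approach, and the one I expect the paper takes, is to invoke the \textbf{Minimal Bad Array Lemma} directly: take a minimal bad $\On(Q)$-array $f$ (minimal under the partial order on arrays induced by $\unlhd$ where $g \lhd f$ roughly means $g$'s values embed into proper restrictions/initial segments of $f$'s values). Minimality forces strong uniformity: after Galvin--Prikry stabilisation, for each $X$ the "tail" arrays obtained by peeling off an initial segment of the ordinal $f(X)$ must be good, which via the standard combination lemmas (Theorems 2.9, 2.11) and a careful concatenation argument yields that $f$ itself is good, contradiction. The main obstacle is precisely the bookkeeping in the limit case: an ordinal of limit order type must be written as a transfinite concatenation of shorter coloured ordinals in a way that is \emph{uniform} across $X\in[A]^\omega$ and compatible with the array structure, so that the Minimal Bad Array machinery applies to the pieces. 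This requires choosing, uniformly and continuously in $X$, a cofinal $\omega$-sequence in $\ot(f(X))$ — which one can do by first using Galvin--Prikry to fix enough of the combinatorics of the order types — and then treating $f(X)$ as an element of a countable product/concatenation, reducing the limit case to the (already-handled or inductively-available) case of $\omega$-indexed sums of strictly shorter coloured ordinals. Once the pieces are shorter, minimality kills them, and badness of $f$ cannot survive.
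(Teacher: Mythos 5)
The paper does not actually prove this theorem: it is Nash-Williams' result on transfinite sequences, and the stated ``proof'' is just a citation to \cite{nashwilliamsseqs} and \cite{simpson}. So the relevant comparison is between your sketch and the known Nash-Williams/Simpson argument, and there your proposal is an outline of the right strategy (minimal bad arrays, Galvin--Prikry stabilisation, splitting off last elements) rather than a proof; the two places where the real work lies are exactly the places you wave at. First, the ``successor case plus induction'' step is not available as stated: the order types $\ot(f(X))$ vary with $X$ and need not be bounded or stabilised by any Galvin--Prikry restriction, so there is no single ordinal on which to run your induction, and ``minimality of our choice'' does no work until you specify the ordering on arrays (the $\unlhd$ relation on values, compatible with the array structure) and prove a Minimal Bad Array Lemma for it; this bookkeeping is the substance of the K\v{r}\'{i}\v{z}/Nash-Williams technique, not a routine detail. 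Second, in the limit case your proposed reduction ``choose, uniformly and continuously in $X$, a cofinal $\omega$-sequence in $\ot(f(X))$'' is simply false for ordinals of uncountable cofinality, so the decomposition into $\omega$-indexed concatenations of shorter coloured ordinals cannot be the mechanism; the actual argument decomposes each coloured ordinal at the point where the badness witness fails (or works with the tree of initial segments and a rank/forerunning argument), not along a chosen cofinal $\omega$-sequence.

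In short: the high-level plan (stabilise with Theorem \ref{Thm:GalvinPrikry}, peel off colours with Theorem \ref{Thm:BadOrPerfect}, use a minimal bad array to kill the shorter pieces) is the standard and correct one, but as written the proof has genuine gaps --- the unjustified induction/minimality step in the successor case and the incorrect cofinality claim in the limit case --- and filling them is essentially reproving Nash-Williams' theorem, which the paper deliberately imports as a black box rather than proving.
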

\begin{proof}
See \cite{simpson,nashwilliamsseqs}.
\end{proof}
\begin{defn}
If $P$ is a partial order, a \emph{chain} of $P$ is a subset with no incomparable elements. An \emph{antichain} of $P$ is a pairwise incomparable subset.
\end{defn}
\begin{defn}
We let $1=\{0\}$ be the partial order consisting of a single point. For $\kappa\in \card$ we let $\AC{\kappa}$ be the antichain of size $\kappa$. For $n\in \omega$ we let $\CH{n}$ be the chain of length $n$.
\end{defn}
We will now define some notation for traversing partial orders.
\begin{defn}\label{Defn:TraversingPOs}
Let $P$ be a partial order and $x\in P$, we define: 
$$\up x=\{y\in P\mid y\leq x\},\hspace{10pt}
\down x=\{y\in P\mid y\geq x\},$$
$$\ups x=\{y\in P\mid y< x\},\hspace{10pt}
\downs x=\{y\in P\mid y> x\}.$$
For $x,y\in P$, if it exists, we define the meet $x\wedge y$ to be the supremum of $\up x\cap \up y$.
\end{defn}
\begin{defn}
Let $P$ be a partial order and $P'\subseteq P$. Then:
\begin{itemize}
	\item we call $P'$ $\up$-closed if $(\forall x\in P')$, $\up x=\{y\in P\mid y\leq x\} \subseteq P'$,
	\item we call $P'$ $\down$-closed if $(\forall x\in P')$, $\down x=\{y\in P\mid y\geq x\} \subseteq P'$.
\end{itemize}

\begin{defn}\label{Defn:Nfree}
We define the partial order $N=\{0,1,2,3\}$ as follows. For $a,b\in N$ we let $a<b$ iff $a=1$ and $b\in \{0,2\}$ or $a=3$ and $b=2$, %Hence $N$ is the partial order on $4$ points that looks like the letter N 
(see Figure \ref{Fig:N}). A partial order is called \emph{$N$-free} if it contains no subset isomorphic to $N$.
\end{defn}
\begin{figure}
\centering%\vspace{-10pt}
\begin{tikzpicture}[thick]
\draw [->-] (0,0) -- (0,1);
\draw [-<-] (0,1) -- (1,0);
\draw [->-] (1,0) -- (1,1);
\draw [fill] (0,0) circle [radius=0.06];
\draw [fill] (0,1) circle [radius=0.06];
\draw [fill] (1,0) circle [radius=0.06];
\draw [fill] (1,1) circle [radius=0.06];
\node [left] at (0,0) {$0$};
\node [left] at (0,1) {$1$};
\node [right] at (1,0) {$2$};
\node [right] at (1,1) {$3$};
\end{tikzpicture}
\caption{The partial order $N$.}\label{Fig:N}
\label{Fig:Antichain}
\end{figure}
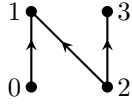
\end{defn}
\begin{defn}\label{Defn:LOBasics}
\begin{itemize}
	\item A \emph{linear order} is a partial order $L$ with no incomparable elements.
	\item A linear order $L$ is \emph{well-founded} if it has no infinite descending sequence.
	\item A linear order $L$ is \emph{scattered} if $\mathbb{Q}\not \leq L$.
	\item A linear order $L$ is \emph{$\sigma$-scattered} iff $L$ can be partitioned into countably many scattered linear orders.
	\item We denote the class of all linear orders as $\mathscr{L}$.
	\item We denote the class of scattered linear orders as $\scat$.
	\item We denote the class of $\sigma$-scattered linear orders as $\sscat$.
	\item We denote the class of countable linear orders as $\ctbl$.

\end{itemize}
\end{defn}
%\begin{defn}
%Let $r$ be a linear order, $j\in r$ and $L=\langle X_i:i\in r\rangle$ be a linearly ordered sequence of sets $X_i$ for $i\in r$. Then we call $X_j$ the \emph{$j$th element} of $L$. We also define $\ot(L)=r$.
%\end{defn}
\begin{defn}
Given $r,r'\in \mathscr{L}$ and linear sequences $k=\langle k_i:i\in r\rangle$ and $k'=\langle k'_j:j\in r'\rangle$, we denote by $\is$ the initial segment relation, and $\sis$ the strict initial segment relation. That is $$k\is k'\mbox{ iff }k=k'\mbox{ or } k=\langle k'_i:i<j\rangle \mbox{ or } k=\langle k'_i:i\leq j\rangle\mbox{ for some } j\in r'$$
and
$k\sis k'\mbox{ iff } k\is k'\mbox{ and } k\neq k'.$
We denote by $k\con k'$ the concatenation of $k$ and $k'$. We also define $\ot(k)=r$, set $k'\setminus k=\langle k'_i:i\in r'\setminus r\rangle$, and call $k_i$ the \emph{$i$th element} of $k$.
\end{defn}
\begin{defn}\label{Defn:Sums}
Let $P$ be a partial order, and for each $p\in P$, let $P_p$ be a partial order. We define the $P$-sum of the $P_p$ denoted by  $\sum_{p\in P}P_p$ as the set $\bigsqcup_{p\in P}P_p$
ordered by letting $a\leq b$ iff 
\begin{itemize}
	\item there is some $p\in P$ such that $a,b\in P_p$ and $a\leq_{P_p}b$, or
	\item there are $p,q\in P$ such that $a\in P_p$, $b\in P_q$ and $p<_{P} q$.
\end{itemize}
\end{defn}

\begin{defn}\label{Defn:Lclosure}
If $\mathbb{L}$ is a class of linear orders, we define $\clos{\mathbb{L}}$ as the least class containing $\mathbb{L}\cup \omega$ and closed under $L$-sums for all $L\in \clos{\mathbb{L}}$.
\end{defn}
\begin{thm}[Hausdorff \cite{Hausdorff}]\label{Thm:Hausdorff}
If $\mathbb{L}=\On\cup\On^*$ then $\clos{\mathbb{L}}=\scat$.
\end{thm}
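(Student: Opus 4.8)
The statement is the classical Hausdorff characterisation, so the plan is to prove the two inclusions $\clos{\mathbb{L}}\subseteq\scat$ and $\scat\subseteq\clos{\mathbb{L}}$ separately, the first being routine and the second carrying the content (it builds the Hausdorff hierarchy). For $\clos{\mathbb{L}}\subseteq\scat$ I would induct on the generation of $\clos{\mathbb{L}}$: the generators $\On$, $\On^*$ and $\omega$ are scattered, since a copy of $\mathbb{Q}$ inside an ordinal would yield an infinite descending sequence in a well-order (and symmetrically for $\On^*$), and $\omega$ is an ordinal. The inductive step needs the lemma that if $L$ and all the $L_p$ ($p\in L$) are scattered then $\sum_{p\in L}L_p$ is scattered; equivalently, if $\mathbb{Q}$ embeds into such a sum then it embeds into $L$ or into some $L_p$. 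Given an embedding with associated (weakly order-preserving) block map $\pi$ into $L$: if $\pi$ is constant on some rational interval then $\mathbb{Q}$ embeds into a single summand, and otherwise one extracts, by a Cantor scheme inside $\mathbb{Q}$ — using that every open subinterval of $\mathbb{Q}$ is again a copy of $\mathbb{Q}$ and, each block being scattered, meets at least two distinct blocks — a subcopy of $\mathbb{Q}$ on which $\pi$ is injective, so $\mathbb{Q}$ embeds into $L$.

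For $\scat\subseteq\clos{\mathbb{L}}$ I would use the iterated finite condensation. For a linear order $M$ put $F(M)=M/{\sim}$ where $x\sim y$ iff only finitely many points of $M$ lie strictly between $x$ and $y$; this is an equivalence relation with convex classes, and each class is a discrete order in which any two points have finitely many points between, hence is finite or isomorphic to $\omega$, $\omega^*$ or $\zeta=\omega^*+\omega$, all of which lie in $\clos{\mathbb{L}}$ (using $\omega\in\clos{\mathbb{L}}$, $\On,\On^*\subseteq\mathbb{L}$, and that $\zeta$ is a $2$-sum). Iterate transfinitely: $F^0(M)=M$, $F^{\alpha+1}(M)=F(F^\alpha(M))$, and at a limit condense by the union of the equivalence relations so far (still an equivalence relation, as they form a chain); write $\equiv_\alpha$ for the relation on $M$ with $M/{\equiv_\alpha}=F^\alpha(M)$, so the $\equiv_\alpha$ increase with $\alpha$. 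Each $F^\alpha(M)$ is a condensation of $M$, hence scattered (a copy of $\mathbb{Q}$ in it lifts, via representatives of its convex classes, to a copy of $\mathbb{Q}$ in $M$); and a scattered densely ordered set has at most one point (a dense order with two points contains a copy of $\mathbb{Q}$). Thus a scattered $F^\alpha(M)$ with at least two points is not dense, so has two consecutive points and $\equiv_{\alpha+1}\supsetneq\equiv_\alpha$; since a strictly increasing chain of subsets of $M\times M$ has bounded length, there is some $\alpha$ with $|F^\alpha(M)|\leq1$.

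It therefore suffices to show, by induction on $\alpha$, that any linear order $M$ with $|F^\alpha(M)|\leq1$ lies in $\clos{\mathbb{L}}$. The case $\alpha=0$ is immediate. For $\alpha=\beta+1$: with $N=F^\beta(M)$ we have $|F(N)|\leq1$, so $N$ is one of the orders listed above and hence $N\in\clos{\mathbb{L}}$; moreover $M=\sum_{z\in N}C_z$ where $C_z$ is the corresponding $\equiv_\beta$-class, which is convex in $M$ with $|F^\beta(C_z)|\leq1$ — the finite condensation of a convex subset is induced from that of the ambient order, so after $\beta$ steps a $\equiv_\beta$-class collapses to a point — whence $C_z\in\clos{\mathbb{L}}$ by the induction hypothesis, so $M\in\clos{\mathbb{L}}$. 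For $\alpha=\lambda$ a limit and $M\neq\emptyset$: all of $M$ is a single $\bigcup_{\gamma<\lambda}\equiv_\gamma$-class, so fixing $a\in M$ the convex sets $[a]_{\equiv_\gamma}$ increase with $\gamma$ with union $M$, and each satisfies $|F^\gamma([a]_{\equiv_\gamma})|\leq1$, hence lies in $\clos{\mathbb{L}}$ by induction. Choosing a cofinal sequence $(\beta_\xi)_{\xi<\mathrm{cf}(\lambda)}$ in $\lambda$ and passing to successive differences, I write $M^{\geq a}$ as a $\mathrm{cf}(\lambda)$-sum and $M^{<a}$ as a $\mathrm{cf}(\lambda)^*$-sum of convex subsets of the $[a]_{\equiv_{\beta_\xi}}$; since $\clos{\mathbb{L}}$ is closed under convex subsets (an easy induction on generation), under $\mathrm{cf}(\lambda)$-sums and $\mathrm{cf}(\lambda)^*$-sums (as $\mathrm{cf}(\lambda)\in\On$ and $\mathrm{cf}(\lambda)^*\in\On^*$), and under the $2$-sum $M=M^{<a}+M^{\geq a}$, we conclude $M\in\clos{\mathbb{L}}$.

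I expect the main obstacles to be, in the forward direction, making the Cantor-scheme extraction precise — ensuring the chosen rationals land in genuinely distinct blocks while retaining room for the recursion — and, more substantially, the limit case of the backward induction: verifying that finite condensation is compatible with passage to convex subsets (so that $|F^\gamma([a]_{\equiv_\gamma})|\leq1$), and organising the cofinal decomposition of $M$ so that the index orders are well-orders and reverse well-orders. The remaining steps are bookkeeping.
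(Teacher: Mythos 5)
Your proposal is a correct and complete proof, and it is the classical argument: the sum lemma for the inclusion $\clos{\mathbb{L}}\subseteq\scat$, and the iterated finite condensation $F^\alpha$ for $\scat\subseteq\clos{\mathbb{L}}$. Note that the paper itself does not prove this theorem; it only cites Hausdorff and Simpson, where essentially your condensation argument appears, so there is no in-paper proof to diverge from. (Elsewhere the paper proves a generalisation, Theorem \ref{Thm:PPL=amrs}, by a quite different route through decomposition trees and maximal chains of intervals, but that machinery is not needed here.) Two small points in your write-up are worth making explicit but are unproblematic: in the successor step the index order $N$ may be $\zeta=\omega^*+\omega$, which is not in $\mathbb{L}\cup\omega$ but is in $\clos{\mathbb{L}}$, and the paper's Definition \ref{Defn:Lclosure} closes under $L$-sums for all $L\in\clos{\mathbb{L}}$, so this is covered (alternatively split a $\zeta$-sum into an $\omega^*$-sum and an $\omega$-sum joined by a $2$-sum); and the two auxiliary facts you flag --- that finite condensation restricts correctly to convex subsets, and that $\clos{\mathbb{L}}$ is closed under nonempty convex subsets --- both go through by the routine inductions you indicate, using that a convex subset of $\sum_{i\in L}L_i$ is a sum over a convex subset of $L$ of convex subsets of the $L_i$.
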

\begin{proof}
See \cite{Hausdorff, simpson}.
\end{proof}
\begin{defn}\label{Defn:LTree} A partial order $T$ is called a \emph{tree} iff $(\forall t\in T)$, $\up t$ is a well-founded linear order. If $\mathbb{L}$ is a class of linear orders, we call $T$ a $\mathbb{L}$-tree iff every chain of $T$ has order type in $\mathbb{L}$ and for every $x,y\in T$, we have $\up x$ is a linear order and the meet $x\wedge y$ exists.
\end{defn}

Note that $\On$-trees are simply trees, and $\mathscr{L}$-trees are the most general `tree-like' partial orders.%\footnote{Some authors call $\mathscr{L}$-trees \emph{forests}.}

\begin{remark}
We make explicit the definition of $\mathbb{L}$-tree, in order to clarify because in the literature on bqo theory, the term `tree' has varied quite significantly. Indeed, the historical time line of bqo results for trees in the authors' terminology is as follows: Nash-Williams proved that the class of \emph{all} trees is bqo \cite{NWInfTrees}, Laver proved that the class of countable increasing unions of trees that do not embed $2^{<\omega}$ is bqo \cite{LaverClassOfTrees}, then Corominas proved that the class of all countable trees is bqo \cite{Corominas}. This is perplexing since each successive breakthrough seems to be a subclass of the previous! However the differences become clear when we use this notation. Nash-Williams proved that a class of $\omega+1$-trees\footnote{Here $\omega+1$ is the set of its predecessors.} is bqo, Laver proved that a class of $\On$-trees are bqo, and Corominas proved that a class of $\ctbl$-trees is bqo.%\footnote{he actually proved slightly more?}
\end{remark}
\begin{defn}\label{Defn:TreeHeight}\label{Defn:TreeBasics}
For a tree $T$ we call $\alpha\in \On$ the \emph{height} of $T$ %and write $\height(T)=\alpha$, 
iff $\alpha=\sup_{x\in T}\{\ot(\up x)\}$.
\end{defn}
%\begin{defn}\label{Defn:TreeBasics}
%If $T$ is an $\mathscr{L}$-tree, then
%\begin{itemize}
	%\item 
	%$t\in T$ is called a \emph{leaf} of $T$ if there is no $t'\in T$ such that $t'>t$;
	%\item $B\subseteq T$ is called a \emph{branch} of $T$ if $\exists t\in T$ such that $\down t=B$.
%\end{itemize}
%\end{defn}

\begin{defn}\label{Defn:WBScatteredTrees}
Let $\mathbb{L}$ be a class of linear orders and $T$ be an $\mathscr{L}$-tree, then we define as follows:
\begin{itemize}
	\item $t\in T$ is called a \emph{leaf} of $T$ if there is no $t'\in T$ such that $t'>t$.
	\item $T$ is \emph{rooted} iff $T$ has a minimal element, denoted $\Troot{T}$.
	\item $T$ is \emph{chain-finite} iff every chain of $T$ is finite.
\end{itemize}
\end{defn}
\begin{defn}
Given a rooted chain-finite tree $T$, and some $t\in T$ we define inductively\footnote{For the base case we have that $\sup (\emptyset)=0$.} $$\Trank(t)=\sup\{\Trank(s)+1\mid t<_Ts\}.$$ We then define the tree rank of $T$ as $\Trank(T)=\Trank(\Troot{T})$.
\end{defn}

\section{Operator construction}\label{Section:OpConstruction}

In this section we give the definitions required to translate more complicated structured trees (i.e. not necessarily chain-finite) into an operator algebra construction similar to Pouzet's \cite{PouzetApps}.

First we must give the parameters of our construction. We will always let: 
\begin{itemize}
	\item $\CC$ be concrete category;
	\item $\AAA$ be a subset of $\CC$;
	\item $\PP$ be a concrete category;
	\item $\MM$ be a quasi-ordered class of functions $f$ with range in $\CC$;
	\item $\RR$ be a non-empty class of linear orders that is closed under taking non-empty subsets;
	\item $\OA$ denote the whole system $\langle \CC,\AAA,\PP,\MM,\RR\rangle$.
\end{itemize}
Intuitively, $\CC$ is going to be the class of objects for which we will be constructing a bqo subclass; $\AAA$ will be a class of `simple' objects that we will start the construction from; $\PP$ is a class of possible \emph{arities} which we will use to generalise the notion of a multivariate function; $\MM$ is a class of functions which we will be applying to elements of $\CC$ in order to construct more complex elements of $\CC$; and $\RR$ is a class of linear orders which we will allow iteration of functions from $\MM$ over. We will keep these standard symbols when using this construction.

We restrict our attention to such $\CC, \MM, \PP$ and $\RR$, so that for every $f\in \MM$ there is some $\arity(f)\in \PP$ and $\barity{f}\subseteq \arity(f)$ with %$$\AAA^{\arity(f)}\subseteq \dom(f)\subseteq \CC^{\arity(f)}.$$ 
$$\dom(f)=\{a\in \CC^{\arity(f)}\mid (\forall i\in \barity{f}), \col_a(i)\in \AAA\}.$$
(Here $\CC^{\arity(f)}$ is as from Definition \ref{Defn:Colours^q}.) We call $\arity(f)$ the \emph{arity} of $f$. We think of the functions of $\MM$ as having arguments structured by $\arity(f)$. For example, if $\arity(f)$ is a finite linear order of length $n$ then the arguments of $f$ are linearly ordered and $f$ has the form $f(x_1,...,x_n)$; if $\arity(f)$ were an antichain, then the order on the $x_i$ would not matter; and if $\arity(f)$ was the binary tree of height $2$ then we think of $f$ having form \begin{center}
\begin{tikzpicture}
\node at (-1.5,0.2) {$f$};
\node at (-1.1,0.2) {$\Big ($};
\node at (0,0.4) {$x$};
\node at (-0.5,0) {$y$};
\node at (0.5,0) {$z$};
\node at (1.1,0.2) {$\Big )$};
\node [right] at (1.2,-0.05) {.};
\end{tikzpicture}
\end{center} We do this because for some functions it will be more convenient to think of the arguments arranged in some general partial order (particularly the sums of Definition \ref{Defn:Sums}). We include $\barity{f}$ in the definition, since in Section \ref{Section:StructuredRTrees} it will be more convenient to only allow elements of $\AAA$ into some arguments of our functions. All other constructions used in this paper will have $\barity{f}=\emptyset$ for every $f\in \MM$, in which case $\dom(f)=\CC^{\arity(f)}$.

\begin{ex}
Let $Q$ be a quasi-order and $\mathbb{L}$ be some class of linear orders. Now we let
\begin{itemize}
	\item $\CC=\mathscr{L}(Q)$ be the class of all $Q$-coloured linear orders;
	\item $\AAA=Q^1$, i.e. single points coloured by elements of $Q$;
	\item $\PP=\mathbb{L}\cup \omega$;
	\item $\MM$ be the set of $L$-sums for all $L\in\PP$, as defined in Definition \ref{Defn:Sums}, inheriting colours;
	\item $\RR=\{1\}$.
\end{itemize}
We order $\MM$ by letting $\sum_A\leq_\MM \sum_B$ iff $A\leq B$. For $\sum_L\in \MM$ we have $\arity(\sum_L)=L$ and $\barity{\sum_L}=\emptyset$. Now define $\OA_{\mathbb{L}}^Q=\langle \CC,\AAA,\PP,\MM,\RR\rangle$. This relatively simple example will construct the class $\clos{\mathbb{L}}(Q)$, which will ultimately allow us to prove that if $\mathbb{L}$ is well-behaved, then $\clos{\mathbb{L}}$ is well-behaved.
\end{ex}
\begin{ex}\label{Ex:POCONSTRUCTION}
Let $Q$ be a quasi-order; $Q'$ be $Q$ with an added minimal element $-\infty$; let $\mathbb{L}$ be some class of non-empty linear orders; and let $\mathbb{P}$ be some class of partial orders.
Now we let 
\begin{itemize}
	\item $\CC$ be the class all of $Q'$-coloured partial orders;
	\item $\AAA=Q'^1$;
	\item $\PP=\mathbb{P}$;
	\item $\MM$ be the set of $P$-sums for all $P\in \mathbb{P}$, inheriting colours;
	\item $\RR=\clos{\mathbb{L}}$.
\end{itemize}
We order $\MM$ by letting $\sum_A\leq_\MM \sum_B$ iff $A\leq B$. Now define $\OA_{\mathbb{L},\mathbb{P}}^Q=\langle \CC,\AAA,\PP,\MM,\RR\rangle$. This example will construct our class of $Q'$-coloured partial orders, that we will characterise in Section \ref{Section:PO}.
\end{ex}

We wish to generalise Pouzet's operator algebra construction in two ways. The first way will allow us to iterate functions over a linear order from $\RR$, and the second way will allow us to take countable limits.
\subsection{Iterating over $\RR$}

First we will define what is required for iteration. This will allow us to represent complicated functions in terms of simpler ones. For a basic example, suppose we would like to construct the $\omega$-sum as from Definition \ref{Defn:Sums}. Given two linear orders $L_0$ and $L_1$, we can define the simple function $+$, so that $L_0+L_1$ is a copy of $L_0$ followed by a copy of $L_1$. Now suppose we iterate this function; we can easily define $L_0+L_1+L_2$ and $L_0+L_1+L_2+L_3$ and so on. This is easily done finitely many times (which is what Pouzet was doing when he used chain-finite trees \cite{PouzetApps}). But it is possible to iterate $+$ over a more complex linear order. Naturally, its $\omega$ iteration would be $\bigcup_{i\in \omega}L_i$ ordered by $a<b$ if $a\in L_i$, $b\in L_j$, $i<j$ or $i=j$ and $a<_{L_i}b$ (i.e. the $\omega$-sum). We need not stop there though, we could define this iteration over a larger linear order, e.g. its $\mathbb{Q}$ iteration could be defined similarly.

Here $+$ is really the function $\sum_{\CH{2}}$. We think of this function as having arguments arranged in arity $\CH{2}$. At each successive stage of the iteration we apply the next function into argument corresponding to the larger of the two points in $\CH{2}$.\footnote{Note that since we are dealing with multivariate functions, we are required to distinguish an argument in order to know in which position to compose further sums inside.} If we repeatedly compose $\sum_{\CH{2}}$ in this way, we get the functions $\sum_{\CH{3}}$, $\sum_{\CH{4}}$, and so on. If we allowed ourselves to compose over a linear order we could get $\sum_{\omega}$ or $\sum_\mathbb{Q}$ as in the previous paragraph. So in general we want to be able to turn linearly ordered lists of functions, each with a distinguished argument, into a new function that acts as their composition. This gives rise to the definition of a \emph{composition sequence}, which will be a list of functions $f_i$ indexed by a linear order $r$, each with a distinguished argument $s_i$.
\begin{defn}\label{Defn:Iterable1}
We call $\eta$ a \emph{composition sequence} iff $\eta=\langle \langle f_i,s_i\rangle:i\in r\rangle$ where $r\in \RR$, and for all $i\in r$, we have $f_i\in \MM$ and $s_i\in \arity(f_i)$ such that if $i\neq \max(r)$ then $s_i\notin \barity{f}$.

We call $r$ the \emph{length} of the sequence $\eta$.
For $i\in r$ let $$a^\eta_i=\left\{\begin{array}{lcl}
\arity(f_i)\setminus \{s_i\} & \mbox{if} &i \neq \max (r) \\
\arity(f_i) &\mbox{if}& i = \max (r)
\end{array}\right. .$$ 
%When it is clear which $\eta$ we are referring to, we write simply $a_i$ for $a^\eta_i$. 
We also define $A^\eta=\bigsqcup_{i\in r} a^\eta_i$ and $B^\eta=\bigsqcup_{i\in r}\barity{f_i}$ so that $B^\eta\subseteq A^\eta$. For any $j\in r$, let 
$$\eta^-_j=\langle \langle f_i,s_i\rangle:i\leq j\rangle$$ 
$$\eta^+_j=\langle \langle f_i,s_i\rangle:i> j\rangle.$$ 
\end{defn}
For each composition sequence $\eta$ we will define a function $f^\eta$ that will act as the composition of the functions $f_i$ (in order type $r$). We want $f_j$ to be applied to the composition of all of the $f_i$ (for $i>j$) in the argument $s_j$. %(Since these will usually be multivariate functions, if we want to compose functions we need to know which argument we will be composing within.) 
\begin{remark}
We require that for $i\in r$, if $i\neq \max(r)$ then $s_i\notin \barity{f}$, because otherwise it could be that the composition of the $f_j$ for $j>i$ is not allowed into the domain of $f_i$ in position $s_i$.
\end{remark}
\begin{ex}\label{Ex:Heta}
Suppose we want to define iteration of the sums of Definition \ref{Defn:Sums} over a general linear order. 
%(which will be crucial for Section \ref{Section:PO}).
%So suppose we are given a composition sequence 
Let $\eta=\langle \langle f_i,s_i\rangle:i\in r\rangle$ be a composition sequence, %where $r$ is a linear order, and 
and for each $i\in r$, suppose $\arity(f_i)$ is a partial order, $f_i$ is the $\arity(f_i)$-sum, and $s_i\in \arity(f_i)$. We turn this composition sequence into a new function $f^\eta$ that acts as the composition of the $f_i$ as follows.
First define the partial order $H_\eta$ as the set $\bigsqcup_{i\in r} a_i^\eta$ ordered so that for $u,v\in H_\eta$ we let $u<v$ iff $u\in a_i^\eta$, $v\in a_j^\eta$ one of the following occurs:
\begin{itemize}
	\item $i=j$ and $u<_{\arity(f_i)}v$;
	\item $i<j$ and $u<_{\arity(f_i)}s_i$;
	\item $i>j$ and $v>_{\arity(f_j)}s_j$.
\end{itemize}
(See Figure \ref{Fig:Heta}.) We can then define our composition $f^\eta$ to be the $H_\eta$-sum. Notice that for finite $r$, this is equivalent to the usual finite composition of sums, each composed in the argument $s_i$.% We let $f^{\langle \rangle}=\Id_\CC$.
\end{ex}

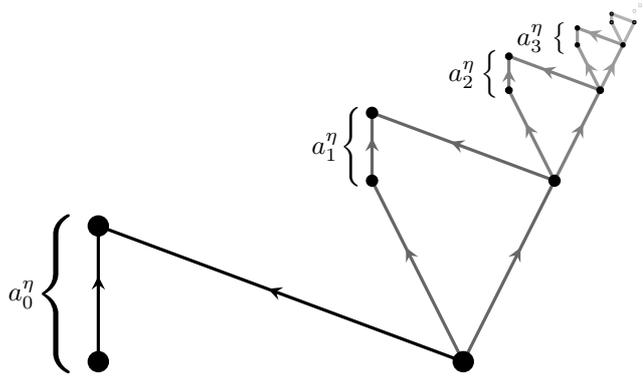
\begin{figure}
\begin{tikzpicture}[very thick, scale=0.3]
%\draw (0,0)--(0,16)--(16,0)--(16,8);

\draw[->-] (0,0)--(0,6);
\draw[-<-] (0,6)--(16,0);
%\draw[->-] (16,0)--(16,8);
\draw[g0, ->-] (16,0)--(12,8);
\draw[g0, ->-] (16,0)--(20,8);
\draw [fill] (0,0) circle [radius=0.4];
\draw [fill] (0,6) circle [radius=0.4];
\draw [fill] (16,0) circle [radius=0.4];
\node [left, scale=2] at (0,3) {$\Bigg \{$};
\node [left] at (-2.3,3) {$a^\eta_0$};

\draw [g0, ->-](12,8)-- (12,11);
\draw [g0, -<-](12,11)--(20,8);
%\draw (20,8)--(20,12);
\draw [g1, ->-] (20,8)--(18,12);
\draw [g1, ->-] (20,8)--(22,12);
\draw [ fill] (12,8) circle [radius=0.2];
\draw [ fill] (12,11) circle [radius=0.2];
\draw [ fill] (20,8) circle [radius=0.2];
\node [left] at (12,9.5) {$\Bigg \{$};
\node [left] at (11,9.5) {$a^\eta_1$};

\draw [g1, ->-] (18,12)--(18,13.5);
\draw [g1, -<-] (18,13.5)--(22,12);
%\draw (22,12)--(22,14);
\draw [g2, ->-] (22,12)--(21,14);
\draw [g2, ->-] (22,12)--(23,14);
\draw [ fill] (18,12) circle [radius=0.1];
\draw [ fill] (18,13.5) circle [radius=0.1];
\draw [ fill] (22,12) circle [radius=0.1];
\node [left] at (18,12.7) {$\Big \{$};
\node [left] at (17,12.7) {$a^\eta_2$};

\draw [g2] (21,14)--(21,14.75);
\draw [g2, -<-] (21,14.75)--(23,14);
%\draw (23,14)--(23,15);
\draw [g3] (23,14)--(22.5,15);
\draw [g3] (23,14)--(23.5,15);
\draw [ fill] (21,14) circle [radius=0.05];
\draw [ fill] (21,14.75) circle [radius=0.05];
\draw [ fill] (23,14) circle [radius=0.05];
\node [left] at (21,14.3) {$\big \{$};
\node [left] at (20,14.3) {$a^\eta_3$};

\draw [g3] (22.5,15)--(22.5,15.375);
\draw [g3] (22.5,15.375)--(23.5,15);
%\draw [g3](23.5,15)--(23.5,15.5);
\draw [ fill] (22.5,15) circle [radius=0.025];
\draw [ fill] (22.5,15.375) circle [radius=0.025];
\draw [ fill] (23.5,15) circle [radius=0.025];

\draw [g3, fill] (24,16) circle [radius=0.01];
\draw [g3, fill] (23.75,15.75) circle [radius=0.01];
\draw [g3, fill] (23.5,15.5) circle [radius=0.01];

\end{tikzpicture}
\caption{The partial order $H_\eta$, for $\eta=\langle \langle \sum_N,3\rangle :i\in \omega \rangle$.}%, and for each $i\in \omega$, $f_i=$ and each $s_i=3\in N$.}
\label{Fig:Heta}
\end{figure}

The next definition allows us, in a general setting, to turn composition sequences into functions that will act as the linear composition of the functions in the composition sequence.

\begin{defn}\label{Defn:Riterable}
$\MM$ is called $\RR$-\emph{iterable} if we distinguish a class $\MM^\RR$ consisting of \emph{composition functions} $f^\eta$, for every composition sequence $\eta=\langle \langle f_i, s_i\rangle:i\in r\rangle$ where $$f^\eta:\{\langle q_u:u\in A^\eta\rangle\in \CC^{A^\eta} \mid (\forall u\in B^\eta),q_u\in \AAA\}\longrightarrow \CC,$$
and $\MM^\RR$ satisfies the following properties.
\begin{enumerate}[(i)]
	\item \label{Item:Riterable4} For all $f_0\in \MM$ and $s_0\in \arity(f_0)$, we have $f^{\langle \langle f_0,s_0\rangle \rangle}=f_0$.% and $f^{\langle \rangle}=\Id_\CC$.	
%	\item \label{Item:Riterable1} $\dom(f^\eta)=\{\langle k_i\restriction a^\eta_i: i\in r\rangle\mid k_i\in \dom(f_i)\}.$
%$\{k: k:r\rightarrow \PP(\CC), k(i)\in \{(z_i,u\restriction z_i): (p_{m(i)},u)\in \dom(m(i))\}\}$
%Thus elements in the domain of $f^\eta$ are sequences of order type $r$ with $i$th element in $\CC^{a_i}$. 
%	\item \label{Item:Riterable2} Given $\langle k_i:i\in r\rangle\in \dom (f^\eta)$ and some $j\in r$, we let $k'_j\in\dom(f_j)\subseteq  \CC^{\arity(f_j)}$ be such that $k'_j\restriction a_j=k_j$ and if $j\neq\max(r)$ then %for the remaining $s\in k^\eta_j$ (ie $s$ is in position $s_j$) we have
% $$\col_{k'_j}(s_j)=f^{\eta^+_j}(\langle k_i:i>j\rangle).$$
%Now set $k'_i=k_i$ whenever $i< j$, then we have
%$$f^\eta(\langle k_i:i\in r\rangle)=f^{\eta^-_j}(\langle k'_i:i\leq j\rangle).$$
	\item \label{Item:Riterable2} Given $\langle q_u:u\in A^\eta\rangle\in \dom (f^\eta)$ and some $j\in r$, set $q_{s_j}=f^{\eta^+_j}(\langle q_u:u\in a^\eta_i, i>j\rangle)$, then $$f^\eta(\langle q_u:u\in A^\eta\rangle)=f^{\eta^-_j}(\langle q_u:u\in A^{\eta^-_j}\rangle).$$

%%%%	so that $\langle q_k:k\in A^{\eta^-_j}\rangle\in \dom(f^{\eta^-_j})$
%%%%	
%%%%	
%%%%	 $k'_j\in\dom(f_j)\subseteq  \CC^{\arity(f_j)}$ be such that $k'_j\restriction a_j=k_j$ and if $j\neq\max(r)$ then %for the remaining $s\in k^\eta_j$ (ie $s$ is in position $s_j$) we have
%%%% $$\col_{k'_j}(s_j)=f^{\eta^+_j}(\langle k_i:i>j\rangle).$$
%%%%Now set $k'_i=k_i$ whenever $i< j$, then we have
%%%%$$f^\eta(\langle k_i:i\in r\rangle)=f^{\eta^-_j}(\langle k'_i:i\leq j\rangle).$$

%	\item \label{Item:Riterable3} For each $\eta$,  $\langle k_i:i\in r\rangle\in \dom(f^\eta)$ and $j\in r$, we have $k^\eta_j\in \dom(f_j)$. 

\end{enumerate}
%For $\zeta$ an initial segment of $r$, we also define $f^{\eta\restriction \zeta}(\zeta,k\restriction \zeta)$. Let $\eta\restriction \zeta=(\zeta, m\restriction \zeta, p\restriction \zeta)$. Now, $k(i)=(z_i,u_i)$ we define $k\restriction \zeta:\zeta\rightarrow P(C)$ %(NOT the usual restriction!) 
%	so that $k\restriction \zeta(i)=(z_i,u'_i)$ where if $u_i(a)=f^\nu(r',k')$ then $u'_i(a)=f^{\nu\restriction n-1}(\min(r',n-1),k'\restriction n-1)$. The $n$'s are decreasing and so is well defined since once we get down to $1$ it is just a function in $M$ applied to a list $1$'s.
%%We will use the notation that if $k\in \dom(f)$ then $k=\langle k_i:i\in r\rangle$.
\end{defn}

So we require that when we split $\eta$ up into initial and final sections $\eta^-_j$ and $\eta^+_j$, the composition will behave as expected (i.e. $f^\eta$ is $f^{\eta^-_j}$ applied to $f^{\eta^+_j}$).
%We think of the functions of $\eta$ being composed in a line of order type $r$, to form $f^\eta$. 
%There is then a remaining argument of $f^\eta$ for each element of each of the $a^\eta_i$, $(i\in r)$, %i.e. the arguments of $f^\eta$ are
%the remaining arguments from each of the domains of the $f_i$, $(i\in r)$.
%these arguments are required to determine the value of the composition $f^\eta$, and so form its domain.
The remaining arguments of $f_i$ are then those in positions corresponding to elements of $a^\eta_i$ (see Figure \ref{Fig:feta_args}), so we consider $f^\eta$ as a multivariate function from $\CC$, which has arguments for each element of $a_i^\eta$, $(i\in r)$, and only allows values admissible into $\dom(f_i)$ (i.e. those respecting that if $u\in \barity{f_i}$ then $q_u\in \AAA$).
\begin{figure}
  
  \centering
    \begin{tikzpicture}
	\node [below right] at (1,5) {$r$};
\draw [thick] (0.1,0.5) -- (1,5);
\draw [fill] (0.8,4) circle [radius=0.06];
\node [left] at (0.8,4) {$i$};
%	\draw  (0.8,4) -- (-0.04,2.3);
	\draw  (0.8,4) -- (1.16,3.3);
	\draw  (0.8,4) -- (1.66,3.3);
	\draw  (0.8,4) -- (2.16,3.3);
	\draw  (0.8,4) -- (2.66,3.3);
	\draw [fill] (0.66,3.3) circle [radius=0.06];
	\node [left] at (0.66,3.3) {$s_i$};
	\node [rotate=90, scale=1.5] at (1.93,3.1) {$\Bigg \{$};
	\node at (1.93,2.75) {$a^\eta_i$};

%	\draw [fill] (0.46,2.3) circle [radius=0.08];
%\draw [fill] (0.6,3) circle [radius=0.08];
	\draw  (0.6,3) -- (-0.04,2.3);
	\draw  (0.6,3) -- (0.96,2.3);
	\draw  (0.6,3) -- (1.46,2.3);
	\draw  (0.6,3) -- (1.96,2.3);
	\draw  (0.6,3) -- (2.46,2.3);
	\draw  (0.6,3) -- (-0.44,2.3);
%	\draw [fill] (0.46,2.3) circle [radius=0.08];

%\draw [fill] (0.4,2) circle [radius=0.08];
	\draw  (0.4,2) -- (-0.24,1.3);
	\draw  (0.4,2) -- (0.76,1.3);
	\draw  (0.4,2) -- (1.26,1.3);
	\draw  (0.4,2) -- (1.76,1.3);
	\draw  (0.4,2) -- (2.26,1.3);
	\draw  (0.4,2) -- (2.76,1.3);
	
%%\draw [fill] (0.2,1) circle [radius=0.08];
%	\draw  (0.2,1) -- (-1.44,0.3);
%	\draw  (0.2,1) -- (-0.94,0.3);
%	\draw  (0.2,1) -- (-0.44,0.3);
%	\draw  (0.2,1) -- (0.56,0.3);
%	\draw  (0.2,1) -- (1.06,0.3);
%	\draw  (0.2,1) -- (1.56,0.3);
%%	\draw  (0.2,1) -- (2.06,0.3);
%%	\draw  (0.2,1) -- (2.56,0.3);
%\node at (-1.5,0.2) {$f$};

\end{tikzpicture}
\caption{The arrangement of the arguments of $f^\eta$.}\label{Fig:feta_args}
\end{figure}
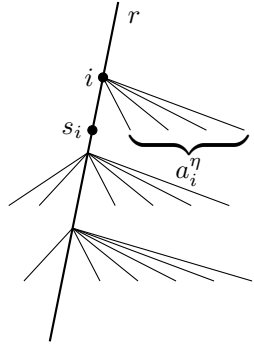

%\begin{remark}
%Given a composition sequence $\eta=\langle \langle f_i,s_i\rangle:i\in r\rangle$ and $k=\langle k_i:i\in r\rangle$, with each $k_i\in \dom(f_i)$; to simplify notation, we write simply $f^\eta(k)$ in place of $f^\eta(\langle k_i\restriction a^\eta_i:i\in r\rangle)$.
%
%\end{remark}
%\begin{defn}
%We call $\MM$ \emph{compatible with} $\CC$ iff whenever $f,g\in \MM$, $x\in \dom(f)$, $y\in \dom(g)$, $f\leq_\MM g$ and $x\leq_{\PP(\CC)} y$ we have $f(x)\leq_\CC g(y)$.
%\end{defn}
\begin{remark}
For a composition sequence $\eta$ of length $r$, notice that elements of $A^\eta$ can be indexed by $i\in r$ and $u\in a^\eta_i$, so we will sometimes write elements of $\dom(f^\eta)$ as $\langle q_{i,u}:i\in r, u\in a^\eta_i\rangle$.
\end{remark}
\begin{lemma}\label{Lemma:PORiterable}
Let $\OA=\OA_{\mathbb{L},\mathbb{P}}^Q=\langle \CC,\AAA,\PP,\MM,\RR\rangle$ for some quasi-order $Q$, some class of linear orders $\mathbb{L}$ and some class of partial orders $\mathbb{P}$. Then $\MM$ is $\RR$-\emph{iterable}.
\end{lemma}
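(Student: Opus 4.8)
The plan is to take $\MM^\RR$ to be exactly the family of $H_\eta$-sums already foreshadowed in Example~\ref{Ex:Heta}, and then to verify the two clauses of Definition~\ref{Defn:Riterable} by directly comparing the partial orders involved. Concretely, let $\eta=\langle\langle f_i,s_i\rangle:i\in r\rangle$ be a composition sequence for $\OA=\OA_{\mathbb{L},\mathbb{P}}^Q$; so $r\in\RR=\clos{\mathbb{L}}$, each $f_i$ is the $\arity(f_i)$-sum for some $\arity(f_i)\in\mathbb{P}$, and $s_i\in\arity(f_i)$. Since $\barity{\sum_P}=\emptyset$ for every $P\in\mathbb{P}$, we have $B^\eta=\emptyset$, so the required domain of $f^\eta$ is all of $\CC^{A^\eta}$. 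I would set $f^\eta$ equal to the $H_\eta$-sum (inheriting colours), where $H_\eta$ is the partial order with underlying set $A^\eta=\bigsqcup_{i\in r}a_i^\eta$ defined precisely as in Example~\ref{Ex:Heta}: for $u\in a_i^\eta$ and $v\in a_j^\eta$ put $u<v$ iff $i=j$ and $u<_{\arity(f_i)}v$, or $i<j$ and $u<_{\arity(f_i)}s_i$, or $i>j$ and $v>_{\arity(f_j)}s_j$. A preliminary point is that $H_\eta$ really is a partial order: irreflexivity and antisymmetry follow at once from the corresponding properties of the $\arity(f_i)$ (for a fixed ordered pair of index blocks the three clauses are mutually exclusive), and transitivity follows from a case analysis on the relative positions in $r$ of the three indices involved. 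Then $f^\eta(\langle q_u:u\in A^\eta\rangle)=\sum_{u\in H_\eta}q_u\in\CC$, as needed.

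Clause~(\ref{Item:Riterable4}) is immediate from this definition. If $r$ is a singleton $\{0\}$ then $0=\max(r)$, so $a_0^\eta=\arity(f_0)$ and $A^\eta=\arity(f_0)$, and the only applicable clause in the definition of $H_\eta$ is the first, giving $u<_{H_\eta}v\iff u<_{\arity(f_0)}v$. Hence $H_\eta=\arity(f_0)$ as a partial order and $f^{\langle\langle f_0,s_0\rangle\rangle}$ is literally the $\arity(f_0)$-sum, i.e. $f_0$.

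For clause~(\ref{Item:Riterable2}), fix $j\in r$ and $\langle q_u:u\in A^\eta\rangle\in\CC^{A^\eta}$, and put $q_{s_j}=f^{\eta^+_j}(\langle q_u:u\in a_i^\eta,\ i>j\rangle)=\sum_{u\in H_{\eta^+_j}}q_u$. Since $j=\max\{i\in r:i\leq j\}$, we have $a_j^{\eta^-_j}=\arity(f_j)$ and $a_i^{\eta^-_j}=a_i^\eta$ for $i<j$, so $A^{\eta^-_j}=\big(\bigsqcup_{i<j}a_i^\eta\big)\sqcup\arity(f_j)$, which contains $s_j$. Consequently $f^{\eta^-_j}(\langle q_u:u\in A^{\eta^-_j}\rangle)$ is the $H_{\eta^-_j}$-sum in which the $s_j$-summand has been filled by the $H_{\eta^+_j}$-sum; expanding that summand in the sense of Definition~\ref{Defn:Sums}, its underlying partial order is the one obtained from $H_{\eta^-_j}$ by substituting $H_{\eta^+_j}$ for the single point $s_j$, and the underlying set of the result is exactly $A^\eta$. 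The heart of the argument is the identity that this substituted partial order coincides with $H_\eta$: for $u\in a_i^\eta$ and $v\in a_k^\eta$ one compares the order relation in $H_\eta$ with the one in the substituted order, splitting into cases according to the positions of $i$ and $k$ relative to $j$, and in every case both sides collapse to the same one of the three defining clauses of $H_\eta$ (using $a_i^{\eta^\pm_j}=a_i^\eta$ for the relevant $i$). Since moreover the colour assigned to any given point agrees on the two sides — a point of an $a_i^\eta$-block carries its colour from $q_u$ in both descriptions, and a point of the substituted $H_{\eta^+_j}$-block carries its colour from the corresponding $q_u$ in both — we obtain $f^\eta(\langle q_u:u\in A^\eta\rangle)=f^{\eta^-_j}(\langle q_u:u\in A^{\eta^-_j}\rangle)$, which is clause~(\ref{Item:Riterable2}).

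I expect the only real difficulty to be clerical: the two multi-case verifications — transitivity of $H_\eta$, and the substitution identity expressing $H_\eta$ as ``$H_{\eta^-_j}$ with the point $s_j$ replaced by $H_{\eta^+_j}$'' — each require a careful enumeration of subcases on the relative $r$-order of the indices involved, but no individual subcase is hard, and no conceptual obstacle arises.
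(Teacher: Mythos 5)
Your proposal is correct and follows essentially the same route as the paper: define $f^\eta=\sum_{H_\eta}$ as in Example \ref{Ex:Heta}, note clause (\ref{Item:Riterable4}) is immediate, and verify clause (\ref{Item:Riterable2}) by observing that $H_\eta$ is exactly $H_{\eta^-_j}$ with the point $s_j$ replaced by $H_{\eta^+_j}$. The extra case checks you flag (transitivity of $H_\eta$ and the substitution identity) are the details the paper leaves implicit, and your treatment of them is sound.
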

\begin{proof}
Given a composition sequence $\eta$, define $f^\eta=\sum_{H_\eta}$ as in Example \ref{Ex:Heta}. Consider Definition \ref{Defn:Riterable}, clearly $f^\eta$ satisfies (\ref{Item:Riterable4})% and (\ref{Item:Riterable1})
. It remains to show (\ref{Item:Riterable2}), so let $\langle q_u:u\in A^\eta\rangle\in \dom (f^\eta)$ and for some $i\in r$, let $q_{s_i}=f^{\eta^+_i}(\langle q_u:u\in a^\eta_i, j>i\rangle)$. % then $$f^\eta(\langle q_u:u\in A^\eta\rangle)=f^{\eta^-_j}(\langle q_u:u\in A^{\eta^-_j}\rangle).$$ 
%
%We have $f^\eta(\langle k_i:i\in r\rangle)=\sum_{u\in H_\eta}q_u$. %Let $H^+=f^{\eta^+_i}(\langle k_j:j>i\rangle)$, i.e. $H^+$ is the $H_{\eta^+_i}$-sum of the $P_{j,a}$ for $j>i$. 
%
We have that $H_{\eta^+_i}=\bigsqcup_{j>i}a^\eta_j$ and $H_{\eta^-_i}=\{s_i\}\sqcup\bigsqcup_{j\leq i}a^\eta_j$. So that the $H_{\eta^-_i}$-sum of $H_{\eta^+_i}$ in position $s_i$ and single points everywhere else is precisely $H_\eta$. Similarly, we see that $$f^\eta(\langle q_u:u\in A^\eta\rangle)=\sum_{u\in H_\eta}q_u=\sum_{u\in H_{\eta^-_i}}q_u=f^{\eta^-_i}(\langle q_u:u\in a^\eta_i, j\leq i\rangle).$$ So we have (\ref{Item:Riterable2}) as required.
\end{proof}
\begin{defn}
Let $\eta=\langle \langle f_i,s_i\rangle:i\in r\rangle$ and $\nu=\langle \langle f'_i,s'_i\rangle:i\in r'\rangle$ be composition sequences. We define $\eta\leq \nu$ iff there is an embedding $\varphi:r\rightarrow r'$ such that for every $i\in r$ we have $f_i\leq_\MM f'_{\varphi(i)}$ and an embedding $\varphi_i$ witnessing $\arity(f_i)\leq \arity(f'_{\varphi(i)})$, such that whenever $i\neq \max(r)$, we have $\varphi_i(s_i)=s'_{\varphi(i)}$ . If $\eta\leq \nu$ we define $\varphi_{\eta,\nu}:A^\eta\rightarrow A^\nu$ so that when $u\in a^\eta_i$ we have $\varphi_{\eta,\nu}(u)=\varphi_i(u)$.
\end{defn}
\begin{defn}
We call $f\in \MM$ \emph{extensive} if for all $q\in \CC$ and $x\in \dom(f)$ with $i\in x$ such that $\col(i)=q$, we have that $q\leq_\CC f(x)$.
\end{defn}
The idea of the next definition is to express the notion that applying `short' lists of `small' functions to `small' objects will give a smaller result than applying `long' lists of `large' functions to `large' objects.
\begin{defn}\label{Defn:InfExtensive}
We call $\OA=\langle \CC,\AAA,\PP,\MM,\RR\rangle$ \emph{infinitely extensive} iff:
\begin{itemize}
	\item $\MM$ is $\RR$-iterable;
	\item every $f\in \MM$ is extensive;
	\item for any two composition sequences $\eta\leq \nu$ and any $k=\langle q_u:u\in A^\eta\rangle\in \dom(f^\eta)$, $k'=\langle q'_u:u\in A^\nu\rangle\in \dom(f^\nu)$, if for all $u\in A^\eta$ we have embeddings $\varphi_u$ witnessing $q_u\leq q'_{\varphi_{\eta,\nu}(u)}$, then we have a corresponding embedding $\varphi_{\eta,\nu}^{k,k'}$ witnessing $f^\eta(k)\leq_\CC f^\nu(k')$.
\end{itemize}
%with $k_i\in \dom(f_i)$ and $k'_j\in \dom(g_j)$ for each $i\in r$, $j\in r'$. If there is an embedding $\varphi:r\rightarrow r'$ such that $f_i\leq_\MM g_{\varphi(i)},$
%and further if for each $i\in r$ there are embeddings $\varphi_i:\arity(f_i)\rightarrow \arity(g_{\varphi(i)})$ that witness $$k_i\leq_{\PP(\CC)} k'_{\varphi(i)}$$
%and are such that $\varphi_i(s_i)=s'_{\varphi(i)}$ whenever $i\neq\max(r)$; then we have
%$$f^\eta(k)\leq_\CC f^\nu(k').$$
%$$\langle \langle f_i,k_i\rangle : i\in r\rangle \leq \langle \langle g_i,k_i\rangle : i\in r'\rangle$$
%when considered as elements of $\RR(\MM\times \PP(\CC))$ then
\end{defn}
%\begin{defn}
%Let $\mathcal{O}$ be a concrete category and $x,y,z\in \mathcal{O}$ with $x\leq y \leq z$ and $\mu:x\rightarrow y$, $\varphi_0:x\rightarrow z$ and $\varphi_1:y\rightarrow z$ embeddings. We say that $\varphi_1$ extends $\varphi_0$ over $\mu$ iff for all $a\in x$, $$\varphi_0(a)=\varphi_1\circ \mu(a).$$
%\end{defn}
\begin{lemma}\label{Lemma:LOinfExt}
Let $\mathbb{L}$ be a class of linear orders and $Q$ be an arbitrary quasi-order, then $\OA^Q_{\mathbb{L}}$ is infinitely extensive.
\end{lemma}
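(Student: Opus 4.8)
The plan is to unwind the definition of infinite extensivity (Definition~\ref{Defn:InfExtensive}) for $\OA^Q_{\mathbb{L}} = \langle \CC, \AAA, \PP, \MM, \RR\rangle$ and verify its three clauses in turn. Recall here $\CC = \mathscr{L}(Q)$, $\AAA = Q^1$, $\PP = \mathbb{L}\cup\omega$, $\MM$ is the set of $L$-sums for $L\in\PP$ (ordered by $\sum_A \leq_\MM \sum_B$ iff $A\leq B$), and $\RR = \{1\}$. The first clause, that $\MM$ is $\RR$-iterable, follows along exactly the same lines as Lemma~\ref{Lemma:PORiterable}: since $\RR = \{1\}$, every composition sequence $\eta$ has length $1$, so $\eta = \langle\langle f_0, s_0\rangle\rangle$ and we simply set $f^\eta = f_0$; clause~(\ref{Item:Riterable4}) of Definition~\ref{Defn:Riterable} is immediate and clause~(\ref{Item:Riterable2}) is vacuous since the only $j\in r = 1$ is $j = \max(r)$, for which $\eta^+_j$ is empty and $\eta^-_j = \eta$. (Alternatively one can just invoke that the construction is a special case of $\OA^Q_{\mathbb{L},\mathbb{P}}$ with $\mathbb{P} = \mathbb{L}\cup\omega$ and $\RR$ replaced by $\{1\}$, and cite Lemma~\ref{Lemma:PORiterable}.)

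For the second clause, that every $f\in\MM$ is extensive: given $f = \sum_L$ for $L\in\mathbb{L}\cup\omega$, a point $q\in\CC$, and $x = \langle q_i : i\in L\rangle\in\dom(f)$ with $\col(i) = q$ for some $i\in L$, the summand $q_i = q$ sits as a convex suborder of $\sum_{i\in L} q_i = f(x)$, and the inclusion map is clearly a $\CC$-morphism (it preserves order strictly and preserves colours). Hence $q\leq_\CC f(x)$, as required.

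For the third clause I would take two composition sequences $\eta\leq\nu$ — which, since all have length $1$, means $\eta = \langle\langle \sum_L, s_0\rangle\rangle$, $\nu = \langle\langle\sum_{L'}, s'_0\rangle\rangle$ with $L\leq L'$ via some embedding $\psi$ — together with $k = \langle q_u : u\in A^\eta\rangle \in\dom(f^\eta)$ and $k' = \langle q'_u : u\in A^\nu\rangle\in\dom(f^\nu)$. Here $A^\eta = a^\eta_0 = \arity(\sum_L) = L$ (the maximum case of Definition~\ref{Defn:Iterable1}), similarly $A^\nu = L'$, and $\varphi_{\eta,\nu} = \psi$. Given witnessing embeddings $\varphi_u : q_u \to q'_{\psi(u)}$ for each $u\in L$, I would define the desired embedding $\varphi^{k,k'}_{\eta,\nu} : \sum_{u\in L} q_u \to \sum_{v\in L'} q'_v$ by sending a point $a$ lying in the summand $q_u$ to the point $\varphi_u(a)$ lying in the summand $q'_{\psi(u)}$. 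One then checks this is a $\CC$-morphism: for $a$ in summand $q_u$ and $b$ in summand $q_v$, if $u = v$ then order and colour are preserved because $\varphi_u$ is an embedding; if $u\neq v$, say $u <_L v$, then $\psi(u) <_{L'} \psi(v)$ since $\psi$ is an embedding, so the images land in the right cross-summand order, and strict order is preserved (and colours pose no issue since the inter-summand condition is the same on both sides). The main obstacle, such as it is, is purely bookkeeping: being careful that the definitions of $A^\eta$, $a^\eta_i$ and $\varphi_{\eta,\nu}$ in the length-$1$ case genuinely reduce to "$\arity$ of the single sum" and "the embedding $\psi$ of arities", and that the colour-preservation clauses of a $\CC = \mathscr{L}(Q)$-morphism are respected throughout — there is no real combinatorial difficulty because $\RR$ is trivial, so no genuine iteration occurs.
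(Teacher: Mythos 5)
Your proposal is correct and follows essentially the same route as the paper: iterability is trivial because $\RR=\{1\}$, extensivity holds because each summand embeds into the sum, and the embedding $\varphi^{k,k'}_{\eta,\nu}$ is defined summand-by-summand via the $\varphi_u$ together with the arity embedding $\varphi_{\eta,\nu}$. Your extra bookkeeping (checking the length-one reduction of $A^\eta$ and the case analysis for order and colour preservation) only spells out details the paper leaves implicit.
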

\begin{proof}
Let $\OA^Q_{\mathbb{L}}=\langle \CC,\AAA,\PP,\MM,\RR\rangle$. Since $\RR=\{1\}$ it is clear that $\MM$ is $\RR$-iterable. We have that every $f\in \MM$ is extensive, because if we take the sum of some linear orders, then each of the linear orders embeds into the sum. Now suppose as in Definition \ref{Defn:InfExtensive} we have composition sequences
$$\eta=\langle \langle \sum_A,s\rangle\rangle \mbox{ and }\nu=\langle \langle \sum_B,s'\rangle\rangle$$ with $\eta\leq \nu$ and $k=\langle q_u:u\in A^\eta\rangle\in \dom(f^\eta)$, $k'=\langle q'_u:u\in A^\nu\rangle\in\dom(f^\nu)$ such that $q_u\leq q'_{\varphi_{\eta,\nu}(u)}$ for each $u\in A^\eta$, with $\varphi_u$ a witnessing embedding.

So we have that $f^\eta(k)=\sum_{u\in A}q_u$ and $f^\nu(k)=\sum_{u\in B}q'_u$. Moreover, $\varphi_{\eta,\nu}$ is an embedding from $A$ to $B$. So we define $\varphi_{\eta,\nu}^{k,k'}:f^\eta(k)\rightarrow f^\nu(k')$ so that for $a\in f^\eta(k)$, with $a\in q_u$ we have $$\varphi_{\eta,\nu}^{k,k'}(a)=\varphi_u(a)\in q'_{\varphi_{\eta,\nu}(u)}\subseteq f^\nu(k').$$
Then $\varphi_{\eta,\nu}^{k,k'}$ is clearly an embedding. Hence $\OA^Q_{\mathbb{L}}$ is infinitely extensive.
%%Now suppose that $\hat{k}=\langle \hat{q}_u:u\in A^\eta\rangle$ is such that $q_u\leq \hat{q}_u$ for all $u\in A^\eta$, with some witnessing embedding $\mu_u$. Suppose also that $\hat{q}_u\leq q'_{\varphi(\eta,\nu)}$ for each $u\in A^\eta$, with $\psi_u$ a witnessing embedding, and that $\varphi_u=\psi_u\circ \mu_u$. Then for $a\in f^\eta(k)$, with $a\in q_u$ we have $$\varphi^{k,k'}_{\eta,\nu}(a)=\varphi_u(a)=\psi_u\circ\mu_u(a)\in q'_{\varphi_{\eta,\nu}(u)}$$
%%and $\mu^{k,k'}_{\eta,\nu}(a)=\mu_u(a)\in \hat{q}_u$, so that $$\psi^{\hat{k},k'}_{\eta,\nu}\circ \mu^{k,k'}_{\eta,\nu}(a)=\psi_u\circ \mu_u(a)\in q'_{\varphi_{\eta,\nu}(u)}.$$
%%Thus we have verified the conditions of Definition \ref{Defn:Extendible}, and $\OA$ is extendible.
%%%$\varphi$ witnesses $\hat{k}\leq_{\PP(\CC)} k'$, in such a way that each embedding given by $\hat{x}_{i}\leq y_{\varphi(i)}$ extends the embedding given by $x_{i}\leq y_{\varphi(i)}$ that we had before. Then extending $\varphi$ to $\hat{\varphi}$ using the embeddings from $\hat{x}_{i}\leq y_{\varphi(i)}$ will make $\hat{\varphi}$ an extension of $\varphi'$. Thus $\OA$ is extendible.
\end{proof}

\begin{lemma}\label{Lemma:RealInfExtensive}
Suppose $\OA$ is infinitely extensive. Let $\eta$ be a composition sequence and $k=\langle q_u :u\in A^\eta\rangle\in \dom(f^\eta)$. Then for any $u\in A^\eta$, we have $q_u\leq f^\eta(k)$.
\end{lemma}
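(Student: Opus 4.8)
The statement says: if $\OA$ is infinitely extensive, $\eta$ a composition sequence, and $k = \langle q_u : u \in A^\eta\rangle \in \dom(f^\eta)$, then $q_u \leq f^\eta(k)$ for every $u \in A^\eta$. The plan is to reduce the claim to the \emph{extensiveness} of a single function $f_i \in \MM$, using the iteration axiom (\ref{Item:Riterable2}) of Definition \ref{Defn:Riterable} to peel off an initial segment of $\eta$ down to the one index where $u$ lives. Given $u \in A^\eta$, by definition of $A^\eta = \bigsqcup_{i \in r} a^\eta_i$ there is a unique $i \in r$ with $u \in a^\eta_i$. The goal is then to exhibit $q_u$ inside $f^{\eta^-_i}(\cdot)$, and then relate $f^{\eta^-_i}(\cdot)$ back to $f^\eta(k)$.

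First I would apply axiom (\ref{Item:Riterable2}) at $j = i$: setting $q_{s_i} = f^{\eta^+_i}(\langle q_w : w \in a^\eta_l, l > i\rangle)$, we get $f^\eta(k) = f^{\eta^-_i}(\langle q_w : w \in A^{\eta^-_i}\rangle)$. Now $A^{\eta^-_i} = \{s_i\} \sqcup \bigsqcup_{l \leq i} a^\eta_l$ (using that $\eta^-_i = \langle\langle f_l,s_l\rangle : l \leq i\rangle$ has $i$ as its maximum, so $a^{\eta^-_i}_i = \arity(f_i)$, whereas for $l < i$ we have $a^{\eta^-_i}_l = a^\eta_l$), and in particular $u \in a^\eta_i \subseteq \arity(f_i) = a^{\eta^-_i}_i \subseteq A^{\eta^-_i}$, so the coordinate $q_u$ genuinely appears in the argument tuple for $f^{\eta^-_i}$. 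The point $u$ sits in the copy of $\arity(f_i)$ that $f^{\eta^-_i}$ will ultimately feed into $f_i$.

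The cleanest way to finish is a second peeling: if $i = \min(r)$ then $\eta^-_i = \langle\langle f_i, s_i\rangle\rangle$ is a singleton, so by axiom (\ref{Item:Riterable4}) $f^{\eta^-_i} = f_i$, and since $f_i$ is extensive and $u \in \arity(f_i) = \dom$-index with $\col(u) = q_u$, extensiveness gives $q_u \leq_\CC f_i(\langle q_w : w\rangle) = f^{\eta^-_i}(\cdots) = f^\eta(k)$. If $i \neq \min(r)$, I would instead apply (\ref{Item:Riterable2}) to the sequence $\eta^-_i$ at the index $i$ again: this is legitimate because $i = \max(\eta^-_i)$, and it exhibits $f^{\eta^-_i}(\cdots) = f^{(\eta^-_i)^-_i}(\cdots)$ where $(\eta^-_i)^-_i = \langle\langle f_i,s_i\rangle\rangle$ is the singleton — wait, more carefully, I want to express $f^{\eta^-_i}$ as $f_i$ applied with its $s_i$-slot filled by $f^{(\eta^-_i)^+_{i'}}$ for $i'$ the predecessor-side split; the cleanest formulation is: split $\eta^-_i$ at the largest $j < i$ (if $r$ restricted to $\leq i$ has a maximum-but-one) — but this can fail if there is no such $j$. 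To avoid case analysis on the order type of $r$, the robust route is: observe $f_i$ itself, viewed through the singleton $\langle\langle f_i,s_i\rangle\rangle$, has $f^{\langle\langle f_i,s_i\rangle\rangle} = f_i$, and $f_i$ is extensive; and then invoke the last bullet of Definition \ref{Defn:InfExtensive} (the monotonicity of composition functions) with the trivial composition sequence $\langle\langle f_i,s_i\rangle\rangle \leq \eta^-_i$ and suitable argument tuples to transfer $q_u \leq f_i(\cdot) \leq f^{\eta^-_i}(\cdot) = f^\eta(k)$.

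\textbf{Main obstacle.} The delicate point is the bookkeeping in the second step: correctly identifying which coordinate of the argument tuple of $f^{\eta^-_i}$ (or of $f_i$) the point $u$ corresponds to, and checking that $u \notin B^\eta$-type side conditions do not interfere — i.e. making sure the chosen argument slot is one where extensiveness actually applies and that the tuples stay in the relevant domains. I expect the smoothest write-up avoids iterating the split and instead argues directly: apply (\ref{Item:Riterable2}) once to get $f^\eta(k) = f^{\eta^-_i}(\langle q_w : w \in A^{\eta^-_i}\rangle)$ with $u$ among the $w$'s in the $\arity(f_i)$-block, then apply (\ref{Item:Riterable2}) a second time to $\eta^-_i$ split at some $j<i$ to reduce $f^{\eta^-_i}$ to an application of $f^{\eta^-_j}$ whose top function is $f_i$ — but since the existence of such $j$ depends on $r$, I would ultimately prove it by transfinite induction on the length $r$ of $\eta$ (or on the position $i$), using (\ref{Item:Riterable4}) as the base case ($r$ a singleton) and (\ref{Item:Riterable2}) plus extensiveness of $f_{\max(r)}$ together with the induction hypothesis applied to $\eta^+_j$ for the successor/limit steps. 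The induction is what makes the argument uniform across all order types in $\RR$.
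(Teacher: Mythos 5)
Your ``robust route'' is exactly the paper's proof: extensiveness of $f_i$ gives $q_u\leq f_i(\cdots)=f^{\langle\langle f_i,s_i\rangle\rangle}(\cdots)$, the monotonicity clause of infinite extensivity applied to $\langle\langle f_i,s_i\rangle\rangle\leq\eta^-_i$ (with identity embeddings on the arguments; the $s$-preservation requirement is vacuous since the single index is maximal) gives $f^{\langle\langle f_i,s_i\rangle\rangle}(\cdots)\leq f^{\eta^-_i}(\cdots)$, and axiom (\ref{Item:Riterable2}) identifies $f^{\eta^-_i}(\langle q_v:v\in A^{\eta^-_i}\rangle)$ with $f^\eta(k)$. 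The closing fallback to transfinite induction on $r$ is unnecessary (and would be awkward, since $r\in\RR$ need not be well-ordered or have a maximum); the direct argument already closes the proof.
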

\begin{proof}
Let $\eta=\langle \langle f_i,s_i\rangle : i\in r\rangle$. Pick some $u\in A^\eta$ and let $i\in r$ be such that $u\in a^\eta_i$. Set $\eta'=\langle\langle f_i,s_i\rangle\rangle$ and $q_{s_i}=f^{\eta^+_i}(\langle q_v:v\in a^\eta_j, j>i\rangle)$. %Could simplify this by instead of using u just setting this final colour to the minimum elememnt (but then we have to say there is a minimum element at some point).
%Let $f_i$ be the $i$th member of the sequence $\eta$. Now we set $\eta'=\langle f_i\rangle$ and $z\in \dom(f_i)$ such that $z\restriction a^\eta_i=k_i$, and $\col_z(s_i)=x(u,s_i)$.
Since $f_i\in \MM$ we have $f_i$ is extensive, therefore $$q_u\leq f_i(\langle q_v:v\in \arity(f_i)\rangle)= f^{\eta'}(\langle  q_v:v\in A^{\eta'}\rangle).$$ Then by a simple application of the infinite extensivity of $\OA$, we see that $$f^{\eta'}(\langle  q_v:v\in A^{\eta'}\rangle) \leq f^{\eta^-_i}(\langle q_v:v\in A^{\eta^-_i}\rangle).$$
Now by $\RR$-iterability of $\MM$, we have 
$$f^{\eta^-_i}(\langle q_v:v\in A^{\eta^-_i}\rangle)=f^\eta(k).$$
So that $q_u\leq f^\eta(k)$, which gives the lemma.
\end{proof}

\begin{defn}\label{Defn:amrs}
When $\MM$ is $\RR$-iterable we define $\amrs\subseteq \CC$ as the smallest class containing $\AAA$ and closed under applying $f^\eta$ for any composition sequence $\eta$.
\end{defn}
%We now also fix some $\AAA\subseteq \CC$ throughout this section.

\begin{ex}\label{Ex:ScatLO}
Following Definitions \ref{Defn:LOBasics} and \ref{Defn:Sums}. Let $\CC$ be the class of linear orders; $\AAA=\{1\}\subseteq \CC$; $\PP=\On\cup \On^*$ and $\MM=\{\sum_\alpha\mid \alpha\in \PP\}$ be the class of ordinal sums and reversed ordinal sums; finally set $\RR=\{1\}$. Then $\amrs=\scat$ the class of scattered linear orders. This is precisely Theorem \ref{Thm:Hausdorff}, Hausdorff's theorem on scattered order types \cite{Hausdorff}.
\end{ex}
\begin{remark}\label{Rk:amrs} We can construct $\amrs$ level by level. Starting with $\AAA$; at successor stages applying $f^\eta$ (for every composition sequence $\eta$) to every element of the previous level; and at limit stages taking unions. This allows the definition of an ordinal rank of an element of $\amrs$. So for $x\in \amrs$, we denote by $\rank(x)$ the least $\alpha\in \On$ such that $x$ appears at level $\alpha$ of this construction.
\end{remark}
\begin{defn}
We define $\CC_\alpha=\{x\in \amrs\mid \rank(x)=\alpha\}$ and $\CC_{<\alpha}=\{x\in \amrs\mid \rank(x)<\alpha\}$. For each composition sequence $\eta$ we also define, $$\dom(f^\eta)_{<\alpha}=\{\langle q_u:u\in A^\eta\rangle\in \dom(f^\eta)\mid (\forall u\in A^\eta), \rank(q_u)<\alpha\}.$$
\end{defn}
%We now give two classical examples of possible $\amrs$.

\subsection{Limits}

Now we will define limits, which will allow us to extend the construction further. For example, when we choose our parameters so that $\amrs$ gives us scattered linear orderings (Example \ref{Ex:ScatLO}), taking limits will give us the $\sigma$-scattered linear orderings of \cite{LaverFrOTconj}. %; if we were to choose the parameters so that $\amrs$ gives the scattered trees (Example \ref{Ex:ScatTrees}), then taking limits would give the $\sigma$-scattered trees of \cite{LaverClassOfTrees}.% First we need to define the kind of sequences of which we can take limits.
The next definition will allow us to chain together many embeddings eventually allowing us to produce embeddings between limits.
\begin{defn}\label{Defn:Extendible}
We call $\OA$ \emph{extendible} iff $\OA$ is infinitely extensive and satisfies the following property. 
For any
%\begin{itemize}
	two composition sequences $\eta\leq \nu$,
	and any $k_0=\langle q^0_u:u\in A^\eta\rangle\in\dom(f^\eta)$,
	$k_1=\langle q^1_u:u\in A^\eta\rangle\in\dom(f^\eta)$,
	and $k=\langle q_u:u\in A^\nu\rangle\in \dom(f^\nu)$;
if for all $u\in A^\eta$ we have:

\begin{itemize}
	\item $q^0_u\leq q^1_u$ with $\mu_u$ a witnessing embedding; 
	\item $q^0_u\leq q_{\varphi_{\eta,\nu}(u)}$ with $\varphi_u$ a witnessing embedding; 
	\item $q^1_u\leq q_{\varphi_{\eta,\nu}(u)}$ with $\psi_u$ a witnessing embedding; 
	\item $\varphi_u=\psi_u\circ \mu_u$;
\end{itemize}	
then we have $$\varphi_{\eta,\nu}^{k_0,k}=\psi_{\eta,\nu}^{k_1,k}\circ\mu_{\eta,\eta}^{k_0,k_1}.$$
% $\eta,\nu,k,k',\varphi$ and $\varphi_i$ are as in Definition \ref{Defn:InfExtensive}. Let $\hat{k}=\langle \hat{k}_i:i\in r\rangle$ be such that for each $i\in r$ we have $\hat{k}_i\in \dom(f_i)$ so that $\hat{k}_i$ has the same structure as $k_i$. Suppose that the colours of $\hat{k}_i$ increased from of colours of $k_i$, i.e. $\forall e\in k_i$ we have $\col_{k_i}(e)\leq \col_{\hat{k}_i}(e)$. Let $\mu_i$ be the embedding from $k_i$ to $\hat{k}_i$, just by taking the identity on the structure.
%
%Suppose that for each $i\in r$ there are $\hat{\varphi}_i$ witnessing $\hat{k}_i\leq_{\PP(\CC)}k'_{\varphi(i)}$, and for each $i\in r$ and each $e\in k_i$, the embedding witnessing $$\col_{\hat{k}_i}(e)\leq \col_{k'_{\varphi(i)}}(\hat{\varphi}_i(e))$$ derived from $\hat{\varphi}_i$ extends the corresponding embedding witnessing $$\col_{k_i}(e)\leq \col_{k'_{\varphi(i)}}(\varphi_i(e))$$ derived from $\varphi_i$ over the embedding witnessing $\col_{k_i}(e)\leq \col_{\hat{k}_i}(e)$ derived from $\mu_i$.
%
%We call $\OA$ \emph{extendible} if, under these assumptions, the embedding given by $f^\eta(\hat{k})\leq f^\nu(k')$ extends the embedding given by $f^\eta(k)\leq f^\nu(k')$ over the embedding given by $f^\eta(k)\leq f^\eta(\hat{k})$. (Existence of these embeddings are given by the fact that $\OA$ is infinitely extensive.)
\end{defn}
\begin{thm}
Let $\OA=\OA_{\mathbb{L},\mathbb{P}}^Q=\langle \CC,\AAA,\PP,\MM,\RR\rangle$ for some quasi-order $Q$, some class of linear orders $\mathbb{L}$ and some class of partial orders $\mathbb{P}$. Then $\OA$ is extendible.
\end{thm}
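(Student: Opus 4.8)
The plan is to check the two clauses of Definition~\ref{Defn:Extendible} in turn: first that $\OA_{\mathbb{L},\mathbb{P}}^Q$ is infinitely extensive, and then the coherence identity $\varphi_{\eta,\nu}^{k_0,k}=\psi_{\eta,\nu}^{k_1,k}\circ\mu_{\eta,\eta}^{k_0,k_1}$. Throughout I work with the explicit description of the composition functions from (the proof of) Lemma~\ref{Lemma:PORiterable}: for a composition sequence $\eta=\langle\langle f_i,s_i\rangle:i\in r\rangle$ we have $f^\eta=\sum_{H_\eta}$, where $H_\eta$ is the partial order on $\bigsqcup_{i\in r}a^\eta_i$ built in Example~\ref{Ex:Heta}, so that $f^\eta(k)=\sum_{u\in H_\eta}q_u$ for $k=\langle q_u:u\in A^\eta\rangle$. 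Note $B^\eta=\emptyset$ here since $\barity{\sum_P}=\emptyset$, so the $\AAA$-constraints in Definition~\ref{Defn:Riterable} are vacuous.

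For infinite extensivity I first note that each $f=\sum_P\in\MM$ is extensive: if $i\in P$ with $\col(i)=q$, then the inclusion of the $i$-th summand is an $\CC$-morphism exhibiting $q\leq_\CC\sum_{p\in P}x_p$ (it preserves the order inside a summand and the colouring, which $\MM$ inherits). The heart of the argument is the claim that if $\eta\leq\nu$ then $\varphi_{\eta,\nu}:A^\eta\to A^\nu$ is in fact an embedding of the partial order $H_\eta$ into $H_\nu$. One proves this by taking $u\in a^\eta_i$, $v\in a^\eta_j$ and matching each of the three defining clauses of $<_{H_\eta}$ (the ``$i=j$'', ``$i<j$'', ``$i>j$'' cases of Example~\ref{Ex:Heta}) with the corresponding clause of $<_{H_\nu}$ for $\varphi_i(u),\varphi_j(v)$, and conversely; here one uses that $\varphi:r\to r'$ and each $\varphi_i:\arity(f_i)\to\arity(f'_{\varphi(i)})$ are embeddings and that $\varphi_i(s_i)=s'_{\varphi(i)}$ whenever $i\neq\max(r)$. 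Granting this, for $\eta\leq\nu$, $k=\langle q_u\rangle\in\dom(f^\eta)$, $k'=\langle q'_u\rangle\in\dom(f^\nu)$ with witnessing embeddings $\varphi_u:q_u\to q'_{\varphi_{\eta,\nu}(u)}$, I define $\varphi_{\eta,\nu}^{k,k'}:\sum_{u\in H_\eta}q_u\to\sum_{u\in H_\nu}q'_u$ by sending $a\in q_u$ to $\varphi_u(a)\in q'_{\varphi_{\eta,\nu}(u)}$; this is an $\CC$-morphism because $\varphi_{\eta,\nu}$ embeds $H_\eta$ into $H_\nu$ (controlling the order between distinct summands) while each $\varphi_u$ is an $\CC$-morphism (controlling order and colours within a summand). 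This is the exact analogue of the computation in Lemma~\ref{Lemma:LOinfExt}, so $\OA_{\mathbb{L},\mathbb{P}}^Q$ is infinitely extensive.

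For the coherence identity, take the canonical witness $\varphi_{\eta,\eta}=\Id_{A^\eta}$ for $\eta\leq\eta$, so $\mu_{\eta,\eta}^{k_0,k_1}$ sends $a\in q^0_u$ to $\mu_u(a)\in q^1_u$ (same summand). Unwinding the three glued maps on a point $a\in q^0_u$ of $f^\eta(k_0)$, with $u\in a^\eta_i$: we get $\mu_{\eta,\eta}^{k_0,k_1}(a)=\mu_u(a)$, then $\psi_{\eta,\nu}^{k_1,k}(\mu_u(a))=\psi_u(\mu_u(a))\in q_{\varphi_{\eta,\nu}(u)}$, whereas $\varphi_{\eta,\nu}^{k_0,k}(a)=\varphi_u(a)\in q_{\varphi_{\eta,\nu}(u)}$. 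Since the hypothesis gives $\varphi_u=\psi_u\circ\mu_u$, these coincide; as $a$ was arbitrary, $\varphi_{\eta,\nu}^{k_0,k}=\psi_{\eta,\nu}^{k_1,k}\circ\mu_{\eta,\eta}^{k_0,k_1}$, which is exactly the required property, and $\OA_{\mathbb{L},\mathbb{P}}^Q$ is extendible. I expect the only genuine work to be the claim that $\varphi_{\eta,\nu}$ restricts to an embedding $H_\eta\hookrightarrow H_\nu$: it is a somewhat fiddly case analysis, and one must take particular care over the behaviour of $\varphi_i$ at $i=\max(r)$ (where no condition ties $\varphi_i$ to the distinguished arguments $s'$) and over the fact that $\varphi$ need not preserve maxima; the definition of $H_\eta$ is arranged precisely so that the composition functions behave coherently under $\leq$. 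Everything else — extensivity of the sums, the gluing construction, and the final identity — is a direct unwinding of definitions, essentially identical to the linear-order case treated in Lemmas~\ref{Lemma:PORiterable} and~\ref{Lemma:LOinfExt}.
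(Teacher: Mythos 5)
Your proposal is correct and follows essentially the same route as the paper: verify $\RR$-iterability via Lemma \ref{Lemma:PORiterable}, note extensivity of sums, use that $\varphi_{\eta,\nu}$ embeds $H_\eta$ into $H_\nu$ to glue the $\varphi_u$ into $\varphi_{\eta,\nu}^{k,k'}$, and then check the coherence identity pointwise using $\varphi_u=\psi_u\circ\mu_u$. The one claim you flag as needing work (that $\varphi_{\eta,\nu}$ is an $H_\eta\to H_\nu$ embedding, with care at $\max(r)$) is asserted without further detail in the paper as well, so your treatment is at least as complete as the original.
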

\begin{proof}
By Lemma \ref{Lemma:PORiterable} we know that $\MM$ is $\RR$-iterable. If we take the sum of some partial orders, then each of these partial orders embeds into the sum and therefore every $f\in \MM$ is extensive.

Suppose now as in Definition \ref{Defn:InfExtensive} that we have composition sequences
$$\eta=\langle \langle f_i,s_i\rangle:i\in r\rangle \mbox{ and }\nu=\langle \langle g_i,s'_i\rangle:i\in r'\rangle$$ with $\eta\leq \nu$ and $k=\langle q_u:u\in A^\eta\rangle\in \dom(f^\eta)$ and $k'=\langle q'_u:u\in A^{\nu}\rangle\in \dom(f^{\nu})$ such that $q_u\leq q'_{\varphi_{\eta,\nu}(u)}$ for each $u\in A^{\eta}$, with $\varphi_u$ a witnessing embedding. 

% Suppose also that there is an embedding $\varphi:r\rightarrow r'$ such that $$f_i\leq_\MM f_{\varphi(i)},$$
%and further for each $i\in r$ is an embedding $\varphi_i:\arity(f_i)\rightarrow \arity(g_{\varphi(i)})$ that witnesses $k_i\leq_{\PP(\CC)} k'_{\varphi(i)}$
%and is such that $\varphi_i(s_i)=s'_{\varphi(i)}$. If we can show that under these assumptions we have $f^\eta(k)\leq f^\nu(k')$, then we have that $\OA$ is infinitely extensive.

We have that $f^\eta(k)=\sum_{u\in H_\eta}q_u$ and $f^\nu(k')=\sum_{u\in H_\nu}q'_u$. Moreover $\varphi_{\eta,\nu}$ is an embedding from $H_\eta$ to $H_\nu$. So we define $\varphi_{\eta,\nu}^{k,k'}:f^\eta(k)\rightarrow f^\nu(k')$ so that for $a\in f^\eta(k)$, with $a\in q_u$ we have $$\varphi_{\eta,\nu}^{k,k'}(a)=\varphi_u(a)\in q'_{\varphi_{\eta,\nu}(u)}.$$
Which is clearly an embedding, hence $\OA$ is infinitely extensive.

Now suppose that $\hat{k}=\langle \hat{q}_u:u\in A^\eta\rangle$ is such that $q_u\leq \hat{q}_u$ for all $u\in A^\eta$, with some witnessing embedding $\mu_u$. Suppose also that $\hat{q}_u\leq q'_{\varphi(\eta,\nu)}$ for each $u\in A^\eta$, with $\psi_u$ a witnessing embedding, and that $\varphi_u=\psi_u\circ \mu_u$. Then for $a\in f^\eta(k)$, with $a\in q_u$ we have $$\varphi^{k,k'}_{\eta,\nu}(a)=\varphi_u(a)=\psi_u\circ\mu_u(a)\in q'_{\varphi_{\eta,\nu}(u)}$$
and $\mu^{k,k'}_{\eta,\nu}(a)=\mu_u(a)\in \hat{q}_u$, so that $$\psi^{\hat{k},k'}_{\eta,\nu}\circ \mu^{k,k'}_{\eta,\nu}(a)=\psi_u\circ \mu_u(a)\in q'_{\varphi_{\eta,\nu}(u)}.$$
Thus we have verified the conditions of Definition \ref{Defn:Extendible}, and $\OA$ is extendible.
%
%
%%%%%
%%%%%Let $\psi:H_\eta\rightarrow H_\nu$ be defined such that when $a\in a^\eta_i$ we have $\psi(a)=\varphi_i(a)$. Since $\varphi_i(s_i)=s'_{\varphi(i)}$ we have that $\psi(a)\in a^\nu_{\varphi(i)}\subseteq H_\nu$. Then $\psi$ is an embedding, since $\varphi$ and the $\varphi_i$ $(i\in r)$ were all embeddings.
%%%%%
%%%%%Since $\varphi_i$ witnesses $k_i\leq_{\PP(\CC)} k'_{\varphi(i)}$ we have that $P_{i,a}\leq P'_{\varphi(i),\varphi_i(a)}$. Since $\psi$ maps the point indexed by $\langle i,a\rangle $ to $\langle \varphi(i),\varphi_i(a)\rangle$ we can extend $\psi$ to $\psi':f^\eta(k)\rightarrow f^\nu(k')$, using the embeddings given by $P_{i,a}\leq P'_{\varphi(i),\varphi_i(a)}$. This gives an embedding, and hence $f^\eta(k)\leq f^\nu(k')$ as required. 
%%%%%
%%%%%Now suppose that $\hat{k}$ has the same structure as $k$ but with larger arguments, ie each of the $\hat{P}_{i,a}$ given by $\hat{k}$ is such that $P_{i,a}\leq \hat{P}_{i,a}$. Suppose also that $\varphi$ witnesses $\hat{k}_i\leq_{\PP(\CC)} k'_{\varphi(i)}$, in such a way that each embedding given by $\hat{P}_{i,a}\leq P'_{\varphi(i),\varphi_i(a)}$ extends the embedding given by $P_{i,a}\leq P'_{\varphi(i),\varphi_i(a)}$ that we had before. Then extending $\psi$ to $\hat{\psi}$ using the embeddings from $\hat{P}_{i,a}\leq P'_{\varphi(i),\varphi_i(a)}$ will make $\hat{\psi}$ an extension of $\psi'$. Thus $\OA$ is extendible.
\end{proof}
\begin{remark}\label{Rk:composition}
Let $x\in \amrs$ with $\rank(x)=\alpha$. So for some composition sequence $\eta$, we have that $x=f^\eta(\langle q_u:u\in A^\eta \rangle)$, with $q_u\in \CC_{<\alpha}$ for all $u\in A^\eta$. %We notice that for fixed $\eta$, the length of the sequence $r$ and the arities $\arity(f_i)$ in the domain of $f^\eta$ are fixed. Hence, we can consider $f^\eta$ as a multivariate function from $\CC$, since we just need to determine the colours of the arities, (elements of $\CC_{<\alpha}$) in order to evaluate the value of $f^\eta$. 
Applying the same reasoning to each of the $q_u$, it is then possible to view $x$ as a composition of some more composition functions, applied to further lower ranked elements or \emph{fragments} (see Figure \ref{Fig:CompArgs}). 
\begin{figure}
  
  \centering
    \begin{tikzpicture}
	\node [left] at (1,5) {$f^\eta$};
\draw [thick] (0.1,0.5) -- (1,5);
\draw [fill] (0.8,4) circle [radius=0.06];
\node [left] at (0.8,4) {$i$};
	\draw  (0.8,4) -- (0.16,3.3);
	\draw  (0.8,4) -- (1.16,3.3);
	\draw  (0.8,4) -- (1.66,3.3);
	\draw  (0.8,4) -- (2.16,3.3);
	\draw  (0.8,4) -- (2.66,3.3);
	\draw [fill] (2.16,3.3) circle [radius=0.06];
	\node [below left] at (2.16,3.3) {$f^\nu$};
%	\node [left] at (0.66,3.3) {$s_i$};
%	\node [rotate=90, scale=1.5] at (1.93,3.1) {$\Bigg \{$};
%	\node at (1.93,2.8) {$a_i$};

%	\draw [fill] (0.46,2.3) circle [radius=0.08];
%\draw [fill] (0.6,3) circle [radius=0.08];
	\draw  (0.6,3) -- (-0.04,2.3);
	\draw  (0.6,3) -- (0.96,2.3);
	\draw  (0.6,3) -- (1.46,2.3);
%	\draw  (0.6,3) -- (1.96,2.3);
%	\draw  (0.6,3) -- (2.46,2.3);
	\draw  (0.6,3) -- (-0.44,2.3);
%	\draw [fill] (0.46,2.3) circle [radius=0.08];

%\draw [fill] (0.4,2) circle [radius=0.08];
	\draw  (0.4,2) -- (-0.24,1.3);
	\draw  (0.4,2) -- (0.76,1.3);
	\draw  (0.4,2) -- (1.26,1.3);
	\draw  (0.4,2) -- (1.76,1.3);
%	\draw  (0.4,2) -- (2.26,1.3);
%	\draw  (0.4,2) -- (2.76,1.3);
	
\draw [thick] (2.16,3.3) -- (2.92,-0.5);
	\draw  (2.4,2) -- (2.08,1.3);
	\draw  (2.4,2) -- (3.08,1.3);
	\draw  (2.4,2) -- (3.58,1.3);
	\draw  (2.4,2) -- (4.08,1.3);
	
	\draw  (2.6,1) -- (1.78,0.3);
	\draw  (2.6,1) -- (2.28,0.3);
	\draw  (2.6,1) -- (3.28,0.3);
	\draw  (2.6,1) -- (3.78,0.3);
%	\draw  (2.6,1) -- (4.28,0.3); fullstop

\end{tikzpicture}
\caption{The arrangement of the composition of $f^\eta$ and $f^\nu$. Here $f^\nu$ is one of the arguments of $f_i$.}\label{Fig:CompArgs}
\end{figure}
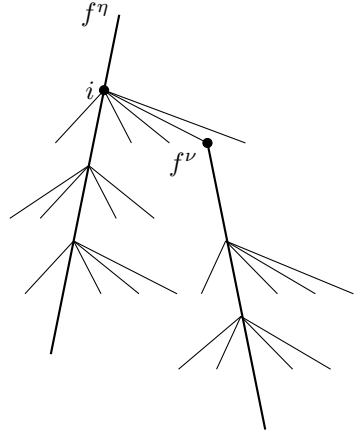

We repeat this process, splitting fragments into more fragments. Because the ranks are well-founded we will eventually obtain $g$, a composition of many $f^\eta$, and an element $d$ of $\dom(g)$, which is a configuration of elements of $\AAA$ such that $g(d)=x$. Since $g$ is just a composition of $f^\eta$, it will usually be possible to compose further still.
% So we define $\Dm(f^\eta)=x'\in \{\col(k_i):i\in r\}$ which would be the domain of this function
%
%Now if $k=\langle k_i:i\in r\rangle$ then for each $x'\in \{\col(k_i):i\in r\}$, we have $x\in\CC_{<\alpha}$. So it must be that $x=f^{\eta'}(k')$ for some $\eta'$ and $k'$. Moreover, we can compose $f^\eta$ and $f^{\eta'}$ in the sense that we had before, placing .
\end{remark}
We now make precise the notion of composing many $f^\eta$.
\begin{defn}\label{Defn:Composing}
Let $\PS$ be a tree of finite sequences of elements of $\bigcup \RR\times \bigcup \PP$ under $\is$, and let $\DS$ be the set of leaves of $\PS$. Suppose that $\PS$ has a $\is$-minimal element,\footnote{For this definition we denote the minimal element of $\PS$ by $\langle \rangle$, but in general it may not be $\langle \rangle$.} and that we have $\FS=\{\eta(\vec{p}):\vec{p}\in \PS\setminus \DS\}$ with each $\eta(\vec{p})$ a composition sequence. We write $r(\vec{p})$ for the length of $\eta(\vec{p})$. Now suppose that for each $\vec{p}\in \PS$ we have
$$\vec{p}\con \langle i,u\rangle\in \PS\mbox{ iff }i\in r(\vec{p}) \mbox{ and }u\in a^{\eta(\vec{p})}_i$$
then we call $\FS$ an \emph{composition set} and $\PS$ a set of \emph{position sequences} of $\FS$. If additionally $\PS$ is a chain-finite tree under $\is$ then we call $\FS$ \emph{admissible}.

If $\FS$ is admissible we define the \emph{composition of $\FS$}, a new function $g^\FS:\CC^\DS\rightarrow \CC$. To do so, we determine the value of $g^\FS(\langle q_{\vec{p}}:\vec{p}\in \DS\rangle )$. When $\vec{p}\in \PS\setminus \DS$ we define inductively $$k^{\vec{p}}=\langle k_{i,u}^{\vec{p}}:i\in r(\vec{p}),u\in a^{\eta(\vec{p})}_i\rangle\in \dom(f^{\eta(\vec{p})}),$$ such that for each $i\in r(\vec{p})$ and each $u\in a_i^{\eta(\vec{p})}$ we have
$$k^{\vec{p}}_{i,u}=
\left \{ \begin{array}{lcl}
f^{\eta(\vec{p}\con \langle i,u\rangle)}(k^{\vec{p}\con \langle i,u\rangle}) & : & \vec{p}\con\langle i,u\rangle\notin \DS \\
q_{\vec{p}\con\langle i,u\rangle} & : & \vec{p}\con\langle i,u\rangle\in \DS
\end{array}
\right ..
$$
Since $\PS$ was a chain-finite tree, these $k^{\vec{p}}$ are well-defined. Now when $\FS\neq \emptyset$, we define  $$g^\FS(\langle q_{\vec{p}}:\vec{p}\in \DS\rangle )=f^{\eta(\langle \rangle)}(k^{\langle \rangle}),$$
and when $\PS=\DS=\{\langle \rangle\}$ so that $\FS=\emptyset$, we define $g^\FS(\langle x \rangle)=x$ for every $x\in \CC$.

\end{defn}
%\begin{remark}\label{Rk:CompositionID}
%Notice that when $\PS=\DS=\{\langle \rangle\}$ and $\FS=\emptyset$, we have $g^\FS(\langle x \rangle)=x$ for every $x\in \CC$, (because $f^{\langle \rangle}=\Id_\CC$).
%\end{remark}

\begin{defn}
Let $\FS$ be an admissible composition set and $\PS$ be a set of position sequences of $\FS$. For $\vec{p}\in \PS$ we define $$\FS(\vec{p})=\{\eta(\vec{u})\in \FS\mid \vec{p}\is \vec{u}\}$$ and $\PS(\vec{p})=\{\vec{u}\in \PS\mid \vec{p}\is \vec{u}\}$. We notice that $\FS(\vec{p})$ is an admissible composition set and $\PS(\vec{p})$ is a set of position sequences of $\FS(\vec{p})$. We also define $\DS(\vec{p})$ to be the set of leaves of $\PS(\vec{p})$.
\end{defn}
\begin{defn}
Let $\FS$ be an admissible composition set. We call $g^{\FS}$ a \emph{decomposition function} for $x\in \amrs$, whenever there is some $\langle q_{\vec{p}}:\vec{p}\in \DS\rangle \in \dom (g^{\FS})$, such that $(\forall \vec{p}\in \DS)$, $q_{\vec{p}}\in \AAA$ and
$$x=g^{\FS}(\langle q_{\vec{p}}:\vec{p}\in \DS\rangle).$$

\end{defn}
\begin{prop}\label{Prop:Reduction}
Let $x\in \CC_\alpha$ then for some composition sequence $\eta$ and $k\in \dom(f^\eta)_{<\alpha}$, we have $x=f^\eta(k)$.
\end{prop}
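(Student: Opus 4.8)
The plan is to read the claim directly off the level-by-level construction of $\amrs$ described in Remark \ref{Rk:amrs}, together with the definitions of $\CC_\alpha$ and $\dom(f^\eta)_{<\alpha}$. Write $L_\beta$ for the $\beta$-th level of that construction, so that $L_0=\AAA$, $L_{\beta+1}=L_\beta\cup\{f^\eta(k)\mid \eta\text{ a composition sequence},\ k\in\dom(f^\eta),\ \text{every coordinate of }k\text{ lies in }L_\beta\}$, and $L_\lambda=\bigcup_{\beta<\lambda}L_\beta$ for limit $\lambda$. By construction the $L_\beta$ are $\subseteq$-increasing, and $\rank(y)$ is by definition the least $\beta$ with $y\in L_\beta$; hence $y\in L_\beta$ iff $\rank(y)\le\beta$, i.e. $L_\beta=\CC_{<\beta+1}$. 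In particular, for a limit $\lambda$ one gets $L_\lambda=\bigcup_{\beta<\lambda}\CC_{<\beta+1}=\CC_{<\lambda}$, so nothing new appears at stage $\lambda$ and $\CC_\lambda=\emptyset$.

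Then I would split into cases on $\alpha$. If $\alpha$ is a limit the statement is vacuous, since $\CC_\alpha=\emptyset$ by the previous paragraph; and the case $\alpha=0$ gives $x\in\CC_0=\AAA$, the base of the construction, which carries no content. So assume $\alpha=\beta+1$. Since $x\in\CC_{\beta+1}$ we have $\rank(x)=\beta+1$, hence $x\in L_{\beta+1}$ but $x\notin L_\beta$. As $L_{\beta+1}\setminus L_\beta\subseteq\{f^\eta(k)\mid k\in\dom(f^\eta),\ \text{coordinates of }k\text{ in }L_\beta\}$, there is a composition sequence $\eta$ and some $k=\langle q_u:u\in A^\eta\rangle\in\dom(f^\eta)$ with $x=f^\eta(k)$ and $q_u\in L_\beta$ for every $u\in A^\eta$. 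But $q_u\in L_\beta$ means $\rank(q_u)\le\beta<\beta+1=\alpha$, so $k\in\dom(f^\eta)_{<\alpha}$, which is exactly what is required.

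The argument is essentially a matter of unwinding definitions, so there is no serious obstacle; the one point to be careful about is the bookkeeping in the first paragraph, namely interpreting ``applying $f^\eta$ to every element of the previous level'' in Remark \ref{Rk:amrs} as ``applying $f^\eta$ to those tuples in $\dom(f^\eta)$ all of whose coordinates lie in the previous level'', and tracking the off-by-one so that the coordinates of $k$ are shown to land in $\CC_{<\alpha}$ (rank strictly $<\alpha$) rather than merely rank $\le\alpha$. With the identification $L_\beta=\CC_{<\beta+1}$ in hand this comes out correctly, and no property of the $f^\eta$ beyond $\RR$-iterability (used only to make $\amrs$ and the $f^\eta$ well defined) is needed.
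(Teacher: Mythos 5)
Your proposal is correct and is essentially the paper's argument: the paper proves this proposition simply by appealing to the level-by-level construction of $\amrs$ in Remark \ref{Rk:amrs}, and your write-up just makes explicit the identification of the levels with the sets $\CC_{<\beta+1}$, the vacuity at limit ranks, and the successor-stage unwinding that yields $\eta$ and $k\in\dom(f^\eta)_{<\alpha}$. No different ideas are involved, only a more detailed bookkeeping of what the paper calls ``clear''.
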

\begin{proof}
Clear by Remark \ref{Rk:amrs}.
\end{proof}

\begin{lemma}\label{Lemma:decompfn}
For any $x\in \amrs$, there is a decomposition function $g$ for $x$.
\end{lemma}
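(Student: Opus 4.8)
\emph{Proof proposal.} The plan is to induct transfinitely on $\rank(x)$ (the rank from Remark~\ref{Rk:amrs}): peel off the top composition function using Proposition~\ref{Prop:Reduction}, and then graft together the decomposition functions that the induction hypothesis supplies for its arguments. If $\rank(x)=0$ then $x\in\AAA$, and $\FS=\emptyset$ works: then $\PS=\DS=\{\langle\rangle\}$ and $g^{\emptyset}(\langle x\rangle)=x$ by the last clause of Definition~\ref{Defn:Composing}. So suppose $\rank(x)=\alpha>0$ and the lemma holds for all elements of $\amrs$ of smaller rank. By Proposition~\ref{Prop:Reduction} fix a composition sequence $\eta=\langle\langle f_i,s_i\rangle:i\in r\rangle$ and $k=\langle q_u:u\in A^\eta\rangle\in\dom(f^\eta)_{<\alpha}$ with $x=f^\eta(k)$; thus $\rank(q_u)<\alpha$ for every $u\in A^\eta$, so the induction hypothesis yields for each such $u$ a decomposition function $g^{\FS_u}$ for $q_u$, with set of position sequences $\PS_u$, leaves $\DS_u$, composition-sequence assignment $\eta_u$, and a leaf-tuple $\langle q^u_{\vec w}:\vec w\in\DS_u\rangle\in\dom(g^{\FS_u})$ with all $q^u_{\vec w}\in\AAA$ and $g^{\FS_u}(\langle q^u_{\vec w}:\vec w\in\DS_u\rangle)=q_u$. (When $q_u\in\AAA$ — in particular whenever $u\in B^\eta$ — this decomposition is the trivial one, a single leaf carrying $q_u$.)

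Now assemble $\PS$. Index $A^\eta$ by pairs $\langle i,u\rangle$ with $i\in r$ and $u\in a^\eta_i$; take a root with $\eta$ assigned to it; and below each child $\langle i,u\rangle$ of the root graft a copy of $\PS_u$, identifying the root of $\PS_u$ with $\langle i,u\rangle$ and, for a non-root node $\vec w$ of $\PS_u$ reached from its root along $\vec t$, placing it at $\langle i,u\rangle\con\vec t$ with composition sequence $\eta_u(\vec w)$. Let $\DS$ be the leaves of the resulting tree $\PS$ and put $\FS=\{\eta(\vec p):\vec p\in\PS\setminus\DS\}$. One checks directly that $\FS$ is a composition set with $\PS$ a set of position sequences of $\FS$: the children of the root are exactly the $\langle i,u\rangle$ with $i\in r$ and $u\in a^\eta_i$ (Definition~\ref{Defn:Iterable1}), and below each $\langle i,u\rangle$ the branching is inherited verbatim from $\PS_u$. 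Moreover $\PS$ is chain-finite, since any chain of $\PS$, after its first element, lies inside a single chain-finite $\PS_u$; hence $\FS$ is admissible. Finally let the leaf-tuple be $q_{\langle i,u\rangle}=q_u$ when $q_u\in\AAA$, and $q_{\langle i,u\rangle\con\vec t}=q^u_{\vec w}$ (for the leaf $\vec w\in\DS_u$ sitting at $\vec t$) otherwise; every entry lies in $\AAA$.

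It remains to see that $g^\FS(\langle q_{\vec p}:\vec p\in\DS\rangle)=x$. I would prove by induction along $\PS$ (well-founded from the leaves, as $\PS$ is chain-finite) that for each $\vec p\in\PS\setminus\DS$ the value $f^{\eta(\vec p)}(k^{\vec p})$ built in Definition~\ref{Defn:Composing} equals the element of $\amrs$ decomposed by $\FS(\vec p)$ with the inherited leaf values. For a child $\langle i,u\rangle$ with $q_u\notin\AAA$, $\FS(\langle i,u\rangle)$ is a faithful copy of $\FS_u$ under the prepending map, so $f^{\eta(\langle i,u\rangle)}(k^{\langle i,u\rangle})=g^{\FS_u}(\langle q^u_{\vec w}:\vec w\in\DS_u\rangle)=q_u$, while for $q_u\in\AAA$ the leaf value is $q_u$ itself. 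Hence $k^{\langle\rangle}=\langle q_u:u\in A^\eta\rangle=k$, and therefore $g^\FS(\langle q_{\vec p}:\vec p\in\DS\rangle)=f^{\eta(\langle\rangle)}(k^{\langle\rangle})=f^\eta(k)=x$, so $g^\FS$ is the desired decomposition function.

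The main obstacle is purely bookkeeping rather than conceptual: one must verify that the grafted tree $\PS$ literally satisfies the closure clause ``$\vec p\con\langle i,u\rangle\in\PS$ iff $i\in r(\vec p)$ and $u\in a^{\eta(\vec p)}_i$'' of Definition~\ref{Defn:Composing}, that admissibility (chain-finiteness of $\PS$) survives the gluing, and that the recursively defined tuples $k^{\vec p}$ really reconstitute $k$ at the root. Care is needed with the pair-indexing of $A^\eta$ and with the convention for the $\is$-least element of a set of position sequences. One also notes the $B^\eta$-constraint on $\dom(f^\eta)$ is harmless: the arguments forced to lie in $\AAA$ are exactly the leaves carrying elements of $\AAA$, inherited from $k\in\dom(f^\eta)$ and recursively from the $\FS_u$.
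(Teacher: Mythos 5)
Your proposal is correct and follows essentially the same route as the paper: induct on $\rank(x)$, peel off the top composition sequence via Proposition \ref{Prop:Reduction}, graft the inductively obtained decomposition functions for the arguments onto the root by prepending $\langle i,u\rangle$, and check that the resulting $\FS$ is an admissible composition set with $g^\FS(d)=f^\eta(k)=x$. The extra bookkeeping you flag (the closure clause for $\PS$, chain-finiteness, and the $B^\eta$-constraint) is exactly what the paper's proof passes over with ``clearly,'' so your more explicit verification is fine and changes nothing of substance.
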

\begin{proof}
Let $x\in \amrs$, then we will define a decomposition function $g$ for $x$ inductively on the rank of $x$ as follows. If $\rank(x)=0$ then $x\in \AAA$ so set $\FS=\emptyset$ and $\PS=\DS=\{\langle \rangle\}$, so that $g^\FS(\langle x\rangle)=x$ is a decomposition function for $x$.

Suppose for induction that for every $q\in \CC_{<\alpha}$ there is a decomposition function for $x$. If $\rank(x)=\alpha>0$, then $x=f^\eta(k)$ for some composition sequence $\eta$ of length $r$, and $$k=\langle q_{i,u}:i\in r,u\in a^\eta_i\rangle\in \dom(f^\eta)_{<\alpha}.$$ So by the induction hypothesis, for each $i\in r$ and $u\in a^\eta_i$, we see that $q_{i,u}=g^{\FS_{i,u}}(d_{i,u})$ for some admissible composition set $\FS_{i,u}=\{\eta_{i,u}(\vec{p})\mid \vec{p}\in \PS_{i,u}\}$ with $\PS_{i,u}$ a set of position sequences for $\FS_{i,u}$ and some $d_{i,u}=\langle d^{i,u}_{\vec{p}}:\vec{p}\in \DS_{i,u}\rangle$, where $\DS_{i,u}$ is the set of leaves of $\PS_{i,u}$.
Let $$\PS=\{\langle i,u\rangle\con \vec{p}:i\in r, u\in a^\eta_i, \vec{p}\in \FS_{i,u}\}\cup \{\langle \rangle\},$$ and let $\DS$ be the set of leaves of $\PS$. We set $\eta(\langle \rangle)=\eta$ and for $i\in r$ and $u\in a^\eta_i$, we set $\eta(\langle i,u\rangle\con \vec{p})=\eta_{i,u}(\vec{p})$. Now let $\FS=\{\eta(\vec{p})\mid \vec{p}\in \PS\}$, so that clearly $\FS$ is an admissible composition set. Finally we set $d_{\langle i,u\rangle\con \vec{p}}=d^{i,u}_{\vec{p}}$ and $d=\langle d_{\vec{p}}:\vec{p}\in \DS\rangle$ now we have by construction
$$g^\FS(d)=f^\eta(\langle g^{\FS_{i,u}}(d_{i,u}):i\in r, u\in a^\eta_i\rangle)=f^\eta(k)=x.$$
Thus $g^\FS$ is a decomposition function for $x$, which completes the induction.
\end{proof}
\begin{defn}
We call a decomposition function \emph{standard} if it can be constructed by the method of Lemma \ref{Lemma:decompfn}.
\end{defn}
%\begin{defn}
%Given $x\in \amrs$, let $g$ be as in Remark \ref{Rk:composition}, we call $g$ a \emph{decomposition function} for $x$.
%
%Each $k\in \dom(g)$, is a structure of nested colourings of chains from $\RR$ and arities from $\PP$, finally coloured by elements of $\CC$. We call these colours from $\CC$ \emph{arguments} of $k$, and the sequence of elements of $\RR$ and $\PP$ that codes where to find a given argument its \emph{position}.
%\end{defn}

\begin{defn}\label{Defn:LimitSequence}
Let $(x_n)_{n\in \omega}$ be a sequence of elements of $\amrs$. Suppose that $\CC$ has a minimal element $q_0$ and for every $n\in \omega$, there is a standard decomposition function $g_n=g^{\FS_n}$ for $x_n$, and some $k_n=\langle k_n^{\vec{p}}:\vec{p}\in \DS_n\rangle\in \dom(g^{\FS_n})$ with $g_n(k_n)=x_n$. Let $\PS_n$ be the set of position sequences of $\FS_n$ and $\DS_n$ be the set of leaves of $\PS_n$. Then we call $(x_n)_{n\in \omega}$ a \emph{limiting sequence} if there are such $\PS_n$, $\FS_n$, $\DS_n$ and $k_n$ such that the following properties hold for every $n\in \omega$.
\begin{enumerate}
\item \label{Item:LimitSeq1} $\PS_{n}$ is a $\up$-closed subset of $\PS_{n+1}$.
\item \label{Item:LimitSeq2} $\FS_{n+1}=\{\eta_n(\vec{p})\mid \vec{p}\in \PS_{n}\setminus \DS_{n}\}$.
\item \label{Item:LimitSeq3} $\eta_n(\vec{p})\is \eta_{n+1}(\vec{p})$ for every $\vec{p}\in \PS_{n}\setminus \DS_{n}$.
\item \label{Item:LimitSeq4} If $\vec{u}\in \DS_m$ for all $m\geq n$, then for all $m\geq n$ we have $k_{n}^{\vec{u}}=k_m^{\vec{u}}$.
\item \label{Item:LimitSeq5} If $\vec{u}\in \DS_n$ and $\vec{v}\in \DS_m$ with $n<m$ and $\vec{u}\sis \vec{v}$, then $k_n^{\vec{u}}=q_0$, and $\forall \vec{p}\in \DS_{n+1}$ such that $\vec{u}\is \vec{p}$ we have in fact $\vec{u}\sis \vec{p}$.
%\item If there is an argument $q$ of $k$ (so $q\in \AAA$) that is different from an argument $q'$ of $k'$, ($q'\in \CC$) with $q$ and $q'$ in the same position; then $q=q_0$.
%\item $x_n\leq x_{n+1}$ for each $n\in \omega$, with $\mu_n$ fixed distinguished embeddings.
\end{enumerate}
\end{defn}

Limiting sequences $(x_n)_{n\in \omega}$ are those sequences with an increasing construction; that is to say that $x_{n+1}$ is produced by \emph{the same} set of functions as $x_n$ applied to increasingly more functions. We could perhaps be slightly less restrictive in the definition, particularly in conditions (\ref{Item:LimitSeq4}) and (\ref{Item:LimitSeq5}), however these conditions simplify some of the work later on and do not reduce the set of limits we can produce.\footnote{This is the case at least in the applications used within this paper.} This condition determines the characteristics of limits, in particular it enforces that our bqo class of $\sigma$-scattered trees will be those covered by \emph{$\up$-closed} scattered trees (as opposed to any scattered trees).

\begin{remark}\label{Rk:Mu}
Suppose $\OA$ is infinitely extensive, let $(x_n)_{n\in \omega}$ be a limiting sequence, and $\eta=\eta(\langle \rangle)$. So for each $n\in \omega$ we have $x_n=f^{\eta}(k^n)$ and $x_{n+1}=f^{\eta}(k^{n+1})$, for some $k^n=\langle q^n_u:u\in A^{\eta}\rangle$, $k^{n+1}=\langle q^{n+1}_u:u\in A^{\eta}\rangle$. It can be seen by an easy induction\footnote{Using that $q_0$ was minimal and repeatedly applying Lemma \ref{Lemma:RealInfExtensive}.} that $q^n_u\leq q^{n+1}_u$ with $\psi_u$ a witnessing embedding, for all $u\in A^{\eta}$. So to simplify notation, we let $\mu_n=\psi_{\eta,\eta}^{k^n,k^{n+1}}$. 

Note that in all of the applications in this paper, we consider $x_n$ as a subset of $x_{n+1}$, in this case $\mu_n$ just acts as the identity on elements of $x_n$, and we can define the limit to be the union.
%If $\OA$ is infinitely extensive then $x_n\leq x_{n+1}$ follows from the first two properties of Definition \ref{Defn:LimitSequence}, since $q_0$ was a minimal element. In this case we always let $\mu_n$ be the embedding given by infinite extensiveness.
\end{remark}

\begin{defn}\label{Defn:HasLimits}
Suppose that to every limiting sequence $(x_n)_{n\in \omega}$ we associate a unique \emph{limit} $x\in \CC$. %satisfying %$x_n\leq x$ for all $n\in \omega$, % with $x_n\leq x$ for each $n\in \omega$. 
Let $\eta$ be a composition sequence, and for each $n\in \omega$ let $\eta_n\is \eta$ be a composition sequence so that $\eta_n\is \eta_{n+1}$, and $\eta=\bigcup_{n\in \omega} \eta_n$. Also let $k_n=\langle q_u^n:u\in A^\eta\rangle\in \dom(f^{\eta_n})$ be such that for every $u\in A^\eta$ we have $(q_u^n)_{n\in \omega}$ is a limiting sequence with limit $q_u$. We say that $\OA$ \emph{has limits} if the limit of $(f^{\eta_n}(k_n))_{n\in \omega}$ is precisely $f^\eta(\langle q_u:u\in A^\eta\rangle)$ for all such $\eta$, $\eta_n$ and $k_n$ $(n\in \omega)$.
%
% if the limits satisfy the following properties for any $m\in \omega$.
%
%Let $g_m$ be the decomposition function for $x_m$ as in Definition \ref{Defn:LimitSequence}. For $\vec{p}\in \DS(g_m)$ we let $x^{\vec{p}}$ be the limit of $(x^{\vec{p}}_n)_{n\geq m}$ (which is a limiting sequence by Proposition \ref{Prop:LimitSeq}). Then we have $$x=g_m(\langle x^{\vec{p}}:\vec{p}\in \DS(g_m)\rangle ).$$
\end{defn}

\begin{ex}\label{Rk:PoLimits}
Let $\OA=\OA_{\mathbb{L},\mathbb{P}}^Q=\langle \CC,\AAA,\PP,\MM,\RR\rangle$ for some quasi-order $Q$, some class of linear orders $\mathbb{L}$ and some class of partial orders $\mathbb{P}$.

Given a limiting sequence $(x_n)_{n\in \omega}$, we have by Remark \ref{Rk:Mu} that $\mu_n$ was the embedding given by infinite extensiveness of $\OA$ using the construction of each $x_n$. Each $x_{n+1}$ is $x_n$ with some elements coloured by $-\infty$ replaced by a larger partial order. The embedding $\mu_n$ then acts as the identity on everything that is not changed, and maps a point $a$ coloured by $-\infty$ that is replaced, into some element $b$ of the order that replaces it. Since it makes no difference to the order, we equate $a$ and $b$, and consider the underlying set of $x_n$ as a subset of the underlying set of $x_{n+1}$, possibly with some colours changing from $-\infty$. In this way each $\mu_n$ $(n\in \omega)$ becomes the identity map from the underlying set of $x_n$ to the underlying set of $x_{n+1}$.

%If we take a sum over a partial order $P$, where each argument of the sum is non-empty, then we can consider $P$ as a subset of the sum, just by choosing an arbitrary point from each of the parts of the sum corresponding to the arguments (giving a copy of $P$ inside the sum). Hence increasing sequences can be considered as sequences of partial orders with $x_0\subseteq x_1 \subseteq ...$ and we can define their limit to be the countable union.
Now given a limiting sequence $(x_n)_{n\in \omega}$ we have that $x_n\subseteq x_{n+1}$ for all $n\in \omega$. Hence we can define the limit $x$ to be the union of all of the $x_n$. This means that $\OA$ has limits, because an $H_\eta$-sum of limits, is the limit of the $H_\eta$-sums of the elements of the limiting sequence.

\end{ex}
\begin{defn}
Suppose that $\OA$ has limits. Let $\amr\subseteq \CC$ be the class containing all elements of $\amrs$ and all limits of limiting sequences in $\amrs$. We also define $\amri=\amr\setminus \amrs.$
\end{defn}

%\begin{remark}\label{Rk:LimitComposition}
%When $\CC$ has limits and $x\in \amr$, we can try to apply the same construction as in Remark \ref{Rk:composition} to $x$. If $x=f^{\eta}(k_0)$ for a composition sequence $\eta$, then we want to find $g_n$ and $k_n\in \dom(g_n)$ for each $n\in \omega$ such that $g_0=\Id_\CC$, $g_1=f^\eta$, and for any $n\in \omega$, $x=g_n(k_n)$ and $g_{n+1}$ is a composition of $g_n$ with some $f^\nu$. We do this by taking each argument of $k_n$ and writing it as some $f^\nu(k')$, then defining the composition $g_{n+1}$ by composing $g_n$ with each $f^\nu$ in the corresponding position. This composition gives us an element $k_{n+1}$ of the domain of $g_{n+1}$ such that $g_{n+1}(k_{n+1})=x$, and we continue. Now we set $k'_n$ to have the same structure as $k_n$, and to have the same arguments when this argument is in $\AAA$, and if not then we change the argument to $q_0$, the minimal element as from Definition \ref{Defn:LimitSequence}. If we set $x'_n=g_n(k'_n)$ then it is clear from the definition that $(x'_n)_{n\in \omega}$ is a limiting sequence.
%\end{remark}
%\begin{defn}
%We say that $\CC$ has \emph{nice limits} if for any $x\in \amr$ such that we can define $g_n$ $(n\in \omega)$ and $x'_n$ as in Remark \ref{Rk:LimitComposition}, then $x$ is the limit of $(x'_n)_{n\in \omega}$.
%\end{defn}

Finally the following condition allows us to produce embeddings between limits.
\begin{defn}\label{Defn:NiceLimits}
Suppose that $\OA$ has limits. Then we say that $\OA$ has \emph{nice limits} iff for any $x,y\in \amr$ with $x$ the limit of $(x_n)_{n\in \omega}$, and for any $n\in \omega$; if there are embeddings  %$\mu_n:x_n\rightarrow x_{n+1}$ and 
$\varphi_n:x_n\rightarrow y$ such that $\varphi_{n}=\varphi_{n+1}\circ \mu_n$, % (for $\mu_n$ as in Remark \ref{Rk:Mu}), 
then $x\leq y$.\footnote{Here $\mu_n$ is as from Remark \ref{Rk:Mu}.}
\end{defn}
\begin{thm}
Let $\OA=\OA_{\mathbb{L},\mathbb{P}}^Q=\langle \CC,\AAA,\PP,\MM,\RR\rangle$ for some quasi-order $Q$, some class of linear orders $\mathbb{L}$ and some class of partial orders $\mathbb{P}$. Then $\OA$ has nice limits.
\end{thm}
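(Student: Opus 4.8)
The plan is to exploit the very concrete description of limits for $\OA=\OA_{\mathbb{L},\mathbb{P}}^Q$ obtained in Example~\ref{Rk:PoLimits} and then simply glue the given embeddings into one. First I would record the features of a limiting sequence $(x_n)_{n\in\omega}$ in this $\OA$ that we already know: the underlying sets may be taken to be increasing, $x_n\subseteq x_{n+1}$, with $\mu_n$ the identity inclusion $x_n\hookrightarrow x_{n+1}$; the partial order of $x_{n+1}$ restricted to $x_n$ is that of $x_n$; the limit is $x=\bigcup_{n\in\omega}x_n$ with order and colouring the unions of those on the $x_n$; hence $\leq_x$ restricted to $x_n$ equals $\leq_{x_n}$, and every $a\in x$ lies in $x_n$ for all sufficiently large $n$. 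The one extra fact I would isolate concerns colours: since each $\mu_n$ is a $\CC$-morphism, the colours of a fixed point are $\leq_{Q'}$-nondecreasing along the sequence, and by the construction of a limiting sequence (the replaced leaves have value $q_0$ by condition (\ref{Item:LimitSeq5}) of Definition~\ref{Defn:LimitSequence}, and in this $\OA$ the element $q_0$ is a single point coloured $-\infty$) the only colour that ever changes is $-\infty$; so a point either acquires a colour in $Q$ at some finite stage and keeps it thereafter, or remains coloured $-\infty$ throughout. In either case $\col_x(a)=\col_{x_n}(a)$ for all sufficiently large $n$.

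With these preliminaries, let $x\in\amr$ be the limit of $(x_n)_{n\in\omega}$, let $y\in\amr$, and let $\varphi_n:x_n\to y$ be embeddings with $\varphi_n=\varphi_{n+1}\circ\mu_n$. Since $\mu_n$ is the identity inclusion this says exactly $\varphi_{n+1}\restriction x_n=\varphi_n$, so the $\varphi_n$ form a coherent family, and I can define $\varphi:x\to y$ by setting $\varphi(a)=\varphi_n(a)$ whenever $a\in x_n$; this is well defined because every $a\in x$ lies in some $x_n$ and the $\varphi_n$ agree on overlaps.

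Finally I would check that $\varphi$ is a $\CC$-morphism, so that it witnesses $x\leq y$ and $\OA$ has nice limits. For any $a,b\in x$, pick $n$ with $a,b\in x_n$; then $a\leq_x b\iff a\leq_{x_n}b\iff\varphi_n(a)\leq_y\varphi_n(b)\iff\varphi(a)\leq_y\varphi(b)$, using that $\leq_x$ extends $\leq_{x_n}$ and that $\varphi_n$ is a partial-order embedding (injectivity of $\varphi$ is then automatic from antisymmetry, as in Example~\ref{Ex:POcat}). For colours, given $a\in x$ pick $n$ with $a\in x_n$ and $\col_x(a)=\col_{x_n}(a)$; then $\col_x(a)=\col_{x_n}(a)\leq_{Q'}\col_y(\varphi_n(a))=\col_y(\varphi(a))$ since $\varphi_n$ is a $\CC$-morphism. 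Thus $\varphi\in\homm_\CC(x,y)$, as required.

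The bulk of this is routine: a directed union of compatible embeddings is an embedding. The one place where the specific hypotheses matter — and the step I would be most careful about — is the colour fact in the first paragraph, namely that the colour of a point of the limit is already attained at a finite stage; this is exactly where one must use the definition of a limiting sequence rather than merely that $x$ is some increasing union, and it is the reason conditions (\ref{Item:LimitSeq4}) and (\ref{Item:LimitSeq5}) of Definition~\ref{Defn:LimitSequence} were imposed.
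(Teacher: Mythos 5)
Your proposal is correct and follows essentially the same route as the paper: treat each $\mu_n$ as the identity inclusion via the description of limits in Example \ref{Rk:PoLimits}, define $\varphi$ as the union of the coherent family $(\varphi_n)_{n\in\omega}$, and verify order-preservation and the colour inequality at a finite stage. Your explicit isolation of the colour-stabilization fact (a point's colour is either always $-\infty$ or settles at some finite stage) is exactly the point the paper handles parenthetically when it chooses $n$ least with $a,b\in x_n$ and $\col_{x_n}(a)=\col_x(a)$.
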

\begin{proof}
Let $x,y\in \amr$ be such that $x$ is the limit of $(x_n)_{n\in \omega}$ and for any $n\in \omega$ we have embeddings $\varphi_n:x_n\rightarrow y$ such that $\varphi_n=\varphi_{n+1}\circ\mu_n$. We want to show that $x\leq y$.

%We have by Remark \ref{Rk:Mu} that $\mu_n$ was the embedding given to us by infinite extensiveness of $\MM$ using the construction of each $x_n$. Each $x_{n+1}$ is $x_n$ with some elements coloured by $-\infty$ replaced by a larger partial order. The embedding $\mu_n$ is then the identity on everything that is not changed, and maps a point $a$ coloured by $-\infty$ that is replaced, into some element $b$ of the order that replaces it. Since it makes no difference to the order, we equate $a$ and $b$, and consider $x_n$ as a subset of $x_{n+1}$, possibly with some colours of elements coloured by $-\infty$ changing. In this way each $\mu_n$ $(n\in \omega)$ becomes the identity map from $x_n$ to $x_{n+1}$.

By Remark \ref{Rk:PoLimits} we consider each $x_n\subseteq x_{n+1}$ and each $\mu_n$ ($n\in \omega$) to be the identity map. So $\varphi_n=\varphi_{n+1}\circ\mu_n$ is equivalent to $a\in x_n$ implies $\varphi_{n+1}(a)=\varphi_n(a)$. Hence it is possible to define $\varphi:x\rightarrow y$ as the union of all of the $\varphi_n$. We claim that $\varphi$ is an embedding.

Let $a,b\in x=\bigcup_{n\in \omega}x_n$, and let $n$ be least such that $a,b\in x_n$ and $\col_{x_n}(a)=\col_x(a)$ (such an $n$ exists since either the colour of $a$ is always $-\infty$ or changes at some $n$, but then stays this colour in $x$). In order to show that $x\leq y$ we need to verify the following properties of $\varphi$.

\begin{enumerate}
%	\item $\varphi$ is injective. If $a\neq b$ then $\varphi(a)=\varphi_n(a)\neq \varphi_n(b)=\varphi(b)$ as required.
	\item $a\leq b$ iff $\varphi_n(a)\leq \varphi_n(b)$ iff $\varphi(a)\leq \varphi(b)$ (since $\varphi_n$ is an embedding).
	\item $\col_x(a)=\col_{x_n}(a)\leq \col_y(\varphi_n(a))=\col_y(\varphi(a))$.
\end{enumerate}
So indeed $\varphi$ is an embedding from $x$ to $y$, and thus $x\leq y$ as required.
\end{proof}

\section{Decomposition trees}\label{Section:StructTrees}
The aim of this section is to encode the elements of $\amr$ in terms of structured trees. This reduces the problem of showing that $\amr$ is bqo to showing that a class of structured trees is bqo.
\subsection{Scattered and structured $\mathscr{L}$-trees}
%%%%%%%% SCAT TREES %%%%%%%%
\begin{defn}\label{Defn:2^<omega}
We define the tree $2^{<\omega}$, (the infinite binary tree of height $\omega$), as the tree of finite sequences of elements of $2=\{0,1\}$ ordered by $\is$. 
\end{defn}
\begin{defn}\label{Defn:WBScatteredTrees}
Let $\mathbb{L}$ be a class of linear orders and $T$ be an $\mathscr{L}$-tree, then we define as follows:
\begin{itemize}
	\item $T$ is \emph{scattered} iff $2^{<\omega}\not \leq T$.
	\item $T$ is \emph{$\mathbb{L}$-$\sigma$-scattered} iff there are countably many $\up$-closed subsets of $T$ that are each scattered $\mathbb{L}$-trees, and every point of $T$ is contained in one of these subsets.
	\item We let $\scatt^\mathbb{L}$ be the class of scattered $\mathbb{L}$-trees.
	\item We let $\sscatt^\mathbb{L}$ be the class of $\mathbb{L}$-$\sigma$-scattered $\mathscr{L}$-trees.
	\item We let $\mathscr{R}$ be the class of rooted $\omega+1$-trees.	
	\end{itemize}
\end{defn}
\begin{remark}
When the context is clear we will call elements of $\sscatt^\mathbb{L}$ $\sigma$-scattered. Notice that elements of $\sscatt^\mathbb{L}$ are not necessarily $\mathbb{L}$-trees as they could contain chains with order type not in $\mathbb{L}$.\footnote{Elements of $\sscatt^\mathbb{L}$ are are still $\mathscr{L}$-trees.} We note also that different classes of linear orders can generate the same classes of $\sigma$-scattered trees, for example $\sscatt^\omega=\sscatt^{\omega+1}$.
\end{remark}

\begin{defn}Given a chain $\zeta$, and $\mathscr{L}$-trees $T^\gamma_i$ for each $i\in \zeta$, and $\gamma< \kappa_i\in \card$, we define the \emph{$\zeta$-tree-sum} of the $T_i^\gamma$ (see Figure \ref{Fig:ZetaTreeSum}) as the set $\zeta\sqcup\bigsqcup_{i\in \zeta, \gamma<\kappa_i}T_i^\gamma$ %partially 
ordered by letting $a\leq b$ iff 
\begin{itemize}
	\item $a,b\in \zeta$ with $a\leq_\zeta b$;
	\item or for some $i\in \zeta$, $\gamma<\kappa_i$ we have $a,b\in T^\gamma_i$ with $a\leq_{T_i^\gamma}b$;
	\item or $a\in \zeta$ and $b\in T_i^\gamma$ for some $i\in \zeta$ with $a<_\zeta i$ and $\gamma<\kappa_i$.
\end{itemize}
\end{defn}
 %$T$ such that there is a chain $\zeta\in\mathbb{L}$ and for each $i\in \zeta$ there is some $T_i\in \scatt^\mathbb{L}_{\alpha}$, with
\begin{defn}
Let $\mathbb{L}$ be a class of linear orders that is closed under finite sums. Let $\scatt^\mathbb{L}_0=\{\emptyset,1\}$, and for $\alpha\in \On$ let $\scatt^\mathbb{L}_{\alpha+1}$ be the class of $\zeta$-tree-sums of trees of $\scatt^\mathbb{L}_{\alpha}$ for $\zeta\in \mathbb{L}$. For limit $\lambda\in \On$ we let $\scatt^\mathbb{L}_\lambda=\bigcup_{\gamma<\lambda} \scatt^\mathbb{L}_\gamma$, and finally set $\scatt^\mathbb{L}_\infty=\bigcup_{\gamma\in \On} \scatt^\mathbb{L}_\gamma$. For $T\in \scatt^\mathbb{L}_\infty$ define the \emph{scattered rank} of $T$, denoted $\Srank(T)$ as the least ordinal $\alpha$ such that $T\in \scatt^\mathbb{L}_\alpha$. (See Figure \ref{Fig:ScatTrees}.)
\end{defn}
\begin{thm}\label{Thm:ScatteredRank}
Let $\mathbb{L}$ be a class of linear orders that is closed under non-empty subsets and finite sums. Then $\scatt^\mathbb{L}=\scatt^\mathbb{L}_\infty$.
\end{thm}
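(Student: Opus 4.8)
The plan is to prove the two inclusions separately, the inclusion $\scatt^\mathbb{L}_\infty\subseteq\scatt^\mathbb{L}$ being routine and $\scatt^\mathbb{L}\subseteq\scatt^\mathbb{L}_\infty$ being the Hausdorff-style hard direction. For $\scatt^\mathbb{L}_\infty\subseteq\scatt^\mathbb{L}$ I would induct on $\Srank(T)$. To see $T$ is an $\mathbb{L}$-tree: a chain $C$ of a $\zeta$-tree-sum meets $\zeta$ in an initial segment and meets a single summand $T^\gamma_i$ (points of distinct summands being incomparable), so $\ot(C)=\ot(C\cap\zeta)+\ot(C\cap T^\gamma_i)$, which lies in $\mathbb{L}$ since $\zeta\in\mathbb{L}$, $\mathbb{L}$ is closed under non-empty subsets and finite sums, and $T^\gamma_i$ is an $\mathbb{L}$-tree by the inductive hypothesis; that $\up x$ is linear and that meets exist follow from a short case analysis on whether the points lie in $\zeta$ or in a summand. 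To see $T$ is scattered: if $\varphi\colon2^{<\omega}\to T$ embedded, with $T$ a $\zeta$-tree-sum of lower-rank trees, put $p=\varphi(\langle\rangle)$; as $2^{<\omega}$ is rooted, $\varphi$ maps into $\down p$. If $p$ lies in a summand $T^\gamma_i$ then $\down p\subseteq T^\gamma_i$ (elements of $T^\gamma_i$ have no descendants outside it), contradicting the inductive hypothesis; if $p\in\zeta$ then at least one of $\varphi(\langle0\rangle),\varphi(\langle1\rangle)$ lies in some summand (they are incomparable, hence not both in $\zeta$), and applying the previous observation to that point again contradicts the inductive hypothesis.

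For $\scatt^\mathbb{L}\subseteq\scatt^\mathbb{L}_\infty$ I would argue by contradiction: let $T$ be a scattered $\mathbb{L}$-tree with $T\notin\scatt^\mathbb{L}_\infty$, and build an embedding of $2^{<\omega}$ into $T$. First record two facts proved by induction on scattered rank: that $\scatt^\mathbb{L}_\alpha\subseteq\scatt^\mathbb{L}_{\alpha+1}$, and that every subtree of a tree in $\scatt^\mathbb{L}_\alpha$ again lies in $\scatt^\mathbb{L}_\alpha$ — so ``having a scattered rank'' passes to subtrees. Call a rooted $\mathbb{L}$-subtree of $T$ \emph{unbounded} if it does not lie in $\scatt^\mathbb{L}_\infty$. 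The crux is the \textbf{Key Lemma}: if $S$ is an unbounded rooted $\mathbb{L}$-subtree of $T$, then there are incomparable $x,y\in S$ with $\down_S x$ and $\down_S y$ both unbounded.

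Granting the Key Lemma, one builds $\varphi\colon2^{<\omega}\to T$ by recursion. One first checks $B:=\{x\in T\mid\down_T x\text{ is unbounded}\}\neq\emptyset$: if $B=\emptyset$ then $T$ is a disjoint union of trees each having a scattered rank, hence a subtree of a $1$-tree-sum of such trees, hence in $\scatt^\mathbb{L}_\infty$ by the subtree fact, a contradiction. One picks $\varphi(\langle\rangle)\in B$, and uses the Key Lemma repeatedly to choose, whenever $\varphi(s)=u\in B$, incomparable points $\varphi(s\con0),\varphi(s\con1)\in B$ strictly above $u$ (applying the lemma to $\down_T u$ and noting $\down_{\down_T u}x=\down_T x$). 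That $\varphi$ is an embedding is immediate from the standard tree fact that $a\geq b$, $a'\geq b'$ and $b\perp b'$ force $a\perp a'$, together with injectivity from strictness. This yields $2^{<\omega}\leq T$, contradicting scatteredness.

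The Key Lemma itself is where the real work lies, and I expect it to be the main obstacle. Suppose it fails for $S$; then $B_S:=\{x\in S\mid\down_S x\notin\scatt^\mathbb{L}_\infty\}$ contains the root, is pairwise comparable, hence a chain, and is $\up$-closed in $S$ by the subtree fact, hence an initial chain; its order type lies in $\mathbb{L}$ since $S$ is an $\mathbb{L}$-tree. Each component of $S\setminus B_S$ is $\down_S x'$ for a suitable minimal $x'$, and $\down_S x'\in\scatt^\mathbb{L}_\infty$ by definition of $B_S$. Attaching these components to $B_S$ exhibits $S$ as a subtree of a $\zeta$-tree-sum, where $\zeta$ is obtained from $B_S$ by possibly adjoining a top point and some phantom branch-points to accommodate components hanging at $\max B_S$ or at limit points of $B_S$; since all attached components lie in $\scatt^\mathbb{L}_\alpha$ for $\alpha=\sup$ of their scattered ranks, this $\zeta$-tree-sum lies in $\scatt^\mathbb{L}_{\alpha+1}$, so $S\in\scatt^\mathbb{L}_{\alpha+1}$ by the subtree fact, contradicting $S\notin\scatt^\mathbb{L}_\infty$. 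The delicate part is the precise bookkeeping of how the components attach to the chain $B_S$ — in particular around its supremum and limit points — and checking that the padded object is genuinely a $\zeta$-tree-sum in the sense of the paper's definition.
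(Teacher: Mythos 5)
Your overall architecture is the right one, and in fact it is the same as the paper's intended route: the result is left as an exercise ``similar to Theorem \ref{Thm:TreesAreAMR}'', whose hard direction is exactly your scheme (a splitting lemma analogous to your Key Lemma --- compare Lemma \ref{Lemma:NonScatteredTreeSplitting} --- followed by the recursive construction of a copy of $2^{<\omega}$), and your easy direction is the straightforward induction. The gap is in your proof of the Key Lemma (and it recurs in your $B\neq\emptyset$ step). You assert that each component of $S\setminus B_S$ is $\down_S x'$ for a minimal $x'$, and that $T$ with $B=\emptyset$ is a disjoint union of trees ``each having a scattered rank''. But $\mathbb{L}$-trees are not well-founded, so components of $S\setminus B_S$ (and $T$ itself) need not have minimal elements; a component all of whose principal sets $\down_S x$ lie in $\scatt^\mathbb{L}_\infty$ is not thereby ``bounded by definition of $B_S$'', and showing it lies in $\scatt^\mathbb{L}_\infty$ requires a further decomposition (e.g.\ along the chain $\up x\cap C$ for some $x$ in the component, using your subtree fact). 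This is precisely why the paper's corresponding argument works with the branch sets ${}^p\downs t$ along \emph{maximal} chains and runs a transfinite recursion with a separate limit case $\bigcap_{j\in\zeta_\lambda}\down j$ (Lemmas \ref{Lemma:CombReduce} and \ref{Lemma:NonScatteredTreeSplitting}); your $B_S$ is defined through principal down-sets and skips that step.

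The second problem is the part you flag as delicate but leave unresolved, and as proposed it does not close. Adjoining a ``phantom branch-point'' at each limit point or gap of $B_S$ where a component attaches can add one new point per cut, and a class $\mathbb{L}$ closed only under non-empty subsets and finite sums need not contain the resulting order type (finite sums do not allow inserting a point after each element of an arbitrary subset, let alone at arbitrarily many cuts). The way out is not bookkeeping but the meet clause in the definition of an $\mathbb{L}$-tree: if $x$ lies in a component $C$ and $b\in B_S$ is above the set $A_C$ of $B_S$-predecessors of $C$, then $x\wedge b=\sup A_C$ exists and lies in $B_S$, so components can never attach at a two-sided gap of $B_S$; only an added top point (covered by $1\in\mathbb{L}$ and finite sums) can be needed. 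Even then you must confront the attachment convention in the paper's $\zeta$-tree-sum, where a summand at position $i$ sits above $\{a\in\zeta\mid a<i\}$ rather than $\{a\in\zeta\mid a\leq i\}$: a component hanging immediately above an element of $B_S$ that has no successor in $B_S$ has no admissible position under the literal definition, and any complete proof (yours or the paper's exercise) has to address this point explicitly rather than by ``some phantom branch-points''.
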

\begin{proof}
%A simple induction shows that $\scatt^\mathbb{L}_\infty\subseteq	 \scatt^\mathbb{L}$. % For $T\in \scatt^\mathbb{L}$, and $\zeta$ a chain of $T$, let ${}^\zeta\downs t=\downs t\setminus \bigcup_{u\in \zeta, u>t}\downs u$. Suppose for contradiction that $T\in \scatt^\mathbb{L}\setminus \scatt^\mathbb{L}_\infty$. For any such $T$, if $\xi$ is an $\up$-closed chain of $T$ then either $\bigcup_{u\in \xi}\down u\notin \scatt^\mathbb{L}_\infty$ or there is some $t\in \xi$ such that ${}^\xi\downs t\notin \scatt^\mathbb{L}_\infty$, since otherwise $T\in \scatt^\mathbb{L}_\infty$.
%The more difficult direction can be seen by showing that if $T\in \scatt^\mathbb{L}\setminus \scatt^\mathbb{L}_\infty$ then $2^{<\omega}\leq T$. 
We leave the proof as an exercise since it is similar to Theorem \ref{Thm:TreesAreAMR}.
%Let $T=T_0$ and suppose that for some $\alpha\in\On$ we have defined some $t_0<...<t_\alpha\in T$ and chains $\xi_0,...,\xi_\alpha\subseteq T$
\end{proof}
%\vspace{-6pt}
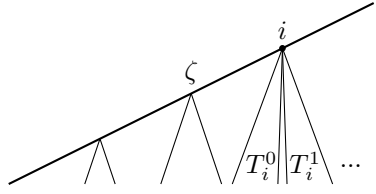
\begin{figure}
\centering

\begin{tikzpicture}[scale=0.6]
\draw [thick] (8,4) -- (0,0);
\node [left, above] at (4,2) {$\zeta$};
\draw (6,3) -- (4.9,0);
\draw (6,3) -- (7.1,0);
\draw (6,3) -- (5.9,0);
\draw (6,3) -- (6.1,0);
\draw [fill] (6,3) circle [radius=0.06];
\node [left, above] at (6,3) {$i$};
\node at (5.54,0.4) {$T_i^0$};
\node at (6.5,0.4) {$T_i^1$};
\node at (7.5,0.4) {$...$};
\draw (2,1) -- (1.666666,0);
\draw (2,1) -- (2.333333,0);
\draw (4,2) -- (3.333333,0);
\draw (4,2) -- (4.666666,0);
\end{tikzpicture}

%\vspace{-5pt}
\caption{A $\zeta$-tree-sum of the $T_i^\gamma$.}
\label{Fig:ZetaTreeSum}
\end{figure}

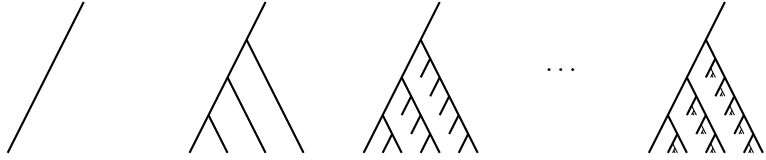
\begin{figure}
\centering

\begin{tikzpicture}[thick, scale=0.5]
\draw (2,4) -- (0,0);
%\node [left] at (1,2) {$\zeta$};
\node [left, above] at (3,0) {};
\end{tikzpicture}
\hspace{15pt}
\begin{tikzpicture}[thick, scale=0.5]
\draw (2,4) -- (0,0);
\draw (1,2) -- (2,0);
\draw (1.5,3) -- (3,0);
\draw (0.5,1) -- (1,0);
\end{tikzpicture}
\hspace{15pt}
\begin{tikzpicture}[thick, scale=0.5]
\draw (2,4) -- (0,0);
\draw (1,2) -- (2,0);
\draw (1.5,1) -- (1.25,0.5);
\draw (1.25,1.5) -- (1,1);
\draw (1.75,0.5) -- (1.5,0);
\draw (1.5,3) -- (3,0);

\draw (1.75,2.5) -- (1.5,2);
\draw (2,2) -- (1.75,1.5);
\draw (2.25,1.5) -- (2,1);
\draw (2.5,1) -- (2.25,0.5);
\draw (2.75,0.5) -- (2.5,0);

\draw (0.5,1) -- (1,0);
\draw (0.75,0.5) -- (0.5,0);
\end{tikzpicture}
\hspace{15pt}
\begin{tikzpicture}[scale=0.5]
\node at (0,2) {$\dots$};
\node at (0,0) {};
\end{tikzpicture}
\hspace{15pt}
\begin{tikzpicture}[scale=0.5]
\draw [thick] (2,4) -- (0,0);
\draw [thick](1,2) -- (2,0);
\draw [thick](1.5,1) -- (1.25,0.5);
\draw [thick](1.25,1.5) -- (1,1);
\draw [thick](1.75,0.5) -- (1.5,0);
\draw [thick](1.5,3) -- (3,0);

\draw [thick](1.75,2.5) -- (1.5,2);
\draw [thick](2,2) -- (1.75,1.5);
\draw [thick](2.25,1.5) -- (2,1);
\draw [thick](2.5,1) -- (2.25,0.5);
\draw [thick](2.75,0.5) -- (2.5,0);

\draw [thick](0.5,1) -- (1,0);
\draw [thick](0.75,0.5) -- (0.5,0);
\draw (0.625,0.25) -- (0.75,0);
\draw (0.6875,0.125) -- (0.625,0);
\draw (0.65625,0.0625) -- (0.6875,0);

\draw (1.625,0.25) -- (1.75,0);
\draw (1.6875,0.125) -- (1.625,0);
\draw (1.65625,0.0625) -- (1.6875,0);

\draw (1.625-0.25,0.25+0.5) -- (1.75-0.25,0+0.5);
\draw (1.6875-0.25,0.125+0.5) -- (1.625-0.25,0+0.5);
\draw (1.65625-0.25,0.0625+0.5) -- (1.6875-0.25,0+0.5);

\draw (1.625-0.5,0.25+1) -- (1.75-0.5,0+1);
\draw (1.6875-0.5,0.125+1) -- (1.625-0.5,0+1);
\draw (1.65625-0.5,0.0625+1) -- (1.6875-0.5,0+1);

\draw (2.625,0.25) -- (2.75,0);
\draw (2.6875,0.125) -- (2.625,0);
\draw (2.65625,0.0625) -- (2.6875,0);

\draw (2.625-0.25,0.25+0.5) -- (2.75-0.25,0+0.5);
\draw (2.6875-0.25,0.125+0.5) -- (2.625-0.25,0+0.5);
\draw (2.65625-0.25,0.0625+0.5) -- (2.6875-0.25,0+0.5);

\draw (2.625-0.5,0.25+1) -- (2.75-0.5,0+1);
\draw (2.6875-0.5,0.125+1) -- (2.625-0.5,0+1);
\draw (2.65625-0.5,0.0625+1) -- (2.6875-0.5,0+1);

\draw (2.625-0.75,0.25+1.5) -- (2.75-0.75,0+1.5);
\draw (2.6875-0.75,0.125+1.5) -- (2.625-0.75,0+1.5);
\draw (2.65625-0.75,0.0625+1.5) -- (2.6875-0.75,0+1.5);

\draw (2.625-1,0.25+2) -- (2.75-1,0+2);
\draw (2.6875-1,0.125+2) -- (2.625-1,0+2);
\draw (2.65625-1,0.0625+2) -- (2.6875-1,0+2);
\end{tikzpicture}
%\vspace{-5pt}
\caption{Scattered $\mathscr{L}$-trees of increasing scattered rank, and a $\sigma$-scattered $\mathscr{L}$-tree.% Here $r$ is a linear order in $\mathbb{L}$.
}
\label{Fig:ScatTrees}
\end{figure}
%\vspace{-10pt}
\begin{defn}\label{Defn:StructTrees}
Let $\mathbb{T}$ be a class of $\mathscr{L}$-trees, and let $\mathcal{O}$ be a concrete category. We define the new concrete category of \emph{$\mathcal{O}$-structured trees of $\mathbb{T}$}, denoted $\mathbb{T}_\mathcal{O}$ as follows. The objects of $\mathbb{T}_\mathcal{O}$ consist of pairs $\langle T,l \rangle$ such that:
\begin{itemize}
	\item $T\in \mathbb{T}$.
	\item $U_{\langle T,l\rangle}=T$.
	\item $l=\{l_v \mid v\in T\}$, where for each $v\in T$ there is some $\ra_v\in \obj{\mathcal{O}}$ such that $$l_v:\downs v \longrightarrow \ra_v $$ %a %surjective? function 
	and if $x,y\in T$ with $x>y>v$ then $l_v(x)=l_v(y)$.%, and moreover, if $l_v(x)=l_v(y)$ then either $x>_Ty$ or $y>_Tx$.
	%\item To simplify notation we will equate $\langle T,l\rangle$ with $T$.
\end{itemize}	
For $\mathcal{O}$-structured trees $\langle T,l\rangle$ and $\langle T',l'\rangle$, we let $\varphi:T\rightarrow T'$ be an embedding whenever:
	\begin{enumerate}
%		\item $\varphi$ is injective,
		\item $x\leq y $ iff $ \varphi(x)\leq \varphi(y)$,
		\item $\varphi(x\wedge y)=\varphi(x)\wedge \varphi(y)$,
		\item \label{Item:StructTrees4} for any $v\in T$, set $\theta:\range(l_v)\rightarrow \range(l'_{\varphi(v)})$ such that $$\theta(l_v(x))=l'_{\varphi(v)}(\varphi(x));$$ then $\theta$ is an embedding of $\mathcal{O}$.

	\end{enumerate}

%When $\mathcal{O}$ is a class of partial orders, we let $\leq_v$ be the order on $\range(l_v)$. Then (\ref{Item:StructTrees4}) is equivalent to: if $x,y\in \downs v$ for some $v\in T$, then $l_v(x)\leq_v l_v(y) \mbox{ iff }$ $$l_{h(v)}(h(x))\leq_{h(v)} l_{h(v)}(h(y)).$$ %(resp. $<_v$), %(resp. $<_{h(v)}$).

We call $l_v(x)$ the \emph{$v$-label} of $x$. To simplify notation, we write $T$ in place of $\langle T,l\rangle$ and always use $l_v(x)$ to denote the $v$-label of $x$, regardless of which $T\in \mathbb{T}_Q$ we are considering (it will be unambiguous since $v\in T$).
\end{defn}

Intuitively $\mathbb{T}_\mathcal{O}$ is obtained by taking $T\in \mathbb{T}$ and for each vertex $v\in T$, and ordering the successors of $v$ by some order in $\mathcal{O}$ as in Figure \ref{Fig:StrucTree}. Embedding for $\mathbb{T}_\mathcal{O}$ is then tree embedding that preserves this ordering on the successors of $v$ for every $v\in \mathbb{T}$.
However, general $\mathscr{L}$-trees may contain points with no immediate successors. To accommodate this, our labelling functions $l_v$ have domain $\downs v$ and we enforce that if $x,y\in T$ with $x>y>v$ then $l_v(x)=l_v(y)$.
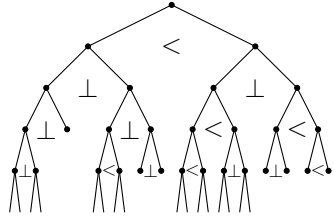
\begin{figure}
  
  \centering
 \begin{tikzpicture}[scale=0.55]
%ATREE

\draw [fill] (9+0,5-6) circle [radius=0.06];
\draw (9+0,5-6) -- (9+-2,4-6);
\draw [fill] (9+-2,4-6) circle [radius=0.06];
\draw (9+0,5-6) -- (9+2,4-6);
\draw [fill] (9+2,4-6) circle [radius=0.06];
\draw (9+2,4-6) -- (9+3,3-6);
\draw [fill] (9+3,3-6) circle [radius=0.06];
\draw (9+-2,4-6) -- (9+-3,3-6);
\draw [fill] (9+-3,3-6) circle [radius=0.06];

\draw (9+2,4-6) -- (9+1,3-6);
\draw [fill] (9+1,3-6) circle [radius=0.06];
\draw (9+-2,4-6) -- (9+-1,3-6);
\draw [fill] (9+-1,3-6) circle [radius=0.06];

\draw (9+1,3-6) -- (9+1.5,2-6);
\draw [fill] (9+1.5,2-6) circle [radius=0.06];
\draw (9+-1,3-6) -- (9+-1.5,2-6);
\draw [fill] (9+-1.5,2-6) circle [radius=0.06];

\draw (9+1,3-6) -- (9+0.5,2-6);
\draw [fill] (9+0.5,2-6) circle [radius=0.06];
\draw (9+-1,3-6) -- (9+-0.5,2-6);
\draw [fill] (9+-0.5,2-6) circle [radius=0.06];

\draw (9+3,3-6) -- (9+3.5,2-6);
\draw [fill] (9+3.5,2-6) circle [radius=0.06];
\draw (9+-3,3-6) -- (9+-3.5,2-6);
\draw [fill] (9+-3.5,2-6) circle [radius=0.06];
\draw (9+3,3-6) -- (9+2.5,2-6);
\draw [fill] (9+2.5,2-6) circle [radius=0.06];
\draw (9+-3,3-6) -- (9+-2.5,2-6);
\draw [fill] (9+-2.5,2-6) circle [radius=0.06];

\draw (9+3.5,2-6) -- (9+3.25,1-6);
\draw [fill] (9+3.25,1-6) circle [radius=0.06];

\draw (9+2.5,2-6) -- (9+2.25,1-6);
\draw [fill] (9+2.25,1-6) circle [radius=0.06];
\draw (9+2.5,2-6) -- (9+2.75,1-6);
\draw [fill] (9+2.75,1-6) circle [radius=0.06];

%\draw (9+-2.5,2-6) -- (9+-2.25,1-6);
%\draw [fill] (9+-2.25,1-6) circle [radius=0.06];
%\draw (9+-2.5,2-6) -- (9+-2.75,1-6);
%\draw [fill] (9+-2.75,1-6) circle [radius=0.06];

\draw (9+0.5,2-6) -- (9+0.25,1-6);
\draw [fill] (9+0.25,1-6) circle [radius=0.06];
\draw (9+0.5,2-6) -- (9+0.75,1-6);
\draw [fill] (9+0.75,1-6) circle [radius=0.06];

\draw (9+-0.5,2-6) -- (9+-0.25,1-6);
\draw [fill] (9+-0.25,1-6) circle [radius=0.06];
\draw (9+-0.5,2-6) -- (9+-0.75,1-6);
\draw [fill] (9+-0.75,1-6) circle [radius=0.06];

\draw (9+-3.5,2-6) -- (9+-3.25,1-6);
\draw [fill] (9+-3.25,1-6) circle [radius=0.06];

\draw (9+3.5,2-6) -- (9+3.75,1-6);
\draw [fill] (9+3.75,1-6) circle [radius=0.06];
\draw (9+-3.5,2-6) -- (9+-3.75,1-6);
\draw [fill] (9+-3.75,1-6) circle [radius=0.06];

\draw (9+1.5,2-6) -- (9+1.25,1-6);
\draw [fill] (9+1.25,1-6) circle [radius=0.06];
\draw (9+-1.5,2-6) -- (9+-1.25,1-6);
\draw [fill] (9+-1.25,1-6) circle [radius=0.06];

\draw (9+1.5,2-6) -- (9+1.75,1-6);
\draw [fill] (9+1.75,1-6) circle [radius=0.06];
\draw (9+-1.5,2-6) -- (9+-1.75,1-6);
\draw [fill] (9+-1.75,1-6) circle [radius=0.06];

\draw (9+0.25,1-6) -- (9+0.125,0-6);
\draw (9+0.25,1-6) -- (9+0.375,0-6);
\draw (9+0.75,1-6) -- (9+0.625,0-6);
\draw (9+0.75,1-6) -- (9+0.875,0-6);

\draw (9+1.25,1-6) -- (9+1.125,0-6);
\draw (9+1.25,1-6) -- (9+1.375,0-6);
\draw (9+1.75,1-6) -- (9+1.625,0-6);
\draw (9+1.75,1-6) -- (9+1.875,0-6);

%\draw (9+2.25,1-6) -- (9+2.125,0-6);
%\draw (9+2.25,1-6) -- (9+2.375,0-6);
%\draw (9+2.75,1-6) -- (9+2.625,0-6);
%\draw (9+2.75,1-6) -- (9+2.875,0-6);
%
%\draw (9+3.25,1-6) -- (9+3.125,0-6);
%\draw (9+3.25,1-6) -- (9+3.375,0-6);
%\draw (9+3.75,1-6) -- (9+3.625,0-6);
%\draw (9+3.75,1-6) -- (9+3.875,0-6);
%
%\draw (9+-0.25,1-6) -- (9+-0.125,0-6);
%\draw (9+-0.25,1-6) -- (9+-0.375,0-6);
%\draw (9+-0.75,1-6) -- (9+-0.625,0-6);
%\draw (9+-0.75,1-6) -- (9+-0.875,0-6);

\draw (9+-1.25,1-6) -- (9+-1.125,0-6);
\draw (9+-1.25,1-6) -- (9+-1.375,0-6);
\draw (9+-1.75,1-6) -- (9+-1.625,0-6);
\draw (9+-1.75,1-6) -- (9+-1.875,0-6);

%\draw (9+-2.25,1-6) -- (9+-2.125,0-6);
%\draw (9+-2.25,1-6) -- (9+-2.375,0-6);
%\draw (9+-2.75,1-6) -- (9+-2.625,0-6);
%\draw (9+-2.75,1-6) -- (9+-2.875,0-6);

\draw (9+-3.25,1-6) -- (9+-3.125,0-6);
\draw (9+-3.25,1-6) -- (9+-3.375,0-6);
\draw (9+-3.75,1-6) -- (9+-3.625,0-6);
\draw (9+-3.75,1-6) -- (9+-3.875,0-6);

\node at (9+0,4-6) {$<$};
\node at (9+2,3-6) {$\perp$};
\node at (9+-2,3-6) {$\perp$};
\node at (9+1,2-6) {$<$};
\node at (9+3,2-6) {$<$};
\node at (9+-1,2-6) {$\perp$};
\node at (9+-3,2-6) {$\perp$};

\node [scale=0.7] at (9+3.5,1-6) {$<$};
\node [scale=0.7] at (9+1.5,1-6) {$\perp$};
\node [scale=0.7] at (9+0.5,1-6) {$<$};
\node [scale=0.7] at (9+2.5,1-6) {$\perp$};

\node [scale=0.7] at (9+-3.5,1-6) {$\perp$};
\node [scale=0.7] at (9+-1.5,1-6) {$<$};
\node [scale=0.7] at (9+-0.5,1-6) {$\perp$};
%\node at (9+-2.5,1-6) {$\perp$};
\end{tikzpicture}
\caption{A tree structured by $\{\CH{2},\AC{2}\}$.}\label{Fig:StrucTree}
\end{figure}

\begin{defn}
Let $T$ be an $\mathcal{O}$-structured $\mathscr{L}$-tree, with $x\in T$ and $p\in \range(l_x)$ then we define
$${}^{p}\downs x=\{y\in T\mid (y>x)\wedge (l_x(y)=p)\}.$$

\end{defn}

%We will end up only considering well-branched structured trees. There are two reasons for this, firstly it is easier to show that these are well-behaved, and secondly because this is all we require for the operator algebra construction of section \ref{Section:OpConstruction}.

It is clear that $\mathbb{T}_Q$ is a concrete category and hence we also have defined the $Q$-coloured, $\mathcal{O}$-structured $\mathscr{L}$-trees of $\mathbb{T}$, denoted $\mathbb{T}_\mathcal{O}(Q)$. % We extend Definitions \ref{Defn:TraversingPOs}, \ref{Defn:TreeHeight}, \ref{Defn:TreeBasics} and \ref{Defn:WBScatteredTrees} to such $\mathscr{L}$-trees in the obvious way.
Finally we mention a theorem of K\v{r}\'{i}\v{z} that is fundamental to the results of this paper. % (from Section \ref{Section:LO} onwards).

\begin{thm}[K\v{r}\'{i}\v{z}, \cite{Kriz}]\label{Thm:Kriz1}
If $\mathcal{O}$ is a well-behaved concrete category with injective morphisms, then $\mathscr{R}_\mathcal{O}$ is well-behaved.
\end{thm}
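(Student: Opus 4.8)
The plan is to follow K\v{r}\'{i}\v{z} \cite{Kriz} and argue by contradiction. Fix a quasi-order $Q$ and suppose $f:[\omega]^\omega\to\mathscr{R}_\mathcal{O}(Q)$ is a bad array with no witnessing bad $Q$-array on any $[M]^\omega$; note that this property is inherited by any restriction $f\restriction[M]^\omega$ and by any array whose values are substructures of the $f(X)$, since a vertex of such a substructure is a vertex of $f(X)$ with the same colour. The engine of the argument is a \emph{decomposition at the root}: applied repeatedly it either produces a witnessing bad $Q$-array (the sought contradiction), or — when the trees $f(X)$ have unbounded height — forces the argument infinitely far down a single branch of the $f(X)$, where it is closed using the Minimal Bad Array Lemma.

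For the decomposition, given a rooted $\mathcal{O}$-structured $Q$-coloured tree $T$ with root $r$, record the colour $\col_T(r)\in Q$ together with the family $p\mapsto{}^{p}\downs r$ of principal subtrees of $T$, indexed by the label object $\ra_r\in\obj{\mathcal{O}}$; since an $\mathscr{R}$-tree has a genuine first level, this family (after the routine removal of empty fibres) is an object of $\mathcal{O}(\mathscr{R}_\mathcal{O}(Q))$. One checks from Definition \ref{Defn:StructTrees} that $\varphi:T\to T'$ is an $\mathscr{R}_\mathcal{O}$-embedding iff it sends root to root, induces an $\mathcal{O}(\mathscr{R}_\mathcal{O}(Q))$-embedding of these families, and increases the root colour; the forward direction is immediate, while for the converse one reassembles $\varphi$ from the $\mathcal{O}$-morphism of label objects and the subtree embeddings, the $\wedge$-preservation across distinct principal subtrees being exactly where injectivity of $\mathcal{O}$-morphisms is used (distinct label-directions at the root must be sent to distinct label-directions). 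Thus, up to this identification, $f$ is an array into $Q\times\mathcal{O}(\mathscr{R}_\mathcal{O}(Q))$. By Theorem \ref{Thm:times bqo} we pass to a sub-array in which one coordinate is bad; a bad first coordinate is the bad $Q$-array $X\mapsto\col_{f(X)}(\text{root})$, which witnesses $f$ and is therefore excluded, so we obtain a bad array $h$ into $\mathcal{O}(\mathscr{R}_\mathcal{O}(Q))$. Applying well-behavedness of $\mathcal{O}$ to $h$ yields, on some $[M]^\omega$, a bad array $f_1$ into $\mathscr{R}_\mathcal{O}(Q)$ with $f_1(X)={}^{p_X}\downs(\text{root of }f(X))$ for suitable $p_X$ — one level deeper inside $f(X)$ — and $f_1$ again has no witnessing bad $Q$-array, so the step repeats.

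If the trees in the range of $f$ have height $\le n$ for a fixed finite $n$, each step strictly lowers the height, so after finitely many steps we reach a bad array into $\mathcal{O}(Q)$ and conclude with one final use of well-behavedness of $\mathcal{O}$; organised as an induction on $n$, this already shows the full subcategory of $\mathscr{R}_\mathcal{O}$-trees of height $\le n$ is well-behaved, with no recourse to minimal bad arrays. The real content is the unbounded case, where the descent never stops and produces, for each $X$, an $\omega$-branch $v^0_X<v^1_X<\cdots$ of $f(X)$ with $f_k(X)$ the subtree above $v^k_X$. To handle it, one invokes the Minimal Bad Array Lemma at the outset, choosing $f$ minimal for a partial ranking of $\mathscr{R}_\mathcal{O}(Q)$ in which each ${}^{p}\downs x$ ranks strictly below $T$. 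Making this ranking genuinely well-founded despite trees of height $\omega$ is the first subtle point — and the reason the plain Nash--Williams minimal bad array does not suffice — but it can be arranged by ranking via ordinal height and observing that a strictly descending chain of principal subtrees traces a single branch and so has length $\le\omega$. With $f$ minimal, the derived $f_1$ satisfies $f_1(X)\le' f(X)$, hence cannot be bad, hence is perfect after restriction; likewise each truncation $X\mapsto f(X)\restriction m$, $m\in\omega$, is perfect on a common $[M']^\omega$ after a diagonalisation over $m$ using Theorem \ref{Thm:BadOrPerfect}.

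The crux, carrying essentially all of the remaining work, is to upgrade ``perfect at every finite level'' to a contradiction with the badness of $f$: for $X\in[M']^\omega$ and $X'=X\setminus\{\min X\}$, the level-$m$ embeddings $f(X)\restriction m\to f(X')\restriction m$ must be amalgamated into a single coherent tower whose union embeds $f(X)$ into $f(X')$. This is where the strengthened (``forerunning'') form of the Minimal Bad Array Lemma is needed, together with well-behavedness of $\On$ (Theorem \ref{Thm:OnWB}) to control the $Q$-colours accumulated along the branch so that the limiting embedding respects colours; injectivity of $\mathcal{O}$-morphisms again helps, keeping the candidate level-$m$ embeddings sufficiently constrained for a K\"onig-style selection of a coherent branch to go through. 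Everything else — thinning to fix the structure, splitting off the root colour, and passing between $\mathscr{R}_\mathcal{O}$ and $\mathcal{O}(\mathscr{R}_\mathcal{O}(Q))$ — is routine bookkeeping with Theorems \ref{Thm:GalvinPrikry}, \ref{Thm:BadOrPerfect}, \ref{Thm:U bqo} and \ref{Thm:times bqo}.
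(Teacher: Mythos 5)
You should first note that the paper does not actually prove this theorem: its proof is the citation ``See \cite{Kriz}'', so what you are offering is a reconstruction of K\v{r}\'{i}\v{z}'s argument, and the reconstruction has genuine gaps at exactly the points where that argument is hard. The most concrete one is the partial ranking. The relation ``${}^{p}\downs x$ ranks strictly below $T$'' is not well-founded on $\mathscr{R}_\mathcal{O}(Q)$: a tree of height $\omega$ (for instance $2^{<\omega}$ with any structure) has principal subtrees which again have height $\omega$, and iterating down any infinite branch produces a strictly descending sequence of length $\omega$ --- which is precisely what well-foundedness forbids, so observing that such chains have length $\leq\omega$ is no help, and ranking by ordinal height fails because all the trees involved have height exactly $\omega$. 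Hence the Minimal Bad Array Lemma cannot be invoked as you describe; this is the very obstruction that forced K\v{r}\'{i}\v{z} to introduce a modified minimal-bad-array technique. There is also an internal inconsistency that signals the problem: if your ranking were legitimate, the proof would end at the first decomposition step, since minimality of $f$ would already forbid the bad array $f_1$ that well-behavedness of $\mathcal{O}$ hands you, giving the contradiction outright; the fact that you then need a further argument about truncations and amalgamation shows the pieces have not been made to fit together.

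The second gap is the step you yourself flag as the crux and then defer: upgrading ``perfect at every finite truncation'' to an actual embedding of $f(X)$ into $f(X\setminus\{\min X\})$. Amalgamating the level-$m$ embeddings into a coherent tower is exactly the content of K\v{r}\'{i}\v{z}'s ``method of extensions'' (his witness lemma), and it is not a K\"onig-style selection: at each level there are in general infinitely many candidate embeddings, the $Q$-colours and the $\mathcal{O}$-structure on successors impose constraints that must be carried through every choice, and coherence has to be achieved simultaneously for all pairs $(X,X\setminus\{\min X\})$ with $X$ ranging over an infinite set, while still producing a \emph{witnessing} bad $Q$-array rather than merely a bad one. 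Declaring this ``routine bookkeeping'' leaves the theorem unproved; what you have is a plan whose easy parts (root decomposition, the bounded-height induction, splitting off the root colour via Theorem \ref{Thm:times bqo}) are fine, but whose hard parts are precisely the missing ones. A minor further point: your claimed characterisation of $\mathscr{R}_\mathcal{O}$-embeddings as maps sending root to root is false as an ``iff'', since an embedding may send the root of $T$ arbitrarily deep into $T'$; only the reassembly direction is needed to reflect badness, so this is cosmetic, but it should not be stated as an identification of quasi-orders.
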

\begin{proof}
See \cite{Kriz}.
\end{proof}
\begin{remark}
Louveau and Saint-Raymond proved, using a modification of Nash-Williams' original method, that if $Q$ satisfies a slight weakening of well-behaved (that is stronger than preserving bqo) then $\mathscr{R}_Q$, the class of $Q$-structured trees of $\mathscr{R}$, satisfies this same property \cite{LouveauStR}. They were unable to attain full well-behavedness and Nash-Williams' method seems to be insufficient.
\end{remark}
\begin{thm}\label{Thm:Kriz}
If $\mathcal{O}$ is a well-behaved concrete category with injective morphisms, then $\sscatt_\mathcal{O}^\omega$ is well-behaved.
\end{thm}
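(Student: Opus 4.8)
The plan is to identify $\sscatt^\omega_\mathcal{O}$ with K\v{r}\'{i}\v{z}'s category $\mathscr{R}_\mathcal{O}$ of $\mathcal{O}$-structured rooted $\omega+1$-trees and then quote Theorem \ref{Thm:Kriz1} directly. The point is that the underlying classes of trees already coincide: a nonempty $\mathscr{L}$-tree is automatically rooted (from any $x$ take $m=\min\up x$; any point not above $m$ is incomparable to $m$, and then its meet with $m$ must be $\sup\emptyset$, i.e.\ a least element of $T$), and in any $\omega+1$-tree each $\up x$ is a chain with maximum $x$ whose order type lies in $\omega+1$, hence is a finite successor ordinal — so every point of an element of $\mathscr{R}$ has finite height.

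With this in hand, the two inclusions are routine bookkeeping. For $\mathscr{R}_\mathcal{O}\subseteq\sscatt^\omega_\mathcal{O}$, stratify $T\in\mathscr{R}$ by its finite levels $T_n=\{x\in T:\ot(\up x)\leq n\}$: each $T_n$ is $\up$-closed, the $T_n$ cover $T$ because all heights are finite, and each $T_n$ has only finite chains (a chain in $T_n$ has strictly increasing heights bounded by $n$) and inherits all meets, so each $T_n$ is a scattered $\omega$-tree and $T\in\sscatt^\omega$; the labelling functions $l_v$ transfer verbatim, so $T\in\sscatt^\omega_\mathcal{O}$. Conversely, if $T=\bigcup_n T_n$ with each $T_n$ an $\up$-closed scattered $\omega$-tree, then every $x$ lies in some $T_n$ with $\up x\subseteq T_n$ a finite chain, and every chain of $T$ is the increasing union of its intersections with the $T_n$, each an initial segment of order type $\leq\omega$, so $T$ is a rooted $\omega+1$-tree and the $\mathcal{O}$-structure is again unchanged. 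Since morphisms in $\mathbb{T}_\mathcal{O}$ (Definition \ref{Defn:StructTrees}) and the colouring operation $\mathcal{O}(-)$ depend only on the tree, its meets, and its successor-structures, this gives $\sscatt^\omega_\mathcal{O}(Q)=\mathscr{R}_\mathcal{O}(Q)$ for every quasi-order $Q$ (the empty tree embeds into everything and so never occurs in a bad array, and may be ignored).

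Consequently any bad $\sscatt^\omega_\mathcal{O}(Q)$-array is a bad $\mathscr{R}_\mathcal{O}(Q)$-array, and since $\mathcal{O}$ is well-behaved with injective morphisms, Theorem \ref{Thm:Kriz1} supplies a witnessing bad $Q$-array, which is exactly a witnessing bad array for the original; hence $\sscatt^\omega_\mathcal{O}$ is well-behaved. There is no serious obstacle here beyond verifying the two inclusions — in particular pinning down that ``$\omega+1$-tree'' means precisely ``rooted tree all of whose points have finite height and all of whose chains have order type $\leq\omega$'' — and checking that the structured-tree morphism classes genuinely agree. If one prefers not to prove the literal equality $\sscatt^\omega_\mathcal{O}=\mathscr{R}_\mathcal{O}$, it already suffices that $\sscatt^\omega_\mathcal{O}$ is a \emph{full} subcategory of $\mathscr{R}_\mathcal{O}$, which makes every bad array of the former a bad array of the latter; alternatively one could realise $\sscatt^\omega_\mathcal{O}$ as the limit-closure $\amr$ of $\amrs=\scatt^\omega_\mathcal{O}$ and invoke the construction machinery of Section \ref{Section:WBConstruction} together with Theorem \ref{Thm:Kriz1}, but that is overkill for this case.
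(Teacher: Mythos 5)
The crux of your argument is the identification $\sscatt^\omega_\mathcal{O}=\mathscr{R}_\mathcal{O}$, and this is exactly where the gap lies: in the paper's intended reading, elements of $\sscatt^\omega_\mathcal{O}$ need \emph{not} be rooted. Your rootedness argument rests on a literal reading of Definition \ref{Defn:LTree}, namely that $x\wedge y$ must exist even when $x$ and $y$ have no common lower bound, so that $\sup\emptyset$ forces a least element. But the paper uses $\mathbb{L}$-trees in a way that plainly admits forests: its own proof of this theorem begins by observing that a $T\in\sscatt^\omega_\mathcal{O}$ is ``not necessarily rooted'', the definition of $\mathscr{R}$ adds the word ``rooted'' precisely because $\omega+1$-trees by themselves need not be, and the later treatment of Laver's theorem builds the class $\sscatt^{\On}$ using the disjoint-union operators $d_\kappa$, whose outputs have many minimal points. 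A disjoint union of two (or of $\omega_1$ many) rooted scattered $\omega$-trees lies in $\sscatt^\omega$ but not in $\mathscr{R}$, so neither your claimed equality nor the weaker ``full subcategory'' fallback holds, and a bad $\sscatt^\omega_\mathcal{O}(Q)$-array cannot simply be reread as a bad $\mathscr{R}_\mathcal{O}(Q)$-array.

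What is missing is exactly the step the paper inserts to handle this: given $T\in\sscatt^\omega_\mathcal{O}(Q)$, enumerate its minimal points as $t_i$, $i\in\kappa$, and send $T$ to the $\kappa$-indexed sequence of its rooted components $\{y\in T\mid y\geq t_i\}$, i.e.\ define $\tau:\sscatt^\omega_\mathcal{O}(Q)\rightarrow\On(\mathscr{R}_\mathcal{O}(Q))$. Since $\tau(S)\leq\tau(T)$ implies $S\leq T$ (embeddings of distinct components glue to an embedding of the forests), a bad $\sscatt^\omega_\mathcal{O}(Q)$-array gives a bad $\On(\mathscr{R}_\mathcal{O}(Q))$-array, and one must first invoke Theorem \ref{Thm:OnWB} (Nash-Williams: $\On$ is well-behaved) to obtain a witnessing bad $\mathscr{R}_\mathcal{O}(Q)$-array, and only then apply Theorem \ref{Thm:Kriz1} to get the witnessing bad $Q$-array. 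Note the set of components can have arbitrary cardinality, so this appeal to the well-behavedness of $\On$ cannot be bypassed by a countability argument. The parts of your proposal that do the rooted-case bookkeeping are fine: the level-stratification $T_n=\{x\mid \ot(\up x)\leq n\}$ showing $\mathscr{R}_\mathcal{O}\subseteq\sscatt^\omega_\mathcal{O}$, and the verification that members of $\sscatt^\omega_\mathcal{O}$ have all heights finite and all chains of order type at most $\omega$; combined with the component reduction above they would yield a complete proof along the paper's lines.
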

\begin{proof}
Notice that any $T\in \sscatt_\mathcal{O}^\omega$ is a tree of height at most $\omega$, but not necessarily rooted. Consider $\{t\mid \forall s\in T, s\not < t\}$, let $\kappa$ be the cardinality of this set, and enumerate its elements as $t_i$ for $i\in \kappa$. Given a quasi-order $Q$, let $\tau:\sscatt_\mathcal{O}^\omega(Q)\rightarrow \On(\mathscr{R}_\mathcal{O}(Q))$ be the function sending $T$ to $\langle \kappa, c\rangle$ where $c(i)=\down t_i$ for each $i\in \kappa$. Thus we have that if $\tau(S)\leq \tau(T)$ then $S\leq T$. So given a bad  $\sscatt^\omega_\mathcal{O}(Q)$-array $f$, we see that $\tau \circ f$ is a bad $\On(\mathscr{R}_\mathcal{O})$-array, then by Theorem \ref{Thm:OnWB} we have a witnessing bad $\mathscr{R}_\mathcal{O}(Q)$-array, and by Theorem \ref{Thm:Kriz1} we have a witnessing bad $Q$-array for $f$.
\end{proof}
\subsection{Encoding with structured $\mathscr{L}$-trees}
%As hinted at before, we aim to prove that $\amr$ is bqo whenever %$\CC$, $\MM$, $\RR$, $\PP$ and $\AAA$ 
%$\OA$ satisfies the correct conditions, all of which we have now defined. 
%
%The general method will be to take an element $x$ of $\amr$, and construct from it a structured $\mathscr{L}$-tree $T_x$ that contains all of the information required to describe how $x$ is built. In particular, we want that if $T_x\leq T_y$, then $x\leq y$, allowing us to reflect bad $\amr$-arrays to bad arrays for the $\mathscr{L}$-trees. Then using a theorem on the well-behavedness of some class of structured $\mathscr{L}$-trees (such as %K\v{r}\'{i}\v{z}'s Theorem in \cite{Kriz})
%Theorem \ref{Thm:Kriz}) we will have our bqo result for $\amr$. % We begin by defining a relatively simple structured tree that codes the $f^\eta$s.
We now want to take an element $x\in \amr$ and construct from it a structured $\mathscr{L}$-tree $T_x$ that contains all of the information required to describe how $x$ is built up from elements of $\AAA$, using functions from $\MM$.
We assume for the rest of this section that $\MM$ is $\RR$-iterable and $\OA$ has limits.

\begin{defn}\label{Defn:Tx}
Let $\FS$ be an composition set with $\PS$ a set of position sequences for $\FS$, and $d=\langle d^{\vec{p}}:\vec{p}\in \DS\rangle\in \dom(g^\FS)$. Suppose that for each $\vec{p}\in \PS$ we have $\eta(\vec{p})=\langle \langle f^{\vec{p}}_i,s^{\vec{p}}_i\rangle:i\in r(\vec{p})\rangle$. We define the $\MM\cup \AAA$-coloured $\PP$-structured $\clos{\RR}$-tree $T^\FS_d$ whose underlying set is $$\{\vec{p}\con \langle i\rangle\mid \vec{p}\in \PS\setminus \DS, i\in r(\vec{p})\}\cup \DS;$$
ordering $\vec{p}\con\langle i\rangle\leq \vec{q}\con \langle j\rangle$ or $\vec{p}\con\langle i\rangle\leq \vec{q}\con \langle j,u\rangle\in \DS$ iff either:
\begin{enumerate}
	\item $\vec{p}=\vec{q}$ and $j\geq i$.
	\item $\vec{p}\sis \vec{q}$ and the first element of $\vec{q}\setminus \vec{p}$ is $\langle j',u'\rangle$ with $j'\geq i$.
\end{enumerate}
We colour all $\vec{p}\con \langle i\rangle\in T^\FS_d$ by letting $$\col(\vec{p}\con \langle i\rangle)=f_i^{\vec{p}},$$
and for $\vec{p}\in \DS\subseteq T^\FS_d$ we let
$$\col(\vec{p})=d^{\vec{p}}.$$
For all $\vec{p}\in \PS\setminus \DS$, $i\in r(\vec{p})$ and $t>\vec{p}\con \langle i\rangle$ we define the labels $l_{\vec{p}\con \langle i\rangle}(t)\in \arity(f_i^{\vec{p}})$ so that
$$l_{\vec{p}\con \langle i\rangle}(t)=\left\{\begin{array}{lcl}
s_i^{\vec{p}}&:&\vec{p}\con \langle j\rangle\is t \mbox{ or } \vec{p}\con \langle j,u\rangle\is t \mbox{ for some }j>i \mbox{ and }u\in a^{\eta(\vec{p})}_j \\
u &:&\vec{p}\con\langle i,u\rangle \is t

\end{array}
\right. .
$$

\end{defn}
\begin{defn}\label{Defn:Tx1}
If $g^\FS$ is a standard decomposition function for $x\in \amrs$ and $d$ is such that $g^\FS(d)=x$, then we call $T^\FS_d$ a \emph{decomposition tree} for $x$.

%Suppose that $\OA$ has limits and 
Let $(x_n)_{n\in \omega}$ be a limiting sequence and $x\in \amri$ be the limit of this sequence. Let $q_0$, $\FS_n$ and $k^{\vec{p}}_n$ be as in Definition \ref{Defn:LimitSequence}. We set $\FS=\bigcup_{n\in \omega}\FS_n$ (so $\FS$ is a composition set). %, we let $\PS$ be a set of position sequences for $\FS$ and $\DS$ be the set of leaves of $\PS$. 
Let $\vec{p}\in \DS$ %then for some $n\in \omega$ we have $\vec{p}\in \FS_n$, and since $\vec{p}\in \DS$ we know that $\vec{p}$ has no successors in any $\FS_m$, $m>n$. I
then there must be some least $m$ such that $k^{\vec{p}}_m\in \DS$, and we let $d^{\vec{p}}=k^{\vec{p}}_m$.\footnote{Note that by (\ref{Item:LimitSeq4}) of Definition \ref{Defn:LimitSequence}, we have $k^{\vec{p}}_m=k^{\vec{p}}_n$ for any $n\geq m$.} %; if no such $m$ exists let $d^{\vec{p}}=q_0$. 
Set $d=\langle d^{\vec{p}}:\vec{p}\in \DS\rangle$. When $\FS$ and $d$ are defined in this way we call $T^\FS_d$ a \emph{decomposition tree} for $x$.
\end{defn}
%\begin{prop}
%For any $x\in \amr$ there is a decomposition tree $T$ for $x$.
%\end{prop}
%\begin{proof}
%If $x\in \amrs$ the result follows by Lemma \ref{Lemma:decompfn}. If $x\in \amri$ we define $T$ as in Definition \ref{Defn:Tx1}.
%\end{proof}
\begin{lemma}\label{Lemma:TreeRanksTheSame}
If $x\in \amr$ then there is a decomposition tree $T=T^\FS_d$ for $x$ such that $T\in \sscatt^{\clos{\RR}}_\PP(\MM\cup \AAA)$. Moreover if $x\in \amrs$ then there is a decomposition tree $T\in \scatt^{\clos{\RR}}_\PP(\MM\cup \AAA)$ for $x$ with $\Srank(T)=\rank(x)$.
\end{lemma}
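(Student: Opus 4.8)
The plan is to treat $x\in\amrs$ first (this also yields the ``moreover'' clause) and then to bootstrap to $x\in\amri$ using that the decomposition tree of a limit is covered by those of the approximants. Throughout, fix the decomposition tree $T=T^\FS_d$ associated to $x$ by Definition~\ref{Defn:Tx1}. I would begin with the features of $T$ that do not depend on the case, namely that $T$ is an $\MM\cup\AAA$-coloured $\PP$-structured $\mathscr{L}$-tree once we know its underlying order is tree-like. Every node $\vec{p}\con\langle i\rangle$ is coloured by $f^{\vec{p}}_i\in\MM$ and every leaf $\vec{p}\in\DS$ by $d^{\vec{p}}$, which is an element of $\AAA$: in the $\amrs$ case because $g^\FS$ is a \emph{decomposition} function, and in the $\amri$ case because each $\FS_n$ is one and condition~(\ref{Item:LimitSeq4}) of Definition~\ref{Defn:LimitSequence} pins $d^{\vec{p}}$ to an eventually constant leaf value. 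The labels $l_{\vec{p}\con\langle i\rangle}(t)$ take values in $\arity(f^{\vec{p}}_i)\in\PP$ directly from their definition, and the coherence requirement ``$l_v(t)=l_v(t')$ whenever $t>t'>v$'' is a short case analysis against the two clauses of the order in Definition~\ref{Defn:Tx} and the two clauses defining the labels. So what remains in each case is to identify the underlying order of $T$ as a $\clos{\RR}$-tree, respectively as a $\clos{\RR}$-$\sigma$-scattered $\mathscr{L}$-tree.

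For $x\in\amrs$ I would induct on $\rank(x)$, following the recursive construction of the standard decomposition function in Lemma~\ref{Lemma:decompfn}. If $\rank(x)=0$ then $T$ is a single leaf, lying in $\scatt^{\clos{\RR}}_0$, so $\Srank(T)=0=\rank(x)$. If $\rank(x)=\alpha>0$ — necessarily a successor, since $\amrs$ is built with unions at limit stages — write $x=f^\eta(\langle q_{i,u}:i\in r,\ u\in a^\eta_i\rangle)$ with each $q_{i,u}$ of rank $<\alpha$; then $T$ is a ``spine'' $\{\langle i\rangle:i\in r\}$ of order type $r\in\RR$, coloured along the way by the $f_i$, with a shifted copy of the standard decomposition tree $T_{i,u}$ of $q_{i,u}$ hung above the spine node $\langle i\rangle$. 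A straightforward induction shows that $\PS$ is chain-finite for a standard decomposition, so every maximal chain of $T$ is a finite concatenation of non-empty suborders of the various $r(\vec{p})\in\RR$ (plus at most one terminal leaf); since $\RR$ is closed under non-empty subsets and $\clos{\RR}$ under finite sums, this order type lies in $\clos{\RR}$, and one checks likewise that each $\up t$ is a linear order and that meets exist, so $T$ is a $\clos{\RR}$-tree. The same picture exhibits $T$ as a tree-sum over the $\RR$-chain $r$ of the $T_{i,u}$, and since $\Srank(T_{i,u})=\rank(q_{i,u})$ by the inductive hypothesis, $T\in\scatt^{\clos{\RR}}_\infty$ with $\Srank(T)=\sup_{i,u}(\rank(q_{i,u})+1)=\alpha=\rank(x)$. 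By Theorem~\ref{Thm:ScatteredRank} this says $T$ is a scattered $\clos{\RR}$-tree, hence $T\in\scatt^{\clos{\RR}}_\PP(\MM\cup\AAA)\subseteq\sscatt^{\clos{\RR}}_\PP(\MM\cup\AAA)$.

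For $x\in\amri$ the limit of a limiting sequence $(x_n)_{n\in\omega}$, take $\FS=\bigcup_n\FS_n$ and $d$ as in Definition~\ref{Defn:Tx1}, and let $T_n\subseteq T$ be the substructure on the nodes arising from $\PS_n$ and the $r_n(\vec{p})$. Using that $\PS_n$ is $\up$-closed in $\PS_{n+1}$ (condition~(\ref{Item:LimitSeq1})), that $\eta_n(\vec{p})\is\eta_{n+1}(\vec{p})$ so that the colours and labels of $T_n$ agree with those of $T$ (condition~(\ref{Item:LimitSeq3})), and that each $r_n(\vec{p})$ is an initial segment of $r(\vec{p})$, I would check that $T_n$ is an $\up$-closed substructure of $T$, that $T_n=T^{\FS_n}_{d_n}$, and that $\bigcup_n T_n=T$. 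Each $x_n\in\amrs$, so by the previous paragraph each $T_n$ is a scattered $\clos{\RR}$-tree; consequently $T$ is $\clos{\RR}$-$\sigma$-scattered, and (agreeing around every point with some $T_n$) it is an $\mathscr{L}$-tree, i.e.\ $T\in\sscatt^{\clos{\RR}}_\PP(\MM\cup\AAA)$.

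The step I expect to cost the most is the exact rank equality $\Srank(T)=\rank(x)$ in the $\amrs$ case. The inequality $\Srank(T)\le\rank(x)$ falls straight out of the induction, but $\Srank(T)\ge\rank(x)$ requires the observation that a tree-sum possessing a hanging subtree of scattered rank $\beta$ has scattered rank at least $\beta+1$, together with care about the index shift between the spine order $r$ and the chain $\zeta$ of a $\zeta$-tree-sum (whose hanging trees sit \emph{strictly} above the chosen chain element), which is absorbed by a harmless reindexing of $r$ inside $\clos{\RR}$. The remaining bookkeeping — the coherence check on labels, chain-finiteness of $\PS$, and $\up$-closedness of $T_n$ in $T$ — is routine unwinding of the definitions.
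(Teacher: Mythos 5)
Your proposal is correct and follows essentially the same route as the paper: induct on $\rank(x)$, viewing the decomposition tree of $x\in\amrs$ as a $\zeta$-tree-sum (your ``spine'') of lower-rank decomposition trees to get $\Srank(T)=\sup(\rank(q_{i,u})+1)=\rank(x)$, and then cover the decomposition tree of a limit by the $\up$-closed scattered trees $T^{\FS_n}_{d_n}$ to conclude $\sigma$-scatteredness. The extra checks you include (labels/colours landing in $\PP$ and $\MM\cup\AAA$, chains lying in $\clos{\RR}$, and the lower bound in the rank equality) are just explicit verifications of points the paper's proof leaves implicit in Definition \ref{Defn:Tx} and the tree-sum rank computation.
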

\begin{proof}
Suppose $x\in \amrs$ then let $g^\FS$ be the decomposition function for $x$ as in Lemma \ref{Lemma:decompfn}, and let $d=\langle q_{\vec{p}}:\vec{p}\in \DS\rangle\in \dom(g^\FS)$ be such that $x=g^\FS(d)$ and $q_{\vec{p}}\in \AAA$ for each $\vec{p}\in \DS$. In this case we set $T=T^{\FS}_d$, so $T$ is a decomposition tree for $x$. We claim that $\Srank(T)=\rank(x)$, and prove this by induction on the rank of $x$.

If $\rank(x)=0$ then $x\in \AAA$ so that $T$ is just a single point coloured by $x$, and hence $\Srank(T)=0$. Suppose that $\rank(x)=\alpha$ and for each $y\in \CC_{<\alpha}$, and each decomposition tree $T'$ for $y$ that was constructed from the decomposition function given by Lemma \ref{Lemma:decompfn}, that we have $\rank(y)=\Srank(T')$. Then let $\zeta=\{\langle i\rangle\mid i\in r(\langle \rangle)\}\subseteq T$ so that $T$ is a $\zeta$-tree-sum of decomposition trees $T_i^u$ for some $q_{i,u}\in \CC_{<\alpha}$ $(i\in \zeta, u\in a^{\eta(\langle \rangle)}_i)$. Then using the induction hypothesis, we have
\begin{eqnarray}
\Srank(T)&=&\sup\{\Srank(T^u_i)+1\mid i\in \zeta, u\in a^{\eta(\langle \rangle)}_i\}\nonumber\\
&=&\sup\{\rank(q_{i,u})+1\mid i\in \zeta, u\in a^{\eta(\langle \rangle)}_i\}\nonumber\\
&=&\rank(x).\nonumber
\end{eqnarray}
This completes the induction, and the lemma holds for all $x\in \amrs$.
If $x\in \amri$ then a decomposition tree $T$ for $x$ was of the form $T=T_d^\FS$ for some $d$ and $\FS=\bigcup_{n\in \omega} \FS_n$, with the $\FS_n$ as from Definition \ref{Defn:LimitSequence}. By what we just proved, we have that the underlying set of $T$ is covered by $\up$-closed subsets consisting of the underling sets of $T_n=T^{\FS_n}_{d_n}\in \scatt^{\clos{\RR}}_\PP(\MM\cup \AAA)$ for some $d_n$; %By Remark \ref{Rk:DecompTreesAlwaysSSCAT} we can always assume that these $T_n$ are in $\scatt^{\clos{\RR}}_\PP(\MM\cup \AAA)$. 
hence $T\in \sscatt^{\clos{\RR}}_\PP(\MM\cup \AAA)$.
\end{proof}

%\begin{remark}
%By a simple induction it is clear that all of the trees mentioned in Definition \ref{Defn:Tx} are well-branched. Also, since each $T^n_x$ is scattered, so is each $S^n_x$, hence $T_x$ is $\sigma$-scattered, ie $T_x\in\mathbb{T}_\PP^{\clos{\RR}}(\MM\cup \AAA)$.
%\end{remark}
%\begin{remark}
%Since $x\in \amr$ always has a decomposition function, when the choice does not matter we just take any composition function and write $T_x$ for $T_x^g$.
%\end{remark}

\begin{lemma}\label{Lemma:x(t,p)}
Let $x\in \amr$ and $T=T^\FS_d$ be a decomposition tree for $x$. For any $t=\vec{p}\con \langle i\rangle \in T$ and $u\in \range(l_t)$, there exists some $x(t,u)\in \amr$ with a decomposition tree $T_{x(t,u)}$ such that $${}^u\downs t=T_{x(t,u)}.$$
Moreover if $x\in \amrs$ and $\vec{p}\neq\langle \rangle$ or $u\neq s^{\langle \rangle}_i$; then $x(t,u)\in \amrs$ with $\rank(x(t,u))<\rank(x)$.
\end{lemma}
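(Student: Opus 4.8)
The plan is to read the subtree ${}^u\downs t$ directly off the decomposition tree $T=T^{\FS}_d$, to identify it on the nose with a decomposition tree $T^{\FS'}_{d'}$ for a composition set $\FS'$ built from $\FS$ (a restriction or a truncation) and $d'=d\restriction\DS'$, and then to take $x(t,u)$ to be the value of the composition $g^{\FS'}$ at $d'$ (a limit of such values in the $\amri$ case). Write $t=\vec p\con\langle i\rangle$ and $\eta(\vec p)=\langle\langle f^{\vec p}_j,s^{\vec p}_j\rangle:j\in r(\vec p)\rangle$, and for $\vec q\in\PS\setminus\DS$ let $y_{\vec q}=f^{\eta(\vec q)}(k^{\vec q})$ be the element decomposed at position $\vec q$, so $y_{\langle\rangle}=x$ in the scattered case. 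By Definition \ref{Defn:Tx} the values in $\range(l_t)$ come in two kinds, and I treat each.

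If $u\in a^{\eta(\vec p)}_i$, put $\vec w=\vec p\con\langle i,u\rangle\in\PS$. Unwinding the ordering of $T$ and the definition of $l_t$ gives ${}^u\downs t=\{y\in T:\vec w\is y\}$, which is exactly the vertex set of $T^{\FS(\vec w)}_{d\restriction\DS(\vec w)}$; the two agree as structured trees since the order, colours and labels of each are determined by the data $\eta(\vec q),d^{\vec q}$ with $\vec w\is\vec q$. If $\vec w\in\DS$ this is a single point coloured $d^{\vec w}\in\AAA$, and I take $x(t,u)=d^{\vec w}$; otherwise $\FS(\vec w)$ is an admissible sub-composition-set of $\FS$, and I take $x(t,u)=g^{\FS(\vec w)}(d\restriction\DS(\vec w))=y_{\vec w}$, a component of $k^{\vec p}$ and hence a member of $\amrs$. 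If instead $u=s^{\vec p}_i$ with $i\neq\max r(\vec p)$, then $\{j\in r(\vec p):j>i\}$ is a non-empty subset of $r(\vec p)\in\RR$, so $\eta(\vec p)^+_i$ is a composition sequence, having the same argument sets at each index $j>i$ as $\eta(\vec p)$; restricting the positions to $\vec p$ together with those lying weakly above some $\vec p\con\langle j,w\rangle$ with $j>i$, and putting $\eta(\vec p)^+_i$ at the root $\vec p$, yields a composition set $\FS'$ with $T^{\FS'}_{d\restriction\DS'}={}^u\downs t$; the second clause of Definition \ref{Defn:Riterable} then identifies $g^{\FS'}(d\restriction\DS')$ with $(k^{\vec p})_{s^{\vec p}_i}$, which I take as $x(t,u)\in\amrs$.

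For the rank clause I use that the ranks $\rank(y_{\vec q})$ strictly decrease along chains of $\PS$: passing from $\vec q$ to $\vec q\con\langle j,w\rangle$ replaces $y_{\vec q}$ by a component of $k^{\vec q}$, and for a standard decomposition (Lemma \ref{Lemma:decompfn}, via Proposition \ref{Prop:Reduction}) one has $k^{\vec q}\in\dom(f^{\eta(\vec q)})_{<\rank(y_{\vec q})}$; hence $\rank(y_{\vec q})<\rank(x)$ whenever $\vec q\neq\langle\rangle$. In the first case $x(t,u)$ is $d^{\vec w}\in\AAA$ or $y_{\vec w}$ with $\vec w=\vec p\con\langle i,u\rangle\neq\langle\rangle$, so its rank is $<\rank(x)$ in every instance. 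In the second case $x(t,u)$ is one composition function applied to components of $k^{\vec p}$, so $\rank(x(t,u))\leq\rank(y_{\vec p})$, which is $<\rank(x)$ exactly when $\vec p\neq\langle\rangle$; the remaining possibility, $\vec p=\langle\rangle$ with $u=s^{\langle\rangle}_i$, is precisely the one excluded in the statement. Finally, when $x\in\amri$ is the limit of a limiting sequence $(x_n)$ with decomposition data $\FS_n$, I apply the scattered case to each $x_n$ with $n$ large enough that $\vec p\in\PS_n\setminus\DS_n$, getting $x_n(t,u)$ with decomposition tree the corresponding restriction/truncation $\FS'_n$ of $\FS_n$; checking that the $\FS'_n$ still satisfy conditions (\ref{Item:LimitSeq1})--(\ref{Item:LimitSeq5}) of Definition \ref{Defn:LimitSequence} shows $(x_n(t,u))_n$ is a limiting sequence, and its limit, whose decomposition tree is $\bigcup_n T^{\FS'_n}_{d'_n}={}^u\downs t$, serves as $x(t,u)$.

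I expect the two genuinely delicate points to be: (a) this last inheritance step — verifying that cutting a limiting sequence's data below a fixed $\vec w$, or truncating a fixed composition sequence inside it at a fixed $i$, preserves the $\up$-closedness condition (\ref{Item:LimitSeq1}) and the stabilisation conditions (\ref{Item:LimitSeq4})--(\ref{Item:LimitSeq5}); and (b) the combinatorial check, in the $u=s^{\vec p}_i$ subcase, that the ordering and the labelling functions of ${}^u\downs t$ are exactly those produced by the truncated $\FS'$ — routine, but with several cases in the definitions of the order and of $l_{(\cdot)}$ to reconcile. Everything else is bookkeeping with the definitions of $T^{\FS}_d$, $\FS(\vec p)$ and $g^{\FS}$.
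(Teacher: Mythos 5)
Your proposal is correct and follows essentially the same route as the paper's proof: in the non-distinguished case you restrict the composition set to the positions extending $\vec p\con\langle i,u\rangle$, in the distinguished case you truncate $\eta(\vec p)$ to $\eta(\vec p)^+_i$ and re-root, you take $x(t,u)$ to be the value of the restricted/truncated composition, and in the $\amri$ case you apply this to the $x_n$ for large $n$ and take the limit of the resulting limiting sequence, exactly as in the paper. The only minor divergence is the rank clause, where the paper deduces $\rank(x(t,u))<\rank(x)$ from the equality $\Srank(T)=\rank(x)$ of Lemma \ref{Lemma:TreeRanksTheSame}, while you argue directly from the fact that a standard decomposition has $k^{\vec q}\in\dom(f^{\eta(\vec q)})_{<\rank(y_{\vec q})}$ at every position; both arguments are sound.
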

\begin{proof}
Let $\PS$ be a set of position sequences for $\FS$ and fix $t=\vec{p}\con \langle i\rangle \in T$ and $u\in \range(l_t)$. We will find $x(t,p)$. First, if $u\neq s_i^{\vec{p}}$ we define:
\begin{itemize}
	\item $\PS_{t,u}=\{\vec{q}\in \PS\mid \vec{p}\con \langle i,u\rangle\is \vec{q}\}$,
	\item $\FS_{t,u}=\{\eta(\vec{q})\mid \vec{p}\con \langle i,u\rangle\is \vec{q}\}\subseteq \FS$,
\end{itemize}
and if $u=s_i^{\vec{p}}$ we define:
\begin{itemize}
	\item $\eta'(\vec{p})=\eta(\vec{p})^+_{i}$, and $\eta'(\vec{q})=\eta(\vec{q})$ whenever $\vec{p}\neq\vec{q}\in \PS$,
	\item $\PS_{t,u}=\{\vec{q}\in \PS\mid (\exists j>i )(\exists v\in a^{\eta(\vec{p})}_j),\vec{p}\con \langle j,v\rangle \is \vec{q} \}\cup \{\vec{p}\}$,
	\item $\FS_{t,u}=\{\eta'(\vec{q})\mid \vec{q}\in\PS_{t,u}\}$.
\end{itemize}
Now let
\begin{itemize}
	\item $\DS_{t,u}$ be the set of leaves of $\PS_{t,u}$,
	\item $d_{t,u}=\langle d^{\vec{q}}:\vec{q}\in \DS_{t,u}\rangle$, (where $d=\langle d^{\vec{q}}:\vec{q}\in \DS\rangle$),
	\item $T_{t,u}=T^{\FS_{t,u}}_{d_{t,u}}$.
\end{itemize}
Then in either case, by construction we have that $T_{t,u}={}^u\downs t$.

If $g^{\FS_{t,u}}(d_{t,u})\in \amrs$ then set $x(t,u)=g^{\FS_{t,u}}(d_{t,u})$, so we have that $T_{t,u}$ is by definition a decomposition tree for $x(t,u)$. Moreover if $\vec{p}\neq \langle \rangle$ or $u\neq s_i^{\langle \rangle}$ then there is some $i_0\in r(\langle \rangle)$ and $u_0\in \range(l_{\langle i_0\rangle})$ such that $T_{t,u}\subseteq {}^{u_0}\downs i_0$ and hence by Lemma \ref{Lemma:TreeRanksTheSame} we have $$\rank(x(t,u))=\Srank(T_{t,u})\leq \Srank({}^{u_0}\downs i_0)<\Srank(T)=\rank(x).$$

%It just remains to find $x(t,u)$ when $x\in \amri$. We will set $x(t,u)$ to be the limit of a limiting sequence constructed as follows. Let $\PS^n_{t,u}$ be the set of sequences from $\PS_{t,u}$ of length at most $\length(\vec{p})+1+n$, let $\FS^n_{t,u}=\{\eta(\vec{q})\mid \vec{q}\in \PS^n_{t,u}\}$, and let $\DS_{t,u}^n$ be the set of leaves of $\PS_{t,u}^n$. Now we let $e^n_{t,u}=\langle e^{\vec{q}}:\vec{q}\in \DS^n_{t,u}\rangle$ where $$e^{\vec{q}}=\left\{\begin{array}{lcl}
%q_0&:&\vec{q}\notin \DS_{t,u}\\
%d^{\vec{q}}&:&\vec{q}\in \DS_{t,u}
%\end{array}\right..$$

It just remains to find $x(t,u)$ when $x\in \amri$. First suppose that $x$ is the limit of $(x_n)_{n\in \omega}$, then consider $x_n(t,u)$ for every $n$ large enough so that $t\in T^{\FS_n}$. Then since $(x_n)_{n\in \omega}$ was a limiting sequence; for some $m\in \omega$ we have that $(x_n(t,u))_{n\geq m}$ is a limiting sequence. We let $x(t,u)$ be its limit,
%
%We have now constructed a limiting sequence $(g^{\FS^n_{t,u}}(e^n_{t,u}))_{n\in \omega}$, we let $x(t,u)$ be its limit, 
then by construction $x(t,u)$ has a decomposition tree equal to $$T_{d_{t,u}}^{\bigcup_{n\in \omega}\FS^n_{t,u}}=T^{\FS_{t,u}}_{d_{t,u}}=T_{t,u}={}^u\downs t.$$
\end{proof}

\begin{defn}
For a given $x\in \amr$ and a decomposition tree $T$ for $x$, let $\zeta$ be a $\up$-closed chain of $T$ that contains no leaf. For each $i\in \zeta$, if $i$ is not the maximum element of $\zeta$ then let $j_i$ be an arbitrary element of $\zeta\cap \down i$, otherwise let $j_i$ be an arbitrary element of $\down i$. We then define $$\eta(\zeta)=\langle \langle \col(i),l_i(j_i)\rangle :i\in \zeta\rangle$$
which is a composition sequence since for each $i\in \zeta$ we have $\col(i)$ is a function of $\MM$, and since the labels below $i$ are elements of $\arity(\col(i))$. We note that the choice of $j_i$ made no difference to $l_i(j_i)$, except possibly when $i$ is maximal, in which case only $s_i$ is ambiguous - but this makes no difference to $f^{\eta(\zeta)}$.

To simplify notation, for each $i\in \zeta$, we let $f^\zeta_i=\col(i)$, $s^{\zeta}_i=l_i(j_i)$ and $a_i^\zeta=a_i^{\eta(\zeta)}$. We also define $k^\zeta=\langle x(i,u):i\in \zeta, u\in a^{\eta(\zeta)}_i\rangle\in \dom(f^{\eta(\zeta)})$, %, where for each $i\in \zeta$ we have $k^\zeta_i\in \CC^{a^{\zeta}_i}$ with $\col(p)=x(i,p)$ for each $p\in a^{\zeta}_i$. 
where $x(i,u)$ is as from Lemma \ref{Lemma:x(t,p)}.
\end{defn}
We now require the following lemma, which will allow us to swap composition sequences.

\begin{lemma}\label{Lemma:ChangeOfPos}
Let $\MM$ be $\RR$-iterable, $x\in \amrs$ have a decomposition tree $T=T_d^\FS$ and let $\zeta$ be a $\up$-closed chain of $T$, that is either maximal or has a maximum element, then
$$x=f^{\eta(\zeta)}(k^\zeta).$$
Moreover if $\OA$ has limits, then the conclusion holds for all $x\in \amr$.
\end{lemma}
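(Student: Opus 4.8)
The plan is to prove the statement first for $x\in\amrs$ by induction on $\rank(x)$, and then deduce the case $x\in\amri$ from the fact that $\OA$ has limits. For the base case, if $\rank(x)=0$ then $x\in\AAA$ and the decomposition tree $T$ is a single point; the only $\up$-closed chain is a subset of a leaf, so $\zeta=\emptyset$ (a chain containing no leaf would be empty here, but the hypothesis allows $\zeta$ with a maximum — in that degenerate case the claim is immediate since $f^{\eta(\zeta)}$ is trivial). For the inductive step, write $x=f^{\eta}(k)$ where $\eta=\eta(\langle\rangle)$ is the top composition sequence of $\FS$ and $k=\langle q_u:u\in A^\eta\rangle\in\dom(f^\eta)_{<\rank(x)}$, using Proposition \ref{Prop:Reduction} together with the construction of the standard decomposition function. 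Let $\zeta$ be the given $\up$-closed chain. The key case split is on whether $\zeta$ meets the "first level" $\{\langle i\rangle : i\in r(\langle\rangle)\}$ of $T$ or lies entirely inside some ${}^u\downs\langle i_0\rangle$ for a single $\langle i_0\rangle$ and $u\in a^{\eta(\langle\rangle)}_{i_0}$.

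If $\zeta$ lies inside some ${}^u\downs\langle i_0\rangle = T_{x(\langle i_0\rangle,u)}$, then by Lemma \ref{Lemma:x(t,p)} we have $x(\langle i_0\rangle,u)\in\amrs$ with strictly smaller rank, and $\zeta$ is a $\up$-closed chain of its decomposition tree with the required maximality property; so the induction hypothesis gives $x(\langle i_0\rangle,u)=f^{\eta(\zeta)}(k^\zeta)$, and we then lift this back up using $\RR$-iterability: $x=f^{\eta}(k)$ and $q_{i_0,u}=x(\langle i_0\rangle,u)$, so replacing this one argument by its own $f^{\eta(\zeta)}$-expression and unwinding the composition via property (\ref{Item:Riterable2}) of Definition \ref{Defn:Riterable} recovers $x=f^{\eta(\zeta)}(k^\zeta)$ after absorbing the outer $f^\eta$ (note that in this sub-case $\eta(\zeta)$ as built from the decomposition tree of $x(\langle i_0\rangle,u)$ coincides with $\eta(\zeta)$ built from $T$, since the colours and labels agree). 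If instead $\zeta$ meets the first level, let $i$ be its minimum; then $\zeta=\{\langle i\rangle\}\cup\zeta'$ essentially, where $\zeta'$ sits inside ${}^{s_i}\downs\langle i\rangle$. Now $x=f^\eta(k)$, and by $\RR$-iterability (splitting $\eta$ at $i$ via $\eta^-_i,\eta^+_i$ and using (\ref{Item:Riterable2})) together with the observation that $x(\langle i\rangle,s_i)=f^{\eta^+_i}(\ldots)$ equals the value obtained from the sub-decomposition tree ${}^{s_i}\downs\langle i\rangle$, we reduce to expressing $x(\langle i\rangle,s_i)$ via $\zeta'$ by the induction hypothesis and then reassembling along $\zeta$. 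The bookkeeping here — matching $a^\zeta_i$ with the leftover arguments $a^{\eta(\vec p)}_i$, and checking that $k^\zeta = \langle x(i,u): i\in\zeta, u\in a^{\eta(\zeta)}_i\rangle$ is exactly the tuple produced by iterating (\ref{Item:Riterable2}) — is the main technical content, and is where I expect the real work to lie; it amounts to an induction on $\length(\zeta)$ (with a limit stage handled by the same decomposition-set gluing as in Lemma \ref{Lemma:decompfn}), showing that peeling off the bottom element of $\zeta$ corresponds to one application of the iterability axiom.

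For the case $x\in\amri$, suppose $\OA$ has limits and $x$ is the limit of a limiting sequence $(x_n)_{n\in\omega}$ with decomposition trees $T_n = T^{\FS_n}_{d_n}$ whose union gives the decomposition tree $T=T^\FS_d$ for $x$. Given the $\up$-closed chain $\zeta$ of $T$, set $\zeta_n = \zeta\cap T_n$; these are $\up$-closed chains of the $T_n$, and by conditions (\ref{Item:LimitSeq2}) and (\ref{Item:LimitSeq3}) of Definition \ref{Defn:LimitSequence} the composition sequences $\eta_n(\vec p)$ along them increase with $\is$, so $\eta(\zeta) = \bigcup_n \eta(\zeta_n)$ and each $k^{\zeta_n}$ is the restriction of a limiting sequence converging to $k^\zeta$ (using Lemma \ref{Lemma:x(t,p)}, which gives $x(i,u)$ as the limit of the $x_n(i,u)$). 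Applying the case already proved to each $x_n$ gives $x_n = f^{\eta(\zeta_n)}(k^{\zeta_n})$, and then the definition of "$\OA$ has limits" (Definition \ref{Defn:HasLimits}) identifies the limit of the left-hand sides — which is $x$ — with $f^{\eta(\zeta)}(k^\zeta)$, completing the proof. A minor point to check is that the decomposition trees and standard decomposition functions are set up so that $\zeta_n$ is genuinely $\up$-closed in $T_n$ and either maximal in $T_n$ or has a maximum; this follows from (\ref{Item:LimitSeq1}) and (\ref{Item:LimitSeq5}).
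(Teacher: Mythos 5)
Your treatment of the limit case is essentially the paper's (restrict $\zeta$ to the trees $T_n$, apply the scattered case, then use uniqueness of limits via Definition \ref{Defn:HasLimits}), but the core non-limit case has a genuine gap. The main work is delegated to an ``induction on $\length(\zeta)$'', peeling off the bottom element of $\zeta$ and applying the iterability axiom once per step. This induction is not available: $\zeta$ is an $\up$-closed chain of a $\clos{\RR}$-tree, so its order type is an arbitrary member of $\clos{\RR}$ --- it need not be well-founded, and it need not even have a minimum element (e.g.\ when $r(\langle\rangle)$ has no least element), so there is no bottom element to peel and no well-founded recursion to run. The appeal to ``the same gluing as Lemma \ref{Lemma:decompfn}'' does not rescue the limit stages, since that construction is only for chain-finite trees of compositions, and Definition \ref{Defn:Riterable}(\ref{Item:Riterable2}) only licenses splitting a composition sequence at a single index, not transfinite reassembly along $\zeta$. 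There is also a structural slip in the case analysis: an $\up$-closed chain can never lie entirely inside some ${}^{u}\downs \langle i_0\rangle$ (closure under predecessors forces it to contain the first-level points below $\langle i_0\rangle$), and in your second case $\zeta\setminus\{\langle i\rangle\}$ need not sit inside ${}^{s_i}\downs\langle i\rangle$: it may branch into ${}^{u}\downs\langle i\rangle$ for some $u\in a^{\eta}_i$ with $u\neq s_i$, and this branching is precisely the situation the lemma is about.

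The paper avoids any induction along $\zeta$. It compares $\zeta$ with the spine $\xi=\{\langle i\rangle\mid i\in r(\langle\rangle)\}$ (so $\eta(\xi)=\eta(\langle\rangle)$); the only nontrivial situation is $\zeta\not\is\xi$ and $\xi\not\is\zeta$, in which case $\xi\cap\zeta$ has a maximum $m$ (the node where $\zeta$ leaves the spine). One then applies $\RR$-iterability once at $m$ to both $\eta(\xi)$ and $\eta(\zeta)$, notes $\eta(\xi)^-_m=\eta(\xi\cap\zeta)=\eta(\zeta)^-_m$, and checks that the two argument tuples $w^\xi$ and $w^\zeta$ agree. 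They agree automatically except possibly at the positions $s^\xi_m$ and $s^\zeta_m$; at $s^\xi_m$ both equal $x(m,s^\xi_m)$ by the construction in Lemma \ref{Lemma:x(t,p)}, and at $s^\zeta_m$ the equality $x(m,s^\zeta_m)=f^{\eta(\zeta)^+_m}(\langle w^\zeta_u:u\in A^{\eta(\zeta)^+_m}\rangle)$ is exactly one application of the rank induction hypothesis, legitimate because $\rank(x(m,s^\zeta_m))<\rank(x)$ and $\zeta\cap\down m$ is an $\up$-closed chain of the decomposition tree ${}^{s^\zeta_m}\downs m$. If you want to salvage your outline, replace the element-by-element peeling with this single split at the branch point; as written, the proposal does not yield the lemma for chains of infinite (or non-well-ordered) type, which is the case the lemma is needed for.
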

\begin{proof}
Let $\xi=\{\langle i\rangle\mid i\in r(\langle \rangle)\}$ be the chain satisfying $\eta(\xi)=\eta(\langle \rangle)$. % Note that we do not lose generality with this assumption, because if we can prove the result for a single $\xi$ and general $\zeta$, then we will have the result for general $\xi$. 
We assume without loss of generality that $\zeta \not \is \xi$, since otherwise the lemma follows from a simple application of $\RR$-iterability (noticing that $\eta(\zeta)=\eta(\xi)^-_{\max(\zeta)}$). We also assume that $\xi\not \is \zeta$ since $\xi\is \zeta$ is impossible unless $\xi$ has a maximum element,\footnote{This is by definition of the order on decomposition trees.} in which case the lemma follows similarly by $\RR$-iterability. Clearly the Lemma holds when $\rank(x)=0$ so suppose for induction that $\rank(x)=\alpha$ and the lemma holds for all $y\in \CC_{<\alpha}$.

By Lemma \ref{Lemma:TreeRanksTheSame} we have $T\in\sscatt^{\clos{\RR}}_\PP(\MM\cup \AAA)$ and thus $\xi\cap \zeta$ has a maximal element $m$. 
 By Proposition \ref{Prop:Reduction}, we have that $x=f^{\eta(\xi)}(k^\xi)$ and $k^\xi\in \dom(f^{\eta(\xi)})_{<\alpha}$.
For each $\chi\in \{\zeta,\xi\}$ and $u\in A^{\eta(\chi)}$ we define %$w^\chi\in \dom(f^{\eta(\chi)^-_m})$ so that $w^\chi=\langle w^\chi_{u}:u\in A^{\eta(\chi)^-_m}_i\rangle$ by letting 
$w^\chi_{u}=x(i,u)$ whenever $i$ is such that $u\in a_i^{\eta(\chi)}$, and %$w^\xi_m\in \CC^{\arity(f_m^\xi)}$ be such that $w^\xi_m\restriction a^{\xi}_m=k^\xi_m$ and %for the remaining $s\in k^\eta_i$ (ie $s$ is in position $s_i$) we have
 $$w^\chi_{s^\chi_m}=f^{\eta(\chi)^+_m}(\langle w^\chi_u:u\in A^{\eta(\chi)^+_m}\rangle).$$
We then set $w^\chi=\langle w^\chi_{u}:u\in A^{\eta(\chi)^-_m}_i\rangle\in \dom(f^{\eta(\chi)})$. %Now set $w^\xi_i=k^\xi_i$ whenever $i<m$. 
%where $w_u^\xi=x(i,u)$ whenever $u\in a_i^{\eta(\zeta)^+_m}$. 
Thus, since $\MM$ is $\RR$-iterable, we have $$f^{\eta(\xi)}(k^\xi)=f^{\eta(\xi)^-_m}(w^\xi) \hspace{10pt}\mbox{ and }\hspace{10pt} f^{\eta(\zeta)}(k^\zeta)=f^{\eta(\zeta)^-_m}(w^\zeta).$$
%We define similarly $w^\zeta\in \dom(f^{\eta(\zeta)})$, and $w^\zeta_u$ $(u\in A^\eta\cup\{s_m^\zeta\})$ with $\zeta$ in place of $\xi$. 
%So by the same argument, we have
%$$f^{\eta(\zeta)}(k^\zeta)=f^{\eta(\zeta)^-_m}(w^\zeta).$$
Clearly $\eta(\xi)^-_m=\eta(\xi\cap \zeta)=\eta(\zeta)^-_m$ since $m$ was the maximal element of $\xi\cap \zeta$. It remains to verify that $w^\xi=w^\zeta$. By definition of $m$, for all $i< m$ and $u\in a^\xi_i=a^\zeta_i$, we have $w_{u}^\xi =x(i,u)=w_{u}^\zeta$. We also have $w_{u}^\xi=x(i,u)=w_{u}^\zeta$ for $u\in a^\xi_m\cap a^\zeta_m$. So the only possible cases in which $w_{u}^\xi\neq w_{u}^\zeta$ are when $u\in \{s^\zeta_m,s^\xi_m\}$, % Since $f^\xi_m=f^\zeta_m$, these functions have the same arity, hence $w_m^\xi$ and $w_m^\zeta$ have the same structure, and it just remains to verify their colours are equivalent.
hence it only remains to verify that they agree here too.
%We have now that $w_m^\xi\restriction a^{\xi}_m=k^\xi_m$ and $w_m^\zeta\restriction a^{\zeta}_m=k^\zeta_m$. So we see that the elements of $w_m^\xi$ and $w_m^\zeta$ share the same colours on the set $$D=\dom(f_m^\xi)\setminus \{s_m^\xi, s_m^\zeta\}$$ since for $p\in D$, considering $p$ as an element of either $w_m^\xi$ or $w_m^\zeta$, in both cases we see that the colour of $p$ is precisely $x(m,p)$. 
%There are now two remaining colours to verify. Firstly, we want that $\col_{w_m^{\xi}}(s^\xi_m)=\col_{w_m^{\zeta}}(s^\xi_m)$, and secondly that $\col_{w_m^{\xi}}(s^\zeta_m)=\col_{w_m^{\zeta}}(s^\zeta_m)$.

Since $\xi \not \is \zeta$ and $\zeta \not \is \xi$ we have $s^\xi_m\neq s^\zeta_m$, and therefore $s^\xi_m\in a^\zeta_m$. Thus 
$$w_{s^\xi_m}^{\zeta}=x(m,s^\xi_m).$$ We also know 
$$w^\xi_{s^\xi_m}=f^{\eta(\xi)^+_m}(\langle w^\xi_u:u\in A^{\eta(\xi)^+_m}\rangle)=f^{\eta(\xi)^+_m}(\langle x(i,u):i\in \xi, i> m, u\in a^{\eta(\xi)^-_m}_i\rangle)$$
and hence by the construction of $x(m,s^\xi_m)$ from Lemma \ref{Lemma:x(t,p)}, we see that 
$$w^\xi_{s^\xi_m}=x(m,s^\xi_m).$$
Hence indeed we have $w^{\xi}_{s^\xi_m}=w^{\zeta}_{s^\xi_m}$. 

Similarly, we have that $w_{s^\zeta_m}^\xi=x(m,s_m^\zeta)$ and $w^\zeta_{s^\zeta_m}=f^{\eta(\zeta)^+_m}(\langle w^\zeta_u:u\in A^{\eta(\zeta)^+_m}\rangle)$. It remains only to show that these are equal. Note that by Lemma \ref{Lemma:x(t,p)}, since $s_m^\zeta\neq s_m^\xi$ we have that $\rank(x(m,s_m^\zeta))<\alpha$. We also have that $\eta(\zeta)^+_m=\eta(\zeta\cap \down m)$ and $\zeta\cap \down m$ is a $\up$-closed chain of ${}^{s_m^\zeta}\downs m$, which is a decomposition tree for $x(m,s_m^\zeta)$. So by the induction hypothesis we have $$w_{s^\zeta_m}^\xi=x(m,s_m^\zeta)=f^{\eta(\zeta)^+_m}(\langle w^\zeta_u:u\in A^{\eta(\zeta)^+_m}\rangle)=w^\zeta_{s^\zeta_m}.$$
So we have verified that $f^{\eta(\xi)^-_m}(w^\xi)=f^{\eta(\zeta)^-_m}(w^\zeta)$, and therefore $x=f^{\eta(\xi)}(k^\xi)=f^{\eta(\zeta)}(k^\zeta)$.

Now suppose that $\OA$ has limits and let $x$ be the limit of $(x_n)_{n\in \omega}$. For each $n\in \omega$ let $T_n$ be the corresponding decomposition tree for $x_n$, such that the underlying set of $T$ is the union of the underlying sets of the $T_n$ $(n\in \omega)$. Then $\zeta=\bigcup_{n\in \omega}\zeta_n$ where for each $n\in \omega$, $\zeta_n$ is a $\up$-closed, chain of $T_n$ which is either maximal or has a maximal element. So by what we just proved, we have that $x_n=f^{\eta(\zeta_n)}(k^{\zeta_n})$. But we know by Definition \ref{Defn:HasLimits}, that the limit of the limiting sequence $(f^{\eta(\zeta_n)}(k^{\zeta_n}))_{n\in \omega}$ is precisely $f^{\eta(\zeta)}(k^\zeta)$. Therefore since limits were unique, we have $f^{\eta(\zeta)}(k^\zeta)= x$ as required.
\end{proof}
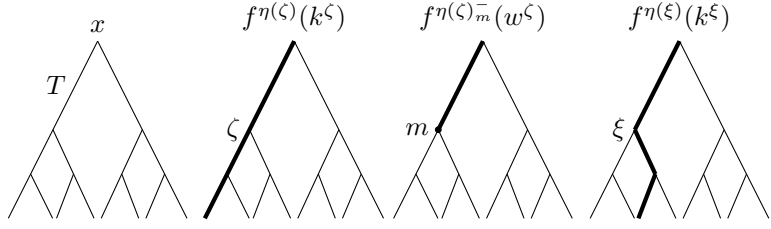
\begin{figure}
  
  \centering
    \begin{tikzpicture}[scale=0.47]
%%%
%%%	\node [left] at (1,5) {$f^\eta$};
%%%\draw [thick] (0.1,0.5) -- (1,5);
%%%\draw [fill] (0.8,4) circle [radius=0.06];
%%%\node [left] at (0.8,4) {$i$};
%%%	\draw  (0.8,4) -- (0.16,3.3);
%%%	\draw  (0.8,4) -- (1.16,3.3);
%%%	\draw  (0.8,4) -- (1.66,3.3);
%%%	\draw  (0.8,4) -- (2.16,3.3);
%%%	\draw  (0.8,4) -- (2.66,3.3);
%%%	\draw [fill] (2.16,3.3) circle [radius=0.06];
%%%	\node [below left] at (2.16,3.3) {$f^\nu$};
%%%%	\node [left] at (0.66,3.3) {$s_i$};
%%%%	\node [rotate=90, scale=1.5] at (1.93,3.1) {$\Bigg \{$};
%%%%	\node at (1.93,2.8) {$a_i$};
\node [above] at (0,5) {$x$};
\draw (0,5) -- (-2.5,0);
\draw (0,5) -- (2.5,0);
\draw (-1.25,2.5) -- (-0.1,0);
\draw (1.25,2.5) -- (0.1,0);
\draw (-1.875,1.25) -- (-1.26,0);
\draw (1.875,1.25) -- (1.35,0);
\draw (-0.675,1.25) -- (-1.15,0);
\draw (0.675,1.25) -- (1.15,0);
\node [left] at (-0.625,3.75) {$T$};

\node [above] at (5.5,5) {$f^{\eta(\zeta)}(k^\zeta)$};
\draw [ultra thick] (5.5+0,5) -- (5.5-2.5,0);
\draw (5.5+0,5) -- (5.5+2.5,0);
\draw (5.5-1.25,2.5) -- (5.5-0.1,0);
\draw (5.5+1.25,2.5) -- (5.5+0.1,0);
\draw (5.5-1.875,1.25) -- (5.5-1.26,0);
\draw (5.5+1.875,1.25) -- (5.5+1.35,0);
\draw (5.5-0.675,1.25) -- (5.5-1.15,0);
\draw (5.5+0.675,1.25) -- (5.5+1.15,0);
\node [left] at (5.5-1.25,2.5) {$\zeta$};
\end{tikzpicture}
\begin{tikzpicture}[scale=0.47]
\node [above] at (0,5-6) {$f^{\eta(\zeta)^-_m}(w^\zeta)$};
\draw (0,5-6) -- (-2.5,0-6);
\draw [ultra thick] (0,5-6) -- (-1.25,2.5-6);
\draw (0,5-6) -- (2.5,0-6);
\draw (-1.25,2.5-6) -- (-0.1,0-6);
\draw (1.25,2.5-6) -- (0.1,0-6);
\draw (-1.875,1.25-6) -- (-1.26,0-6);
\draw (1.875,1.25-6) -- (1.35,0-6);
\draw (-0.675,1.25-6) -- (-1.15,0-6);
\draw (0.675,1.25-6) -- (1.15,0-6);
\draw [fill] (-1.25,2.5-6) circle [radius=0.08];
%\node [left] at (-0.625,3.75-6) {$\eta(\zeta)^-_m$};
\node [left] at (-1.25,2.5-6) {$m$};

\node [above] at (5.5,5-6) {$f^{\eta(\xi)}(k^\xi)$};
\draw (5.5+0,5-6) -- (5.5-2.5,0-6);
\draw [ultra thick] (5.5,5-6) -- (5.5-1.25,2.5-6);
\draw (5.5+0,5-6) -- (5.5+2.5,0-6);
\draw (5.5-1.25,2.5-6) -- (5.5-0.1,0-6);
\draw [ultra thick] (5.5-1.25,2.5-6) -- (5.5-0.675,1.25-6);
\draw (5.5+1.25,2.5-6) -- (5.5+0.1,0-6);
\draw (5.5-1.875,1.25-6) -- (5.5-1.26,0-6);
\draw (5.5+1.875,1.25-6) -- (5.5+1.35,0-6);
\draw [ultra thick] (5.5-0.675,1.25-6) -- (5.5-1.15,0-6);
\draw (5.5+0.675,1.25-6) -- (5.5+1.15,0-6);
\node [left] at (5.5-1.25,2.5-6) {$\xi$};
\end{tikzpicture}
\caption{The method of the proof of Lemma \ref{Lemma:ChangeOfPos}.}
\end{figure}

\section{The construction is bqo}\label{Section:WBConstruction}
Now we aim to prove that if $x,y\in \amr$ have decomposition trees $T_x$ and $T_y$ respectively, then $x\leq y$. This will allow us to reflect bad $\amr$-arrays to bad arrays for the trees. Then we can use a structured tree theorem such as Theorem \ref{Thm:Kriz} or Theorem \ref{Thm:TLPWell-Behaved} in order to show that $\amr$ is bqo.
\subsection{The non-limit case}

%%%%%\begin{lemma}
%%%%%Suppose that $\OA$ has limits, $x\in \amri$ has a decomposition tree $T$, and let $\zeta$ be a $\up$-closed chain of $T$ that is either maximal or has a maximal element, then
%%%%%$$f^{\eta(\zeta)}(k^\zeta)= x.$$
%%%%%\end{lemma}
%%%%%\begin{proof}
%%%%%
%%%%%\end{proof}

%\begin{cor}\label{Cor:TSizeReduction}
%Suppose $\OA$ is infinitely extensive. Let $x\in \amrs$ and $t\in T^n_x$ for some $n\in \omega$ be such that $\col(t)\in \CC$. Then we have that $\col(t)\leq \col(P_x(t))$.
%\end{cor}
%\begin{proof}
%Let $u=P_x(t)$, so by the construction of $T_x$, there is a composition sequence $\eta$, some $k\in \dom(f^\eta)$ with $k=\langle k_j:j<r\rangle$, some $i\in r$, and some $p\in k_i$, such that $$\col(u)=f^{\eta}(k)\mbox{ with }\col(t)=\col(p).$$ %(ie $\eta$ is the central branch of $H(\col(t_1))$).
%So by Lemma \ref{Lemma:RealInfExtensive}, we see that $\col(t)\leq \col(u)$.
%\end{proof}

\begin{thm}\label{Thm:Non-Limit}
Suppose $\OA$ is infinitely extensive. If $x\in \amrs$ and $y\in \amr$ have decomposition trees $T_x$ and $T_y$ respectively such that $T_x\leq T_y$, then $x\leq y$.
\end{thm}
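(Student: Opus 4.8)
The plan is to prove the statement by induction on $\rank(x)$, peeling off the top composition function of the decomposition tree $T_x$ and matching it against an appropriate composition sequence living inside $T_y$ (which need not be rooted at the root of $T_y$).

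First I would unpack what the hypothesis $T_x \leq T_y$ gives: an embedding $\varphi \colon T_x \to T_y$ of $\PP$-structured $\clos{\RR}$-trees, so $\varphi$ preserves order, meets, colours (which are elements of $\MM \cup \AAA$, ordered so that embeddings increase them), and induces embeddings of the arity structures via the labels. Since $\rank(x) = \Srank(T_x)$ by Lemma \ref{Lemma:TreeRanksTheSame}, the base case $\rank(x) = 0$ is immediate: $T_x$ is a single point coloured by $x \in \AAA$, so $\varphi$ picks out a point of $T_y$ whose colour is $\geq_\CC x$ (if it is in $\AAA$) or, more carefully, we use that $x$ being a leaf forces the matched point to carry a colour above $x$; either way Lemma \ref{Lemma:RealInfExtensive} / extensivity lets us conclude $x \leq y$. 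For the inductive step, let $\xi = \{\langle i \rangle : i \in r(\langle \rangle)\}$ be the top chain of $T_x$, so that $x = f^{\eta(\xi)}(k^\xi)$ with $k^\xi = \langle x(i,u) : i \in \xi, u \in a_i^{\eta(\xi)}\rangle$ and each $x(i,u) \in \amrs$ of rank $< \rank(x)$ by Lemma \ref{Lemma:x(t,p)}. The image $\varphi(\xi)$ is a chain in $T_y$; take $\zeta$ to be the $\up$-closed chain of $T_y$ generated by $\varphi(\xi)$ (i.e. $\zeta = \bigcup_{i \in \xi} \up\varphi(i)$ within $T_y$, which is a chain because $\varphi$ preserves the tree order and meets). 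Then $\zeta$ is $\up$-closed and either maximal or has a maximum; by Lemma \ref{Lemma:ChangeOfPos} applied to $T_y$, $y = f^{\eta(\zeta)}(k^\zeta)$ where $k^\zeta = \langle y(i',u') : i' \in \zeta, u' \in a_{i'}^{\eta(\zeta)}\rangle$.

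Next I would check that $\eta(\xi) \leq \eta(\zeta)$ as composition sequences. The embedding $\varphi$ restricted to $\xi$ gives the required order embedding $\varphi \restriction \xi \colon \xi \to \zeta$; for each $i \in \xi$, $\col(i) = f_i^\xi \leq_\MM \col(\varphi(i)) = f_{\varphi(i)}^\zeta$ because $\varphi$ increases colours; and the induced map $\theta_i$ on $\range(l_i)$ (which is an embedding of $\PP$, i.e. of $\arity(f_i^\xi) \leq \arity(f_{\varphi(i)}^\zeta)$ via condition (\ref{Item:StructTrees4})) sends the distinguished argument $s_i^\xi$ to the distinguished argument at $\varphi(i)$ in $\zeta$, except possibly at the maximum — exactly what the definition of $\leq$ on composition sequences requires, giving us $\varphi_{\eta(\xi),\eta(\zeta)} \colon A^{\eta(\xi)} \to A^{\eta(\zeta)}$. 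Now for each $i \in \xi$ and $u \in a_i^{\eta(\xi)}$, the subtree ${}^u\downs i$ of $T_x$ is a decomposition tree for $x(i,u)$ and it embeds, via $\varphi$ (using meet-preservation and that $\varphi$ respects labels), into ${}^{\,\varphi_{\eta(\xi),\eta(\zeta)}(u)}\downs \varphi(i)$ — wait, more precisely into the subtree of $T_y$ hanging off the matched label below $\varphi(i)$, which is a decomposition tree for $y(\cdot, \cdot) =: y_{i,u}$, an element of $\amr$. Since $\rank(x(i,u)) < \rank(x)$, the induction hypothesis gives embeddings $x(i,u) \leq y_{i,u}$ for all $u \in A^{\eta(\xi)}$. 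Finally, infinite extensivity of $\OA$ (Definition \ref{Defn:InfExtensive}), applied to $\eta(\xi) \leq \eta(\zeta)$ with the pointwise embeddings just obtained, yields $x = f^{\eta(\xi)}(k^\xi) \leq_\CC f^{\eta(\zeta)}(k^\zeta) = y$, which is the conclusion.

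The main obstacle I expect is the bookkeeping around how $\varphi$ interacts with labels and subtrees: specifically, verifying that the image of the label-$u$ subtree ${}^u\downs i$ of $T_x$ really lands inside the label-$\varphi_{\eta(\xi),\eta(\zeta)}(u)$ subtree below $\varphi(i)$ in $T_y$ — this uses meet-preservation together with the label-compatibility condition (\ref{Item:StructTrees4}) of structured-tree embeddings, and one has to be careful that the chain $\zeta$ generated from $\varphi(\xi)$ has its labels lining up correctly with the $\theta_i$ (in particular that passing to the $\up$-closure doesn't disturb which branch a given subtree sits on). A secondary subtlety is the case $y \in \amri$ (limit case), where $T_y = \bigcup_n T_{y_n}$ and one must ensure $\zeta$ decomposes as $\bigcup_n \zeta_n$ compatibly — but Lemma \ref{Lemma:ChangeOfPos} already absorbs this, so it should not cause real trouble here. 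Everything else is a direct assembly of the extensivity axioms and $\RR$-iterability with the induction hypothesis.
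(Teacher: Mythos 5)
Your proposal is correct and follows essentially the same route as the paper: induction on $\rank(x)$, writing $x=f^{\eta(\xi)}(k^\xi)$ from the top chain $\xi$ of $T_x$, taking $\zeta=\bigcup_{t\in\xi}\up\varphi(t)$ in $T_y$ and applying Lemma \ref{Lemma:ChangeOfPos}, checking $\eta(\xi)\leq\eta(\zeta)$ via the induced label embeddings, invoking the induction hypothesis on the subtrees ${}^u\downs i$ (Lemma \ref{Lemma:x(t,p)}), and finishing with infinite extensivity. The only gloss is the base case, where the paper is explicit that the matched point $t_0$ is a leaf and applies Lemma \ref{Lemma:ChangeOfPos} to $\up t_0$ so that $\col(t_0)$ appears as an argument before using Lemma \ref{Lemma:RealInfExtensive}; your sketch clearly intends the same step.
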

\begin{proof}
Suppose $x,y$ are as in the statement of the theorem. We prove $x\leq y$ by induction on $\rank(x)$. 
So for the base case we assume $\rank(x)=0$, i.e. that $x\in \AAA$, so we have that $T_x$ is just a single point coloured by $x$. If $T_x\leq T_y$, then there must be some $t_0\in T_y$ such that $x\leq \col(t_0)$ and thus necessarily $t_0=\vec{p}\con \langle i,u\rangle$ is a leaf of $T_y$. %Let $n\in \omega$ be least such that $t_0\in T^n_y$. 
%Now let $t_1=P_x(t_0)$, thus we have by Corollary \ref{Cor:TSizeReduction} that $\col(t_0)\leq \col(t_1)$. For each $j< n$ set $t_{j+1}=P_x(t_j)$. Hence we have $t_{i}\in T^{n-i}_y$ for each $i\leq n$ and $$x\leq \col(t)\leq \col(t_1)\leq ...\leq \col(t_n).$$
%But since $T^0_y$ was just a point coloured by $y$, we know that $\col(t_n)=y$ and therefore $x\leq y$ as required. 
Let $\xi=\up t_0$, thus by Lemma \ref{Lemma:ChangeOfPos} we have that $y=f^{\eta(\xi)}(\langle y(j,u):j\in \xi, u\in a^{\xi}_j\rangle)$. Now since $y(\vec{p}\con \langle i\rangle,u)=\col(t_0)$ we have by Lemma \ref{Lemma:RealInfExtensive} that $$x\leq \col(t_0)\leq f^{\eta(\xi)}(k^\xi)=y$$
which gives the base case.

Now suppose that $\rank(x)=\alpha$ and whenever $\rank(x')<\alpha$, $T_{x'}\leq T_{y'}$ and $y'\in \amr$ we have $x'\leq y'$. We set $\eta=\eta(\langle \rangle)$ as from Proposition \ref{Prop:Reduction}, so for some $k=\langle q_u:u\in A^\eta\rangle\in\dom(f^\eta)_{<\alpha}$, we have that $x=f^\eta(k)$. We also let $\chi=\{\langle i\rangle\mid i\in r(\langle \rangle)\}\subseteq T_x$, $\eta=\langle \langle f_i,s_i\rangle:i\in r\rangle$ and %$r$ be the length of $\eta$ and 
$\varphi$ be an embedding witnessing $T_x\leq T_y$. We notice that $\eta=\eta(\chi)$.%We have by Lemma \ref{Lemma:x(t,p)}, that each $t\in T_x\setminus C_x$ and $p\in \range(l_t)$ that $\rank(x(t,s))<\rank(x)$.

For $\langle i\rangle \in \chi$ we denote by $\varphi_i$ the embedding from $\range(l_{\langle i\rangle})$ to $\range(l_{\varphi(\langle i\rangle)})$ induced by the structured tree embedding $\varphi$. We have, since $\varphi$ was an embedding, that for each $p\in \range(l_{\langle i\rangle})$, $$\varphi({}^p\downs \langle i\rangle )\subseteq {}^{\varphi_{ i}(p)}\downs \varphi(\langle i\rangle).$$
By Lemma \ref{Lemma:x(t,p)}, this implies $T_{x(\langle i\rangle ,p)}\leq T_{y(\varphi(\langle i\rangle),\varphi_i(p))}$ where $T_{x(\langle i\rangle ,p)}$ and $T_{y(\varphi(\langle i\rangle),\varphi_i(p))}$ are decomposition trees for $x(\langle i\rangle ,p)$ and $y(\varphi(\langle i\rangle),\varphi_i(p))$ respectively. Moreover, whenever $p\neq s_i$, or $\langle i\rangle$ is the maximum element of $\chi$, we have ${}^p\downs i\cap \chi=\emptyset$; hence $\rank(x(i,p))<\alpha$ by Lemma \ref{Lemma:x(t,p)}. Therefore, by the induction hypothesis, whenever $p\in a^\eta_i$ we have that 
\begin{equation}\label{Eqn:Non-Limit LowerRanks}
x(\langle i\rangle ,p) \leq y(\varphi(\langle i\rangle),\varphi_i(p))
\end{equation}

Now let $$\zeta=\bigcup_{t\in \chi}\up \varphi(t)$$
so that by Lemma \ref{Lemma:ChangeOfPos} we have that $y=f^{\eta(\zeta)}(k^\zeta)$.
%
%
%Let $n$ be least such that there exists some $u\in \zeta$ with $P_y(u)\in T_y^{n}$. For this $u$, we set $y_0=\col(u)$. By repeated applications of Corollary \ref{Cor:TSizeReduction} similarly to the base case, we see that $y_0\leq y$. By Lemma \ref{Lemma:ChangeOfPos}, we have $y_0=f^{\eta(\zeta)}(k^\zeta)$. 
Let $\varphi'$ be the embedding from $\ot(\chi)$ to $\ot(\zeta)$ induced by $\varphi$. It then simple to verify that $\varphi'$ and the $\varphi_i$ witness the fact that $\eta=\eta(\chi)\leq \eta(\zeta)$. Then using (\ref{Eqn:Non-Limit LowerRanks}) and since $\OA$ is infinitely extensive, we have $$x=f^\eta(k)\leq f^{\eta(\zeta)}(k^\zeta)=y$$
as required.
\end{proof}
We immediately obtain the following bqo result.

\begin{cor}
Suppose $\OA$ is infinitely extensive. Also suppose that $\sscatt_\PP^{\clos{\RR}}$ is well-behaved, $\MM$ is bqo and $\AAA$ is bqo. Then $\amrs$ is bqo.
\end{cor}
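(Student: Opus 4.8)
The plan is to argue by contradiction: assume there is a bad $\amrs$-array $f\colon[\omega]^\omega\to\amrs$ and manufacture from it a bad $(\MM\cup\AAA)$-array, which is impossible since $\MM$ and $\AAA$ are bqo. The bridge between the two worlds is the assignment of decomposition trees provided by Lemma \ref{Lemma:TreeRanksTheSame}, together with the monotonicity statement of Theorem \ref{Thm:Non-Limit}.

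First I would fix, for every $x\in\amrs$, a decomposition tree $T_x$ for $x$ with $T_x\in\sscatt^{\clos{\RR}}_\PP(\MM\cup\AAA)$; such a tree exists by Lemma \ref{Lemma:TreeRanksTheSame} (indeed one may even take $T_x\in\scatt^{\clos{\RR}}_\PP(\MM\cup\AAA)$). Define $g\colon[\omega]^\omega\to\sscatt^{\clos{\RR}}_\PP(\MM\cup\AAA)$ by $g(X)=T_{f(X)}$. Since $f$ is an array and the codomain carries the discrete topology, $g$ is continuous, hence an array. I claim $g$ is bad: if $g(X)\leq g(X\setminus\{\min X\})$ held for some $X$, then, as $T_{f(X)}$ and $T_{f(X\setminus\{\min X\})}$ are decomposition trees for $f(X)\in\amrs$ and $f(X\setminus\{\min X\})\in\amr$ respectively, Theorem \ref{Thm:Non-Limit} would give $f(X)\leq f(X\setminus\{\min X\})$, contradicting the badness of $f$.

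Next I would invoke the hypothesis that $\sscatt^{\clos{\RR}}_\PP$ is well-behaved, applied to the quasi-order $Q=\MM\cup\AAA$: since $g$ is a bad $\sscatt^{\clos{\RR}}_\PP(\MM\cup\AAA)$-array, Definition \ref{Defn:WellBehaved} furnishes $M\in[\omega]^\omega$ and a bad array $h\colon[M]^\omega\to\MM\cup\AAA$ such that for each $X$ the value $h(X)$ is the colour of some vertex of $g(X)$; since every colour appearing in a decomposition tree lies in $\MM\cup\AAA$, this $h$ is literally a bad $(\MM\cup\AAA)$-array. Finally, as $\MM$ is bqo and $\AAA$ is bqo (the latter with the quasi-order inherited from $\CC$), Theorem \ref{Thm:U bqo} shows $\MM\cup\AAA$ is bqo, so no bad $(\MM\cup\AAA)$-array exists. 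This contradicts the existence of $h$, so no bad $\amrs$-array can exist, i.e.\ $\amrs$ is bqo.

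As for the main obstacle: essentially all the real work has already been front-loaded into the machinery developed earlier — the construction of decomposition trees living inside $\scatt^{\clos{\RR}}_\PP(\MM\cup\AAA)$ (Lemma \ref{Lemma:TreeRanksTheSame}), and the reflection of structured-tree embeddings to embeddings in $\amrs$ (Theorem \ref{Thm:Non-Limit}, which itself rests on $\RR$-iterability and infinite extensivity via Lemmas \ref{Lemma:x(t,p)} and \ref{Lemma:ChangeOfPos}). Granting those, the corollary is a short diagram chase; the only points requiring any care are the (automatic) continuity of $g$ and the observation that a witnessing bad array for $g$ is genuinely valued in $\MM\cup\AAA$, so that the bqo hypotheses on $\MM$ and $\AAA$ can be applied directly.
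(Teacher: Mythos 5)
Your proposal is correct and follows essentially the same route as the paper's own proof: pass from a bad $\amrs$-array to a bad $\sscatt^{\clos{\RR}}_\PP(\MM\cup\AAA)$-array via decomposition trees, use the contrapositive of Theorem \ref{Thm:Non-Limit} to see the tree array is bad, then apply well-behavedness of $\sscatt^{\clos{\RR}}_\PP$ and Theorem \ref{Thm:U bqo} to reach a contradiction with $\MM$ and $\AAA$ being bqo. The extra remarks on continuity of $g$ and on the witnessing array being valued in $\MM\cup\AAA$ are just explicit versions of steps the paper leaves implicit.
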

\begin{proof}
Suppose we had a bad array $f:[\omega]^\omega\rightarrow\amrs$. Let $g:[\omega]^\omega\rightarrow \sscatt_\PP^{\clos{\RR}}(\MM\cup \AAA)$ be defined by letting $g(X)$ be a decomposition tree for $f(X)$. Then by Theorem \ref{Thm:Non-Limit}, $g$ must be bad. Thus since $\sscatt_\PP^{\clos{\RR}}$ is well-behaved there must be a bad $\MM\cup \AAA$-array, which is a contradiction of Theorem \ref{Thm:U bqo} since $\MM$ and $\AAA$ were bqo.
\end{proof}

We now present a simple application of Theorem \ref{Thm:Non-Limit}. We will show that if a class of linear orders $\mathbb{L}$ is well-behaved, then $\clos{\mathbb{L}}$ is well-behaved. This will allow us to give nicer descriptions of the classes that we will construct in section \ref{Section:PO}, as well as expanding on some classical results. So for the rest of this subsection we let $\mathbb{L}$ be a class of linear orders and $Q$ be a quasi-order.
%We now define the parameters $\CC, \AAA, \MM, \PP$ and $\RR$, so that the construction will give us the class $\clos{\mathbb{L}}(Q)$ for a given quasi-order $Q$, and we aim to use Theorem \ref{Thm:Limit}. We include $Q$-colourings because this will allow us to show that the class $\clos{\mathbb{L}}$ is well-behaved (Theorem \ref{Thm:LbarWB}). So, throughout this section we let:
%\begin{enumerate}
%	\item $Q$ be an arbitrary quasi-order;
%	\item $Q'$ be $Q$ with an added minimal element $-\infty$;
%	\item $\mathbb{L}$ be some class of linear orders;
%	\item $\CC=\mathscr{L}(Q')$ be the class of all $Q'$-coloured linear orders;
%	\item $\AAA=Q'^1$;
%	\item $\PP=\mathbb{L}\cup \omega$;
%	\item $\MM$ be the set of $L$-sums for all $L\in\PP$, as defined in Definition \ref{Defn:Sums}, inheriting colours;
%	\item $\RR=\{1\}$.
%\end{enumerate}
\begin{defn}
We define $\mathbb{L}_0=\mathbb{L}\cup \omega$, and for $\alpha\in \On$, $$\mathbb{L}_{\alpha+1}=\left\{\sum_{i\in L}L_i\mid L\in \mathbb{L}_0,L_i\in \mathbb{L}_\alpha\right\}$$ and for limit $\lambda$, $\mathbb{L}_\lambda=\bigcup_{\gamma<\lambda}\mathbb{L}_\gamma$ finally set $\mathbb{L}_\infty=\bigcup_{\alpha\in \On}\mathbb{L}_\alpha$.
For $x\in \mathbb{L}_\infty$ we say that $\rank_{\mathbb{L}}(x)=\alpha$ iff $\alpha$ is least such that $x\in \mathbb{L}_\alpha$.
\end{defn}
\begin{prop}\label{Prop:Linfty=amrs}
Let $\OA=\OA^Q_\mathbb{L}$, then $\mathbb{L}_\infty(Q)=\amrs$.
\end{prop}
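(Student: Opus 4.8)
The plan is to prove the two inclusions separately, each by an easy transfinite induction. First I would pin down the shape of $\amrs$ in this instance: since $\RR=\{1\}$, every composition sequence has length $1$, so by Lemma~\ref{Lemma:LOinfExt} (which gives that $\MM$ is $\RR$-iterable) together with clause~(\ref{Item:Riterable4}) of Definition~\ref{Defn:Riterable}, $f^{\langle\langle f_0,s_0\rangle\rangle}=f_0$ for every $f_0\in\MM$. Since moreover $\arity(\sum_L)=L$ and $\barity{\sum_L}=\emptyset$ for $L\in\PP=\mathbb{L}\cup\omega$, this makes $\amrs$ exactly the smallest subclass of $\CC=\mathscr{L}(Q)$ that contains $\AAA=Q^1$ and is closed under forming the $L$-sum (Definition~\ref{Defn:Sums}) of an $L$-indexed family of its members, for every $L\in\PP$.

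For $\amrs\subseteq\mathbb{L}_\infty(Q)$ I would invoke minimality of $\amrs$: it suffices to check that $\mathbb{L}_\infty(Q)$ contains $\AAA$ and is closed under these $L$-sums. The former holds because $1\in\mathbb{L}\cup\omega=\mathbb{L}_0\subseteq\mathbb{L}_\infty$ and $\mathbb{L}_\infty(Q)$ by definition carries all $Q$-colourings of all members of $\mathbb{L}_\infty$. For the latter, given $\langle X_i,c_i\rangle\in\mathbb{L}_\infty(Q)$ for $i\in L\in\mathbb{L}_0$, put $\gamma=\sup_{i\in L}\rank_{\mathbb{L}}(X_i)$; then each $X_i\in\mathbb{L}_\gamma$, so $\sum_{i\in L}X_i\in\mathbb{L}_{\gamma+1}\subseteq\mathbb{L}_\infty$ straight from the definition of $\mathbb{L}_{\gamma+1}$, and the $L$-sum inherits the evident $Q$-colouring, so it lies in $\mathbb{L}_\infty(Q)$.

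For $\mathbb{L}_\infty(Q)\subseteq\amrs$ I would show by transfinite induction on $\alpha$ that every $Q$-coloured linear order whose underlying order lies in $\mathbb{L}_\alpha$ belongs to $\amrs$. For $\alpha=0$ we have $X\in\mathbb{L}_0=\PP$, and $\langle X,c\rangle$ is the value of $\sum_X$ on the family of single points $\langle 1,c(i)\rangle\in\AAA$ ($i\in X$), hence $\langle X,c\rangle\in\amrs$. For $\alpha=\beta+1$, write $X=\sum_{i\in L}L_i$ with $L\in\mathbb{L}_0$ and each $L_i\in\mathbb{L}_\beta$; the underlying set of $X$ is $\bigsqcup_{i\in L}L_i$, so any $c\colon X\to Q$ splits into the colourings $c_i=c\restriction L_i$, the induction hypothesis gives $\langle L_i,c_i\rangle\in\amrs$, and applying $\sum_L$ to the family $(\langle L_i,c_i\rangle)_{i\in L}$ recovers $\langle X,c\rangle$, so $\langle X,c\rangle\in\amrs$. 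The limit stage is immediate from $\mathbb{L}_\lambda=\bigcup_{\gamma<\lambda}\mathbb{L}_\gamma$. Combining the two inclusions yields $\mathbb{L}_\infty(Q)=\amrs$.

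I do not expect a genuine obstacle: once the reduction of $\amrs$ to ``$\AAA$ closed under $\MM$'' is in place the argument is pure bookkeeping, the only mildly delicate point being the (routine) bijection between $Q$-colourings of an $L$-sum and families of $Q$-colourings of its summands, which is what makes the splitting in the successor step and its reassembly mutually inverse. The sole potential nuisance is the empty order $\emptyset$ --- no composition sequence can involve $\sum_\emptyset$, so $\emptyset\notin\amrs$ --- which is consistent with reading $\mathbb{L}\cup\omega$ as a family of non-empty orders and so does not affect the equality.
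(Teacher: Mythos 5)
Your argument is correct and is essentially the paper's own: the paper disposes of this proposition with ``by construction, considering Remark \ref{Rk:amrs}'', i.e.\ exactly the level-by-level comparison you carry out, with your two transfinite inductions (plus the observation that $\RR=\{1\}$ collapses composition sequences to single $L$-sums) simply making the routine details explicit. Your side remark about the empty order is a harmless convention both you and the paper implicitly adopt, so nothing further is needed.
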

\begin{proof}
This is by construction, considering Remark \ref{Rk:amrs}.
\end{proof}
\begin{lemma}\label{Lemma:Lbar=Linfty}
$\clos{\mathbb{L}}=\mathbb{L}_\infty$.
\end{lemma}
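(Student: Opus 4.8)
The plan is to prove the two inclusions separately; $\mathbb{L}_\infty \subseteq \clos{\mathbb{L}}$ is routine, and $\clos{\mathbb{L}} \subseteq \mathbb{L}_\infty$ carries the content. For $\mathbb{L}_\infty \subseteq \clos{\mathbb{L}}$ I would show $\mathbb{L}_\alpha \subseteq \clos{\mathbb{L}}$ by transfinite induction on $\alpha$: the base case $\mathbb{L}_0 = \mathbb{L}\cup\omega \subseteq \clos{\mathbb{L}}$ is immediate from Definition \ref{Defn:Lclosure}; at a successor $\alpha+1$ every element of $\mathbb{L}_{\alpha+1}$ has the form $\sum_{i\in L}L_i$ with $L\in\mathbb{L}_0\subseteq\clos{\mathbb{L}}$ and each $L_i\in\mathbb{L}_\alpha\subseteq\clos{\mathbb{L}}$ by the induction hypothesis, so it lies in $\clos{\mathbb{L}}$ since $\clos{\mathbb{L}}$ is closed under $L$-sums for every $L$ it contains; limit stages are unions.

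For $\clos{\mathbb{L}} \subseteq \mathbb{L}_\infty$ I would appeal to the minimality of $\clos{\mathbb{L}}$: it suffices to check that $\mathbb{L}_\infty$ contains $\mathbb{L}\cup\omega$ (it \emph{is} $\mathbb{L}_0$) and is closed under $L$-sums for every $L\in\mathbb{L}_\infty$. Two preliminaries are needed. First, the hierarchy is monotone, $\mathbb{L}_\alpha\subseteq\mathbb{L}_\beta$ whenever $\alpha\le\beta$: since $1\in\omega\subseteq\mathbb{L}_0$, any $L\in\mathbb{L}_\alpha$ is the $1$-indexed sum with single summand $L$, so $L\in\mathbb{L}_{\alpha+1}$, and full monotonicity follows by induction on $\beta$. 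Second, if $L\in\mathbb{L}_0$ and each $L_i\in\mathbb{L}_\infty$ then $\sum_{i\in L}L_i\in\mathbb{L}_\infty$: the ordinal $\gamma:=\sup_{i\in L}\rank_{\mathbb{L}}(L_i)$ exists because the index set $L$ is a set, monotonicity puts every $L_i$ in $\mathbb{L}_\gamma$, and then $\sum_{i\in L}L_i\in\mathbb{L}_{\gamma+1}$ by definition.

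The main step is then: for every $L\in\mathbb{L}_\infty$ and every family $(L_i)_{i\in L}$ with each $L_i\in\mathbb{L}_\infty$, $\sum_{i\in L}L_i\in\mathbb{L}_\infty$, which I would prove by induction on $\rank_{\mathbb{L}}(L)$. Rank $0$ is exactly the second preliminary above. For $\rank_{\mathbb{L}}(L)=\alpha+1$, write $L=\sum_{j\in M}M_j$ with $M\in\mathbb{L}_0$ and each $M_j\in\mathbb{L}_\alpha$; by associativity of $P$-sums (Definition \ref{Defn:Sums}) one has $\sum_{i\in L}L_i=\sum_{j\in M}\left(\sum_{i\in M_j}L_i\right)$, each inner sum lies in $\mathbb{L}_\infty$ by the induction hypothesis applied to $M_j$ (of rank $\le\alpha$), and the outer $M$-sum then lies in $\mathbb{L}_\infty$ by the second preliminary since $M\in\mathbb{L}_0$; the limit-rank case reduces at once to a strictly smaller rank. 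Hence $\mathbb{L}_\infty$ has the closure properties defining $\clos{\mathbb{L}}$, so $\clos{\mathbb{L}}\subseteq\mathbb{L}_\infty$ and the two classes coincide.

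I expect the only genuine point to be the associativity reduction in the successor case above: the trick is to absorb the ``outer'' index order $L$ into its own $\mathbb{L}_0$-sum decomposition, which is precisely what keeps the rank controlled while iterating the second preliminary; everything else is bookkeeping with the hierarchy and its monotonicity.
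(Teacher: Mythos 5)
Your proof is correct and follows essentially the same route as the paper: both establish $\mathbb{L}\cup\omega\subseteq\mathbb{L}_\infty\subseteq\clos{\mathbb{L}}$ and then reduce the lemma to showing $\mathbb{L}_\infty$ is closed under sums, which is done by decomposing the index order into its $\mathbb{L}_0$-sum pieces, invoking associativity of sums, and appealing to well-foundedness of $\rank_{\mathbb{L}}$. Your version merely packages the paper's recursive unfolding as a clean induction on $\rank_{\mathbb{L}}(L)$, and makes explicit the monotonicity and sup-of-ranks facts that the paper leaves implicit.
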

\begin{proof}
Clearly $\mathbb{L}\cup \omega\subseteq \mathbb{L}_\infty\subseteq \clos{\mathbb{L}}$. So if we can show that $\mathbb{L}_\infty$ is closed under sums, we will have the lemma. So let $y\in \mathbb{L}_\infty$, and $y_j \in \mathbb{L}_\infty$ for each $j\in y$, and we claim that $\sum_{j\in y}y_j\in \mathbb{L}_\infty$. 
%Then since $y_i\in \mathbb{L}_\infty$ we have $y_i=\sum_{j\in y_i^0}y_{\langle i,j\rangle}$ for some $y_{\langle i,j\rangle}$ of rank less than $\rank(y)$, and $y_i^0\in \mathbb{L}_0$. Then $x=\sum_{i\in y^0} \sum_{j\in y^0_i}y_{\langle i,j\rangle}$, so that $x\in \mathbb{L}_\infty$ whenever we have that for each $i\in y^0$ and $j\in y_i^0$ we have $y_{\langle i,j\rangle}\in \mathbb{L}_\infty$. Repeating this process, for each sequence $s$ writing each $y_s=\sum_{j\in y^0_s}y_{s\con \langle j\rangle }$ for some $y_{s\con \langle j\rangle }$ of lower rank than $y_s$ and $y^0_s\in \mathbb{L}_0$. After each step we have reduced the lemma to show that the $y_s\in \mathbb{L}_\infty$ for each of the longest sequences $s$. Whenever $s\is t$, we have $\rank_\mathbb{L}(y_s)>\rank_\mathbb{L}(y_t)$ so the induction stops when the $y_s$ are in $\mathbb{L}_0\subseteq \mathbb{L}_\infty$, this gives the lemma.
Let $y_{\langle \rangle}=y$ and suppose inductively that we have defined $y_s\in \mathbb{L}_\infty\setminus \mathbb{L}_0$ for some sequence $s$. Then we can find $y'_s\in \mathbb{L}_0$ and for each $i\in y'_s$, we find $y_{s\con \langle i\rangle}\in \mathbb{L}_\infty$ with $\rank_\mathbb{L}(y_{s\con \langle i\rangle})<\rank_\mathbb{L}(y_s)$; such that $y_s=\sum_{i\in y'_s}y_{s\con\langle i\rangle}$. Thus $\sum_{j\in y_s}y_j=\sum_{i\in y'_s}\sum_{j\in y_{s\con \langle i\rangle}}y_j$. So if every $y_{s\con \langle i\rangle}\in \mathbb{L}_0$ then $\sum_{i\in y_s}y_i\in \mathbb{L}_\infty$. Since the ranks are well-founded $y_s\in \mathbb{L}_0$ for all of the longest sequences $s$ generated by this induction. So by induction, $\sum_{j\in y}y_j\in \mathbb{L}_\infty$, which gives the lemma.
\end{proof}

\begin{thm}\label{Thm:LbarWB}
If $\mathbb{L}$ is well-behaved, then $\clos{\mathbb{L}}$ is well-behaved.
\end{thm}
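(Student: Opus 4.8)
The plan is to apply the machinery of Sections \ref{Section:OpConstruction}--\ref{Section:WBConstruction} to the operator algebra $\OA^Q_{\mathbb{L}}=\langle\CC,\AAA,\PP,\MM,\RR\rangle$ for an arbitrary quasi-order $Q$, exploiting that $\RR=\{1\}$ makes all composition sequences trivial, so that the relevant decomposition trees land in $\mathscr{R}$. First I would fix $Q$ and a bad array $f:[\omega]^\omega\to\clos{\mathbb{L}}(Q)$. By Lemma \ref{Lemma:Lbar=Linfty} and Proposition \ref{Prop:Linfty=amrs} we have $\clos{\mathbb{L}}(Q)=\amrs$, and by Lemma \ref{Lemma:LOinfExt} the system $\OA^Q_{\mathbb{L}}$ is infinitely extensive; note that here $\PP=\mathbb{L}\cup\omega$, that $\MM\cong\mathbb{L}\cup\omega$ (ordered by $\sum_A\leq_\MM\sum_B$ iff $A\leq B$), and that $\AAA=Q^1\cong Q$. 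Lemma \ref{Lemma:TreeRanksTheSame} gives, for each $X$, a decomposition tree $T_{f(X)}\in\scatt^{\clos{\RR}}_\PP(\MM\cup\AAA)$; but since $\RR=\{1\}$ every composition sequence has length $1$, every composition function is a single element of $\MM$, and the underlying poset of $T_{f(X)}$ is order-isomorphic to the chain-finite position tree of its defining composition set, so $T_{f(X)}$ is a rooted chain-finite tree and hence lies in $\mathscr{R}_\PP(\MM\cup\AAA)$. The range of $f$ is countable, so fixing one decomposition tree per value makes $g:X\mapsto T_{f(X)}$ an array; and $g$ is bad, because $g(X)\leq g(X\setminus\{\min X\})$ would yield $f(X)\leq f(X\setminus\{\min X\})$ by Theorem \ref{Thm:Non-Limit}.

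Next I would invoke Theorem \ref{Thm:Kriz1} with $\mathcal{O}=\PP=\mathbb{L}\cup\omega$. This is a concrete category of linear orders, so its morphisms are injective; and it is well-behaved: $\mathbb{L}$ is well-behaved by hypothesis, the class of finite linear orders is well-behaved as a subclass of the well-behaved class $\On$ (Theorem \ref{Thm:OnWB}), and a class covered by two well-behaved subcategories is itself well-behaved, by splitting a bad array via Theorem \ref{Thm:GalvinPrikry} according to which subcategory the underlying linear order lies in. Hence $\mathscr{R}_\PP$ is well-behaved, and the bad $\mathscr{R}_\PP(\MM\cup\AAA)$-array $g$ admits a witnessing bad array $h:[M]^\omega\to\MM\cup\AAA$ with $h(X)=\col_{g(X)}(v_X)$ for some $v_X\in T_{f(X)}$.

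It remains to descend from $\MM\cup\AAA$ to $Q$. The quasi-order $\MM\cup\AAA$ is the disjoint union of $\MM$ and $\AAA$, and $\MM\cong\mathbb{L}\cup\omega$ is bqo (since $\mathbb{L}$ and $\omega$ are bqo, again by splitting a bad array), so Theorem \ref{Thm:U bqo} restricts $h$ on some $[A]^\omega$ to a bad $\AAA$-array, which is a bad $Q$-array $h'$ via $\AAA=Q^1\cong Q$. For $X\in[A]^\omega$, since $h(X)\in\AAA$ the vertex $v_X$ must be a leaf of $T_{f(X)}$, coloured by one of the single $Q$-coloured points $d^{\vec p}\in Q^1$ occurring in the decomposition of $f(X)$; by extensivity of the sum operations that point appears inside $f(X)$ carrying the same colour, so $h'(X)=\col_{f(X)}(v'_X)$ for a suitable $v'_X\in f(X)$. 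Thus $h'$ witnesses the bad array $f$, and $\clos{\mathbb{L}}$ is well-behaved. I expect the main obstacle to be the bookkeeping of the first and third steps: checking that with $\RR=\{1\}$ the decomposition trees are genuinely rooted chain-finite (so that Theorem \ref{Thm:Kriz1} rather than only Theorem \ref{Thm:Kriz} is available), and tracing the colour of a leaf of $T_{f(X)}$ back to an actual point of $f(X)$ so that the array produced by Theorem \ref{Thm:Kriz1} genuinely witnesses $f$.
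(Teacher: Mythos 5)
Your proposal is correct and follows essentially the same route as the paper: encode each $f(X)$ by a decomposition tree, use Theorem \ref{Thm:Non-Limit} (via Lemma \ref{Lemma:LOinfExt}) to see the tree array is bad, apply K\v{r}\'{i}\v{z}'s structured-tree theorem to get a witnessing bad $\MM\cup\AAA$-array, and then peel off $\MM$ and pass through $\AAA=Q^1$ to a witnessing bad $Q$-array. The only differences are minor refinements: you invoke Theorem \ref{Thm:Kriz1} directly after checking that, since $\RR=\{1\}$, the decomposition trees are rooted and chain-finite (the paper routes through Theorem \ref{Thm:Kriz} with $\clos{\RR}=\omega$), and you make explicit the well-behavedness of $\PP=\mathbb{L}\cup\omega$ and the continuity of $g$, points the paper leaves implicit.
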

\begin{proof}
Let $\OA=\OA^Q_{\mathbb{L}}=\langle \CC,\AAA,\PP,\MM,\RR\rangle$. By Lemma \ref{Lemma:Lbar=Linfty} and Proposition \ref{Prop:Linfty=amrs} we have that $\clos{\mathbb{L}}(Q')=\amrs$. Suppose we have a bad $\clos{\mathbb{L}}$-array $f$, then define $g(X)$ to be a decomposition tree for $f(X)$. First we note that by Definition \ref{Defn:Tx}, it is clear that the leaves of the tree $g(X)$ are coloured by some element of $\AAA$ used to construct $f(X)$. Thus the colours of the linear order $f(X)$ are colours of colours of elements of $g(X)$. Now using Lemma \ref{Lemma:LOinfExt} we can apply Theorem \ref{Thm:Non-Limit}, in order to see that $g$ is a bad array with range in $\scatt^{\clos{\RR}}_{\PP}(\MM\cup \AAA)$.

We know that $\clos{\RR}=\omega$ and hence is well-behaved. We also have that $\PP=\mathbb{P}$ is well-behaved and has injective morphisms; hence by Theorem \ref{Thm:Kriz}, there is a witnessing bad $\MM\cup \AAA$-array $g'$. Since the order on $\MM$ is isomorphic to the order on $\mathbb{L}$, we know that $\MM$ is bqo. Hence by Theorem \ref{Thm:U bqo}, we can find a witnessing bad $\AAA$-array $f'$. Since the colours from each $g(X)$ that were in $\AAA$ were points of $f(X)$ we now know that $f'$ is a witnessing bad array for $f$. Then since $\AAA=Q^1$ this clearly gives a witnessing bad $Q$-array just passing to colours. Hence any bad $\clos{\mathbb{L}}(Q)$-array admits a witnessing bad $Q$-array, i.e. $\clos{\mathbb{L}}$ is well-behaved.
\end{proof}
\begin{remark}
Without much more difficulty, using we could show that the class of countable unions of elements of $\clos{\mathbb{L}}$ is also well-behaved, by considering limiting sequences $(x_n)_{n\in \omega}$ to be such that $x_n\subseteq x_{n+1}$ for each $n\in \omega$ and defining limits as the countable union. Then it is relatively simple to verify that $\OA$ is infinitely extensive and has nice limits. The result then follows similarly to Theorem \ref{Thm:LbarWB} (using Theorem \ref{Thm:Limit} in place of Theorem \ref{Thm:Non-Limit}). We omit the proof because this is implied by Theorem \ref{Thm:MLPisWB2}.
\end{remark}
\subsection{The limit case}

\begin{thm}\label{Thm:Limit}%\label{Lemma:LimitChains}
Let $\OA$ be extendible and have nice limits. If $x,y\in\amr$ have decomposition trees $T_x$ and $T_y$ respectively such that $T_x\leq T_y$, then $x\leq y$.
\end{thm}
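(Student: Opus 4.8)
The plan is to reduce the statement to Theorem \ref{Thm:Non-Limit} together with the \emph{nice limits} hypothesis. If $x\in\amrs$ there is nothing new: extendible implies infinitely extensive, so Theorem \ref{Thm:Non-Limit} applies directly. So assume $x\in\amri$, say $x$ is the limit of a limiting sequence $(x_n)_{n\in\omega}$ with each $x_n\in\amrs$. By Definition \ref{Defn:Tx1} and the proof of Lemma \ref{Lemma:TreeRanksTheSame} we may take $T_x=T^\FS_d$ with $\FS=\bigcup_n\FS_n$, each $x_n$ having a decomposition tree $T_n=T^{\FS_n}_{d_n}$ whose underlying set is the $\up$-closed stage-$n$ part of $T_x$; note that a node of $T_n$ which is internal in $T_x$ but a leaf of $T_n$ carries the colour $q_0$ in $T_n$ (condition (\ref{Item:LimitSeq5}) of Definition \ref{Defn:LimitSequence}). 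Fix a witnessing embedding $\varphi$ for $T_x\leq T_y$. The goal is to produce witnessing embeddings $\varphi_n\colon x_n\to y$ for $x_n\leq y$ satisfying $\varphi_n=\varphi_{n+1}\circ\mu_n$ for every $n$ (with $\mu_n$ as in Remark \ref{Rk:Mu}); since $\OA$ has nice limits, this then yields $x\leq y$.

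To build the $\varphi_n$ I would re-run the proof of Theorem \ref{Thm:Non-Limit} for $x_n$, but keeping the witnessing embedding explicit and using the restriction $\varphi\!\restriction\! T_n$ in place of the embedding of $T_{x_n}$. This restriction preserves the tree order, meets and all labels, and preserves colours everywhere except at the $q_0$-coloured temporary leaves; at such a leaf $v$ the relevant fragment of $x_n$ is the minimal object $q_0$, which embeds into the fragment $y(\cdot,\cdot)$ of $y$ over $\varphi(v)$ by minimality of $q_0$ in $\CC$ — so the induction on $\rank(x_n)$ goes through with that fragment mapped by the minimality embedding in place of an appeal to the induction hypothesis. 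Lemmas \ref{Lemma:x(t,p)}, \ref{Lemma:ChangeOfPos} and \ref{Lemma:RealInfExtensive}, together with infinite extensivity, then assemble the fragment maps into $\varphi_n\colon x_n\to y$ exactly as in Theorem \ref{Thm:Non-Limit}.

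The crux is the coherence $\varphi_n=\varphi_{n+1}\circ\mu_n$, and this is exactly what the stronger hypothesis \emph{extendible} (Definition \ref{Defn:Extendible}) is for. Working with the common root composition sequence $\eta=\eta(\langle\rangle)$ of $T_x$ as in Remark \ref{Rk:Mu}, so that $x_n=f^\eta(k^n)$, $x_{n+1}=f^\eta(k^{n+1})$ and $x=f^\eta(\langle q_u\rangle)$, one checks that for every argument position $u$ the witnessing embeddings of $q^n_u\leq q^{n+1}_u$, of $q^n_u\leq q^y_{\varphi(u)}$ and of $q^{n+1}_u\leq q^y_{\varphi(u)}$ form a commuting triangle: on the $q_0$ fragments because any map into a fixed object from $q_0$ is forced, and on the stable fragments because $\varphi_n$ and $\varphi_{n+1}$ use the same restriction of $\varphi$ there while $\mu_n$ acts trivially. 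Feeding this into the identity $\varphi^{k_0,k}_{\eta,\nu}=\psi^{k_1,k}_{\eta,\nu}\circ\mu^{k_0,k_1}_{\eta,\eta}$ supplied by Definition \ref{Defn:Extendible} at the top level, and recursing through the construction, gives $\varphi_n=\varphi_{n+1}\circ\mu_n$; nice limits then finishes the proof.

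I expect the main obstacle to be precisely this coherence bookkeeping: setting up the inductions for $x_n$ and $x_{n+1}$ in parallel, making sure the composition sequences $\eta(\zeta_n)$ obtained by applying Lemma \ref{Lemma:ChangeOfPos} along $T_n$ are initial segments of the $\eta(\zeta_{n+1})$ obtained along $T_{n+1}$, and tracking how $\varphi\!\restriction\! T_n$ sits inside $\varphi\!\restriction\! T_{n+1}$, so that the extendibility identity can be applied uniformly at every level. The individual moves are routine; the work is in unwinding \emph{extendible} and \emph{nice limits} along the construction used to prove Theorem \ref{Thm:Non-Limit}.
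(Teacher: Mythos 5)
Your proposal is correct and follows essentially the same route as the paper: reduce to $x\in\amri$, apply Theorem \ref{Thm:Non-Limit} to each $x_n$ via the restriction of the embedding $T_x\leq T_y$ to $T_n$, use extendibility at the root composition sequence $\eta(\langle\rangle)$ (with the embeddings $\psi_u$ of Remark \ref{Rk:Mu} obtained from Lemma \ref{Lemma:RealInfExtensive}) to get $\varphi_n=\varphi_{n+1}\circ\mu_n$, and finish with nice limits. If anything you are more explicit than the paper about the two points it treats tersely — the colour of the $q_0$-coloured temporary leaves and the per-argument commuting triangles needed to invoke Definition \ref{Defn:Extendible} — but the argument is the same.
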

\begin{proof}
%We note that since every decomposition function $g_n$ for the sequence $(x_n)_{n\in \omega}$ was a composition of some $f^\eta$; by extending the limiting sequence if necessary, we can assume that each $g_{n+1}$ is $g_n$ composed with just one more $f^\eta$ in every argument.
By Theorem \ref{Thm:Non-Limit}, it only remains to check that the theorem holds when $x\in \amri$. So suppose that $x$ is a limit of the limiting sequence $(x_n)_{n\in \omega}$.
For each $n\in \omega$ let $T_{n}=T^{\FS_n}_{d_n}$ be a decomposition tree for $x_n$. Since $T_{n}\subseteq T_x\leq T_y$ and each $x_n\in \amrs$ we have by Theorem \ref{Thm:Non-Limit} that $x_n\leq y$ for every $n$. Let $\varphi_n:x_n\rightarrow y$ be the embedding granted by infinite extensivity that was used in the final line of the proof of Theorem \ref{Thm:Non-Limit}.

By the definition of a limiting sequence, we see that for each $n\in \omega$ there are $k,k'\in \dom(g_n)$ such that $x_n=g_n(k)$ and $x_{n+1}=g_n(k')$. If $k=\langle q_u:u\in \DS_n\rangle$ and $k'=\langle q'_u:u\in \DS_n\rangle$ then for each $u\in \DS_n$ we have $q_u\leq q'_u$ since either $q_u=q'_u$ or $q_u$ is the minimum element $q_0$. Set $\eta=\eta(\langle \rangle)\in \FS$, then we can inductively find, using $k$ and $k'$, elements $k^n,k^{n+1}$ of $\dom(f^\eta)$ such that $x_n=f^\eta(k^n)$, $x_{n+1}=f^\eta(k^{n+1})$. Thus we have that if $k^n=\langle q_u^n:u\in A^\eta\rangle$ and $k^{n+1}=\langle q^{n+1}_u:u\in A^\eta\rangle$ then $q^n_u\leq q^{n+1}_u$, by repeated applications of Lemma \ref{Lemma:RealInfExtensive}. So let $\psi_u$ be the embedding witnessing $q^n_u\leq q^{n+1}_u$ for $u\in A^\eta$, granted by this induction.
Then since $\OA$ is extendible, we see that $$\varphi_n=\varphi_{n+1}\circ \psi_{\eta,\eta}^{k^n,k^{n+1}}=\varphi_{n+1}\circ \mu_n$$ with $\mu_n$ as from Remark \ref{Rk:Mu}. Therefore, since $\CC$ has nice limits we have $x\leq y$.
%Then if embedding that we used in the final line of the proof of Theorem \ref{Thm:Non-Limit} to extend the previous embedding. So for every $n\in \omega$ we have embeddings $\varphi_n:x_n\rightarrow y$ with $\varphi_{n+1}$ extending $\varphi_n$. Therefore, since $\CC$ has nice limits we have $x\leq y$.
%since there are  that still embed into the larger object, hence since $\CC$ is extendible we can go back inductively, and we see that the embedding given in the final line of the proof of Theorem \ref{Thm:Non-Limit} is extended to the next step. Ie for every $n\in \omega$ we have embeddings $h_n:x_n\rightarrow y$ with $h_{n+1}$ extending $h_n$. Thus since $\CC$ has nice limits we have that $x\leq y$. Similarly $y\leq x$.
\end{proof}

Again we immediately obtain a bqo result, we also mention some further corollaries.
\begin{cor}\label{Cor:LimitBqo}
Let $\OA$ be extendible and have nice limits. Suppose that $\sscatt_\PP^{\clos{\RR}}$ is well-behaved, $\MM$ is bqo and $\AAA$ is bqo. Then $\amr$ is bqo.
\end{cor}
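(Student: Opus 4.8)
The plan is to argue exactly as in the corollary following Theorem \ref{Thm:Non-Limit} (which established the same conclusion for $\amrs$), but with the appeal to Theorem \ref{Thm:Non-Limit} replaced by Theorem \ref{Thm:Limit}, which also handles limits. Since $\OA$ is extendible it is in particular infinitely extensive and has limits and nice limits, so every hypothesis of Theorem \ref{Thm:Limit} (and hence of Theorem \ref{Thm:Non-Limit}) is available, and $\amr$ is defined.

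In detail, I would suppose for contradiction that there is a bad array $f:[\omega]^\omega\to\amr$. For each $x\in\amr$ I fix, using Lemma \ref{Lemma:TreeRanksTheSame}, a decomposition tree $T_x=T^\FS_d$ for $x$ with $T_x\in\sscatt^{\clos{\RR}}_\PP(\MM\cup\AAA)$, and I set $g(X)=T_{f(X)}$. By Definition \ref{Defn:Tx} the internal vertices of $T_{f(X)}$ are coloured by the entries $f^{\vec p}_i\in\MM$ of the relevant composition sequences and its leaves by elements of $\AAA$, so $g$ indeed has range in $\sscatt^{\clos{\RR}}_\PP(\MM\cup\AAA)$; moreover $g$ is continuous, being the composition of the continuous map $f$ with the map $x\mapsto T_x$ between discrete spaces. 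If $g(X)\leq g(X\setminus\{\min X\})$ held for some $X$, Theorem \ref{Thm:Limit} would give $f(X)\leq f(X\setminus\{\min X\})$, contradicting badness of $f$; hence $g$ is a bad $\sscatt^{\clos{\RR}}_\PP(\MM\cup\AAA)$-array.

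The final step invokes the hypotheses on the building blocks: since $\sscatt^{\clos{\RR}}_\PP$ is well-behaved there is a witnessing bad $\MM\cup\AAA$-array, and then Theorem \ref{Thm:U bqo} produces a restriction on which this array is either a bad $\MM$-array or a bad $\AAA$-array, contradicting that $\MM$ and $\AAA$ are bqo. Thus no bad $\amr$-array exists and $\amr$ is bqo. I do not expect a genuine obstacle here: all the real work is already contained in Theorem \ref{Thm:Limit} and in the decomposition-tree machinery feeding it, and the corollary is a short deduction. The only subtlety worth flagging is that Lemma \ref{Lemma:TreeRanksTheSame} gives, for $x\in\amri$, only membership of the decomposition tree in $\sscatt^{\clos{\RR}}_\PP(\MM\cup\AAA)$ and no control of its scattered rank — but rank is irrelevant here, so this causes no difficulty.
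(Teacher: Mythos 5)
Your argument is correct and is essentially the paper's own proof: build a decomposition-tree array $g$ from the bad $\amr$-array, use Theorem \ref{Thm:Limit} to see $g$ is bad, then use well-behavedness of $\sscatt_\PP^{\clos{\RR}}$ and Theorem \ref{Thm:U bqo} to contradict that $\MM$ and $\AAA$ are bqo. The extra details you supply (continuity of $g$, irrelevance of scattered rank for limits) are fine and do not change the route.
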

\begin{proof}
Suppose we had a bad $\amr$-array $f$. Let define the bad $\sscatt_\PP^{\clos{\RR}}(\MM\cup \AAA)$-array $g$ by letting $g(X)$ be a decomposition tree for $f(X)$. Then by Theorem \ref{Thm:Limit}, $g$ must be bad. Thus since $\sscatt_\PP^{\clos{\RR}}$ is well-behaved there must be a bad function to $\MM\cup \AAA$, which is a contradiction of Theorem \ref{Thm:U bqo} since $\MM$ and $\AAA$ were bqo.
\end{proof}
\begin{cor}[Laver \cite{LaverFrOTconj}]\label{Cor:sscat}
$\sscat$, the class of $\sigma$-scattered linear orders is bqo.
\end{cor}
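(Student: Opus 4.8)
The plan is to realise $\sscat$ as the class $\amr$ produced by a particular instance of the operator construction of Sections \ref{Section:OpConstruction}--\ref{Section:WBConstruction}, and then to invoke Corollary \ref{Cor:LimitBqo}. Take $Q=1$, $\mathbb{L}=\emptyset$, $\mathbb{P}=\On\cup\On^*$, and let $\OA=\OA_{\mathbb{L},\mathbb{P}}^{Q}=\langle\CC,\AAA,\PP,\MM,\RR\rangle$ be the construction of Example \ref{Ex:POCONSTRUCTION}; so $\CC$ is the class of $Q'$-coloured linear orders (where $Q'$ is $1$ with an added bottom element $-\infty$), $\AAA=Q'^{1}$, $\PP=\On\cup\On^*$, $\MM$ is the class of ordinal sums $\sum_\alpha$ and reverse ordinal sums $\sum_{\alpha^*}$ ordered by $\sum_A\le_\MM\sum_B$ iff $A\le B$, and $\RR=\clos{\mathbb{L}}=\omega$. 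Limits are as described in Example \ref{Rk:PoLimits}: a limiting sequence $(x_n)_{n\in\omega}$ has limit $\bigcup_n x_n$, with each $\mu_n$ the inclusion of $x_n$ into $x_{n+1}$.

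First I would identify the classes built by this $\OA$. Because $\amrs$ is closed under applying each $\sum_P$ for $P\in\On\cup\On^*$ while remaining inside the scattered orders (a $P$-sum of scattered orders being scattered), Hausdorff's Theorem \ref{Thm:Hausdorff} gives $\amrs=\scat(Q')$, exactly as in Example \ref{Ex:ScatLO}. For the limits: a countable increasing union of scattered linear orders is $\sigma$-scattered by definition, and conversely if $L=\bigcup_{n\in\omega}S_n$ with each $S_n$ scattered then $L=\bigcup_{n\in\omega}(S_0\cup\dots\cup S_n)$ is an increasing union of scattered orders, by the standard fact that a finite union of scattered suborders of a linear order is scattered (were $\mathbb{Q}$ to embed in $S\cup S'$, one of $\mathbb{Q}\cap S$, $\mathbb{Q}\cap S'$ would already contain a copy of $\mathbb{Q}$). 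Matching such increasing unions against the limiting sequences of Definition \ref{Defn:LimitSequence} then yields $\amr=\sscat(Q')$.

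Next I would verify the hypotheses of Corollary \ref{Cor:LimitBqo} for this $\OA$. That $\OA$ is extendible, has limits (Example \ref{Rk:PoLimits}) and has nice limits is precisely the content of the theorems of Section \ref{Section:OpConstruction} establishing these properties for every $\OA_{\mathbb{L},\mathbb{P}}^{Q}$. Since $\RR=\omega$ we have $\clos{\RR}=\omega$, and $\PP=\On\cup\On^*$ is a concrete category with injective morphisms that is well-behaved --- $\On$ by Theorem \ref{Thm:OnWB}, $\On^*$ by the order-reversing symmetry, and the disjoint union by a Galvin--Prikry restriction as in Theorem \ref{Thm:U bqo} --- so Theorem \ref{Thm:Kriz} tells us that $\sscatt_\PP^{\clos{\RR}}=\sscatt^\omega_{\On\cup\On^*}$ is well-behaved. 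Furthermore $\MM$ is order-isomorphic to $\On\cup\On^*$, which is bqo, and $\AAA\cong Q'$ is a two-element chain, hence bqo. Corollary \ref{Cor:LimitBqo} then gives that $\amr=\sscat(Q')$ is bqo; as $\sscat$ embeds into $\sscat(Q')$ as a sub-quasi-order (colour everything $0$), we conclude that $\sscat$ is bqo.

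The only genuine work, I expect, lies in the two steps flagged as routine: checking that this $\OA$ is extendible and has nice limits (immediate once each $\mu_n$ is an inclusion, as the compatibility clauses of Definitions \ref{Defn:Extendible}, \ref{Defn:NiceLimits} and \ref{Defn:HasLimits} then reduce to the observation that a sum of increasing unions is the increasing union of the sums), and --- slightly more delicately --- checking that the technically constrained limiting sequences of Definition \ref{Defn:LimitSequence} actually realise every $\sigma$-scattered order as an element of $\amr$; this last point is the linear-order shadow of the Hausdorff-style analysis performed for general partial orders in Section \ref{Section:PO}. Past that, the argument is a bookkeeping invocation of Corollary \ref{Cor:LimitBqo}, that is, of Theorems \ref{Thm:Limit} and \ref{Thm:Kriz}.
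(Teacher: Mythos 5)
Your proposal is correct and takes essentially the same route as the paper's own proof: realise $\sscat$ (up to a trivial colouring) as $\amr$ for an operator algebra with $\PP=\On\cup\On^*$ and $\clos{\RR}=\omega$, and then invoke Corollary \ref{Cor:LimitBqo} via Theorems \ref{Thm:OnWB}, \ref{Thm:U bqo} and \ref{Thm:Kriz}. The differences are only bookkeeping — the paper instantiates the uncoloured Example \ref{Ex:ScatLO} with $\RR=\{1\}$ and checks extendibility and nice limits directly, whereas you instantiate $\OA^{Q}_{\mathbb{L},\mathbb{P}}$ with trivial $Q$ so as to quote those properties and restrict colours at the end; both treatments assert the identification $\amr=\sscat$ at a comparable level of detail.
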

\begin{proof}
Set $\CC, \AAA, \MM, \PP$ and $\RR$ as in Example \ref{Ex:ScatLO}. For a limiting sequence $(x_n)_{n\in \omega}$, we consider $x_n\subseteq x_{n+1}$ and define the limit to be the union, in this way $\mu_n$ acts as the identity on elements of $x_n$. We then have that $\amr=\sscat$. We have $\PP=\On\cup \On^*$ and $\RR=\{1\}$ hence $\clos{\RR}=\omega$ so since $\PP$ has injective morphisms and by theorems \ref{Thm:OnWB}, \ref{Thm:U bqo} and \ref{Thm:Kriz} we know $\sscatt_\PP^{\clos{\RR}}$ is well-behaved. By Corollary \ref{Cor:LimitBqo} it just remains to show that $\OA$ is extendible and has nice limits. Since $\RR=\{1\}$ %the former is simple. Clearly $\CC$ is a concrete category as it is a class of partial orders, s
%Since we are using sums as functions, i
it is easily verified that $\OA$ is extendible. Finally if we have embeddings $\varphi_n$ for $n\in \omega$ as in the definition of nice limits, then we let $\varphi$ be the union of the $\varphi_n$, this clearly satisfies the definition of nice limits. The result now follows from Corollary \ref{Cor:LimitBqo}.
\end{proof}
\begin{cor}[Laver \cite{LaverClassOfTrees}]
$\sscatt^{\On}$, the class of $\sigma$-scattered trees is bqo.
\end{cor}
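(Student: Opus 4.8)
The plan is to run the argument of Corollary~\ref{Cor:sscat} with the partial order construction of Example~\ref{Ex:POCONSTRUCTION} in place of the linear order construction of Example~\ref{Ex:ScatLO}. Explicitly, I would take the trivial quasi-order $Q=1$, put $\mathbb{L}=\On$ and $\mathbb{P}=\{1,\CH{2},\AC{2}\}$, and form $\OA=\OA^{1}_{\On,\mathbb{P}}=\langle\CC,\AAA,\PP,\MM,\RR\rangle$; since $\clos{\On}=\On$ this gives $\RR=\On$ and hence $\clos{\RR}=\On$ as well. As in Example~\ref{Rk:PoLimits} I would, along any limiting sequence, take $x_n\subseteq x_{n+1}$ and declare its limit to be the union, so that the maps $\mu_n$ of Remark~\ref{Rk:Mu} are identities.

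The first part of the proof is then a direct appeal to Corollary~\ref{Cor:LimitBqo}, whose hypotheses are met as follows. The system $\OA=\OA^{Q}_{\mathbb{L},\mathbb{P}}$ is extendible, has limits, and has nice limits, by the results of Section~\ref{Section:OpConstruction} applicable to every such system (Example~\ref{Rk:PoLimits} giving the existence of limits). The orders $\AAA=Q'^{1}$ (a two-element chain) and $\MM=\{\sum_{1},\sum_{\CH{2}},\sum_{\AC{2}}\}$ are finite, hence bqo. Finally $\sscatt^{\clos{\RR}}_{\PP}=\sscatt^{\On}_{\PP}$ is well-behaved: $\PP=\{1,\CH{2},\AC{2}\}$ is a finite set of finite partial orders, so it is well-behaved and has injective morphisms by Lemma~\ref{Lemma:FinitePosWB}, $\On$ is well-behaved by Theorem~\ref{Thm:OnWB}, and so the structured $\sscatt^{\mathbb{L}}$-tree theorem (Theorem~\ref{Thm:TLPWell-Behaved}) applies. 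Corollary~\ref{Cor:LimitBqo} then gives that $\amr$ is bqo.

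The second part is to locate $\sscatt^{\On}$ inside $\amr$. I would show that every $T\in\sscatt^{\On}$, coloured trivially by the top colour of $Q'$, is an element of $\amr$, the $\amr$-embeddability order restricting to ordinary tree embeddability on these objects; then a bad $\sscatt^{\On}$-array is a bad $\amr$-array, and since $\amr$ is bqo no such array exists, so $\sscatt^{\On}$ is bqo. This identification is the tree analogue of Hausdorff's theorem: by Theorem~\ref{Thm:ScatteredRank} a scattered $\On$-tree is an iterated $\zeta$-tree-sum over linear orders $\zeta\in\On$, and these iterated tree-sums are exactly what the $\CH{2}$- and $\AC{2}$-sums of $\MM$ build inside $\amrs$ once they are composed along composition sequences over $\RR=\On$ (the $\CH{2}$-sums producing chains of arbitrary order type $\zeta$, the $\AC{2}$-sums splitting off branches); covering an arbitrary $T\in\sscatt^{\On}$ by countably many $\up$-closed scattered $\On$-subtrees and taking the union then exhibits $T$ as the limit of a limiting sequence, this last step relying on the fact that Definition~\ref{Defn:LimitSequence} was tailored precisely around $\up$-closed coverings. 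In the fully coloured form this identification is the business of Section~\ref{Section:PO}; here only the inclusion $\sscatt^{\On}\subseteq\amr$ in the uncoloured case is needed.

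I expect that second part to be the real obstacle: matching the combinatorics of iterated $\zeta$-tree-sums and their countable $\up$-closed coverings to the formal machinery of composition sequences, decomposition functions and limiting sequences, and in particular verifying that the $\amr$ produced by these specific parameters really contains all of $\sscatt^{\On}$ --- tall trees included --- rather than just the chain-finite or height-$\leq\omega$ $\sigma$-scattered trees. Everything else is either bookkeeping already discharged for $\OA^{Q}_{\mathbb{L},\mathbb{P}}$ in Section~\ref{Section:OpConstruction} or a single application of Corollary~\ref{Cor:LimitBqo}, Theorem~\ref{Thm:OnWB}, Lemma~\ref{Lemma:FinitePosWB} and Theorem~\ref{Thm:TLPWell-Behaved}.
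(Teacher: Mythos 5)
Your first half is sound, but it takes a genuinely different route from the paper, and the route shifts all the real work onto a step you only sketch. The paper does not reuse the partial-order system $\OA^{Q}_{\mathbb{L},\mathbb{P}}$ at all here: it builds a bespoke operator algebra whose objects are trees, with $\AAA=\{\emptyset\}$, $\PP=\On$, $\RR=\{1\}$, and $\MM$ consisting of the $\alpha$-tree-sums $c_\alpha$ and the disjoint unions $d_\kappa$; then $\amrs=\scatt^{\On}$ is immediate from Theorem \ref{Thm:ScatteredRank}, limits are literally increasing unions of $\up$-closed subtrees (so $\amr=\sscatt^{\On}$ matches the definition of $\sigma$-scattered on the nose), and only Theorem \ref{Thm:Kriz} is needed since $\clos{\RR}=\omega$. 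Your version instead needs Theorem \ref{Thm:TLPWell-Behaved} (no circularity, but a heavier tool), and, crucially, needs the containment $\sscatt^{\On}\subseteq\amr$ for the parameters $\mathbb{L}=\On$, $\mathbb{P}=\{1,\CH{2},\AC{2}\}$. That containment is exactly where the content lies, and nothing in the paper discharges it for you: Theorem \ref{Thm:CPsubsetMLP} is restricted to countable partial orders (its proof uses an $\omega$-enumeration), while $\sigma$-scattered trees are typically uncountable.

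Concretely, two things are missing. First, the claim that iterated $\zeta$-tree-sums ``are exactly what the $\CH{2}$- and $\AC{2}$-sums build'' needs an actual argument: a $\zeta$-tree-sum attaches, above each spine point $i$, a bundle of $\kappa_i$ pairwise incomparable trees that are incomparable to $i$ itself and to all later spine points, so one must simulate this by a composition sequence (e.g.\ two steps per spine point, a $\CH{2}$-step for the spine element and an $\AC{2}$-step for the bundle, with the bundle itself produced by an all-$\AC{2}$ sequence of length $\kappa_i$) and verify via the definition of $H_\eta$ that the resulting poset is the tree-sum; you assert it, you do not do it. Second, and more seriously, in the poset construction limits grow only by replacing $-\infty$-coloured placeholder points (Remark \ref{Rk:PoLimits}, Definition \ref{Defn:LimitSequence}), whereas a $\sigma$-scattered tree $T=\bigcup_n T_n$ grows by attaching new forests above arbitrary $\up$-closed chains of $T_n$; so you must insert a placeholder at every attachment site, check the augmented $T_n$ is still in $\amrs$, and verify conditions (\ref{Item:LimitSeq1})--(\ref{Item:LimitSeq5}) of Definition \ref{Defn:LimitSequence} (in particular that placeholders are refined at the very next stage). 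None of this is ``bookkeeping already discharged''. Note also that you cannot shortcut via the Section \ref{Section:PO} characterisation: with $\clos{\On}=\On$, condition (\ref{PPL2}) of Definition \ref{Defn:PPL} is not closed under order-reversal, and already for the tree $\omega$ the chain of initial-segment intervals under $\supseteq$ has type $\omega^*\notin\On$, so quoting $\amr=\sscat^{\mathbb{L}}_{\mathbb{P}}(Q')$ for these parameters does not hand you the inclusion; you would have to argue membership in $\amr$ directly, which is precisely the gap.
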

\begin{proof}

Let $\CC$ be the class of all trees, $\AAA=\{\emptyset\}\subseteq \CC$, $\PP=\On$ and $\RR=\{1\}$. We let $\MM=\{c_\alpha:\alpha\in \On\}\cup \{d_\kappa:\kappa\in \card\}$, ordered so that $c_\alpha<c_\beta$ and $d_\alpha<d_\beta$ for $\alpha<\beta$; where we define:
\begin{itemize}
	\item $c_\alpha:\CC^\alpha\rightarrow \CC$ such that $c_\alpha(\langle T_\gamma:\gamma< \alpha \rangle)$ is %a chain of order type $\alpha$ with the tree $T_\gamma$ placed below the $\gamma$th element of $\alpha$ for each $\gamma<\alpha$ (and incomparable to the $\gamma+1$th);
%	the set $\alpha\cup \bigcup_{\gamma<\alpha}T_\gamma$ ordered by letting $a\leq b$ iff $a,b\in \alpha$, $a\leq_\alpha b$ or $a\in \alpha$, $b\in T_\gamma$, $a\leq_\gamma b$ or $a,b\in T_\gamma$ for some $\gamma$ and $a\leq_{T_\gamma}b$
the $\alpha$-tree-sum of the $T_\gamma$ $(\gamma<\alpha)$.
	\item $d_\kappa:\CC^\kappa\rightarrow \CC$ such that $d_\kappa(\langle T_\gamma:\gamma<\kappa\rangle)$ is the disjoint union of the $T_\gamma$.
\end{itemize}
Then $\amrs$ is $\scatt^\On$, the class of scattered trees (as in \cite{LaverClassOfTrees}) by Theorem \ref{Thm:ScatteredRank}. %This can be seen because it is possible to show that if $T\in \CC\setminus \amrs$ then $2^{<\omega}\leq T$. We leave the proof of this as an exercise since it is similar to Theorem \ref{Thm:TreesAreAMR}.
Define the limit of a limiting sequence to be the union so that $\amr=\sscatt^{\On}$. Now proceed similarly to Corollary \ref{Cor:sscat}.
\end{proof}
%\begin{remark}
%Without too much more effort, we could in fact show that $\sscat$ is infinitely bqo or even well-behaved, however we keep things simple since this will be implied by Theorem \ref{Thm:LbarWB}.
%\end{remark}
%The next three sections will consist of applications of Theorem \ref{Thm:Non-Limit} and Theorem \ref{Thm:Limit}.
%\section{Closing classes of linear orders under sums}\label{Section:LO}
The following theorem is our main application of Theorem \ref{Thm:Limit}, which is the crucial step in showing that the large classes of partial orders in Section \ref{Section:PO} are well-behaved.

\begin{thm}\label{Thm:MLPisWB}
Let $Q$ be a quasi-order, $\mathbb{L}$ be a class of linear orders, $\mathbb{P}$ be a bqo class of partial orders and $\OA=\OA_{\mathbb{L},\mathbb{P}}^Q$. If $\sscatt^{\clos{\mathbb{L}}}_{\mathbb{P}}$ is well-behaved, then any bad $\amr$-array admits a witnessing bad $Q$-array.
\end{thm}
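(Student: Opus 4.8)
The plan is to turn a bad $\amr$-array into a bad array of decomposition trees, feed it to the hypothesised well-behavedness of $\sscatt^{\clos{\mathbb{L}}}_{\mathbb{P}}$, and then strip away first the tree structure and then the $P$-sum colours to land back in $Q$. First I would assemble the ingredients for $\OA=\OA_{\mathbb{L},\mathbb{P}}^Q$. Since $\RR=\clos{\mathbb{L}}$ already contains $\omega$ and is closed under $L$-sums, $\clos{\RR}=\clos{\mathbb{L}}$; by Lemma \ref{Lemma:PORiterable} $\MM$ is $\RR$-iterable, by Example \ref{Rk:PoLimits} $\OA$ has limits, and the (unlabelled) theorems of Section \ref{Section:OpConstruction} show $\OA$ is extendible and has nice limits. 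Hence Lemma \ref{Lemma:TreeRanksTheSame} applies: every $x\in\amr$ has a decomposition tree in $\sscatt^{\clos{\mathbb{L}}}_{\mathbb{P}}(\MM\cup\AAA)$; and Theorem \ref{Thm:Limit} is available, so that whenever $a,b\in\amr$ have decomposition trees $T_a\leq T_b$ then $a\leq b$.

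Next I would take a bad $\amr$-array $f$ and set $g(X)$ to be a decomposition tree for $f(X)$ lying in $\sscatt^{\clos{\mathbb{L}}}_{\mathbb{P}}(\MM\cup\AAA)$ as above. If $g(X)\leq g(X\setminus\{\min X\})$ held for some $X$, then Theorem \ref{Thm:Limit} would give $f(X)\leq f(X\setminus\{\min X\})$, contradicting badness of $f$; thus $g$ is a bad $\sscatt^{\clos{\mathbb{L}}}_{\mathbb{P}}(\MM\cup\AAA)$-array. Since $\sscatt^{\clos{\mathbb{L}}}_{\mathbb{P}}$ is well-behaved there are $M\in[\omega]^\omega$ and a witnessing bad $(\MM\cup\AAA)$-array $g'$ on $[M]^\omega$, with $g'(X)=\col_{g(X)}(v_X)$ for some vertex $v_X$ of $g(X)$.

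Finally I would peel off the last two layers. The quasi-order $\MM$ is order-isomorphic to $\mathbb{P}$ via $\sum_P\mapsto P$, so $\MM$ is bqo by the hypothesis on $\mathbb{P}$; hence Theorem \ref{Thm:U bqo} supplies $M'\subseteq M$ on which $g'$ restricts to a bad $\AAA$-array $f'$ (it cannot restrict to a bad $\MM$-array). As $\AAA=Q'^1$ and the $P$-sums inherit colours, $f'(X)$ is a single point whose colour in $Q'=Q\cup\{-\infty\}$ equals $\col_{f(X)}(w_X)$, where $w_X\in f(X)$ is the point coming from the leaf $v_X$ of $g(X)$; passing to these colours gives a bad $Q'$-array, which (since $-\infty\leq_{Q'}q$ for every $q$) never takes the value $-\infty$ and is therefore a bad $Q$-array witnessing badness of $f$. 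The real content of this theorem is imported from Theorem \ref{Thm:Limit} and the assumed well-behavedness of $\sscatt^{\clos{\mathbb{L}}}_{\mathbb{P}}$; the only points needing care here are the bookkeeping that the leaf colours of the decomposition trees are genuinely colours of points of $f(X)$ (via `inheriting colours' in the $P$-sums) and the measurability of $X\mapsto g(X)$, the latter handled as in the corollaries of Theorem \ref{Thm:Non-Limit} by choosing standard decomposition trees and invoking that Borel arrays may replace continuous ones.
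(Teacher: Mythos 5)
Your proposal is correct and follows essentially the same route as the paper's proof: encode $f(X)$ by decomposition trees (Lemma \ref{Lemma:TreeRanksTheSame}), use Theorem \ref{Thm:Limit} to conclude the tree array is bad, apply the assumed well-behavedness of $\sscatt^{\clos{\mathbb{L}}}_{\mathbb{P}}$ to obtain a witnessing bad $\MM\cup\AAA$-array, discard $\MM$ via Theorem \ref{Thm:U bqo} since $\MM$ is order-isomorphic to the bqo class $\mathbb{P}$, and then pass from leaf colours in $Q'$ down to a witnessing bad $Q$-array. The only (harmless) deviations are cosmetic: you observe that a bad $Q'$-array automatically avoids the minimal colour $-\infty$ where the paper instead restricts to the complement of $f'^{-1}(-\infty)$, and you make explicit the bookkeeping ($\clos{\RR}=\clos{\mathbb{L}}$, continuity of $X\mapsto g(X)$) that the paper leaves implicit.
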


\begin{proof}
Let $\OA=\langle \CC,\AAA,\PP,\MM,\RR\rangle$. Suppose we have a bad $\amr$-array $f$ then for each $X\in[\omega]^\omega$, let $g(X)$ be a decomposition tree for $f(X)$. First we note that by Definition \ref{Defn:Tx}, it is clear that the leaves of the tree $g(X)$ are coloured by some element of $\AAA$ used to construct $f(X)$. Thus the colours of the partial order $f(X)$ are colours of colours of leaves of $g(X)$. Now by Theorem \ref{Thm:Limit} we have that $g$ is a bad $\scatt^{\clos{\RR}}_{\PP}(\MM\cup \AAA)$-array.

Since we assumed that $\sscatt^{\clos{\mathbb{L}}}_{\mathbb{P}}$ is well-behaved, there is a witnessing bad $\MM\cup \AAA$-array $g'$. Since the order on $\MM$ is isomorphic to the order on $\mathbb{P}$, we know that $\MM$ is bqo. Hence we can restrict $g'$ as in the proof of Theorem \ref{Thm:U bqo}, to find a witnessing bad $\AAA$-array $f'$. Since the colours from each $g(X)$ that were in $\AAA$ were points of $f(X)$ we now know that $f'$ is a witnessing bad array for $f$. Then since $\AAA=Q'^1$ we can restrict again to be inside the complement of $f'^{-1}(-\infty)$ in order to find a witnessing bad $Q^1$-array. This clearly gives a witnessing bad $Q$-array just passing to colours. Hence any bad $\amr$-array admits a witnessing bad $Q$-array.
\end{proof}

\section{Constructing structured $\mathscr{L}$-trees}\label{Section:StructuredRTrees}
The aim of this section is to show that whenever $\mathbb{L}$ is a class of linear orders and $\mathbb{P}$ is a class of partial orders both of which are well-behaved, then we have $\sscatt^{\clos{\mathbb{L}}}_\mathbb{P}$ is also well-behaved. We can then combine this result with Theorem \ref{Thm:MLPisWB}. The proof will consist of an application of Theorem \ref{Thm:Limit}, but now we will construct structured trees instead of partial orders.
\begin{defn}
We define $\mathbb{E}\subseteq\clos{\mathbb{L}}(\mathbb{P}(\AC{2}))$ such that whenever $\langle E_i:i\in r\rangle \in \mathbb{E}$ and $i\in r$ %we have that
%\begin{itemize}
	%\item if $i<\max(r)$ then 
	there is a unique $e\in E_i$ with $\col(e)=1$. 
%	\item and if $i=\max(r)$ then for every $e\in E_i$ we have $\col(e)=0$.
%\end{itemize}

We define from $\mathbb{E}$ the new concrete category $\mathbb{E}'$ of $\AC{2}$-coloured $\mathbb{P}$-structured $\clos{\mathbb{L}}$-trees. Let $E\in \mathbb{E}$ be such that $E=\langle E_i:i\in r\rangle$, %then let $$Z_i=\{e\in E_i:\col(e)=0\}.$$ For each such $E\in \mathbb{E}$ 
for each $i\in r$ we let 
\begin{itemize}
	\item $E'_i=E_i$ if $i$ is not the maximum element of $r$,
	\item $E'_i=E_i\cup\{\varepsilon\}$ if $i$ is the maximum element of $r$.
%	\item $E'_i=E_i\cup\{\sigma\}$ if $i$ is the minimum element of $r$.
\end{itemize}
Then we define $$E'=\bigsqcup_{i\in r}E'_i$$ and let $\mathbb{E}'=\{E':E\in \mathbb{E}\}$. %We take this disjoint union in such a way to make the $E'$ distinct for distinct $E$.
We order $E'$ as in Figure \ref{Fig:E'} by letting %$\sigma$ be a minimum, 
$\varepsilon>e\in E'$ iff $\col(e)=1$; and for $d,e\in E'$ we have $d<e$ iff $d\neq \varepsilon$ and either
\begin{itemize}
	\item $d,e\in E_i$, $\col(d)=1$, $\col(e)=0$.
	\item $d\in E_i$, $e\in E_j$, $i<j$ and $\col(d)=1$.
\end{itemize}
The colouring of elements of the $E_i$ induces a colouring of $E'$ (with colours in $\AC{2}$), we also set %$\col(\sigma)=1$ and 
$\col(\varepsilon)=0$ when $\varepsilon\in E'$. 
For each $i\in r$ and $d\in E_i$ with $\col(d)=1$ and $d<e$ we define the label $l_d(e)$ to be $d$ if $\col(e)=1$ and $e$ otherwise. This gives us a $\mathbb{P}$-structured $\mathbb{L}$-tree $E'$. We call the chain of elements of $E'$ coloured by $1$ the \emph{central chain} of $E'$.

We define morphisms of $\mathbb{E}'$ to be maps induced by embeddings of $\mathbb{E}$ (we can also find a place for the possibly new maximum $\varepsilon$). Thus morphisms of $\mathbb{E}'$ will also be structured tree embeddings.
\end{defn}
\begin{figure}
%  \vspace{-10pt}
  \centering
  
\begin{tikzpicture}
\node [above] at (0.5,4.5) {$E$};
\draw [fill] (0,4) circle [radius=0.04];
\draw [fill] (0.2,4) circle [radius=0.04];
\draw [fill] (0.4,4) circle [radius=0.04];
\draw [fill] (0.6,4) circle [radius=0.04];
%\draw [fill] (0.8,4) circle [radius=0.04];
\draw [fill] (1,4) circle [radius=0.04];
\draw (0,4) -- (1,4);
\node [left] at (0,4) {$E_0$};
\draw [red, fill] (0.8,4) circle [radius=0.04];

\draw [fill] (0,3) circle [radius=0.04];
\draw [fill] (0.4,2.9) circle [radius=0.04];
\draw [fill] (0.6,3.05) circle [radius=0.04];
\draw [fill] (0.8,3) circle [radius=0.04];
\draw [fill] (1,3.1) circle [radius=0.04];
\draw (0,3) -- (0.2,3);
\draw (0.2,3) -- (0.4,2.9);
\draw (0.4,2.9) -- (0.6,3.05);
\draw (0.6,3.05) -- (0.8,3);
\draw (0.8,3) -- (1,3.1);
\draw [red, fill] (0.2,3) circle [radius=0.04];
\node [left] at (0,3) {$E_1$};

\draw [fill] (0,2) circle [radius=0.04];
\draw [fill] (0.2,2) circle [radius=0.04];
\draw [fill] (0.4,2.1) circle [radius=0.04];
\draw [fill] (0.6,2) circle [radius=0.04];
%\draw [fill] (0.8,2) circle [radius=0.04];
\draw [fill] (1,2) circle [radius=0.04];
\draw (0.4,2.1) -- (0.2,2);
\draw (0.4,2.1) -- (0.6,2);
\draw [red, fill] (0.8,2) circle [radius=0.04];
\node [left] at (0,2) {$E_2$};

\draw [fill] (0,1) circle [radius=0.04];
\draw [fill] (0.2,1) circle [radius=0.04];
%\draw [fill] (0.4,1) circle [radius=0.04];
\draw [fill] (0.6,1) circle [radius=0.04];
\draw [fill] (0.8,1) circle [radius=0.04];
\draw [fill] (1,1) circle [radius=0.04];
\draw (0,1) -- (0.4,1);
\draw [red, fill] (0.4,1) circle [radius=0.04];
\node [left] at (0,1) {$E_3$};

\draw [fill] (0,0) circle [radius=0.04];
\draw [fill] (0.2,0.1) circle [radius=0.04];
\draw [fill] (0.2,-0.1) circle [radius=0.04];
%\draw [fill] (0.4,0) circle [radius=0.04];
\draw [fill] (0.6,0) circle [radius=0.04];
\draw [fill] (0.8,0) circle [radius=0.04];
\draw [fill] (1,0) circle [radius=0.04];
\draw (0,0) -- (0.2,0.1);
\draw (0,0) -- (0.2,-0.1);
\draw (0.4,0) -- (0.2,0.1);
\draw (0.4,0) -- (0.2,-0.1);
\draw (0.8,0) -- (1,0);
\node [left] at (0,0) {$E_4$};
\draw [red, fill] (0.4,0) circle [radius=0.04];
%\end{tikzpicture}
%
%\hspace{40pt}%%%%%%%%%%%%%%%%%%%%%%%%%%%%%%%%%%%%%%%%%%%%%%%%%%%%%%%%%%%%%%%%%%%%%%%%%%%%
%
%
%
%
%
%
%
%
%
%%%%%%%%%%%%%%%%%%%%%%%%%%%%%%%%%%%%%%%%%%%%%%%%%%%%%%%%%%%5
%\begin{tikzpicture}
\node [above] at (3+0.5,4.5) {$E'$};
\draw [lightgray] (3+0,4) -- (3+1,4);
\draw [fill] (3+0,4) circle [radius=0.04];
\draw [fill] (3+0.2,4) circle [radius=0.04];
\draw [fill] (3+0.4,4) circle [radius=0.04];
\draw [fill] (3+0.6,4) circle [radius=0.04];
%\draw [fill] (0.8,4) circle [radius=0.04];
\draw [fill] (3+1,4) circle [radius=0.04];

\node [left] at (3+0,4) {$E'_0$};

\draw (3+0.8,4.4) -- (3+0,4);
\draw (3+0.8,4.4) -- (3+0.2,4);
\draw (3+0.8,4.4) -- (3+0.4,4);
\draw (3+0.8,4.4) -- (3+0.6,4);
\draw [blue] (3+0.8,4.4) -- (3+0.8,4);
\draw (3+0.8,4.4) -- (3+1,4);
\draw [red, fill] (3+0.8,4.4) circle [radius=0.04];

\draw [blue] (3+0.8,4) -- (3+0.2,3.3);

\draw [lightgray] (3+0,3) -- (3+0.2,3);
\draw [lightgray] (3+0.2,3) -- (3+0.4,2.9);
\draw [lightgray] (3+0.4,2.9) -- (3+0.6,3.05);
\draw [lightgray] (3+0.6,3.05) -- (3+0.8,3);
\draw [lightgray] (3+0.8,3) -- (3+1,3.1);

\draw [fill] (3+0,3) circle [radius=0.04];
\draw [fill] (3+0.4,2.9) circle [radius=0.04];
\draw [fill] (3+0.6,3.05) circle [radius=0.04];
\draw [fill] (3+0.8,3) circle [radius=0.04];
\draw [fill] (3+1,3.1) circle [radius=0.04];

\draw (3+0.2,3.3) -- (3+0,3);
\draw (3+0.2,3.3) -- (3+0.4,2.9);
\draw (3+0.2,3.3) -- (3+0.6,3.05);
\draw (3+0.2,3.3) -- (3+0.8,3);
\draw (3+0.2,3.3) -- (3+1,3.1);

\draw [blue] (3+0.2,3.3) -- (3+0.2,3);
\draw [blue] (3+0.2,3) -- (3+0.8,2.4);

\draw [red, fill] (3+0.2,3.3) circle [radius=0.04];
\node [left] at (3+0,3) {$E'_1$};
\draw [lightgray] (3+0.4,2.1) -- (3+0.2,2);
\draw [lightgray] (3+0.4,2.1) -- (3+0.6,2);
\draw [fill] (3+0,2) circle [radius=0.04];
\draw [fill] (3+0.2,2) circle [radius=0.04];
\draw [fill] (3+0.4,2.1) circle [radius=0.04];
\draw [fill] (3+0.6,2) circle [radius=0.04];
%\draw [fill] (0.8,2) circle [radius=0.04];
\draw [fill] (3+1,2) circle [radius=0.04];

\draw (3+0.8,2.4) -- (3+0,2);
\draw (3+0.8,2.4) -- (3+0.2,2);
\draw (3+0.8,2.4) -- (3+0.4,2.1);
\draw (3+0.8,2.4) -- (3+0.6,2);
\draw [blue] (3+0.8,2.4) -- (3+0.8,2);
\draw (3+0.8,2.4) -- (3+1,2);

\draw [red, fill] (3+0.8,2.4) circle [radius=0.04];

\node [left] at (3+0,2) {$E'_2$};
\draw [lightgray] (3+0,1) -- (3+0.4,1);
\draw [fill] (3+0,1) circle [radius=0.04];
\draw [fill] (3+0.2,1) circle [radius=0.04];
%\draw [fill] (3+0.4,1) circle [radius=0.04];
\draw [fill] (3+0.6,1) circle [radius=0.04];
\draw [fill] (3+0.8,1) circle [radius=0.04];
\draw [fill] (3+1,1) circle [radius=0.04];

\draw [blue] (3+0.8,2) -- (3+0.4,1.4);

\draw (3+0.4,1.4) -- (3+0,1);
\draw (3+0.4,1.4) -- (3+0.2,1);
\draw [blue] (3+0.4,1.4) -- (3+0.4,1);
\draw (3+0.4,1.4) -- (3+0.6,1);
\draw (3+0.4,1.4) -- (3+0.8,1);
\draw (3+0.4,1.4) -- (3+1,1);

\draw [red, fill] (3+0.4,1.4) circle [radius=0.04];
\node [left] at (3+0,1) {$E'_3$};
\draw [lightgray] (3+0,0) -- (3+0.2,0.1);
\draw [lightgray] (3+0,0) -- (3+0.2,-0.1);
\draw [lightgray] (3+0.4,0) -- (3+0.2,0.1);
\draw [lightgray] (3+0.4,0) -- (3+0.2,-0.1);
\draw [lightgray] (3+0.8,0) -- (3+1,0);
\draw [fill] (3+0,0) circle [radius=0.04];
\draw [fill] (3+0.2,0.1) circle [radius=0.04];
\draw [fill] (3+0.2,-0.1) circle [radius=0.04];

\node [below] at (3+0.4,0) {$\varepsilon$};
\draw [fill] (3+0.6,0) circle [radius=0.04];
\draw [fill] (3+0.8,0) circle [radius=0.04];
\draw [fill] (3+1,0) circle [radius=0.04];

\node [left] at (3+0,0) {$E'_4$};

\draw [blue] (3+0.4,1) -- (3+0.4,0.4);

\draw (3+0.4,0.4) -- (3+0,0);
\draw (3+0.4,0.4) -- (3+0.2,0.1);
\draw (3+0.4,0.4) -- (3+0.2,-0.1);
\draw [blue] (3+0.4,0.4) -- (3+0.4,0);
\draw (3+0.4,0.4) -- (3+0.6,0);
\draw (3+0.4,0.4) -- (3+0.8,0);
\draw (3+0.4,0.4) -- (3+1,0);

\draw [blue, fill] (3+0.4,0) circle [radius=0.04];
\draw [red, fill] (3+0.4,0.4) circle [radius=0.04];
\end{tikzpicture}

\caption{The order on $E'$, for given $E\in \mathbb{E}$.}\label{Fig:E'}
\end{figure}
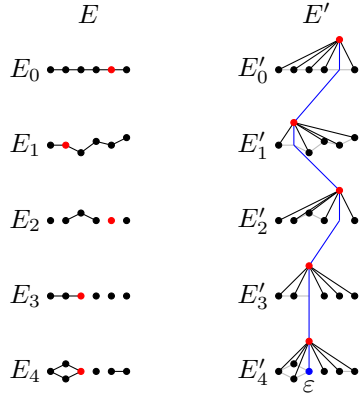
We will now define the parameters of our construction. Throughout this section, we let:
\begin{enumerate}
	\item $Q$ be an arbitrary quasi-order;
	\item $Q'$ be $Q$ with an added minimal element $-\infty$;
	\item $\mathbb{P}$ be an arbitrary concrete category;
	\item $\mathbb{L}$ be an arbitrary class of linear orders;
	\item $\CC=\sscatt^{\clos{\mathbb{L}}}_\mathbb{P}(Q')$;
	\item $\AAA=Q^1\cup \{\emptyset\}\subseteq \CC$;
	\item $\PP=\mathbb{E}'$;
	\item $\RR=\{1\}$.
\end{enumerate}
It remains to define $\MM$, now we will define its functions.
\begin{defn}
For each $E=\langle E_i:i\in r\rangle \in \mathbb{E}$ we define the function $S_E:\dom(S_E)\rightarrow \CC$ as follows. %by letting $\dom(S_E)\subseteq\CC^{E'}$ be so that for $k\in \dom(S_E)$ and $e\in E'$ we have $\col(e)=1$ implies that the argument of $k$ in position $e$ must be in $\AAA$, i.e. 
First let $\arity(S_E)=E'$ and $\barity{S_E}=\{e\in E'\mid \col(e)=1\}$ be the central chain of $E'$. % If $\col_{E'}(e)=0$ then we allow the argument $e$ of $k$ to be any element of $\CC$.
Define the function $\tau:\dom(S_E)\rightarrow \CC^{E'}$ so that $\tau(\langle x_i:i\in E'\rangle)=\langle x'_i:i\in E'\rangle$ where $x'_i$ is a single point coloured by $-\infty$ if $x_i=\emptyset$ and $i$ is on the central chain of $E'$; and $x'_i=x_i$ otherwise.
Now we let $S_E=\sum_{E'}\circ \tau$. % be the $E'$-sum as defined in Definition \ref{Defn:Sums}, except that when we have an argument equal to $\emptyset$ along the central chain, instead of replacing the point by $\emptyset$, we replace the point by the singleton partial order coloured by $-\infty$. 
We inherit labels and colours in this sum. %maybe give a real defn?
We define $S_E\leq_\MM S_F$ iff $E\leq_{\mathbb{E}} F$.
\end{defn}
Throughout this section we will also let $\MM=\{S_E:E\in \mathbb{E}\}$.
%\begin{defn}
%Given a composition sequence $\eta=\langle \langle S_{E_i},s_i\rangle: i\in n\rangle$, we want to define $f^{\eta}$ as the composition of the $S_{E_i}$ as would be expected. So we let $Z^\eta_i$ be the structure $E'_i\setminus s_i$ whenever $i<n-1$ and $Z^\eta_{n-1}=E'_{n-1}$. Now let $E^\eta=\bigsqcup_{i<n}Z^\eta_i$, and $d,e\in E^\eta$. We let $d<e$ iff for $d\in Z^\eta_i$ and $e\in Z^\eta_j$ either
%\begin{itemize}
%	\item $i=j$ and $d<_{Z^\eta_i}e$, or
%	\item $i<j$ and $d<_{E_i}s_i$.
%\end{itemize}
%So for $k\in \dom(f^\eta)$ and $e\in E^{\eta}$ we let $i_e$ be such that $e\in Z^\eta_{i_e}$ and we let $T_e$ be the argument of $k_{i_e}$ in position $e$. We the define $f^\eta(k)$ be the sum over $E^\eta$ of each of the $T_e$.
%\end{defn}
\begin{remark}\label{Rk:TreeSubsetLimits}
Clearly the minimal element of $\CC$ is $\emptyset$. Therefore %composing decomposition functions in a limiting sequence will %either
%just add more branches onto a tree.
elements of limiting sequences are constructed by replacing $\emptyset$ arguments with larger $\mathscr{L}$-trees. So
 %, or will change a colour from $-\infty$ to some element of $Q$. However if the latter occurs in a limiting sequence, the limit we construct would be the same as if we just applied the function to an element of $\AAA$ instead of composing with another function. 
%Hence %we can assume without loss of generality that 
limiting sequences will be sequences of coloured and structured $\mathscr{L}$-trees, whose underlying sets are each $\up$-closed subsets of the next (as in Figure \ref{Fig:ScatTrees}). Thus it is possible to define limits as unions of such sequences.% (If the colour of an element were to change from $-\infty$ to an element of $Q$, we just take the element of $Q$ to be the colour of this point in the union.)
\end{remark}
%\begin{remark}\label{Rk:GoAwayEmptyset}
%Notice that when constructing unions of a limiting sequence, if we applied a function to $\emptyset$ at some stage $n$ and this argument will eventually be replaced by a larger tree (say stage $n+m$), then without loss of generality we can always assume that this tree was added at the first stage possible (stage $n+1$), since doing so makes no difference to the union.
%\end{remark}

We now have two goals. Firstly we will show that $\amr=\sscatt^{\clos{\mathbb{L}}}_\mathbb{P}(Q')$, and secondly we will verify the conditions of Theorem \ref{Thm:Limit}. Using this theorem we will then see that $\sscatt^{\clos{\mathbb{L}}}_\mathbb{P}$ is well-behaved whenever $\mathbb{L}$ and $\mathbb{P}$ are well-behaved.

\subsection{The constructed trees are $\sigma$-scattered}
Now we aim to show that $\amr=\sscatt^{\clos{\mathbb{L}}}_\mathbb{P}(Q')$. 

%For $S,T\in \sscatt^{\clos{\mathbb{L}}}_\mathbb{P}(Q')$ $S$ embeds into the underlying tree of $T$ (with no colours or structure). We write $2^{<\omega}\leq T$ to mean $2^{<\omega}\leq' T$.
\begin{defn}
Let $T\in \sscatt^{\clos{\mathbb{L}}}_{\mathbb{P}}(Q')$, $\xi$ be a chain of $T$ and $t\in \xi$. We define $l_t(\xi)$ to be $\emptyset$ whenever $t$ is the maximal element of $\xi$, and if there is some $t'\in \xi$ with $t'>t$ then we let $l_t(\xi)=\{l_t(t')\}$. This is well-defined by the definition of $l_t$.
\end{defn}

\begin{lemma}\label{Lemma:CombReduce}
Suppose that $T\in \sscatt^{\clos{\mathbb{L}}}_{\mathbb{P}}(Q')\setminus \amrs$, and $\xi$ is an $\up$-closed chain of $T$ that contains no leaves. Then either $\bigcap_{j\in\xi}\down j\notin \amrs$ or there is some $t\in \xi$ and $p\in \range(l_t)\setminus l_t(\xi)$ such that ${}^p\downs t\notin \amrs$.
\end{lemma}
\begin{proof}
Let $T$ and $\xi\subseteq T$ be as described. Suppose $\bigcap_{j\in\xi}\down j\in \amrs$ and there are no such $t$ and $p$. Pick a maximal chain $\zeta$ of $\bigcap_{j\in\xi}\down j$, and let $\xi'=\zeta\cup \xi$. Then for each $i\in \ot(\xi')$ let $\xi'_i$ be the $i$th element of $\xi'$ and we define $E_i$ as an $\AC{2}$-coloured copy of $\range(l_{\xi'_i})\in \mathbb{P}$, that we colour as follows. When $\ot(\xi')$ has a maximum element $m$, we let every colour of $E_m$ be $0$. For non-maximal $i\in \ot(\xi')$ we let $u\in E_i$ be coloured with $1$ iff there is some $j\in \ot(\xi')$ with $i<j$ and $l_{\xi'_i}(\xi'_j)=u$. We set $E=\langle E_i:i\in \ot(\xi')\rangle$, then we have that $E\in \mathbb{E}$.

Now for $a\in E'$ we define $K_a\in \sscatt^{\clos{\mathbb{L}}}_\mathbb{P}(Q')$ as follows. If $a$ is the $i$th point on the central chain of $E'$ and $\col_T(\xi'_i)\neq -\infty$ we let $K_a$ be a single point coloured by $\col_T(\xi'_i)$, if $\col_T(\xi'_i)=-\infty$ then we let $K_a=\emptyset$, hence $K_a\in \AAA$. Otherwise $a$ is not on the central chain of $E'$ and we can let $i$ be largest such that the $i$th point $w$ of the central chain of $E'$ is $<a$ (there was such a largest element, by definition of the order on $E'$). Now let $u\in \range(l_w)$ be such that $l_w(a)=u$. Then we set $$K_a={}^u\downs \xi'_i\subseteq T.$$
We then have by construction that $$S_E(\langle K_a:a\in E'\rangle)=T.$$
%since after applying the function $S_E$, that the central chain of $E'$ will have precisely the same colours and labels as the maximal chain $\xi'$, and each of the trees that we add underneath this chain corresponded exactly to the same sub-tree of $T$ below the same point of $\xi$ with the same label.

But we know that each $K_a$ was in $\amrs$ by our original assumption. Hence since $\amrs$ was closed under applying any function in $\MM$ it must be that $T\in \amrs$ which is a contradiction.
\end{proof}

\begin{lemma}\label{Lemma:NonScatteredTreeSplitting}
Suppose that $T\in \sscatt^{\clos{\mathbb{L}}}_{\mathbb{P}}(Q')\setminus \amrs$, then there is some $u\in T$ and two disjoint $\down$-closed subtrees $S_1,S_2\subseteq \downs u$ such that $S_1,S_2\notin \amrs$.
\end{lemma}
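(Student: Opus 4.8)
The goal is to show that a tree $T \in \sscatt^{\clos{\mathbb{L}}}_\mathbb{P}(Q') \setminus \amrs$ must contain, above some point $u$, two disjoint $\down$-closed subtrees neither of which lies in $\amrs$. The natural strategy is a tree-search argument: starting from a root, repeatedly pass to a subtree not in $\amrs$, using Lemma \ref{Lemma:CombReduce} at each step to locate where the failure of $\amrs$-membership ``lives''. The key point is that Lemma \ref{Lemma:CombReduce} tells us that being outside $\amrs$ is witnessed either by a \emph{tail intersection} $\bigcap_{j\in\xi}\down j$ along an $\up$-closed chain $\xi$, or by a \emph{lateral branch} ${}^p\downs t$ for some $t\in\xi$ and $p \in \range(l_t) \setminus l_t(\xi)$. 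I would build, by transfinite recursion, a maximal $\up$-closed chain $\xi$ of $T$ together with a sequence of ``bad'' subtrees, and analyse the two cases.

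\textbf{Step 1 (setup and the branching case).} Pick a maximal $\up$-closed chain $\xi$ of $T$ containing no leaf (if $T$ has only finite chains we can argue directly, but in general extend a maximal chain of $\bigcap_{j}\down j$ suitably). Apply Lemma \ref{Lemma:CombReduce}. In the easy case, there is $t\in\xi$ and $p \in \range(l_t)\setminus l_t(\xi)$ with ${}^p\downs t \notin \amrs$. Since $p \notin l_t(\xi)$, the subtree $S_1 := {}^p\downs t$ is disjoint from $\xi$, hence disjoint from the subtree $S_2 := {}^{q}\downs t$ where $q \in l_t(\xi)$ is the label of the element of $\xi$ above $t$ (such a $q$ exists because $\xi$ contains no leaf, so $t$ is not maximal in $\xi$). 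Both are $\down$-closed subtrees of $\downs t$. Now $S_2$ contains a tail of $\xi$, so if $S_2 \notin \amrs$ we are done with $u = t$. If instead $S_2 \in \amrs$, recurse into $\downs t$ with the chain $\xi \cap \downs t$: this pushes the problem strictly higher in the tree, so by well-foundedness of $\scatt$-rank (Theorem \ref{Thm:ScatteredRank}, noting $T$ is $\sigma$-scattered and the relevant $\up$-closed scattered pieces have an ordinal rank) the recursion terminates, and at termination the branching case must produce two bad subtrees.

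\textbf{Step 2 (the intersection case) — the main obstacle.} The harder case is when $\bigcap_{j\in\xi}\down j \notin \amrs$ for \emph{every} choice of maximal chain. Here $\xi$ has no maximum, and the ``badness'' has moved into the intersection $I := \bigcap_{j\in\xi}\down j$, which is again a $\sigma$-scattered structured $\clos{\mathbb{L}}$-tree not in $\amrs$ but whose root-chains start strictly above those of $T$. I would want to iterate the construction transfinitely along $I$. The difficulty is that this iteration need not terminate after finitely many steps, so I must track an ordinal rank that strictly decreases. The right invariant is the scattered rank $\Srank$ of the relevant $\up$-closed scattered pieces (from $T$'s $\sigma$-scattered decomposition): each time we pass from $T$ to a proper tail-intersection or lateral branch, the $\Srank$ of the piece containing it drops, since $\amrs$-membership is exactly $\scatt^{\clos{\mathbb{L}}}_\infty$-membership (Lemma \ref{Lemma:TreeRanksTheSame}, Theorem \ref{Thm:ScatteredRank}). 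So the recursion must halt, and when it halts Lemma \ref{Lemma:CombReduce} can no longer return the intersection case — forcing the branching case, which yields $u$, $S_1$, $S_2$ as required. Making this ranking bookkeeping precise — in particular handling the countably-many-pieces structure of $\sigma$-scatteredness, and ensuring that at each stage we stay inside a single scattered piece so that $\Srank$ is defined — is where the real work lies; everything else is routine manipulation of the definitions of ${}^p\downs t$, $l_t(\xi)$, and $\down$-closedness.
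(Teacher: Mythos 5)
Your overall search strategy (repeatedly invoking Lemma \ref{Lemma:CombReduce} and distinguishing the lateral-branch case from the tail-intersection case) is the same as the paper's, but two essential pieces are missing or wrong. First, the termination argument. You propose to stop the recursion via a strictly decreasing scattered rank, justified by ``$\amrs$-membership is exactly $\scatt^{\clos{\mathbb{L}}}_\infty$-membership''. That identification is essentially Theorem \ref{Thm:TreesAreAMR} (together with Theorem \ref{Thm:ScatteredRank}), and Theorem \ref{Thm:TreesAreAMR} is proved \emph{using} the present lemma, so appealing to it here is circular. Moreover, even granting ranks, the claimed decrease is false: passing to the tail $\bigcap_{j\in\xi}\down j$, or to points strictly higher along a chain inside a single scattered piece, need not lower $\Srank$ at all (all tails of a long chain have the same rank), so the recursion can repeat transfinitely within one piece. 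The paper avoids ranks entirely: it builds a strictly increasing ordinal-indexed sequence $t_0<t_1<\cdots$ in $T$, recording at each stage a bad branch ${}^{p_\gamma}\downs t_\gamma\notin\amrs$, and terminates by cardinality --- if the recursion never stopped, $T$ would contain a strictly increasing chain indexed by all of $\On$ and hence be a proper class.

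Second, and more importantly, you never actually produce \emph{two} disjoint bad subtrees. Lemma \ref{Lemma:CombReduce} only ever hands you one bad lateral branch; your candidate $S_2={}^{q}\downs t$ in the chain direction may lie in $\amrs$, and in that case you say ``recurse into $\downs t$ with the chain $\xi\cap\downs t$'' --- but that is precisely the direction you have just verified to be good, the known badness lives in $S_1={}^{p}\downs t$, and nothing in your recursion supports the final assertion that ``at termination the branching case must produce two bad subtrees''. In the paper's proof the second bad tree comes from the recursion history: at a limit stage $\lambda$ one applies Lemma \ref{Lemma:CombReduce} to $\zeta_\lambda=\bigcup_{\gamma<\lambda}\up t_\gamma$; if the branch case occurs at some $t\in\zeta_\lambda$ with $p\notin l_t(\zeta_\lambda)$, then choosing $\delta$ with $t<t_\delta$ one takes $S_1={}^{p}\downs t$ and $S_2={}^{p_\delta}\downs t_\delta$, which are disjoint (their elements carry different $t$-labels) and are both already known to be outside $\amrs$; if instead the intersection case occurs, the recursion continues into $\bigcap_{j\in\zeta_\lambda}\down j$. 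Without this bookkeeping --- carrying the previously found bad branches along the growing chain so that a later branch-case hit at a lower point can be paired with one of them --- your argument does not reach the conclusion.
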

\begin{proof}
%Suppose for contradiction that $T\in \sscatt^{\clos{\mathbb{L}}}_{\mathbb{P}}(Q')\setminus \amrs$ and for any $u\in T$ and any two disjoint $\down$-closed subtrees $S_1,S_2\subseteq \downs u$ of $T$ either $S_1\in \amrs$ or $S_2\in \amrs$.

Let $T\in \sscatt^{\clos{\mathbb{L}}}_{\mathbb{P}}(Q')\setminus \amrs$. Pick a chain $\xi_0$ of $T$ that contains no leaves and is maximal. Then by Lemma \ref{Lemma:CombReduce} there is some $t_0\in \xi_0$ and $p_0\in \range(l_{t_0})\setminus l_{t_0}(\xi_0)$ such that ${}^{p_0}\downs t_0\notin \amrs$. Suppose for induction that for some $\alpha\in\On$ we have defined $t_0<...<t_\alpha\in T$ and $p_0,...,p_\alpha$ such that for any $\gamma<\alpha$ we have $l_{t_\gamma}(t_\alpha)=p_\gamma$ and ${}^{p_\gamma}\downs t_\alpha\notin \amrs$. We now apply Lemma \ref{Lemma:CombReduce} to a maximal chain $\xi_\alpha$ of ${}^{p_\alpha}\downs t_\alpha$ that contains no leaves, whence we find $t_{\alpha+1}\in {}^{p_\alpha}\downs t_\alpha$ and $p_{\alpha+1}$ such that ${}^{p_{\alpha+1}}\downs t_{\alpha+1}\notin \amrs$, in other words $t_{\alpha+1}$ and $p_{\alpha+1}$ satisfy the induction hypothesis.

Now suppose for some limit $\lambda\in \On$ we have defined such $t_\gamma$ and $p_\gamma$ for every $\gamma<\lambda$. Let $\zeta_\lambda=\bigcup_{\gamma<\lambda}\up t_\gamma$, then $\zeta_\lambda$ is an $\up$-closed chain of $T$ that contains no leaves because every element has some $t_\gamma$ larger than it. By Lemma \ref{Lemma:CombReduce} either $\bigcap_{j\in\zeta_\lambda}\down j\notin \amrs$ or there is some $t\in \zeta_\lambda$ and $p\in \range(l_t)\setminus l_t(\zeta_\lambda)$ such that ${}^p\downs t\notin \amrs$. Suppose the latter, then let $\delta$ be least such that $t<t_\delta$. Since $t_\delta\in \zeta_\lambda$ we have that $l_t(t_\delta)\in l_t(\zeta_\lambda)\neq \emptyset$. Now we also had that $p\notin l_t(\zeta_\lambda)$, hence the trees $S_1={}^p\downs t$ and $S_2={}^{p_\delta}\downs t_\delta$ are disjoint. Moreover each is clearly $\down$-closed and we also have $S_1,S_2\notin \amrs$ we now set $u=t$ and we are done. %We have therefore found a contradiction of our earlier assumption.

Finally suppose that $\bigcap_{j\in\zeta_\lambda}\down j\notin \amrs$, then pick a maximal chain $\xi_\lambda$ of $\bigcap_{j\in\xi}\down j$, so by Lemma \ref{Lemma:CombReduce} there is some $t_\lambda\in \xi_\lambda$ and $p_\lambda\in \range(l_{t_\lambda})\setminus l_{t_\lambda}(\xi_\lambda)$ such that ${}^{p_\lambda}\downs t_\lambda\notin \amrs$, and we can continue the induction. Now the induction must stop at some point, otherwise for all limit $\lambda\in \On$ it must be that $\bigcap_{j\in\zeta_\lambda}\down j\neq \emptyset$, and therefore $T$ is a proper class! This contradiction ensures that we will always find $u$, $S_1$ and $S_2$ as required.
\end{proof}

\begin{thm}\label{Thm:TreesAreAMR}
$\amrs=\scatt^{\clos{\mathbb{L}}}_\mathbb{P}(Q')$.
\end{thm}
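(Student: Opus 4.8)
The plan is to prove the two inclusions separately; this is the structured-tree analogue of Theorem~\ref{Thm:ScatteredRank}. First note that since $\RR=\{1\}$ every composition sequence has length $1$ and $f^{\langle\langle S_E,s\rangle\rangle}=S_E$ by $\RR$-iterability, so $\amrs$ is simply the least subclass of $\CC=\sscatt^{\clos{\mathbb{L}}}_\mathbb{P}(Q')$ containing $\AAA=Q^1\cup\{\emptyset\}$ and closed under applying the operators $S_E$ for $E\in\mathbb{E}$. I record for repeated use that a scattered $\clos{\mathbb{L}}$-tree is trivially $\mathbb{L}$-$\sigma$-scattered (it is covered by the single $\up$-closed scattered $\clos{\mathbb{L}}$-subtree equal to all of itself), so $\scatt^{\clos{\mathbb{L}}}_\mathbb{P}(Q')\subseteq\CC$; and that a $\down$-closed subtree of a scattered $\clos{\mathbb{L}}$-tree is again a scattered $\clos{\mathbb{L}}$-tree (its chains are chains of the ambient tree, and $2^{<\omega}$ does not embed into it since it embeds into the ambient tree by composing with the inclusion).

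\emph{The inclusion $\amrs\subseteq\scatt^{\clos{\mathbb{L}}}_\mathbb{P}(Q')$.} I would induct on $\rank(x)$ (Remark~\ref{Rk:amrs}, Proposition~\ref{Prop:Reduction}). The base case is immediate, each element of $\AAA$ being $\emptyset$ or a single point coloured by some element of $Q\subseteq Q'$, hence a scattered $\clos{\mathbb{L}}$-tree with trivial $\mathbb{P}$-structure. For the step, let $x=S_E(\langle K_a:a\in E'\rangle)$ with every $K_a\in\scatt^{\clos{\mathbb{L}}}_\mathbb{P}(Q')$ of strictly smaller rank. The key point is structural: the underlying order of $E'$ is a \emph{spine} (its central chain, together with $\varepsilon$ on top) off which hang pairwise incomparable maximal points, and, because $\barity{S_E}$ is exactly the central chain, $\tau$ forces every argument positioned on the spine to be a single point; hence $S_E(\langle K_a\rangle)$ is again a tree, namely the spine (whose points carry single $Q'$-coloured points) with the subtrees $K_a$ for $a$ a leaf of $E'$, and $K_\varepsilon$, grafted above the corresponding points. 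Every chain of this tree is a sum, indexed by a chain of $E'$, of chains of the individual $K_a$, hence has order type in $\clos{\mathbb{L}}$ by closure of $\clos{\mathbb{L}}$ under sums; colours remain in $Q'$ and the $\mathbb{P}$-structure is inherited. It only remains to see $2^{<\omega}\not\leq S_E(\langle K_a\rangle)$. Each grafted subtree $K_a$ (and $K_\varepsilon$) is closed upwards in $S_E(\langle K_a\rangle)$ (nothing outside it lies above any of its points), so a copy of $2^{<\omega}$ with its root in one of them would lie entirely inside it, contradicting the induction hypothesis; and if the root lay on the spine, then at every node of the copy at most one of the two successors can remain on the spine (a chain), so an infinite sub-copy of $2^{<\omega}$ would be driven into a single grafted subtree, the same contradiction. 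Hence $x\in\scatt^{\clos{\mathbb{L}}}_\mathbb{P}(Q')$.

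\emph{The inclusion $\scatt^{\clos{\mathbb{L}}}_\mathbb{P}(Q')\subseteq\amrs$.} Suppose not, and fix $T\in\scatt^{\clos{\mathbb{L}}}_\mathbb{P}(Q')\setminus\amrs$; by the opening remark $T\in\sscatt^{\clos{\mathbb{L}}}_\mathbb{P}(Q')\setminus\amrs$, so Lemma~\ref{Lemma:NonScatteredTreeSplitting} applies to it. I would iterate that lemma over $2^{<\omega}$: put $R_{\langle\rangle}=T$, and given a $\down$-closed subtree $R_\sigma\notin\amrs$ (again scattered, hence $\mathbb{L}$-$\sigma$-scattered, by the opening remark), apply Lemma~\ref{Lemma:NonScatteredTreeSplitting} to get $u_\sigma\in R_\sigma$ and disjoint $\down$-closed subtrees $S^\sigma_0,S^\sigma_1\subseteq\downs u_\sigma$ with $S^\sigma_i\notin\amrs$; set $x_\sigma=u_\sigma$ and $R_{\sigma i}=S^\sigma_i$. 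Then $\sigma\sqsubset\tau$ gives $x_\sigma<x_\tau$ (each extension step strictly raises $x$, since $x_{\sigma i}\in S^\sigma_i\subseteq\downs x_\sigma$, and $<$ is transitive), while if $\sigma,\tau$ are incomparable with greatest common initial segment $\rho$ then, using $\down$-closedness, $x_\sigma\in R_{\rho 0}$ and $x_\tau\in R_{\rho 1}$, so $x_\sigma\perp x_\tau$; and using the explicit form ${}^{p}\downs t$ of the subtrees supplied by Lemma~\ref{Lemma:NonScatteredTreeSplitting} (convex above their base point) one checks that $\sigma\mapsto x_\sigma$ also preserves meets. Thus $\sigma\mapsto x_\sigma$ embeds $2^{<\omega}$ into $T$, contradicting that $T$ is scattered.

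The bulk of the difficulty sits in the second inclusion, but its hard content is exactly Lemma~\ref{Lemma:NonScatteredTreeSplitting} (which in turn rests on Lemma~\ref{Lemma:CombReduce}); what is left there is the bookkeeping of the $2^{<\omega}$-indexed iteration and the verification that the resulting map is a genuine meet-preserving tree embedding rather than a bare order embedding. In the first inclusion the one non-routine step is realising that the domain restriction $\barity{S_E}$ is precisely what keeps $S_E(\langle K_a\rangle)$ a tree, and that the spine-and-leaves shape of $E'$ is what obstructs $2^{<\omega}$.
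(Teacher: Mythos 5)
Your proof is correct and takes essentially the same route as the paper: the forward inclusion by induction on rank, using that the non-spine summands of the $E'$-sum are upward-closed to push any embedded copy of $2^{<\omega}$ into a lower-rank argument, and the reverse inclusion by iterating Lemma \ref{Lemma:NonScatteredTreeSplitting} along $2^{<\omega}$ to produce an embedding contradicting scatteredness. The extra details you supply (the $\clos{\mathbb{L}}$-tree verification, the applicability of the lemma to the $\down$-closed subtrees, meet preservation) only make explicit what the paper dismisses as clear.
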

\begin{proof}
First we will show that $\amrs\subseteq \scatt^{\clos{\mathbb{L}}}_\mathbb{P}(Q')$. It is clear that $\amrs$ consists of $Q'$-coloured, $\mathbb{P}$-structured $\clos{\mathbb{L}}$-trees, so we prove by induction on the rank of $x\in \amrs$ that $2^{<\omega}\not \leq x$. Clearly if $x\in \AAA$ then $2^{<\omega}\not \leq x$, so we have the base case.

So let $x\in \CC_{\alpha}$ for some $\alpha\in \On$. Then for some $E\in \mathbb{E}$ we have that $x=S_E(\langle x_i:i\in E'\rangle)$ with $x_i\in \CC_{<\alpha}$ for each $i\in E'$. Hence by the induction hypothesis, $x$ is a $\zeta$-tree-sum of the scattered trees $x_i$, $(i\in E')$ of lower rank for some chain $\zeta\subseteq x$.

Suppose $2^{<\omega}\leq x$ and let $\psi$ be a witnessing embedding. Then either $\range(\psi)$ is contained entirely inside the chain $\zeta$ (which is a contradiction) or there is some $z\in 2^{<\omega}$ with $\psi(z)$ inside $x_i$ for some $i\in E'$. But each $x_i$ is $\down$-closed, hence $\psi(\down z)\subseteq x_i$. But for any point $t\in 2^{<\omega}$ we have that $\down t$ is a copy of $2^{<\omega}$, and therefore $2^{<\omega}\leq x_i$ which contradicts that $x_i$ was scattered. Therefore $2^{<\omega}\not \leq x$, and hence we have $\amrs\subseteq \scatt^{\clos{\mathbb{L}}}_\mathbb{P}(Q')$.

For the other direction we want to show $\amrs\supseteq \scatt^{\clos{\mathbb{L}}}_\mathbb{P}(Q')$. So suppose that $T\in \scatt^{\clos{\mathbb{L}}}_\mathbb{P}(Q')\setminus\amrs$, we will show that $2^{<\omega}\leq T$. First we set $T^{\langle \rangle}=T$. Now suppose that we have already defined $T^s\notin \amrs$ for some $s\in 2^{<\omega}$. We apply Lemma \ref{Lemma:NonScatteredTreeSplitting} to $T^s$ to obtain corresponding $u_s$, and $S^s_1, S^s_2$ $\notin \amrs$. Now let $T^{s\con \langle 0\rangle}=S^s_1$ and $T^{s\con \langle 1 \rangle }=S^s_2$.

We now define the embedding $\varphi:2^{<\omega}\rightarrow T$ by letting $\varphi(s)=u_s$. We claim that this is a tree embedding. For any $s\in 2^{<\omega}$ we have that the $\down$-closed trees $T^{s\con \langle 0\rangle}$ and $T^{s\con \langle 1\rangle}$ are disjoint, hence $u_{s\con \langle 0\rangle}$ and $u_{s\con \langle 1\rangle}$ are incomparable elements of $T$. It remains to check that for $i\in \{0,1\}$ we have $u_s< u_{s\con \langle i\rangle}$, but this is clear since we had that $$u_{s\con \langle i\rangle}\in T^{s\con \langle i\rangle}=S^s_i\subseteq \downs u_{s}.$$
So indeed $\varphi$ is an embedding and $2^{<\omega}\leq T$. Therefore $T\notin \scatt^{\clos{\mathbb{L}}}_\mathbb{P}(Q')$, and we conclude that $\amrs\supseteq \scatt^{\clos{\mathbb{L}}}_\mathbb{P}(Q')$. So we have shown both directions, which completes the proof.
\end{proof}

\subsection{The class $\sscatt^{\clos{\mathbb{L}}}_\mathbb{P}$ of structured trees is well-behaved}
\begin{lemma}\label{Thm:TreesAritiesWellBehaved}
If $\mathbb{L}$ and $\mathbb{P}$ are well-behaved, then $\PP$ is well-behaved and $\MM$ is bqo.
\end{lemma}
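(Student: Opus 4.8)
The plan is to reduce both assertions to three facts available from the excerpt: that $\clos{\mathbb{L}}$ is well-behaved (Theorem~\ref{Thm:LbarWB}, since $\mathbb{L}$ is well-behaved), that $\mathbb{P}$ is well-behaved, and that the finite order $\AC{2}$ is bqo (Lemma~\ref{Lemma:FinitePosWB}). For the easy half, $\MM$ is isomorphic as a quasi-order to $\mathbb{E}$ (by definition $S_E\leq_{\MM}S_F$ iff $E\leq_{\mathbb{E}}F$), and $\mathbb{E}$ is a sub-quasi-order of $\clos{\mathbb{L}}\big(\mathbb{P}(\AC{2})\big)$. Now $\clos{\mathbb{L}}$ and $\mathbb{P}$ preserve bqo and $\AC{2}$ is bqo, so $\mathbb{P}(\AC{2})$ is bqo, hence $\clos{\mathbb{L}}(\mathbb{P}(\AC{2}))$ is bqo, hence so is its sub-quasi-order $\mathbb{E}$, and therefore $\MM$ is bqo.

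For $\PP=\mathbb{E}'$ I would fix a quasi-order $Q$ and a bad $\mathbb{E}'(Q)$-array $f$; since $f$ is continuous it has countable range, and by Theorem~\ref{Thm:GalvinPrikry} applied to $\{X:r(f(X))\text{ has a maximum}\}$ I may restrict $f$ so that the extra point $\varepsilon$ is either always present or always absent. The guiding observation is that a $Q$-colouring of the tree $E'=\bigsqcup_{i\in r}E'_i$ amounts to a $Q$-colouring of each partial order $E_i$ (plus a colour for $\varepsilon$ when it is present), so each level-piece of a $Q$-coloured $E'$ becomes an object of $\mathbb{P}(\AC{2})(Q)=\mathbb{P}(\AC{2}\times Q)$; and, because the morphisms of $\mathbb{E}'$ are by definition exactly those induced by morphisms of $\mathbb{E}$, an $\mathbb{E}'(Q)$-embedding from $E'$ to $F'$ is precisely a $\clos{\mathbb{L}}$-morphism $\varphi:r\to r'$ together with $\mathbb{P}(\AC{2}\times Q)$-morphisms $E_i\to F_{\varphi(i)}$, plus (when $\varepsilon$ is present) the requirement that its $Q$-colour increase, since one may always place $\varepsilon_A$ at $\varepsilon_B$. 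Hence the map
$$\Psi\colon \mathbb{E}'(Q)\longrightarrow \clos{\mathbb{L}}\big(\mathbb{P}(\AC{2}\times Q)\big)\times Q$$
that sends $E'$ to the pair consisting of $\langle E_i\rangle_{i\in r}$ carrying both its $\AC{2}$- and $Q$-colourings and the $Q$-colour of $\varepsilon$ (dropping the second coordinate when $\varepsilon$ is absent) reflects $\leq$, so $\Psi\circ f$ is a bad array. When $\varepsilon$ is present, Theorem~\ref{Thm:times bqo} restricts $\Psi\circ f$ to a bad array equal either to its second coordinate $X\mapsto\col_{f(X)}(\varepsilon)$, which is already a witnessing bad $Q$-array for $f$, or to its first coordinate; so in every case we are left, after possibly finishing immediately, with a bad $\clos{\mathbb{L}}(\mathbb{P}(\AC{2}\times Q))$-array $h$ whose colours are the fully coloured level-pieces of the $f(X)$.

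Now I would peel off the well-behaved functors one at a time. Since $\clos{\mathbb{L}}$ is well-behaved, $h$ admits a witnessing bad $\mathbb{P}(\AC{2}\times Q)$-array, each value of which is a level-piece $E_i$ of some $f(X)$ with its $\AC{2}$- and $Q$-colourings. Since $\mathbb{P}$ is well-behaved, this admits a witnessing bad $(\AC{2}\times Q)$-array $g$, each value $g(X)$ being the $\AC{2}\times Q$-colour of some vertex $v$ of that $E_i$, hence of some vertex $v$ of $f(X)$, so the second coordinate of $g(X)$ equals $\col_{f(X)}(v)$. Finally, since $\AC{2}$ is bqo, Theorem~\ref{Thm:times bqo} applied to $g$ cannot yield a bad $\AC{2}$-array, so it yields a bad $Q$-array equal to the second coordinate of $g$; this is a witnessing bad $Q$-array for $f$ in the sense of Definition~\ref{Defn:WellBehaved}, proving $\PP$ well-behaved.

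The main obstacle is the claim that $\Psi$ reflects $\leq$: one must verify that a $\clos{\mathbb{L}}(\mathbb{P}(\AC{2}\times Q))$-morphism between the flattened, fully coloured level-sequences, together with the inequality of the $\varepsilon$-colours, genuinely reassembles into a structured-tree embedding of $E'$ into $F'$ — checking clauses (1), (2) and (\ref{Item:StructTrees4}) of Definition~\ref{Defn:StructTrees}, and in particular that $\varepsilon_A$ has an admissible image above the image of the central chain. This is exactly what the stipulation ``morphisms of $\mathbb{E}'$ are those induced by morphisms of $\mathbb{E}$'' is designed to deliver, so it is essentially bookkeeping, but it is the crux; everything else is an assembly of Theorems~\ref{Thm:LbarWB} and~\ref{Thm:times bqo} and Lemma~\ref{Lemma:FinitePosWB}.
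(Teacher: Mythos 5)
Your argument is correct and follows essentially the same route as the paper: both map $\mathbb{E}'(Q)$ into $\clos{\mathbb{L}}\bigl(\mathbb{P}(\AC{2}\times Q)\bigr)$ paired with the colour of $\varepsilon$, check that this map reflects $\leq$, and then peel off Theorem \ref{Thm:times bqo}, the well-behavedness of $\clos{\mathbb{L}}$ (via Theorem \ref{Thm:LbarWB}) and of $\mathbb{P}$, and the fact that $\AC{2}$ is bqo to obtain a witnessing bad $Q$-array. The only cosmetic differences are that the paper records an absent $\varepsilon$ by the value $-\infty$ in $Q'$ instead of your preliminary Galvin--Prikry restriction, and deduces that $\MM$ is bqo from the well-behavedness of $\mathbb{E}'$ rather than directly from bqo preservation as you do.
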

\begin{proof}
Suppose $\mathbb{L}$ and $\mathbb{P}$ are well-behaved. We have that $\PP=\mathbb{E}'$ and $\mathbb{E}\subseteq \clos{\mathbb{L}}(\mathbb{P}(\AC{2}))$. Consider the function $$\tau:\mathbb{E}'(Q)\rightarrow \clos{\mathbb{L}}(\mathbb{P}(\AC{2}\times Q))\times Q'$$ 
defined as follows. First if $\hat{E}'\in \mathbb{E}'(Q)$, (so $\hat{E}'$ is an $\AC{2}\times Q$-coloured copy of $E'\in \mathbb{E}'$), then let $\hat{E}$ be
%
%$\langle \hat{E},\sigma\rangle$ where $\hat{E}$ is 
a coloured copy of $E=\langle E_i:i\in r\rangle\in \mathbb{E}$ such that $\col_{\hat{E}_i}(z)=\col_{\hat{E'}}(z)$ for every $i\in r$ and $z\in E_i$.
If we added $\varepsilon$ to $E'$ then we let $\sigma=\col(\varepsilon)$, otherwise we let $\sigma=-\infty$. Then we define $\tau(\hat{E'})=\langle \hat{E},\sigma\rangle$.
Now if $\tau(E')\leq \tau(F')$ then there is an embedding of $\mathbb{E}$ that increases the colours that are in $Q$. Hence in this case $E'\leq F'$.

So suppose there were a bad $\mathbb{E}'$-array $f$, hence $\tau\circ f$ is a bad $\clos{\mathbb{L}}(\mathbb{P}(\AC{2}\times Q))\times Q'$-array and we notice that $\tau\circ f(X)$ has all of its colours in $Q$ from $f(X)$, and all its colours in $Q'$ either from $f(X)$ or $-\infty$. By Theorem \ref{Thm:times bqo} we either get either a witnessing bad $Q'$-array or $\clos{\mathbb{L}}(\mathbb{P}(\AC{2}\times Q))$-array. In the first case applying the Galvin and Prikry Theorem \ref{Thm:GalvinPrikry}, restricting to be inside the complement of $f^{-1}(-\infty)$ would give a bad $Q$-array which would be witnessing for $f$. 

So we have a witnessing bad $\clos{\mathbb{L}}(\mathbb{P}(\AC{2}\times Q))$-array. Since $\mathbb{L}$ is well-behaved, we have by Theorem \ref{Thm:LbarWB} that $\clos{\mathbb{L}}$ is well-behaved, and hence since $\mathbb{P}$ is also well-behaved, we have a witnessing bad $\AC{2}\times Q$-array $f'$. But each of the colours in $Q$ of $f'(X)$ were from $f(X)$, hence using Theorem \ref{Thm:times bqo} we find a witnessing bad $Q$-array for $f$. Therefore $\PP=\mathbb{E}'$ is well-behaved. Clearly this means that $\mathbb{E}$ is bqo, and hence also $\MM$ is bqo.
\end{proof}
We now verify the conditions of Theorem \ref{Thm:Limit}.%  note that that $\CC=\mathbb{T}^\mathbb{L}_\mathbb{P}(Q')$ is a concrete category.
%\begin{lemma}
%$\MM$ is $\RR$-iterable.
%\end{lemma}
%\begin{proof}
%Trivial since $\RR=\{1\}$.
%\end{proof}
\begin{lemma}\label{Lemma:TreesEveryfExtensive}
Every $f\in \MM$ is extensive.
\end{lemma}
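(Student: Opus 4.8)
The plan is to unwind the definition of \emph{extensive} and reduce the claim to the elementary fact that in the $E'$-sum construction every argument reappears as a sub-structured-tree of the output. Recall that $f\in\MM$ means $f=S_E$ for some $E=\langle E_i:i\in r\rangle\in\mathbb{E}$, that $\arity(S_E)=E'$, and that $S_E=\sum_{E'}\circ\tau$. So I am given $x=\langle x_a:a\in E'\rangle\in\dom(S_E)$ and a point $i\in E'$ with $\col_x(i)=q$, i.e.\ $q=x_i$, and I must show $q\leq_\CC S_E(x)$. First I would record what $\tau$ does: writing $\tau(x)=\langle x'_a:a\in E'\rangle$, we have $x'_a=x_a$ unless $a$ lies on the central chain of $E'$ and $x_a=\emptyset$, in which case $x'_a$ is a single point coloured $-\infty$. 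Thus either $x'_i=q$, or $q=x_i=\emptyset$; and since $\emptyset$ is the minimal element of $\CC$ (Remark \ref{Rk:TreeSubsetLimits}), in the latter case $q=\emptyset\leq_\CC S_E(x)$ is immediate. So it suffices to prove $x'_a\leq_\CC S_E(x)=\sum_{E'}(\langle x'_b:b\in E'\rangle)$ for every $a\in E'$.

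The substance is then to read off this inclusion from the shape of $E'$ together with the definition of the sum. The key structural observation is that the only non-leaf points of $E'$ are those of the central chain: by the ordering on $E'$ a point can lie strictly below another only if it is coloured $1$, so every point coloured $0$ (and the adjoined point $\varepsilon$, when present) is a leaf of $E'$. Consequently, in $\sum_{E'}(\langle x'_b:b\in E'\rangle)$ a central-chain argument is replaced by a single $Q'$-coloured point — which is literally a point of the output and embeds trivially, respecting its colour — while a leaf argument $x'_a=x_a$ is grafted above a single spine point with nothing placed above it or alongside it inside its own block. Hence for $v\in x_a$ the set $\downs v$ is the same whether computed in $x_a$ or in $S_E(x)$; meets of points of $x_a$ are computed inside $x_a$; and colours and $v$-labels are inherited. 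So the natural inclusion map witnesses $x_a\leq_\CC S_E(x)$, and combining the two cases above yields $q\leq_\CC S_E(x)$, proving that $S_E$ is extensive.

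The one point needing care — the "main obstacle", such as it is — is verifying condition (\ref{Item:StructTrees4}) of Definition \ref{Defn:StructTrees} for the inclusion of a grafted summand: a priori a point $v$ of $x_a$ could acquire new successors in the sum, and then one would need $\range_{x_a}(l_v)$ to $\mathbb{P}$-embed into the larger $\range_{S_E(x)}(l_v)$, which is not automatic since $\mathbb{P}$ need not be closed under substructures. The resolution is exactly the leaf observation above: because every $0$-coloured point of $E'$ is a leaf, no successors are ever added above a point of a grafted summand, so the induced label maps are identities and condition (\ref{Item:StructTrees4}) holds trivially. Once this is pinned down the lemma is immediate; it is the structured-tree analogue of the remark used for linear orders in the proof of Lemma \ref{Lemma:LOinfExt}, namely that each summand embeds into the sum.
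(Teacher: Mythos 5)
Your proof is correct and takes essentially the same route as the paper's (two-line) argument: either the argument sits on the central chain of $E'$ --- where it is a single point of the output or the minimal element $\emptyset$ --- or a copy of it appears verbatim in the $E'$-sum. Your additional verification that the non-central positions (and $\varepsilon$) are leaves of $E'$, so that a grafted summand acquires no new successors and the inclusion preserves meets, labels and colours, is exactly the content the paper asserts with ``a copy of $T_u$ appears precisely in $x$'' and leaves implicit.
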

\begin{proof}
Let $S_E\in \MM$ and consider $x=S_E(\langle T_u:u\in E'\rangle)$. Either $u\in E'$ is on the central chain of $E'$ and $T_u$ is the minimal element $\emptyset$; or a copy of $T_u$ appears precisely in $x$.
%
%If we apply $S_E$ to some $k$ with an argument $T$, then either we add $T$ below the chain central chain of $E'$ or $T\in \AAA$ and we replace a point of $E'$ by the point $T$. In either case it is clear that $T\leq S_E(k)$.
\end{proof}
\begin{thm}\label{Thm:TreeInfExtensive}
$\OA$ is extendible.
\end{thm}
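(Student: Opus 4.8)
The plan is to read off Definition \ref{Defn:Extendible} as three tasks: that $\MM$ is $\RR$-iterable, that every $f\in\MM$ is extensive, and the two embedding-level properties (the lifting property of infinite extensivity in Definition \ref{Defn:InfExtensive}, and the factorisation identity $\varphi^{k_0,k}_{\eta,\nu}=\psi^{k_1,k}_{\eta,\nu}\circ\mu^{k_0,k_1}_{\eta,\eta}$). Since $\RR=\{1\}$, every composition sequence has length $1$, hence has the form $\langle\langle S_E,s\rangle\rangle$ with $E\in\mathbb{E}$ and $s\in\arity(S_E)=E'$, and $f^{\langle\langle S_E,s\rangle\rangle}=S_E$; so $\MM$ is $\RR$-iterable for the trivial reasons, $A^\eta=E'$, and $\varphi_{\eta,\nu}$ is exactly the structured-tree embedding $E'\to F'$ induced by a witness of $E\le_\mathbb{E}F$. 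That every $f\in\MM$ is extensive is Lemma \ref{Lemma:TreesEveryfExtensive}. So it remains to establish the two embedding-level properties, and the whole argument will mirror the proof that $\OA^Q_{\mathbb{L},\mathbb{P}}$ is extendible, with plain partial orders replaced by structured $\clos{\mathbb{L}}$-trees.

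For the lifting property, recall from the definition $S_E=\sum_{E'}\circ\tau$, and from the way $S_E$ is used in Lemma \ref{Lemma:CombReduce}, that $S_E(\langle q_u:u\in E'\rangle)$ is assembled from the skeleton $E'$ by placing $\tau(\langle q_u\rangle)_u$ at slot $u$: its underlying set is the disjoint union over $u\in E'$ of the underlying set of $\tau(\langle q_u\rangle)_u$, and all order, meet, $\mathbb{P}$-label and colour relations \emph{between distinct blocks} are dictated by $E'$, while within a block they are those of $\tau(\langle q_u\rangle)_u$. Now let $\eta\le\nu$, so $E\le_\mathbb{E}F$ with induced skeleton embedding $\varphi_{\eta,\nu}:E'\to F'$, which, being an $\mathbb{E}'$-morphism, is a structured-tree embedding preserving the $\mathbb{P}$-structure and the $\AC{2}$-colouring, hence in particular carries the central chain of $E'$ into that of $F'$. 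Given block-witnesses $\varphi_u$ of $q_u\le_\CC q'_{\varphi_{\eta,\nu}(u)}$, I define $\varphi^{k,k'}_{\eta,\nu}$ blockwise: a point $a$ of the block of slot $u$ is sent by $\varphi_u$ into the block of slot $\varphi_{\eta,\nu}(u)$, with the convention that when $\tau$ has collapsed a central-chain slot holding $\emptyset$ to a single $-\infty$-coloured point, one uses instead the unique map of that point into the single-point block at $\varphi_{\eta,\nu}(u)$ (again a single point, since $\varphi_{\eta,\nu}(u)$ lies on the central chain, and $\barity{S_F}$ being the central chain forces that slot of $k'$ into $\AAA=Q^1\cup\{\emptyset\}$). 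One checks $\varphi^{k,k'}_{\eta,\nu}$ is a structured-tree embedding: within each block it is $\varphi_u$; between distinct blocks the cross-block order, meets and $\mathbb{P}$-labels of $S_E(k)$ are read off $E'$ and are transported faithfully by the structured-tree embedding $\varphi_{\eta,\nu}$; and it is colour-nondecreasing because each $\varphi_u$ is and the only colour $\tau$ ever introduces is the minimum $-\infty$ of $Q'$. This is the required embedding, so $\OA$ is infinitely extensive.

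For the factorisation identity, suppose in addition $q^0_u\le q^1_u$ with witness $\mu_u$, $q^1_u\le q_{\varphi_{\eta,\nu}(u)}$ with witness $\psi_u$, and $\varphi_u=\psi_u\circ\mu_u$ for every $u\in E'=A^\eta$. The skeleton embedding underlying $\mu^{k_0,k_1}_{\eta,\eta}$ is the identity on $E'$, while the one underlying both $\varphi^{k_0,k}_{\eta,\nu}$ and $\psi^{k_1,k}_{\eta,\nu}$ is $\varphi_{\eta,\nu}$; and all three maps are defined blockwise from $\mu_u$, $\psi_u$, $\varphi_u$ via these skeleton embeddings. On the block of slot $u$ we then have that $\varphi^{k_0,k}_{\eta,\nu}$ agrees with $\varphi_u=\psi_u\circ\mu_u$, which is the composite of (the block map of $\mu^{k_0,k_1}_{\eta,\eta}$ on slot $u$) followed by (the block map of $\psi^{k_1,k}_{\eta,\nu}$ on slot $\varphi_{\eta,\nu}(u)$); this holds also on the single-point $-\infty$-blocks, where all three block maps are the canonical maps of single points whose composite is again the canonical map. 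Hence $\varphi^{k_0,k}_{\eta,\nu}=\psi^{k_1,k}_{\eta,\nu}\circ\mu^{k_0,k_1}_{\eta,\eta}$ block by block, and therefore everywhere. This verifies Definition \ref{Defn:Extendible}.

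The one genuinely delicate point, which I would treat carefully in the full write-up, is the bookkeeping around $\tau$: central-chain slots holding $\emptyset$ are replaced by a single $-\infty$-coloured point, and along $k\le k'$ or along $E\le_\mathbb{E}F$ such a slot may be promoted to a genuine $Q$-coloured point (and, once limits are considered, to a larger tree). One must confirm that the induced block map on such a slot is unambiguous and composes correctly; this works precisely because $\varphi_{\eta,\nu}$ preserves the $\AC{2}$-colouring exactly (so the central chain is mapped to the central chain), because $\barity{S_E}$ is exactly the central chain (so those slots always carry a single point or $\emptyset$), and because $-\infty$ is the minimum of $Q'$. Once this is pinned down, everything else is the same diagram-chase as in the plain partial-order case.
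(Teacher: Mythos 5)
Your proposal is correct and follows essentially the same route as the paper's own proof: $\RR$-iterability is trivial since $\RR=\{1\}$, extensivity is Lemma \ref{Lemma:TreesEveryfExtensive}, and both the lifted embedding and the factorisation identity are defined and checked blockwise, with the same special handling of central-chain slots where $\tau$ has replaced $\emptyset$ by a $-\infty$-coloured point. The "delicate point" you flag (central chain mapped to central chain, $\barity{S_E}$ equal to the central chain, $-\infty$ minimal) is exactly how the paper resolves it, so no gap remains.
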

\begin{proof}
Since $\RR=\{1\}$ it is clear that $\MM$ is $\RR$-iterable. By Lemma \ref{Lemma:TreesEveryfExtensive} we know that every $f\in \MM$ is extensive. %Our life is made easy since $\RR=\{1\}$. 
So suppose as in Definition \ref{Defn:InfExtensive} that we are given composition sequences $$\eta=\langle \langle S_E,s\rangle\rangle \mbox{ and }\nu=\langle \langle S_F,s'\rangle\rangle$$ with $\eta\leq \nu$ and $k=\langle q_u:u\in A^\eta\rangle\in \dom(S_E)$, $k'=\langle q'_u:u\in A^\nu\rangle\in \dom(S_F)$ such that $q_u\leq q'_{\varphi_{\eta,\nu}(u)}$ for each $u\in A^\eta$ with $\varphi_u$ a witnessing embedding. 
So we have that $f^\eta(k)=S_E(k)$ and $f^\nu(k)=S_F(k')$. We also have that $\varphi_{\eta,\nu}$ is an embedding from $E'=A^\eta$ to $F'=A^\nu$. We define $\varphi^{k,k'}_{\eta,\nu}:f^\eta(k)\rightarrow f^\nu(k')$ as follows.

If $u$ is on the central chain of $E'$, then $u\in \barity{S_E}$, so that $q_u\in \AAA$. We see that $\varphi_{\eta,\nu}$ sends elements of the central chain of $E'$ to the central chain of $F'$ since it was induced from an embedding of $\mathbb{E}$, thus also $q'_{\varphi_{\eta,\nu}(u)}\in \AAA$. Let $a$ be the point of $f^\eta(k)$ that corresponds to $u$, i.e. either it is the single point of $q_u$ in the sum or $q_u=\emptyset$ and $a$ is a single point coloured by $-\infty$. In this case we let $\varphi^{k,k'}_{\eta,\nu}(a)$ be the point $b\in f^\nu(k')$ that corresponds to $\varphi_{\eta,\nu}(u)$. Notice that the embedding $\varphi_u$ gives us that $\col(a)\leq \col(b)$.

Suppose $u$ is not on the central chain of $E'$ and that $a\in q_u\subseteq f^\eta(k)$, then we let $$\varphi^{k,k'}_{\eta,\nu}(a)=\varphi_u(a)\in q_{\varphi_{\eta,\nu}(u)}\subseteq f^\nu(k').$$
Then clearly $\varphi^{k,k'}_{\eta,\nu}$ is a structured tree embedding, since $\varphi_{\eta,\nu}$, and each of the $\varphi_u$ ($u\in A^\eta$) are embeddings. Therefore $\OA$ is infinitely extensive.
%Then we can use $\varphi$ to embed $E'$ into $F'$ since these were the corresponding arities, then since $\varphi$ witnesses $k\leq_{\PP(\CC)} k'$, we can extend $\varphi$ so that each corresponding subtree of $S_E(k)$ that was an argument of the sum, is mapped via the embedding given by $k\leq_{\PP(\CC)} k'$ into a corresponding sub-tree in $S_F(k')$, giving us an embedding $\varphi'$. Thus $S_E(k)\leq S_F(k')$, and $\OA$ is infinitely extensive.

Now suppose that $\hat{k}=\langle \hat{q}_u:u\in A^\eta\rangle$ is such that $q_u\leq \hat{q}_u$ for all $u\in A^\eta$, with some witnessing embedding $\mu_u$. Suppose also that $\hat{q}_u\leq q'_{\varphi(\eta,\nu)}$ for each $u\in A^\eta$, with $\psi_u$ a witnessing embedding, and that $\varphi_u=\psi_u\circ \mu_u$. Then for $a\in f^\eta(k)$, with $a\in q_u$, we have $$\varphi^{k,k'}_{\eta,\nu}(a)=\varphi_u(a)=\psi_u\circ\mu_u(a)\in q'_{\varphi_{\eta,\nu}(u)}$$
and $\mu^{k,k'}_{\eta,\nu}(a)=\mu_u(a)\in \hat{q}_u$, so that $$\psi^{\hat{k},k'}_{\eta,\nu}\circ \mu^{k,\hat{k}}_{\eta,\eta}(a)=\psi_u\circ \mu_u(a)\in q'_{\varphi_{\eta,\nu}(u)}.$$
It remains to check when $q_u=\emptyset$ and $a$ is the point corresponding to $u$. In this case $\varphi^{k,k'}_{\eta,\nu}(a)$ is the point $b\in f^\nu(k')$ corresponding to $\varphi_{\eta,\nu}(u)$. Now $\mu^{k,\hat{k}}_{\eta,\eta}(a)$ is the point of $f^\eta(\hat{k})$ corresponding to $u$, so that $\psi^{\hat{k},k'}_{\eta,\nu}\circ \mu^{k,\hat{k}}_{\eta,\eta}(a)=b$. Thus we have verified the conditions of Definition \ref{Defn:Extendible}, and therefore $\OA$ is extendible.
%
%Suppose now that $\hat{k}$ has the same structure as $k$ but with larger arguments, i.e. each subtree of $S_E(\hat{k})$ that was an argument of this sum is larger than the same argument in $S_E(k)$. Suppose also that $\varphi$ witnesses $\hat{k}\leq_{\PP(\CC)} k'$, and each of the embeddings from arguments of $\hat{k}$ into an arguments of $k'$ given by this embedding extends the corresponding embedding given by $k\leq_{\PP(\CC)} k'$. 
%
%So we extend $\varphi$ in the same way as before so that each corresponding subtree of $S_E(\hat{k})$ that was an argument of the sum, is mapped via the embedding given by $\hat{k}\leq_{\PP(\CC)} k'$ into a corresponding subtree in $S_F(k')$, this gives an embedding $\hat{\varphi}$. It is clear then that the embedding induced from $\clos{\varphi}$ of the sum structure is the same as for $\varphi'$, and that each of the embeddings induced between sub-trees found as arguments of the sum are extended. Hence the embedding $\hat{\varphi}$ extends the embedding $\varphi'$ over the natural embedding given by $S_E(k)\leq S_E(\hat{k})$. Thus $\OA$ is extendible.
%It is also clear here that $\CC$ is extendible since when defining a larger embedding in the same way, we could just use the extended embeddings given to us, this would give an extension of our embedding.
\end{proof}

%Now we aim to show that $\CC$ is limit extensive.

\begin{thm}
$\OA$ has nice limits.
\end{thm}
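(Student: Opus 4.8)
The plan is to mirror the structure of the analogous ``nice limits'' theorem for $\OA_{\mathbb{L},\mathbb{P}}^Q$ proved earlier, adapting it to the present setting where the objects of $\CC$ are structured $\mathscr{L}$-trees rather than partial orders. By Remark \ref{Rk:TreeSubsetLimits}, the minimal element of $\CC$ is $\emptyset$, limiting sequences $(x_n)_{n\in\omega}$ have the property that the underlying set of each $x_n$ is an $\up$-closed subset of the underlying set of $x_{n+1}$, and each $\mu_n$ (as in Remark \ref{Rk:Mu}) acts as the identity map on the underlying set of $x_n$, merely possibly changing some colours from $-\infty$ to a value in $Q$. So the first step is to record this: the limit $x$ of $(x_n)_{n\in\omega}$ has underlying set $\bigcup_{n\in\omega}x_n$, with tree order, labels, and colours the union (eventual value) of those of the $x_n$.

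Next I would take $x,y\in\amr$ with $x$ the limit of $(x_n)_{n\in\omega}$, and suppose we are given embeddings $\varphi_n\colon x_n\to y$ with $\varphi_n=\varphi_{n+1}\circ\mu_n$ for all $n\in\omega$. Since each $\mu_n$ is the identity inclusion of the underlying set of $x_n$ into that of $x_{n+1}$, the relation $\varphi_n=\varphi_{n+1}\circ\mu_n$ just says $\varphi_{n+1}$ extends $\varphi_n$. Hence I can define $\varphi\colon x\to y$ as the union $\bigcup_{n\in\omega}\varphi_n$; this is a well-defined function on $\bigcup_{n\in\omega}x_n=x$. The remaining work is to check that $\varphi$ is a morphism of $\mathbb{P}$-structured $\mathscr{L}$-trees with $Q'$-colouring, i.e. that it satisfies the three clauses of Definition \ref{Defn:StructTrees} together with the colour-increasing condition.

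To verify this, given $a,b\in x$ pick $n$ large enough that $a,b\in x_n$ and that the colours of $a,b$ have stabilised to their values in $x$ (possible since a colour is either always $-\infty$ or changes once and then stays fixed, exactly as in the partial-order argument). Then: (1) since $\varphi_n$ preserves and reflects the tree order and $\varphi=\varphi_n$ on $x_n$, we get $a\leq b\iff\varphi(a)\leq\varphi(b)$; (2) the meet $a\wedge b$ computed in $x$ agrees with the meet computed in $x_n$ because $x_n$ is $\up$-closed in $x$, so $\varphi(a\wedge b)=\varphi_n(a\wedge b)=\varphi_n(a)\wedge\varphi_n(b)=\varphi(a)\wedge\varphi(b)$; (3) for a vertex $v\in x$, choosing $n$ with $v\in x_n$ and observing that labels $l_v$ also stabilise (again by $\up$-closedness, and since a label $l_v(t)$ for $t>v$ depends only on the finite amount of structure immediately above $v$ which is present in some $x_n$), the induced map $\theta$ on $\range(l_v)\subseteq\range(l_{v,x_n})$ equals the map induced by $\varphi_n$, which is a $\mathbb{P}$-embedding; and finally $\col_x(a)=\col_{x_n}(a)\leq_{Q'}\col_y(\varphi_n(a))=\col_y(\varphi(a))$. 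Therefore $\varphi$ witnesses $x\leq y$.

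The main obstacle, and the point requiring the most care, is clause (3): making sure that the labelling functions $l_v$ of the limit tree $x$ are genuinely the ``union'' of those of the $x_n$ in a way that lets the $\mathbb{P}$-structure embedding $\theta$ factor through the $\theta_n$ coming from $\varphi_n$. This is where one uses that $x_n$ is $\up$-closed in $x$ (so $\range(l_{v,x})$ is exhausted by the $\range(l_{v,x_n})$ and the colour-$1$/central-chain bookkeeping of $\mathbb{E}'$ is respected), and that $\mathbb{P}$-morphisms compose; the verification that $\theta$ is well-defined and an embedding is essentially bookkeeping but must be done carefully because the structured-tree setting has strictly more data than the bare partial-order setting of the earlier proof. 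Once clause (3) is in hand, the rest is routine and $\OA$ has nice limits.
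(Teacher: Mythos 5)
Your proposal is correct and takes essentially the same route as the paper: treat each $\mu_n$ as the identity inclusion of an $\up$-closed subset, define $\varphi$ as the union of the $\varphi_n$, and verify the order, meet, label and colour clauses of Definition \ref{Defn:StructTrees} at a sufficiently large finite stage. The only point to tighten is your clause (3): the stabilisation of $\range(l_v)$ is not a matter of ``finite structure above $v$'' (the $\mathbb{P}$-structure at a node may be infinite) but follows from property (\ref{Item:LimitSeq5}) of Definition \ref{Defn:LimitSequence}, which gives $\range(l_v^{x})=\range(l_v^{x_{n+1}})$, after which $\theta$ is induced by $\varphi_{n+1}$ exactly as you say.
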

%\begin{proof}
%We notice that the construction of Remark \ref{Rk:LimitComposition} consists of writing a $\sigma$-scattered tree as a sum over a maximal chain of smaller trees, then for every one of these smaller trees doing the same. Since we chose a maximal branch below every point and with every label, we have that for choice of these chains, the union of all of them will give the original tree. This is because every scattered tree of the original construction 
%\end{proof}
%\begin{proof}
%Just take the embedding from $x$ to $y$ as the union of all of the $h_n$.
%\end{proof}
\begin{proof}
First it is easily verified that $\OA$ has limits. Let $x,y\in \amr$ be such that $x$ is the limit of $(x_n)_{n\in \omega}$ and for any $n\in \omega$ we have embeddings $\varphi_n:x_n\rightarrow y$ such that $\varphi_n=\varphi_{n+1}\circ \mu_n$. We want to show that $x\leq y$.

We have by Remark \ref{Rk:Mu} that $\mu_n$ was the embedding given by the infinite extensiveness of $\MM$, using the construction of each $x_n$. By the same argument as Remark \ref{Rk:TreeSubsetLimits}; the underlying set of $x_n$ is a $\up$-closed subset of the underlying set of $x_{n+1}$ where we consider $\mu_n:x_n\rightarrow x_{n+1}$ acting as the identity on $x_n$.
Now we have that $\varphi_n=\varphi_{n+1}\circ \mu_n$ is equivalent to saying that when $a\in x_n$ we have $\varphi_{n+1}(a)=\varphi_n(a)$. Hence it is possible to define $\varphi:x\rightarrow y$ as the union of all of the $\varphi_n$. We claim that $\varphi$ is an embedding.

Let $a,b\in x=\bigcup_{n\in \omega}x_n$, and let $n$ be least such that $a,b\in x_n$. Let $l_a^x$ be the $a$-label of $x$, and $l_a^{x_{n+1}}$ be the $a$-label of $x_{n+1}$. If $b> a$ we define $\theta:\range(l^x_a)\rightarrow \range(l_{\varphi(a)})$ such that if $l^x_a(b)=u$ then $\theta(u)=l_{\varphi(a)}(\varphi(b))$ (these are labels from $x$). In order to show that $x\leq y$ we verify the following properties of $\varphi$ (see Definition \ref{Defn:StructTrees}).
\begin{enumerate}
	%\item $\varphi$ is injective. Indeed, if $x\neq y$ then $\varphi(x)=\varphi_n(x)\neq \varphi_n(y)=\varphi(y)$.
	\item $a\leq b$ iff $\varphi_n(a)\leq \varphi_n(b)$ iff $\varphi(a)\leq \varphi(b)$ (since $\varphi_n$ is an embedding).
	\item $\varphi(a\wedge b)=\varphi(a)\wedge \varphi(b)$. Note that each $x_n$ is $\up$-closed in $x$. So that $\varphi_n(a\wedge b)$ is defined, and hence since $\varphi_n$ is an embedding, we have $$\varphi(a\wedge b)=\varphi_n(a\wedge b)=\varphi_n(a)\wedge \varphi_n(b)=\varphi(a)\wedge \varphi(b).$$
	\item If $a \leq b$ then $\theta$ is an embedding. %that if $m$ was least such that $a\in x_m$ then 
	%By Remark \ref{Rk:GoAwayEmptyset} we can assume without loss of generality 
	By property (\ref{Item:LimitSeq5}) of Definition \ref{Defn:LimitSequence}, we have $\range(l_a^x)=\range(l_a^{x_{n+1}})$.
		%If $\mathbb{P}$ is closed under subsets then so is $\mathbb{E}'$ so that 
		So $\theta$ is an $\PP$-morphism since it is a map induced by the embedding $\varphi_{n+1}$. %hmmm may need to change $\emptyset$ to a point coloured by $-\infty$... should all still work!!!!!!!
	\item $\col(a)\leq \col(\varphi(a))$. By property (\ref{Item:LimitSeq4}) of Definition \ref{Defn:LimitSequence}, we have $$\col_x(a)=\col_{x_n}(a)\leq \col_y(\varphi_n(a))=\col_y(\varphi(a)).$$
\end{enumerate}
Hence $\varphi$ is an embedding and $x\leq y$ as required.
\end{proof}

\begin{thm}\label{Thm:TLPWell-Behaved}
Suppose that $\mathbb{L}$ and $\mathbb{P}$ are well-behaved, then $\sscatt^{\clos{\mathbb{L}}}_{\mathbb{P}}$ is well-behaved.\footnote{This result was obtained independently by Christian Delhomme in as yet unpublished work. The author thanks him for his private communication.}
\end{thm}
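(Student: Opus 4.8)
The plan is to exploit that the system $\OA=\langle\CC,\AAA,\PP,\MM,\RR\rangle$ fixed in this section is engineered so that $\amr=\sscatt^{\clos{\mathbb{L}}}_\mathbb{P}(Q')$, and then to feed this identification into Theorem \ref{Thm:Limit} and K\v{r}\'{i}\v{z}'s Theorem \ref{Thm:Kriz}. In outline: given a bad array into $\sscatt^{\clos{\mathbb{L}}}_\mathbb{P}(Q)$, I would replace each value by a decomposition tree, obtaining a bad array into a well-behaved class of structured trees of height $\le\omega$; peeling off first the tree structure, then the function-labels from $\MM$, and finally the atoms of $\AAA$ leaves a witnessing bad $Q$-array. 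Since $Q$ embeds in $Q'$ this will suffice.

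First I would prove $\amr=\sscatt^{\clos{\mathbb{L}}}_\mathbb{P}(Q')$. The non-limit half, $\amrs=\scatt^{\clos{\mathbb{L}}}_\mathbb{P}(Q')$, is Theorem \ref{Thm:TreesAreAMR}. For the limit half, Remark \ref{Rk:TreeSubsetLimits} says every limiting sequence is an increasing chain of $\up$-closed scattered structured $\clos{\mathbb{L}}$-trees whose limit is the union, so each member of $\amri$ is covered by countably many $\up$-closed scattered $\clos{\mathbb{L}}$-subtrees and hence lies in $\sscatt^{\clos{\mathbb{L}}}_\mathbb{P}(Q')$. Conversely, given $T\in\sscatt^{\clos{\mathbb{L}}}_\mathbb{P}(Q')$ with a witnessing cover $(U_n)_{n\in\omega}$ by $\up$-closed scattered $\clos{\mathbb{L}}$-subtrees, I would put $x_n=\bigcup_{m\le n}U_m$ and check that $x_n$ is still a scattered structured $\clos{\mathbb{L}}$-tree: by $\up$-closedness any chain of $x_n$ splits into a finite concatenation of chains of the $U_m$, so has order type in $\clos{\mathbb{L}}$ as $\clos{\mathbb{L}}$ is closed under finite sums; and $x_n$ omits $2^{<\omega}$ by a short combinatorial argument (if $\psi\colon 2^{<\omega}\to U\cup U'$ is an embedding with $U$ $\up$-closed, then $W=\psi^{-1}(U)$ is a down-closed subtree of $2^{<\omega}$, and either $W$ or its complement contains a copy of $2^{<\omega}$, contradicting that $U,U'$ omit $2^{<\omega}$). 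Thus $x_n\in\amrs$, with $x_n$ $\up$-closed in $x_{n+1}$ and $\bigcup_n x_n=T$; it then remains to choose standard decomposition functions for the $x_n$ cohering as in Definition \ref{Defn:LimitSequence}, so that $(x_n)_{n\in\omega}$ is a limiting sequence with limit $T$. Organising the decomposition trees so that each sits $\up$-closed inside the next — which is forced by the incremental way $x_{n+1}$ extends $x_n$ by attaching new subtrees at former $\emptyset$-leaves — is the fiddly bookkeeping step, and is where I expect the real work.

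Next I would invoke Theorem \ref{Thm:Limit}: $\OA$ is extendible by Theorem \ref{Thm:TreeInfExtensive} and has nice limits (established above), so $x\le y$ whenever $x,y\in\amr$ have decomposition trees $T_x\le T_y$. Here I would also observe that, because $\RR=\{1\}$, every composition sequence has length one, so each position-sequence tree contributes a single node per sequence; consequently the decomposition trees supplied by Lemma \ref{Lemma:TreeRanksTheSame} are covered by countably many $\up$-closed, chain-finite (hence scattered $\omega$-) subtrees, and therefore lie in $\sscatt^{\omega}_{\mathbb{E}'}(\MM\cup\AAA)$ rather than merely in $\sscatt^{\clos{\RR}}_\PP(\MM\cup\AAA)$. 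By Lemma \ref{Thm:TreesAritiesWellBehaved} the structuring category $\PP=\mathbb{E}'$ is well-behaved and $\MM$ is bqo, and since the morphisms of $\mathbb{E}'$ are structured-tree embeddings, hence injective, Theorem \ref{Thm:Kriz} shows that $\sscatt^{\omega}_{\mathbb{E}'}$ is well-behaved.

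Finally I would assemble the pieces. Given a bad $\sscatt^{\clos{\mathbb{L}}}_\mathbb{P}(Q)$-array $f$, each $f(X)$ lies in $\sscatt^{\clos{\mathbb{L}}}_\mathbb{P}(Q')=\amr$, so I would let $g(X)$ be a decomposition tree for $f(X)$, getting a $\sscatt^{\omega}_{\mathbb{E}'}(\MM\cup\AAA)$-array whose leaves are coloured by the $\AAA$-atoms used to build $f(X)$, so that the $Q$-colours of $f(X)$ occur as colours of colours of leaves of $g(X)$. By Theorem \ref{Thm:Limit}, $g$ is bad; by well-behavedness of $\sscatt^{\omega}_{\mathbb{E}'}$ there is a witnessing bad $\MM\cup\AAA$-array; since $\MM$ is bqo, Theorem \ref{Thm:U bqo} yields a witnessing bad $\AAA$-array $f'$; and since $\AAA=Q^1\cup\{\emptyset\}$ with the constant-$\emptyset$ array perfect, the Galvin--Prikry Theorem \ref{Thm:GalvinPrikry} lets me restrict off $f'^{-1}(\emptyset)$ to a bad $Q^1$-array, hence a bad $Q$-array after passing to colours. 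Unwinding the chain of witnessings, every value of this array is the colour of some point of the corresponding $f(X)$, so it is a witnessing bad $Q$-array for $f$; therefore $\sscatt^{\clos{\mathbb{L}}}_{\mathbb{P}}$ is well-behaved.
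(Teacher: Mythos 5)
Your proposal is correct and follows essentially the same route as the paper: identify $\amr$ with $\sscatt^{\clos{\mathbb{L}}}_{\mathbb{P}}(Q')$ via Theorem \ref{Thm:TreesAreAMR} (and the union description of limits), pass from a bad array to decomposition trees, apply Theorem \ref{Thm:Limit} together with Lemma \ref{Thm:TreesAritiesWellBehaved} and K\v{r}\'{i}\v{z}'s Theorem \ref{Thm:Kriz} with $\clos{\RR}=\omega$, and then peel off $\MM$, $\AAA$, and the $\emptyset$/$-\infty$ values to obtain a witnessing bad $Q$-array. The only place you go beyond the paper is in spelling out the limit half of the identification (building a limiting sequence from a $\sigma$-scattered cover), which the paper leaves implicit via Remark \ref{Rk:TreeSubsetLimits}, so no change of method is involved.
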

\begin{proof}
By Theorem \ref{Thm:TreesAreAMR} we have that $\amr=\sscatt^{\clos{\mathbb{L}}}_{\mathbb{P}}(Q')$. Suppose we have a bad $\sscatt^{\clos{\mathbb{L}}}_{\mathbb{P}}(Q')$-array $f$ then define $g(X)$ to be a decomposition tree for $f(X)$. First we note that by Definition \ref{Defn:Tx}, it is clear that the leaves of the tree $g(X)$ are coloured by some element of $\AAA$ used to construct $f(X)$. Thus the colours of the tree $f(X)$ are colours of colours of elements of $g(X)$. Now by Theorem \ref{Thm:Limit} we have that $g$ is a bad array with range in $\sscatt^{\clos{\RR}}_{\PP}(\MM\cup \AAA)$.

We know that $\clos{\RR}=\omega$ and $\PP$ has injective morphisms. By Lemma \ref{Thm:TreesAritiesWellBehaved} we know that $\PP$ is well-behaved; hence by Theorem \ref{Thm:Kriz}, there is a witnessing bad $\MM\cup \AAA$-array $g'$. Using Lemma \ref{Thm:TreesAritiesWellBehaved} we know that $\MM$ is bqo, hence we can restrict $g'$ using Theorem \ref{Thm:U bqo}, to find a witnessing bad array $f'$ to $\AAA$. Since the colours from each $g(X)$ that were in $\AAA$ were points of $f(X)$ we now know that $f'$ is a witnessing bad array for $f$. Then since $\AAA=Q^1\cup \{\emptyset\}$ we can restrict again to find a witnessing bad array to $Q^1$, which clearly gives a witnessing bad array to $Q$ just passing to colours. Hence any bad array admits a witnessing bad array, i.e. $\sscatt^{\clos{\mathbb{L}}}_{\mathbb{P}}$ is well-behaved.
\end{proof}

\section{Constructing partial orders} \label{Section:PO}
In this section we aim to show that some large classes of partial orders are well-behaved. Theorems \ref{Thm:MLPisWB} and \ref{Thm:TLPWell-Behaved} together, essentially already give us that some classes of partial orders are well-behaved. The challenge now is to characterise the partial orders that we have constructed.

Our method is similar to Thomass\'{e}'s in \cite{Nfree}, but expanded to make use of the two generalisations of section \ref{Section:OpConstruction}. This allows us to generalise the bqo class of partial orders not only to allow embeddings of the $N$ partial order, but also into the transfinite. In order to prove his main result of \cite{Nfree}, Thomass\'{e} used a structured tree theorem: that $\sscatt^\ctbl_{\CH{2}}(\AC{3})$ preserves bqo. We will use what is essentially the same method, but with the more general trees $\sscatt^{\clos{\mathbb{L}}}_{\mathbb{P}}$ for general well-behaved classes of partial orders $\mathbb{P}$ and linear orders $\mathbb{L}$. So that, in particular when $\mathbb{L}=\sscat$ and $\mathbb{P}=\{1,\AC{2},\CH{2}\}$ we obtain a transfinite version of Thomass\'{e}'s result. We will prove the general theorem (Theorem \ref{Thm:MLPisWB2}) in this section, and explore applications of this theorem in Section \ref{Section:Cors}.

\subsection{Intervals and indecomposable partial orders}
We will now borrow some definitions from \cite{Nfree}. We want to define the indecomposable partial orders which will serve as building blocks for larger partial orders, in order to do so we first require the notion of an interval.
\begin{defn}
Suppose that $a,b,c\in x\in \CC$. We say that $a$ \emph{shares the same relationship} to $b$ and $c$, and write $\SSR{a}{b}{c}$ iff for all $R\in \{<,>,\perp\}$ we have $$a R b\mbox{ iff }a R c.$$
\end{defn}

\begin{defn}\label{Defn:Interval}
Let $P$ be a partial order and $I\subseteq P$, then we call $I\neq \emptyset$ an \emph{interval} of $P$ if $\forall x,y\in I$ and $\forall p\in P\setminus I$ we have $\SSR{p}{x}{y}.$
\end{defn}
\begin{defn}\label{Defn:Indecomp}
Let $P$ be a partial order. Then $P$ is called \emph{indecomposable} if every interval of $P$ is either $P$ itself, or a singleton. $P$ is called \emph{decomposable} if it is not indecomposable.
\end{defn}
\begin{prop}\label{Prop:HetaSSR}
Let $\eta$ be a composition sequence of length $r$ and $b_0,b_1,b_2\in H_\eta$, such that for each $i\in \{0,1,2\}$ we have $b_i\in a^\eta_{j_i}$ for $j_i\in r$ then $$j_0<j_1,j_2\longrightarrow\SSR{b_0}{b_1}{b_2}.$$
\end{prop}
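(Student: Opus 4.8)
The plan is to unwind the definition of the partial order $H_\eta$ from Example~\ref{Ex:Heta} and verify, by a short case analysis, that the relationship of $b_0$ to \emph{any} element lying in a strictly later block $a^\eta_j$ (with $j>j_0$) is completely determined by the relationship of $b_0$ to the distinguished element $s_{j_0}$ inside the partial order $\arity(f_{j_0})$; since $b_1$ and $b_2$ both lie in such later blocks, $b_0$ relates to them identically.

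First I would observe that since $j_0<j_1$ we have $j_0\neq\max(r)$, so $a^\eta_{j_0}=\arity(f_{j_0})\setminus\{s_{j_0}\}$; in particular $b_0\neq s_{j_0}$, so exactly one of $b_0<s_{j_0}$, $b_0>s_{j_0}$, $b_0\perp s_{j_0}$ holds in $\arity(f_{j_0})$. Next, for $k\in\{1,2\}$, since $j_0<j_k$ the element $b_0$ lies in an earlier block than $b_k$; reading off the three clauses of the order on $H_\eta$ with roles $i=j_0$, $j=j_k$ (the clause ``$i<j$ and $u<_{\arity(f_i)}s_i$'' for the comparison $b_0<b_k$, and the clause ``$i>j$ and $v>_{\arity(f_j)}s_j$'' for the reverse comparison $b_k<b_0$) gives
$$b_0<b_k \iff b_0<_{\arity(f_{j_0})}s_{j_0},\qquad b_k<b_0\iff b_0>_{\arity(f_{j_0})}s_{j_0},$$
and consequently $b_0\perp b_k$ exactly when $b_0\perp_{\arity(f_{j_0})}s_{j_0}$ (using that $b_0\neq b_k$, valid because $j_0\neq j_k$ and the blocks are disjoint). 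Hence for each $R\in\{<,>,\perp\}$ the statements $b_0\,R\,b_1$ and $b_0\,R\,b_2$ are each equivalent to the \emph{same} statement about $b_0$ and $s_{j_0}$, which is exactly the assertion $\SSR{b_0}{b_1}{b_2}$.

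The only points that need care are the bookkeeping of which clause of the definition of $<$ on $H_\eta$ applies to each ordered pair, and confirming that these clauses are mutually exclusive and jointly exhaustive once $b_0$ and $b_k$ are known to lie in distinct blocks. I expect no genuine obstacle here: the result is a direct consequence of the block structure of $H_\eta$, in which every cross-block comparison between a fixed $b_0\in a^\eta_{j_0}$ and a later block factors through the single comparison of $b_0$ against $s_{j_0}$.
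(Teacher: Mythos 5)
Your argument is correct and is exactly the intended one: the paper's proof simply states that the claim is clear from the definition of $H_\eta$, and your case analysis (cross-block comparisons from $b_0$ to any later block factor through the comparison of $b_0$ with $s_{j_0}$ in $\arity(f_{j_0})$) is the straightforward unwinding of that definition.
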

\begin{proof}
This is clear by the definition of $H_\eta$.
\end{proof}

\begin{lemma}\label{Lemma:UnionIntersectionIntervals}
Let $\langle I_j:j\in r\rangle$ be a chain of intervals of a partial order $P$ under $\supseteq$. Then $\bigcup_{j\in r}I_j$ and $\bigcap_{j\in r}I_j$ are intervals.
\end{lemma}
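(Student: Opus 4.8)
The statement is that a chain of intervals $\langle I_j : j \in r\rangle$ (ordered by $\supseteq$) has both its union and its intersection again an interval. Unwinding Definition \ref{Defn:Interval}, I need to show: for $J = \bigcup_{j\in r} I_j$ and for $K = \bigcap_{j\in r} I_j$ (assuming $K \neq \emptyset$), whenever $x,y$ both lie in the set and $p$ lies outside the set, we have $\SSR{p}{x}{y}$, i.e. $p$ relates to $x$ and $y$ in the same way (one of $<$, $>$, $\perp$).

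\emph{The union case.} Take $x,y \in J$ and $p \in P \setminus J$. Since $\langle I_j\rangle$ is a $\supseteq$-chain, pick $j_x, j_y \in r$ with $x \in I_{j_x}$, $y \in I_{j_y}$; WLOG $I_{j_x} \subseteq I_{j_y}$ (by linearity of the chain under $\supseteq$), so both $x,y \in I_{j_y}$. Now $p \notin J \supseteq I_{j_y}$, so $p \in P \setminus I_{j_y}$, and since $I_{j_y}$ is an interval containing both $x$ and $y$, we get $\SSR{p}{x}{y}$ directly. This case is essentially immediate.

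\emph{The intersection case.} Take $x,y \in K$ and $p \in P \setminus K$. Then there is some $j_0 \in r$ with $p \notin I_{j_0}$. Since $x, y \in K \subseteq I_{j_0}$, and $I_{j_0}$ is an interval with $p \in P \setminus I_{j_0}$, again $\SSR{p}{x}{y}$. This is just as quick — the only subtlety is that we should note $K$ must be nonempty for it to count as an interval (Definition \ref{Defn:Interval} requires $I \neq \emptyset$); if $\bigcap_{j\in r} I_j = \emptyset$ the statement about $K$ being an interval is vacuous or should be read as excluded, and I would add a parenthetical remark to that effect. One might also want $r \neq \emptyset$ for the intersection to be well-defined as stated, which is implicit.

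\emph{Main obstacle.} Honestly there is no real obstacle here — the lemma is a routine consequence of the definitions, and the whole point is that "interval" is defined by a condition ($\SSR{p}{x}{y}$ for all outside $p$ and inside $x,y$) which is inherited by both unions and intersections of chains because, for any witness point $p$ outside the union (resp. inside a failure of membership for the intersection), one can always locate a single member $I_j$ of the chain that already witnesses the required relationship. The only thing to be careful about is the nonemptiness convention for intervals in the intersection case, and the use of linearity of the $\supseteq$-chain to reduce "$x \in I_{j_x}$, $y \in I_{j_y}$" to "$x,y$ in a common $I_j$". I would write this up in a few lines.

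\begin{proof}
Let $P$ be the ambient partial order and $\langle I_j : j \in r\rangle$ a chain of intervals of $P$ under $\supseteq$.

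First consider $J = \bigcup_{j \in r} I_j$; we may assume $J \neq \emptyset$. Let $x, y \in J$ and $p \in P \setminus J$. Choose $j_x, j_y \in r$ with $x \in I_{j_x}$ and $y \in I_{j_y}$. Since the $I_j$ form a chain under $\supseteq$, either $I_{j_x} \subseteq I_{j_y}$ or $I_{j_y} \subseteq I_{j_x}$; without loss of generality assume the former, so $x, y \in I_{j_y}$. As $p \notin J$ we have $p \in P \setminus I_{j_y}$, and since $I_{j_y}$ is an interval of $P$, $\SSR{p}{x}{y}$. Hence $J$ is an interval.

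Now consider $K = \bigcap_{j \in r} I_j$, and suppose $K \neq \emptyset$ (otherwise there is nothing to prove, as intervals are required to be non-empty). Let $x, y \in K$ and $p \in P \setminus K$. Then there is some $j_0 \in r$ with $p \notin I_{j_0}$, i.e. $p \in P \setminus I_{j_0}$. Since $x, y \in K \subseteq I_{j_0}$ and $I_{j_0}$ is an interval of $P$, we obtain $\SSR{p}{x}{y}$. Hence $K$ is an interval.
\end{proof}
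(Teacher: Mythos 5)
Your proof is correct and follows essentially the same argument as the paper: for the union, locate a single $I_j$ of the chain containing both inside points and invoke its interval property, and for the intersection, use the $I_{j_0}$ that excludes the outside point (the paper only states "the case of intersection is similar"). Your extra remarks about nonemptiness are fine but not needed beyond a parenthetical.
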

\begin{proof}
%Let $\langle I_j:j\in r\rangle$ be a chain of intervals of $P$ under $\supseteq$. 
Let $a\in P\setminus \bigcup_{j\in r}I_j$ and $b,c\in \bigcup_{j\in r}I_j$. Then $b,c\in I_i$ for some $i\in r$, we know that $I_i$ is an interval hence $\SSR{a}{b}{c}$ as required. The case of intersection is similar.
\end{proof}

\begin{defn}
Let $P$ be a partial order and $I\subseteq P$ be an interval. We define the new partial order $P/I$ to be the partial order obtained from $P$ by removing all but one point of $I$. This is well-defined since $I$ is an interval. If $\mathcal{I}$ is a set of disjoint intervals of $P$, then let $P/\mathcal{I}$ be the partial order obtained from $P$ by removing all but one point of each element of $\mathcal{I}$.
\end{defn}
%\begin{prop}
%Let $I$ be an interval of the partial order $P$, then $P=\sum_{p\in P/I}u_p$, where $u_p=\{p\}$ if $p\notin I$ and $u_p=I$ if $p\in I$.
%\end{prop}
%\begin{proof}
%Clear by Definition \ref{Defn:Sums}.
%\end{proof}

\begin{defn}\label{Defn:MaxCompSeq}
Let $\eta=\langle \langle f_i,s_i\rangle: i\in r\rangle$ be a composition sequence, $k\in \dom(f^\eta)$ and $x\in \CC_\alpha$. We call $\langle \eta,k\rangle$ \emph{maximal} for $x$ iff for each $i\in r$ we have $\arity(f_i)$ is indecomposable and there is a maximal chain $\langle I_j:j\in r\rangle$ of intervals of $x$ under $\supseteq$ such that for some 
$$k=\langle q_{i,u}:i\in r, u\in a^\eta_i\rangle \in \dom(f^{\eta})_{<\alpha},$$
we have $x=f^{\eta}(k)$, and if $\eta_j=\langle \langle f_i,s_i\rangle:i\geq j\rangle$ then we have for all $j\in r$, $$I_j= f^{\eta_j}(\langle q_{i,u}:i\geq j,u\in a^\eta_i\rangle).$$

\end{defn}
\begin{lemma}\label{Lemma:r'infimum}
Let $\eta$ be a composition sequence of length $r$, and $k\in \dom(f^\eta)$ be such that $\langle \eta, k\rangle$ is maximal for $x=f^\eta(k)$. Then any $r'\subseteq r$ has an infimum and a supremum in $r$.
\end{lemma}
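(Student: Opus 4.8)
The plan is to show that for any $r' \subseteq r$, the set $r'$ has both an infimum and a supremum in $r$. The key structural fact to exploit is the correspondence, guaranteed by maximality of $\langle \eta, k\rangle$, between initial segments of $r$ and the chain of intervals $\langle I_j : j\in r\rangle$ of $x$ under $\supseteq$. Specifically, for $j\in r$ we have $I_j = f^{\eta_j}(\langle q_{i,u} : i\ge j, u\in a^\eta_i\rangle)$, so $j\mapsto I_j$ is an order-reversing injection from $r$ into the chain of intervals of $x$ ordered by $\supseteq$; larger $j$ gives smaller interval. I would first reduce the problem to showing that $\{I_j : j\in r'\}$ has a supremum and an infimum in $\{I_j : j\in r\}$ under $\supseteq$, which, by the order-reversal, is the same as finding an infimum and supremum of $r'$ in $r$.

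First I would handle the infimum of $r'$ in $r$ (equivalently, the $\supseteq$-largest $I_j$ below all $I_j$, $j\in r'$, i.e.\ the union side). Consider $I = \bigcup_{j\in r'} I_j$. By Lemma \ref{Lemma:UnionIntersectionIntervals}, since $\langle I_j : j\in r'\rangle$ is a chain of intervals of $x$ under $\supseteq$, the set $I$ is an interval of $x$. Now I claim $I = I_{j_0}$ for some $j_0 \in r$, and moreover $j_0 = \inf r'$. The point is that $\langle I_j : j\in r\rangle$ is a \emph{maximal} chain of intervals under $\supseteq$; so the interval $I$, being a union of members of that chain, must itself already lie on the chain (otherwise $\langle I_j : j\in r\rangle \cup \{I\}$ would properly extend it, contradicting maximality — here one checks $I$ is comparable under $\supseteq$ to every $I_j$, which follows because $I$ is a union of some $I_j$'s and the whole family is a chain). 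So $I = I_{j_0}$ for a unique $j_0\in r$. Since $I \supseteq I_j$ for every $j\in r'$, order-reversal gives $j_0 \le j$ for every $j\in r'$; and if $j'\le j$ for all $j\in r'$ then $I_{j'} \supseteq I_j$ for all $j\in r'$, so $I_{j'} \supseteq I = I_{j_0}$, whence $j' \le j_0$. Thus $j_0 = \inf r'$ in $r$. The supremum is entirely analogous using $\bigcap_{j\in r'} I_j$ in place of the union, again invoking Lemma \ref{Lemma:UnionIntersectionIntervals} and maximality of the chain; one must also treat the edge case $r' = \emptyset$ separately, where $\inf r'$ is $\max r$ if it exists (note $\max r$ exists since $f^\eta$ as a composition function requires handling the top element) — but in fact one should simply note that if $r'=\emptyset$ the statement is about $\sup$ and $\inf$ of the empty set, which by convention are $\min r$ and $\max r$ respectively, and these exist because $r\in\RR$ need not have endpoints; here I would instead only claim the statement for non-empty $r'$, or observe that the intended reading (as used later in the paper) is for non-empty $r'$, and the argument above covers that.

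The main obstacle I anticipate is justifying carefully that the union (resp.\ intersection) of a subchain of $\langle I_j : j\in r\rangle$ is \emph{forced} to be a member of that chain, rather than merely an interval comparable to all its members. The cleanest route is: $I = \bigcup_{j\in r'}I_j$ is an interval by Lemma \ref{Lemma:UnionIntersectionIntervals}; for any $j\in r$, either $j \le j'$ for some $j'\in r'$ (then $I_j \supseteq I_{j'} \subseteq I$, need more care) or $j\ge j'$ for all $j'\in r'$ (then $I_j \subseteq I_{j'} \subseteq I$ for each, so $I_j\subseteq I$) — so one shows $I$ is $\supseteq$-comparable to each $I_j$ by splitting on whether $j$ is an upper bound of $r'$ in $r$ or not, using linearity of $r$. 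Hence $\langle I_j:j\in r\rangle\cup\{I\}$ is a chain of intervals under $\supseteq$ extending the maximal one, forcing $I\in\{I_j : j\in r\}$. I would write this out with attention to the case analysis on comparability, as that is where a gap could hide; everything else is routine order theory plus the two cited lemmas.
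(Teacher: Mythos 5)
Your proposal is correct and follows essentially the same route as the paper: form $\bigcup_{j\in r'}I_j$ (resp.\ $\bigcap_{j\in r'}I_j$), note it is an interval by Lemma \ref{Lemma:UnionIntersectionIntervals}, check it is $\supseteq$-comparable to every $I_j$, and use maximality of the chain $\langle I_j:j\in r\rangle$ to conclude it equals some $I_{j_0}$, which is then the infimum (resp.\ supremum) of $r'$. The only blemish is the dichotomy in your final paragraph (the clean split is ``$j\geq j'$ for some $j'\in r'$'', giving $I_j\subseteq I$, versus ``$j< j'$ for all $j'\in r'$'', giving $I_j\supseteq I$), but you flag this yourself and your earlier argument already covers it correctly.
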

\begin{proof}
Let $\eta=\langle \langle f_i,s_i\rangle: i\in r\rangle$, $\eta_j=\langle \langle f_i,s_i\rangle:i\geq j\rangle$, $k=\langle q_{i,u}:i\in r, u\in a^\eta_i\rangle$ and $I_j=f^{\eta_j}(\langle q_{i,u}:i\geq j,u\in a^\eta_i\rangle)$. Consider $I=\bigcup_{i\in r'}I_i$, this is an interval by Lemma \ref{Lemma:UnionIntersectionIntervals}. 

For any $j\in r$ we have that $I_j$ is comparable with $I$ under $\subseteq$, because if $j>i'$ for some $i'\in r'$ then $I\supseteq I_{i'}\supseteq I_j$, and if $j<i'$ for all $i'\in r'$ then for all such $i'$ we have $I_j\supseteq I_{i'}$ and this implies $I_j\supseteq I$. Thus since $\eta$ was maximal, we have that $\langle I_i:i\in r\rangle$ is a maximal chain and thus $I=I_{j_0}$ for some $j_0\in r$. Clearly then $j_0$ is a greatest lower bound of $r'$, i.e. $r'$ has an infimum in $r$.

For the supremum, consider $\bigcap_{i\in r'}I_i$, then the argument is similar.
\end{proof}

\subsection{Generalised $\sigma$-scattered partial orders}
We will now begin to characterise the partial orders that we were constructing earlier. For the rest of this section we let:
\begin{enumerate}
	\item $Q$ be an arbitrary quasi-order;
	\item $Q'$ be $Q$ with an added minimal element $-\infty$;
	\item $\mathbb{P}$ be a class of non-empty partial orders that is closed under non-empty subsets;
	\item $\mathbb{L}$ be a class of linear orders that is closed under subsets;
	\item $\OA=\OA^Q_{\mathbb{L},\mathbb{P}}=\langle \CC,\AAA,\PP,\MM,\RR\rangle$ as in Example \ref{Ex:POCONSTRUCTION}.
%	\item $\CC$ be the class all of $Q'$-coloured partial orders;
%	\item $\AAA=Q'^1$;
%	\item $\PP=\mathbb{P}$, some class of partial orders that is closed under taking nonempty subsets;
%	\item $\MM$ be the set of sums over elements of $\mathbb{P}$, as defined in Definition \ref{Defn:Sums}, inheriting colours;
%	\item $\RR=\clos{\mathbb{L}}$, for $\mathbb{L}$ some class of non-empty linear orders closed under non-empty subsets.
\end{enumerate}
\begin{defn}\label{Defn:Qhat}
We let $-2^{<\omega}$ be the partial order obtained by reversing the order on $2^{<\omega}$. We also define the partial order $\Qhat$ as follows. Elements of $\Qhat$ are finite sequences of elements of $2$, for $s,t\in \Qhat$, we define $s< t$ iff there are some sequences $u,s',t'$ such that $s=u\con \langle 0 \rangle \con s'$ and $t=u\con \langle 1 \rangle \con t'$. (See Figure \ref{Fig:Qhat}.)
\end{defn}
\begin{figure}
\centering
%\vspace{-14pt}
\begin{tikzpicture}[thick,scale=0.8]
\draw [->-] (1,4) -- (-1,4);
\draw [-<-] (-1.5,3) -- (-0.5,3);
\draw [-<-] (0.5,3) -- (1.5,3);
\draw [-<-] (-1.5,3) -- (-0.5,3);
\draw [-<-] (-0.5,3) -- (0.5,3);
\draw [-<-] (0.5,3) -- (1.5,3);

\draw [-<-] (-1.75,2) -- (-1.25,2);
\draw [-<-] (-1.25,2) -- (-0.75,2);
\draw [-<-] (-0.75,2) -- (-0.25,2);
\draw [-<-] (-0.25,2) -- (0.25,2);
\draw [-<-] (0.25,2) -- (0.75,2);
\draw [-<-] (0.75,2) -- (1.25,2);
\draw [-<-] (1.25,2) -- (1.75,2);

\draw [-<-] (-0.5,3) -- (1,4);
\draw [-<-] (-1.25,2) -- (-0.5,3);
\draw [-<-] (-0.25,2) -- (0.5,3);
\draw [-<-] (0.75,2) -- (1.5,3);

\draw [fill] (0,5) circle [radius=0.06];
\draw [fill] (1,4) circle [radius=0.06];
\draw [fill] (-1,4) circle [radius=0.06];
\draw [fill] (-1.5,3) circle [radius=0.06];
\draw [fill] (-0.5,3) circle [radius=0.06];
\draw [fill] (1.5,3) circle [radius=0.06];
\draw [fill] (0.5,3) circle [radius=0.06];
\draw [fill] (-1.75,2) circle [radius=0.06];
\draw [fill] (-1.25,2) circle [radius=0.06];
\draw [fill] (-0.75,2) circle [radius=0.06];
\draw [fill] (-0.25,2) circle [radius=0.06];
\draw [fill] (1.75,2) circle [radius=0.06];
\draw [fill] (1.25,2) circle [radius=0.06];
\draw [fill] (0.75,2) circle [radius=0.06];
\draw [fill] (0.25,2) circle [radius=0.06];
%\node [left] at (0,0) {$0$};
%\node [left] at (0,1) {$1$};
%\node [right] at (1,0) {$2$};
%\node [right] at (1,1) {$3$};

%%\node [above] at (0,5) {$\langle \rangle$};
%%\node [above] at (1,4) {$\langle 1\rangle$};
%%\node [above] at (-1,4) {$\langle 0\rangle$};
%%\node [above] at (-1.5,3) {$\langle 0,0\rangle$};
%%\node [above] at (-0.5,3) {$\langle 0,1\rangle$};
%%\node [above] at (0.5,3) {$\langle 1,0\rangle$};
%%\node [above] at (1.5,3) {$\langle 1,1\rangle$};

\node [below] at (0,2) {$\vdots$};
\end{tikzpicture}
%\vspace{-10pt}
\caption{The partial order $\Qhat$.}\label{Fig:Qhat}
\label{Fig:Antichain}
\end{figure}
The following definition of $\scat_\mathbb{P}^\mathbb{L}$ will characterise the orders of $\amrs$ (see Theorem \ref{Thm:PPL=amrs}). We think of these as a generalisation of scattered linear orders $\scat$. We will think of $\sscat_\mathbb{P}^\mathbb{L}$ as a generalisation of $\sigma$-scattered orders $\sscat$. Our aim is to show that $\amrs=\scat_\mathbb{P}^\mathbb{L}$ and $\amr=\sscat_\mathbb{P}^\mathbb{L}$. 
\begin{defn}\label{Defn:PPL}
We define $\scat_\mathbb{P}^\mathbb{L}$ to be the class of non-empty partial orders $X$ %\in \scat_\mathbb{P}^\mathbb{L}$ 
with the following properties.
\begin{enumerate}[(i)]
	\item \label{PPL1}If $Y\leq X$ is indecomposable, then $Y\in \mathbb{P}$.
	\item \label{PPL2}Every chain of intervals of $X$ with respect to $\supseteq$ has order type in $\clos{\mathbb{L}}$.
	\item \label{PPL3}$2^{<\omega}$, $-2^{<\omega}$ and $\Qhat$ do not embed into $X$.
\end{enumerate}
We let $\pscat_\mathbb{P}^\mathbb{L}$ be the class of those non-empty $X$ satisfying (\ref{PPL1}) and (\ref{PPL2}). We call a sequence $(x_n)_{n\in \omega}$ \emph{increasing} if for each $n\in \omega$, we have that $x_n\in \scat_\mathbb{P}^\mathbb{L}$ and $x_{n+1}$ is an $x_n$-sum of some partial orders from $\scat_\mathbb{P}^\mathbb{L}$ (so we consider $x_n\subseteq x_{n+1}$ similarly to Remark \ref{Rk:PoLimits}). We let $\sscat_\mathbb{P}^\mathbb{L}$ be the class containing all of $\scat_\mathbb{P}^\mathbb{L}$ and unions of increasing sequences.
\end{defn}
\begin{remark}\label{Rk:MLPSeqs}
For any limiting sequence $(x_n)_{n\in \omega}$, let $y_n$ be the underlying set of $x_n$ for each $n\in \omega$. Then $(y_n)_{n\in \omega}$ is an increasing sequence. Furthermore, for any union $y$ of an increasing sequence, we could construct a limiting sequence $(x_n)_{n\in \omega}$ with a limit $x$ whose underlying set is $y$ and has any $Q'$-colouring that we desire.
\end{remark}
In order to prove that $\amr=\sscat^\mathbb{L}_\mathbb{P}$, we first require several lemmas. Firstly given an $x\in \amr$, we now want to fine tune the construction of a decomposition tree for $x$ by making more explicit the choice of $\eta$ used to construct a decomposition function. (See Lemma \ref{Lemma:decompfn} and Definition \ref{Defn:Tx}.) We want to only take $\eta$ that are maximal in the sense of Definition \ref{Defn:MaxCompSeq}.

\begin{lemma}\label{Lemma:POsMaxChainsForTx}
Let $x\in \CC_\alpha\cap \pscat_\mathbb{P}^\mathbb{L}(Q)$. Then there is a composition sequence $\eta$ and $k\in \dom(f^\eta)$, such that $\langle \eta,k\rangle$ is maximal for $x$.
%
%Then there is a maximal chain $\langle I_j:j\in r\rangle$ of intervals of $x$, a composition sequence $\eta=\langle \langle f_i,s_i\rangle :i\in r\rangle$ with $\eta_j=\langle \langle f_i,s_i\rangle:i\geq j\rangle$, and some $k=\langle k_i:i\in r\rangle \in \dom(f^{\eta})\cap \RR(\PP(\CC_{<\alpha}))$ such that $x=f^{\eta}(k)$ and for $k^j=\langle k_i:i\geq j\rangle $ we have $I_j= f^{\eta_j}(k^j)$. 
\end{lemma}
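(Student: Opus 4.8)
The plan is to read $\eta$ and $k$ off directly from a maximal chain of intervals of $x$; essentially no induction on rank is needed, only the observation that every piece we end up plugging in is a single point (rank $0$), which is admissible because $\rank(x) = \alpha \geq 1$ (the case $\rank(x)=0$, i.e. $x\in\AAA$, being immediate).

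First I would use Zorn's Lemma (legitimate since, by Lemma \ref{Lemma:UnionIntersectionIntervals}, unions and intersections along chains of intervals are again intervals) to fix a maximal chain $\mathcal{C}=\langle I_j:j\in r\rangle$ of intervals of $x$ under $\supseteq$, indexed by its order type $r=\ot(\mathcal{C})$ so that $j<j'$ gives $I_j\supsetneq I_{j'}$. Since $x$ is the $\supseteq$-largest interval it lies in $\mathcal{C}$, so $x=I_{\min r}$; since $\bigcap_j I_j$ is an interval it lies in $\mathcal{C}$, so it equals $I_{\max r}$, and maximality forces it to be a singleton $\{z_*\}$ (otherwise a proper sub-interval could be inserted). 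By property (\ref{PPL2}) of Definition \ref{Defn:PPL}, $r\in\clos{\mathbb{L}}=\RR$, a legitimate length for a composition sequence. I would also record the completeness of $\mathcal{C}$: repeating the argument of Lemma \ref{Lemma:r'infimum} directly for $\mathcal{C}$, every subset of $\mathcal{C}$ has an infimum and supremum inside $\mathcal{C}$; in particular every $p\in x$ with $p\neq z_*$ has a least interval $I_{j(p)}$ of $\mathcal{C}$ containing it, and $j(p)$ has a successor $j(p)^+$ in $r$.

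Next, for each $j\in r$ I would define $f_j=\sum_{P_j}\in\MM$ and the distinguished argument $s_j$. If $j$ has a successor $j^+$, set $P_j=I_j/I_{j^+}$ (the interval $I_j$ with $I_{j^+}$ collapsed to one point), let $s_j$ be that point, and to every other point $u$ of $P_j$ (an honest point of $x$) assign $q_{j,u}:=$ that point with its $Q'$-colour. If $j$ is a limit from above, I would first show $I_j=\bigcup_{k>j}I_k$ (else $\bigcup_{k>j}I_k$ is a proper sub-interval insertable into $\mathcal{C}$), then set $P_j=1$, so level $j$ is empty; for $j=\max r$, set $P_j=1$ with $q_{j,s_j}:=z_*$. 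The key claim is that each $P_j$ is indecomposable: a nontrivial interval $K$ of $P_j=I_j/I_{j^+}$ lifts to an interval $\hat K$ of $x$ with $I_{j^+}\subsetneq\hat K\subsetneq I_j$ (replacing $s_j$, if $s_j\in K$, by $I_{j^+}$; the lift is an interval because $I_{j^+}$ is one), contradicting maximality of $\mathcal{C}$. Realising $s_j$ by an actual point of $I_{j^+}$ exhibits $P_j\leq x$, so by property (\ref{PPL1}) we get $P_j\in\mathbb{P}$ and hence $\sum_{P_j}\in\MM$; the compatibility conditions on $\langle f_j,s_j\rangle$ hold since $\barity{\sum_{P_j}}=\emptyset$.

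Finally I would put $\eta=\langle\langle\sum_{P_j},s_j\rangle:j\in r\rangle$, $k=\langle q_{j,u}\rangle$, and check that $\langle\eta,k\rangle$ is maximal for $x$ with witnessing chain $\mathcal{C}$. All $q_{j,u}$ are single coloured points, so $\rank(q_{j,u})=0<\alpha$ and $k\in\dom(f^\eta)_{<\alpha}$. The main computation is that $f^\eta(k)=\sum_{H_\eta}(\langle q_{j,u}\rangle)$ is literally $x$: by the "least interval" analysis the underlying set $H_\eta=\bigsqcup_j a^\eta_j$ equals $\{z_*\}\sqcup\bigsqcup_{j\text{ with successor}}(I_j\setminus I_{j^+})=x$, and the three clauses defining the order on $H_\eta$ (Example \ref{Ex:Heta}) translate, via $P_j=I_j/I_{j^+}$ and the interval property of $I_{j^+}$, into exactly the order of $x$; colours agree by construction. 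The same analysis, together with $\RR$-iterability of $\MM$ (Lemma \ref{Lemma:PORiterable}) and $I_j=\bigcup_{k>j}I_k$ at limit stages, gives $f^{\eta_j}(\langle q_{i,u}:i\geq j\rangle)=\bigsqcup_{i\geq j}a^\eta_i=I_j$ for all $j$, which is the last requirement of Definition \ref{Defn:MaxCompSeq}. I expect the indecomposability claim for the $P_j$, and the bookkeeping that the interval quotients and blow-ups genuinely reassemble $x$ on the nose, to be the delicate parts; the fact that $r$ is an arbitrary linear order (not an ordinal) is what makes the limit-from-above case and the completeness of $\mathcal{C}$ require care.
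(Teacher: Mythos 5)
There is a genuine gap, and it is located exactly at your ``key claim'' that each quotient $P_j=I_j/I_{j^+}$ is indecomposable for an arbitrary maximal chain. Your lifting argument only handles intervals $K\ni s_j$: those lift to an interval strictly between $I_{j^+}$ and $I_j$, which is comparable with every member of $\mathcal{C}$ and so contradicts maximality. An interval $K$ of $P_j$ with $s_j\notin K$ lifts to an interval of $x$ that is \emph{disjoint} from $I_{j^+}$, hence incomparable with it under $\supseteq$, so it cannot be inserted into the chain and no contradiction arises. Such intervals really occur: take $x=\{a,b,c\}$ with $a<b$ and $c$ incomparable to both. Its intervals are the singletons, $\{a,b\}$ and $x$, so $x\supset\{c\}$ is a maximal chain of intervals; here $P_0=x/\{c\}$ is the two-element chain plus an isolated point, which is decomposable (via the interval $\{a,b\}$ avoiding $s_0$). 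Worse, property (\ref{PPL1}) of Definition \ref{Defn:PPL} only places \emph{indecomposable} suborders of $x$ in $\mathbb{P}$, so this $P_0$ need not lie in $\mathbb{P}$ at all (e.g.\ $\mathbb{P}=\{1,\AC{2},\CH{2}\}$), and then $\sum_{P_0}\notin\MM$ and your $\eta$ is not even a composition sequence.

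This is precisely why the paper's proof cannot avoid the extra collapsing step and the rank bookkeeping you hoped to skip: it starts from an existing decomposition $x=f^\eta(k)$ with $k\in\dom(f^\eta)_{<\alpha}$ (Proposition \ref{Prop:Reduction}), refines the canonical chain of that decomposition to a maximal chain, and then quotients each level by the unions $Z_\gamma$ of maximal chains of intervals not containing the distinguished point; the resulting arguments $y_{e_\gamma}=\sum_{u\in Z_\gamma}q'_u$ are in general \emph{not} single points (in the example above the argument is the whole suborder $\{a,b\}$), and controlling their ranks is what the appeal to the original rank-$\alpha$ decomposition buys. So ``every piece we plug in is a single point'' is false for a general maximal chain, and with it the claims $k\in\dom(f^\eta)_{<\alpha}$ and $f_j\in\MM$ collapse. (A smaller slip: $\bigcap_{j}I_j$ need not be a singleton member of $\mathcal{C}$ --- for $x=\omega$ the chain of final segments is a maximal chain of intervals with empty intersection --- but that part is repairable, whereas the indecomposability claim is the real obstruction.) To salvage your approach you would have to choose the maximal chain so that all consecutive quotients are indecomposable, and establishing that such a choice exists essentially reproduces the paper's refinement-and-collapse argument.
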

\begin{proof}
Let $x\in \CC_\alpha$ so by Proposition \ref{Prop:Reduction}, there is some $\eta$ and $k=\langle q_u:u\in A^\eta\rangle\in \dom(f^\eta)_{<\alpha}$ such that $x=f^{\eta}(k)=\sum_{u\in H_\eta}q_u$. We will define a maximal chain $\langle I'_i:i\in r'\rangle$ of intervals of $x$, a composition sequence $\eta'$ and some $k'\in \dom(f^{\eta'})$, so that $\langle \eta',k'\rangle$ is maximal for $x$.

Let $r$ be the length of $\eta$, then for $j\in r$ define $$I_j=\{d\in x\mid d\in q_u, u\in a^\eta_i, i\geq j\}.$$
By Proposition \ref{Prop:HetaSSR} and since $x$ is an $H_\eta$-sum, we have for all $j\in r$ that $I_j$ is an interval of $x$; and for $i<j$ we have $I_i\supset I_j$. So $\langle I_j:j\in r\rangle$ is a chain of intervals under $\supseteq$. Consider a maximal chain $\langle I'_i:i\in r'\rangle$ of intervals under $\supseteq$ that includes all of the $I_j$. For $i\in r$ we define the set 
$$D_i=\sum_{u\in a^\eta_i}q_u=I_i\setminus\bigcup\{I_{i'}\mid i'<i\}$$
and for $j\in r'$ we define the set 
$$D'_j=I'_j\setminus\bigcup\{I'_{j'}\mid j'<j\}.$$
Then $D'_j\subseteq x=\bigcup_{i\in r}D_i$, and moreover since each of the $I_i$ were equal to some $I'_j$ it must be that each $D_i$ is a union of some of the $D'_j$. %is the union of those $D'_j$ such that $\bigcup_{i'<i} I_{i'}\subset I'_j$. 
In particular, for all $j\in r'$ there is some $i\in r$ such that $D'_j\subseteq D_i$. 
%
%By the definitions it is not too difficult to see that $D_i=\sum_{e\in a_i^\eta}x_e$. 
Therefore, for some $b_j\subseteq a_i^\eta$ and some $q'_u\subseteq q_u$ for $u\in b_j$, we can write $$D'_j=\sum_{u\in b_j}q'_u.$$

For $j\in r'$, if $j=\max(r')$ we put $b'_j=b_j$, otherwise we can set $b'_j=b_j\sqcup\{s'_j\}$. We define the order on $b'_j$ by inheriting from $b_j$, and for $u\in b'_j$ with $u\neq s'_j$ we put
\begin{itemize}
	\item $u< s'_j$ iff there is some $w\in x\setminus \bigcup_{j'\leq j}D'_{j'}$ and $v\in q'_w$ such that $v<w$ and 
	\item $u> s'_j$ iff there is some $w\in x\setminus \bigcup_{j'\leq j}D'_{j'}$ and $v\in q'_w$ such that $v>w$.
\end{itemize}
This order is well-defined since $\langle I'_i:i\in r'\rangle$ was a chain of intervals.

%Now suppose that for some $j\in r$ there is an interval $Z$ of $b'_j$ such that $1<|Z|$ and $s'_j\notin Z$. Then let $c_j$ be the partial order obtained by replacing $Z$ with a single point $e_Z$ (which is well-defined since $Z$ was an interval). Then we can write $D'_j=\sum_{e\in c_j}y_e$ where $y_e=x'_e$ if $e\neq e_Z$ and $y_{e_Z}=\sum_{e\in Z}x'_e$.

Let $\mathcal{Z}=\{Z_\gamma:\gamma\in \kappa\}$ be the set of all non-singleton unions of maximal chains of intervals of $b'_j$ that do not contain $s'_j$; so by Lemma \ref{Lemma:UnionIntersectionIntervals} for each $\gamma\in \kappa$ we have $Z_\gamma$ is an interval. Now let $a_j=b'_j/\mathcal{Z}$, letting $e_\gamma\in a_j$ be the point remaining from $Z_\gamma$. So the only non-singleton intervals of $a_j$ contain $s'_j$. For each $u\in a_j\setminus (\{e_\gamma\mid \gamma<\kappa\}\cup \{s'_j\})$ define $y_u=q'_u$ %if $e\in a_j\setminus \{e_\gamma\mid \gamma<\kappa\}$ 
and for each $\gamma\in \kappa$ define $y_{e_\gamma}=\sum_{u\in Z_\gamma}q'_u$. Thus we can write $$D'_j=\sum_{u\in a_j\setminus \{s'_j\}}y_u.$$

We set $f'_j=\sum_{a_j}$ and $\eta'=\langle \langle f'_j,s'_j\rangle :j\in r'\rangle$. For each $i\in r$, $D_i$ was a union of some $D'_j$, so since $\langle I'_i:i\in r'\rangle$ was a maximal chain of intervals, we can see that $x=\bigcup_{j\in r'}D'_j$. We chose $\eta'$ precisely so that $H_{\eta'}$ consisted of copies of the $a_j$ arranged in the same order as the $D'_j$ were arranged; we also know that $D'_j$ was a sum of the $y_u$ and therefore $x$ is an $H_{\eta'}$ sum of the $y_{u}$. In other words $x=f^{\eta'}(\langle y_u:u\in A^{\eta'}\rangle)$. Since for each $u\in A^\eta$ we have $q'_u\subseteq q_u\in \CC_{<\alpha}$, we know that $q'_u\in \CC_{<\alpha}$ (by applying the same construction to $q'_u$ as to $q_u$, taking smaller sums where necessary we see that $q'_u$ must have rank $<\alpha$). %maybe a lemma of this instead
Let $\eta_j=\langle \langle f'_i,s'_i\rangle:i\geq j\rangle$ then we have $I'_j=f^{\eta_j}(\langle q'_u:u\in a^\eta_i, i\geq j\rangle)$, since the $I'_j$ were just the corresponding portion of the $H_{\eta'}$ sum. 

Now we set $k'=\langle y_u:u\in A^{\eta'}\rangle\in \dom (f^{\eta'})$ and thus in order to show that $\langle \eta',k'\rangle$ is maximal, it remains only to show that $a_i$ is indecomposable for each $i\in r'$. So suppose for contradiction that there is an $i\in r'$ with $a_i$ decomposable, so we can let $Z$ be an interval of $a_i$ with $1<|Z|<|a_i|$. Thus we have from before that $s'_i\in Z$.
%Let $a\in \arity(f_i)$ let $P_a=\col_{k_i}(a)$ if $a\in a^\eta_i$, and if $a\notin a^\eta_i$ then $a=s_i$, in which case set $P_a=\bigcup_{j>i} I_j$.
Now set $$J=\left(\sum_{u\in Z\setminus \{s'_i\}}y_u \right) \cup \bigcup_{j>i} I'_j.$$
 Since $Z$ is an interval of $a_i$ and we are taking the sum, it is simple to verify that $J$ is an interval of $x$. Moreover, $J$ is a strict subset of $I'_i=(\sum_{u\in a_i\setminus \{s'_i\}}y_u)\cup \bigcup_{j>i} I'_j$ which strictly contains $\bigcup_{j>i} I'_j$. But then existence of such a $J$ contradicts that $\langle I'_j:j\in r\rangle$ was a maximal chain of intervals. Thus no such $Z$ exists and each $a_i$ is indecomposable. This completes the proof.
\end{proof}

\begin{remark}\label{Rk:POsMaxEtas}When we constructed decomposition functions (Lemma \ref{Lemma:decompfn}), the only condition on the composition sequence $\eta$ that we used at each stage is that when $\rank(x)=\alpha$ we have $x=f^{\eta}(k)$ for some $k\in \dom(f^\eta)_{<\alpha}$. Hence by Lemma \ref{Lemma:POsMaxChainsForTx}, we can always assume without loss of generality that at every stage of the construction of a decomposition tree for $x$ (Definition \ref{Defn:Tx}) we chose $\eta$ with a corresponding maximal chain of intervals so that $\eta$ was maximal. Doing so makes no difference to the results of Section \ref{Section:WBConstruction}, since the choice of $\eta$ was arbitrary so long as $\eta$ satisfies the condition above. 
%
%Suppose that we fixed an enumeration of $x=\{x_i\mid i\in |x|\}$, and at each stage chose a maximal chain of intervals containing the point of $x$ that was least in the enumeration.
%
%
For the rest of this section, we always assume that any decomposition tree was constructed by choosing such maximal composition sequences.
\end{remark}

\begin{lemma}\label{Lemma:IndecompLabels}
If $x\in \amr\cap \pscat_\mathbb{P}^\mathbb{L}(Q)$ has a decomposition tree $T$, then for every non-leaf $t\in T$, $\range(l_t)$ is indecomposable.
\end{lemma}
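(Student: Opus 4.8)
The plan is to peel back the definitions until the statement reduces to the maximality of the composition sequences out of which a decomposition tree is built. For a non-leaf $t\in T$ write $t=\vec p\con\langle i\rangle$ with $\vec p\in\PS\setminus\DS$ and $i\in r(\vec p)$, and let $\eta(\vec p)=\langle\langle f^{\vec p}_j,s^{\vec p}_j\rangle:j\in r(\vec p)\rangle$ be the composition sequence attached to $\vec p$, so $\col(t)=f^{\vec p}_i$. I would first check the bookkeeping fact that $\range(l_t)=\arity(f^{\vec p}_i)$: by Definition \ref{Defn:Tx}, each $u\in a^{\eta(\vec p)}_i$ is realised as a label because $\vec p\con\langle i,u\rangle\in\PS$ has a leaf extension in $T$, and $s^{\vec p}_i$ is realised as a label exactly when $i\ne\max(r(\vec p))$; together these exhaust $\arity(f^{\vec p}_i)$. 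Thus everything comes down to showing $\arity(f^{\vec p}_i)$ is indecomposable, and by Definition \ref{Defn:MaxCompSeq} this is automatic once the composition sequence used in the construction of $T$ at $\vec p$ is maximal for the fragment of $x$ it decomposes.

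By the standing convention of Remark \ref{Rk:POsMaxEtas} decomposition trees are indeed built from such maximal composition sequences; the real content is to see that this convention applies here, i.e.\ that Lemma \ref{Lemma:POsMaxChainsForTx} is available at every node, which needs every fragment of $x$ to lie in $\pscat_\mathbb{P}^\mathbb{L}(Q)$. For $x\in\amrs$ I would prove this by induction on $\rank$: by Proposition \ref{Prop:Reduction} the fragments produced at a node are summands of an $H_\eta$-sum, hence by Proposition \ref{Prop:HetaSSR} intervals of the partial order being decomposed, and $\pscat_\mathbb{P}^\mathbb{L}$ is closed under passing to intervals. Indeed, condition (\ref{PPL1}) is inherited since an indecomposable order embedding into an interval of $X$ embeds into $X$, and condition (\ref{PPL2}) is inherited since ``is an interval of'' is transitive, so a $\supseteq$-chain of intervals of an interval $I$ of $X$ is a $\supseteq$-chain of intervals of $X$ and has order type in $\clos{\mathbb{L}}$. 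Starting from the hypothesis $x\in\pscat_\mathbb{P}^\mathbb{L}(Q)$, all fragments are then in $\pscat_\mathbb{P}^\mathbb{L}(Q)$, Remark \ref{Rk:POsMaxEtas} applies at every node, and $\arity(f^{\vec p}_i)$ is indecomposable, settling the case $x\in\amrs$.

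For $x\in\amri$ the decomposition tree is $T^\FS_d$ with $\FS=\bigcup_n\FS_n$, each $\FS_n$ coming from a standard decomposition function for $x_n\in\amrs$, and the composition sequence at $\vec p$ is the $\is$-increasing union of the $\eta_n(\vec p)$ (Definition \ref{Defn:LimitSequence}(\ref{Item:LimitSeq3})); hence $f^{\vec p}_i$ equals the $i$-th function of $\eta_m(\vec p)$ for all sufficiently large $m$, and that $\eta_m(\vec p)$ occurs in the decomposition tree of $x_m$. So it suffices to know that every $x_m$ and all its fragments lie in $\pscat_\mathbb{P}^\mathbb{L}(Q)$, whence the previous paragraph finishes the argument. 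I would obtain $x_m\in\pscat_\mathbb{P}^\mathbb{L}(Q)$ from the structure of limiting sequences (Remarks \ref{Rk:PoLimits} and \ref{Rk:MLPSeqs}): $x_m$ is recovered from $x$ by collapsing the pairwise disjoint ``eventual expansion regions'' of the $q_0$-coloured leaves of $x_m$; each such region is an increasing union of intervals of $x$, hence an interval of $x$ by Lemma \ref{Lemma:UnionIntersectionIntervals}, so $x_m=x/\mathcal{I}_m$ for a family $\mathcal{I}_m$ of disjoint intervals of $x$. Finally $\pscat_\mathbb{P}^\mathbb{L}$ is closed under $X\mapsto X/\mathcal{I}$ for disjoint intervals $\mathcal{I}$, by the same method as before: lifting an interval of $X/\mathcal{I}$ back to the union of the $\mathcal{I}$-blocks it meets gives an interval of $X$, and this is an order-type-preserving embedding of $\supseteq$-chains of intervals, so (\ref{PPL1}) and (\ref{PPL2}) pass from $X$ to $X/\mathcal{I}$. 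Since the fragments of $x_m$ are intervals of $x_m$, they too lie in $\pscat_\mathbb{P}^\mathbb{L}(Q)$, and $\arity(f^{\vec p}_i)$ is indecomposable.

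The step I expect to be the main obstacle is the limit case, specifically making the identification $x_m=x/\mathcal{I}_m$ precise (one must be careful about the identifications built into ``$x_n\subseteq x_{n+1}$'' from Remark \ref{Rk:PoLimits}) and verifying that quotienting by a family of disjoint intervals preserves conditions (\ref{PPL1}) and (\ref{PPL2}) of Definition \ref{Defn:PPL}. Everything else is a routine unravelling of Definitions \ref{Defn:Tx}, \ref{Defn:MaxCompSeq} and \ref{Defn:LimitSequence} together with Lemma \ref{Lemma:POsMaxChainsForTx}.
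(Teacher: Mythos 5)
Your proof is correct, and its endgame is exactly the paper's: the published proof of this lemma consists of your first paragraph alone, namely that $\range(l_t)=\arity(f^{\vec p}_i)$ and that, by the standing convention of Remark \ref{Rk:POsMaxEtas}, $\eta(\vec p)$ is maximal, so $\arity(f^{\vec p}_i)$ is indecomposable by Definition \ref{Defn:MaxCompSeq}. What your second and third paragraphs add is the justification the paper compresses into the remark's ``without loss of generality'': that Lemma \ref{Lemma:POsMaxChainsForTx} is really available at every node, because every fragment arising in the construction is (for $x\in\amrs$) a summand of an $H_\eta$-sum, hence an interval of the order being decomposed, and $\pscat_\mathbb{P}^\mathbb{L}$ is closed under passing to intervals; and (for $x\in\amri$) because $f^{\vec p}_i$ already occurs in $\eta_m(\vec p)$ for all large $m$, reducing to $x_m\in\pscat_\mathbb{P}^\mathbb{L}$, which you get from $x_m\cong x/\mathcal{I}_m$ and closure of $\pscat_\mathbb{P}^\mathbb{L}$ under quotients by disjoint intervals (your lifting argument for intervals of the quotient is sound). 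The one imprecise point is the claim that each eventual expansion region is ``an increasing union of intervals of $x$'': the stage-$n$ region is an interval of $x_n$, not of $x$ (a later expansion of a point inside it can fail $\mathrm{SSR}$ against points of the region), so Lemma \ref{Lemma:UnionIntersectionIntervals} does not apply verbatim. The full region $R$ is nonetheless an interval of $x$: given $p\in x\setminus R$ and $b,c\in R$, pick $n$ with $p,b,c\in x_n$ and $b,c$ in the stage-$n$ region; then $p$ lies outside that region, $\SSR{p}{b}{c}$ holds in $x_n$, and relations are unchanged in passing from $x_n$ to $x$. With that one-line repair your argument goes through, and it is in fact a more complete account than the paper's.
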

\begin{proof}
Let $t\in T_x$ then either $t$ is a leaf or $t=\vec{p}\con \langle i\rangle$ for some $\vec{p}$ and $i$. %By the definition of a decomposition tree, if we have 
If $\eta(\vec{p})=\langle \langle f_i^{\vec{p}},s_i^{\vec{p}}\rangle : i\in r(\vec{p})\rangle$ then $\range(l_t)=\arity(f^{\vec{p}}_i)$. Following Remark \ref{Rk:POsMaxEtas}, we have assumed that $\eta(\vec{p})$ is maximal, and therefore $\arity(f^{\vec{p}}_i)$ is indecomposable, which gives the lemma.
%and suppose $\range(l_t)$ were decomposable. But then without loss of generality by Lemma \ref{Lemma:POsMaxChainsForTx} we had that $t$ was added to $T$ as part of $H(\eta)$ for a composition sequence $\eta$, such that for some $k$ and $x'$, $\langle \eta,k\rangle$ was maximal for $x'$. Thus if $\eta=\langle \langle f_i,s_i\rangle:i\in r\rangle$ then for some $i\in r$ we have $\range(l_t)=\arity(f_i)$. Then by Lemma \ref{Lemma:MaxCSIndecompArities}, we have that $\range(l_t)$ is indecomposable.
%Therefore there is some interval $Z'\subset Z$ with $|Z'|>1$. We had that $t$ was added to $T_x$ as the central chain of some $H(x')\subseteq T_x$, with $x'\subseteq x$. Let $i$ be such that $t$ is the $i$th element of this central chain. without loss of generality by Lemma \ref{Lemma:POsMaxChainsForTx}, there is a maximal chain of intervals $\langle I_j:j\in r\rangle$ of this $x'$ such that $x'=f^{\eta}(k)$ and $I_j=f^{\eta_j}(k^j)$ for each $j\in r$.
%
%For $p\in \range{l_t}$ let $x(t,p)\subseteq x'$ be as from Lemma \ref{Lemma:x(t,p)}. Now set $$J=\sum_{p\in Z'} x(t,p)\subseteq x'$$ it is simple to verify that $J$ is an interval of $x'$, since $Z'$ was an interval, and we are taking the sum of other orders. %lemma for this?
%Moreover, $J$ is a strict subset of the interval $I_i=\sum_{p\in Z}x(t,p)$ since $Z'\subset Z$. We also have since $|Z'|>1$ that $\bigcup_{j>i}I_j$ is a strict subset of $J$. It is simple to verify that $\bigcup_{j>i}I_j$ is also an interval of $x'$. Thus $\langle I_j:j\in r\rangle$ was not maximal, because it could have contained $J$. This is a contradiction.
\end{proof}

\subsection{Pathological decomposition trees}

We now continue with lemmas that mainly address which types of embeddings of $2^{<\omega}$ can appear within decomposition trees, and what affect such embeddings will have on the partial order. First we have a lemma that tells us that we can always find a copy of $2^{<\omega}$ in decomposition trees for elements of $\amri$.

\begin{lemma}\label{Lemma:XnotinAmrsThenDecompTreeNotScat}
Let $x\in \pscat_\mathbb{P}^\mathbb{L}(Q)$ with $x\notin \amrs$, then any decomposition tree for $x$ is not scattered.
\end{lemma}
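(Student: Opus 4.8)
The statement to prove is Lemma~\ref{Lemma:XnotinAmrsThenDecompTreeNotScat}: if $x\in\pscat_\mathbb{P}^\mathbb{L}(Q)$ but $x\notin\amrs$, then any decomposition tree $T$ for $x$ is not scattered, i.e. $2^{<\omega}\leq T$. The plan is to mimic the argument of Theorem~\ref{Thm:TreesAreAMR} (the direction showing $\scatt^{\clos{\mathbb{L}}}_\mathbb{P}(Q')\setminus\amrs$ trees embed $2^{<\omega}$), but now working inside a decomposition tree rather than an arbitrary structured tree. The key link between ``$x\notin\amrs$'' and ``$T$ contains a splitting of $2^{<\omega}$'' is that each node of $T$ corresponds to a subobject $x(t,u)\in\amr$ (via Lemma~\ref{Lemma:x(t,p)}), and whenever $x(t,u)\in\amrs$ for \emph{every} label $u$ at a node, the node contributes nothing to making $x$ non-$\amrs$; so non-$\amrs$-ness must propagate down through infinitely branching choices.

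First I would set up the branching construction. Since $x\notin\amrs$, by Lemma~\ref{Lemma:TreeRanksTheSame} (and the definition of $\amrs$ as the non-limit part) the decomposition tree $T=T^\FS_d$ is not of the form arising from a standard decomposition function for an element of $\amrs$; equivalently $T\notin\scatt^{\clos{\RR}}_\PP(\MM\cup\AAA)$, so by Theorem~\ref{Thm:TreesAreAMR} applied to the structured tree $T$ (with underlying colouring class $\MM\cup\AAA$ in place of $Q'$) we already get $2^{<\omega}\leq T$. Actually this last observation may already finish the proof almost immediately: $T$ is a $\PP$-structured $\clos{\RR}$-tree coloured by $\MM\cup\AAA$, and Theorem~\ref{Thm:TreesAreAMR} (phrased for the construction parameters of Section~\ref{Section:StructuredRTrees}, or more precisely the general $\scatt$-vs-$\amrs$ dichotomy behind it) says that a tree in $\scatt^{\clos{\mathbb{L}}}_\mathbb{P}$ built from $\amrs$-fragments lies in $\amrs$; contrapositively, if the tree is not in that $\amrs$, it is not scattered. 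The subtlety is that $T\in\amrs$ (as a structured tree) corresponds, via the decomposition-function machinery, exactly to $x\in\amrs$ as a partial order --- this is Lemma~\ref{Lemma:TreeRanksTheSame} together with Lemma~\ref{Lemma:ChangeOfPos}. So the argument reduces to: $T$ scattered $\Longrightarrow$ $T\in\scatt^{\clos{\RR}}_\PP(\MM\cup\AAA)=\amrs$-of-structured-trees $\Longrightarrow$ $x=g^\FS(d)\in\amrs$, contradicting the hypothesis.

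If the direct route above needs more care (because a decomposition tree may not a priori be an arbitrary scattered structured tree but has extra constraints), I would instead run the explicit $2^{<\omega}$-splitting. Define $T^{\langle\rangle}=T$; inductively, given a subtree $T^s$ which is a decomposition tree for some $x^s\in\amr\setminus\amrs$, I would find a node $u_s\in T^s$ and two labels (or two directions among successors) yielding disjoint $\down$-closed subtrees $T^{s\con\langle0\rangle},T^{s\con\langle1\rangle}$ that are again decomposition trees for elements not in $\amrs$. The mechanism producing such a splitting node is exactly the combinatorial content already isolated in Lemmas~\ref{Lemma:CombReduce} and~\ref{Lemma:NonScatteredTreeSplitting}, applied to $x^s$ as an element of $\pscat_\mathbb{P}^\mathbb{L}(Q')\setminus\amrs$ (using that $x(t,u)\in\amr$ by Lemma~\ref{Lemma:x(t,p)}, and that if all fragments were in $\amrs$ then $\amrs$-closure under the $f^\eta$ would force $x^s\in\amrs$). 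Then $\varphi(s)=u_s$ is a tree embedding of $2^{<\omega}$ into $T$, since $u_{s\con\langle0\rangle}$ and $u_{s\con\langle1\rangle}$ are incomparable (lying in disjoint $\down$-closed subtrees) and both strictly above $u_s$.

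The main obstacle I anticipate is bookkeeping: making sure that a ``decomposition tree for an element of $\amr\setminus\amrs$'' is preserved under passing to the subtree $^u\downs t$, i.e. that the $x(t,u)$ provided by Lemma~\ref{Lemma:x(t,p)} is genuinely in $\amr$ and that its associated subtree $^u\downs t$ is again a decomposition tree for it --- this is asserted by Lemma~\ref{Lemma:x(t,p)}, so the work is just to invoke it correctly at each node and to verify that \emph{some} label $u$ at the splitting node yields $x(t,u)\notin\amrs$ (otherwise $x^s$ would be an $f^\eta$-image of $\amrs$-fragments, hence in $\amrs$, using Remark~\ref{Rk:POsMaxEtas} so that the relevant $\eta$ is maximal and its arity is indecomposable, keeping us inside $\pscat_\mathbb{P}^\mathbb{L}$). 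Once that propagation step is in hand, the rest is the routine $2^{<\omega}$-embedding verification as in Theorem~\ref{Thm:TreesAreAMR}.
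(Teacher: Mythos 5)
There is a genuine gap, and it sits exactly at the step your proposal treats as automatic. Your first route asserts that $x\notin\amrs$ is ``equivalently'' $T\notin\scatt^{\clos{\RR}}_\PP(\MM\cup\AAA)$, i.e.\ (contrapositively) that a scattered decomposition tree forces $x\in\amrs$ --- but that implication \emph{is} the lemma. Lemma \ref{Lemma:TreeRanksTheSame} only gives the opposite direction ($x\in\amrs$ implies \emph{some} decomposition tree is scattered of matching rank), and Theorem \ref{Thm:TreesAreAMR} is a statement about the Section \ref{Section:StructuredRTrees} construction (trees built by the $S_E$ from tree-fragments, with \emph{its own} $\amrs$), so it says nothing about when a decomposition tree of the partial-order construction $\OA^Q_{\mathbb{L},\mathbb{P}}$ corresponds to a member of the partial-order $\amrs$. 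What is missing is precisely the paper's argument: induct on $\Srank(T)$ of a putative scattered decomposition tree; write $T$ as a $\zeta$-tree-sum along its main chain $\zeta$, note each ${}^p\downs t$ $(t\in\zeta,\ p\in a^\zeta_t)$ is a decomposition tree for $x(t,p)$ of strictly smaller scattered rank (Lemma \ref{Lemma:x(t,p)}), conclude $x(t,p)\in\amrs$ by the inductive hypothesis, and then use Lemma \ref{Lemma:ChangeOfPos} ($x=f^{\eta(\zeta)}(k^\zeta)$) together with closure of $\amrs$ under composition functions to get $x\in\amrs$, a contradiction. You do name ChangeOfPos and the closure fact, but you never run this induction, and without it the ``equivalence'' is circular.

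Your fallback route has a second problem of the same kind: Lemmas \ref{Lemma:CombReduce} and \ref{Lemma:NonScatteredTreeSplitting} are proved for trees in $\sscatt^{\clos{\mathbb{L}}}_{\mathbb{P}}(Q')\setminus\amrs$ in the Section \ref{Section:StructuredRTrees} construction; they cannot be ``applied to $x^s$'' (a partial order) nor to its decomposition tree $T^s$ (which is coloured by $\MM\cup\AAA$ and whose relation to $\amrs$ is the very thing at issue) without reproving decomposition-tree analogues. Moreover, the propagation step you describe only yields, from $x^s\notin\amrs$, \emph{one} fragment $x(t,u)\notin\amrs$ (else closure under $f^{\eta(\zeta)}$ gives $x^s\in\amrs$); producing \emph{two incomparable} non-$\amrs$ positions --- the splitting needed to embed $2^{<\omega}$ --- is the transfinite argument of the \ref{Lemma:NonScatteredTreeSplitting} type, with its limit-stage use of a \ref{Lemma:CombReduce}-style lemma, and is not supplied. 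In short, your second route would amount to redoing the proof of Theorem \ref{Thm:TreesAreAMR} in the decomposition-tree setting, which is substantially more than bookkeeping, whereas the paper's actual proof avoids all of this with the short induction on scattered rank sketched above.
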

\begin{proof}
We will prove this by induction on the scattered rank of possible decomposition trees for $x$. Since the only non-empty scattered tree of rank $0$ is a single point, clearly $x$ has no scattered decomposition tree of rank $0$. % Note that decomposition trees are not disjoint unions of subtrees.
Suppose for any $y\in \pscat_\mathbb{P}^\mathbb{L}(Q)\setminus \amrs$ that $y$ has no decomposition tree $T'$ with $\Srank(T')<\alpha$ and that $x\notin \amrs$ has a scattered decomposition tree $T$ with $\Srank(T)=\alpha$. Thus there is a chain $\zeta$ of $T$ such that $T$ is a $\zeta$-tree-sum of the lower ranked trees ${}^p\downs t$ $(t\in \zeta, p\in a^\zeta_t)$.%\footnote{By construction, decomposition trees are not disjoint unions of other trees.}
%Now for $i\in \zeta$:
%\begin{itemize}
%	\item let $f_i=\col(i)$;
%	\item when $i$ is not a maximum, let $j\in \zeta\cap \downs i$ and $s'_i=s_i=l_i(j)$;
%	\item when $i$ is a maximum, let $s'_i=\emptyset$ and let $s_i$ be any element of $\range(l_i)$;
%	\item let $r=\ot(\zeta)$,
%	\item let $k_i=\langle x(i,p):p\in \range(l_i)\setminus s'_i\rangle$.
%\end{itemize}
%This allows us to define $\eta=\langle \langle f_i,s_i\rangle: i\in r\rangle$. 

Then $x=f^{\eta(\zeta)}(k^\zeta)$ by Lemma \ref{Lemma:ChangeOfPos}. For each $t\in \zeta$ and $p\in a^\zeta_t$ we have a decomposition tree $T_{x(t,p)}={}^p\downs t$ for $x(t,p)$, with $\Srank(T_{x(t,p)})<\alpha$. Thus by the induction hypothesis it must be that $x(t,p)\in \amrs$. But then since $x=f^{\eta(\zeta)}(k^\zeta)$ we see that $x$ is $f^{\eta(\zeta)}$ applied to elements of $\amrs$ hence it must be that $x\in \amrs$. This is a contradiction, therefore $x$ has no scattered decomposition tree of rank $\alpha$. This completes the induction and gives the lemma.
%
% there must be some $t\in \zeta$ and $p\in \range(l_t)$ such that $x(t,p)\notin \amrs$. But $x(t,p)$ had a decomposition tree ${}^p\downs t$ which was one of the lower ranked trees. Hence by the induction hypothesis, ${}^p\downs t$ is not scattered. Hence $2^{<\omega}$ embeds into ${}^p\downs t\subseteq T$, which implies that $T$ is not scattered.
\end{proof}

\begin{lemma}\label{Lemma:POinDcompAppearinT}
Let $x\in \amr$, and $T$ be a decomposition tree for $x$. If $y\subseteq x$ is indecomposable and $|y|>1$, then there exists $t\in T$ such that the underlying partial order of $y$ embeds into $\range(l_t)$.
\end{lemma}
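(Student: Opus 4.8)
The plan is to exploit the correspondence between the decomposition tree $T$ and the construction of $x$ by composition functions, and argue that any indecomposable subset of $x$ with more than one point must ``live inside'' a single arity. Recall that $x$ has a decomposition tree $T = T^\FS_d$, and by Remark \ref{Rk:POsMaxEtas} we may assume every composition sequence $\eta(\vec{p})$ used in building $T$ is maximal in the sense of Definition \ref{Defn:MaxCompSeq}; in particular each $\range(l_t) = \arity(f^{\vec p}_i)$ is indecomposable for every non-leaf $t = \vec p \con \langle i\rangle$ (Lemma \ref{Lemma:IndecompLabels}). The key structural fact is Proposition \ref{Prop:HetaSSR}: within an $H_\eta$-sum, points lying in ``strictly later'' summands all share the same relationship to points in an ``earlier'' summand. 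This says exactly that the portions of $x$ coming from $\downs t$ for various $t$ along a chain are nested intervals, and the partial order on $x$ restricted to a single ``level'' $a^\eta_i$ behaves like the arity $\arity(f_i)$.

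First I would set up the following picture. Let $y \subseteq x$ be indecomposable with $|y| > 1$. Consider the root composition sequence $\eta = \eta(\langle\rangle)$, so $x = f^\eta(k)$ is an $H_\eta$-sum $\sum_{u \in H_\eta} q_u$ where the $q_u$ are lower-ranked fragments sitting at positions $u \in a^\eta_i$, $i \in r(\langle\rangle)$. By Proposition \ref{Prop:HetaSSR}, for each $i \in r(\langle\rangle)$ the set $I_i = \{d \in x \mid d \in q_u,\ u \in a^\eta_j,\ j \ge i\}$ is an interval of $x$, and these form a maximal chain of intervals (by maximality of $\eta$). Since $y$ is an interval of itself but we are looking at how $y$ meets these intervals of $x$: the traces $y \cap I_i$ form a chain of subsets of $y$. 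The crux is a dichotomy. Either (a) $y$ is entirely contained in a single ``block'' $D_i = \sum_{u \in a^\eta_i} q_u$ of the root sum — in which case $y$ meets at most one fragment $q_u$ (because if $y$ met two distinct $q_u, q_{u'}$ with $u,u'$ in the same $a^\eta_i$, then by indecomposability of $y$... actually here one must be careful) — or (b) $y$ meets at least two distinct blocks $D_i, D_j$, and then I claim $y$ maps onto a subset of $\arity(f_i)$ or onto a subset of the arity at the relevant level, hence into some $\range(l_t)$ with $t$ on the central chain. More precisely: if $y$ meets blocks $D_{i_0}$ and $D_{i_1}$ with $i_0 < i_1$, then every point of $y$ in block $D_{i}$ for $i < i_1$ must, by indecomposability and Proposition \ref{Prop:HetaSSR}, have the same relationship to all points of $y$ below it; the only way $y$ can fail to be ``spread out along the chain trivially'' (which would contradict $y$ being indecomposable with $\ge 2$ points spanning two comparable intervals — a decomposition!) forces $y$ to be confined to the points lying at a single level $a^\eta_i$, i.e. to the summands indexed by $a^\eta_i$ with each meeting $q_u$ in at most one point. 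Then $y$ embeds into the partial order on $a^\eta_i$, and since $y$ is indecomposable with $|y|>1$ this is a subset of $\arity(f_i) = \range(l_{\langle i\rangle})$, and we take $t = \langle i \rangle$.

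The inductive engine handles case (a): if $y \subseteq q_u$ for a single fragment $q_u$, then $q_u$ has a decomposition tree equal to ${}^u \downs \langle i\rangle$ (a subtree of $T$) by Lemma \ref{Lemma:x(t,p)}, namely $T_{x(\langle i\rangle, u)}$, and $q_u = x(\langle i\rangle, u) \in \amr$ with strictly smaller rank when $x \in \amrs$ (and with a genuine decomposition tree in general). By the induction hypothesis applied to $q_u$ and its decomposition tree, there is $t' \in T_{x(\langle i\rangle,u)} \subseteq T$ with $y$ embedding into $\range(l_{t'})$, and since the labelling of the subtree agrees with that of $T$, this $t'$ works for $x$ too. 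For the base of the induction ($x \in \AAA$, so $|x| \le 1$), the hypothesis $|y| > 1$ is vacuous. I also need to lift the argument from $\amrs$ to all of $\amr$: when $x \in \amri$ is the limit of $(x_n)$, a finite indecomposable $y \subseteq x = \bigcup x_n$ is contained in some $x_n$ (being finite), is an indecomposable subset there (since each $x_n$ is $\up$-closed... one checks an interval argument), and then apply the $\amrs$ case to $x_n$ and the corresponding decomposition tree $T_n \subseteq T$; the resulting $t \in T_n \subseteq T$ works. Here I should double-check that $y \subseteq x_n$ indecomposable in $x$ stays indecomposable (or at least that its underlying partial order still embeds into some $\range(l_t)$) — using Remark \ref{Rk:PoLimits} that $x_{n+1}$ only differs from $x_n$ by replacing $-\infty$-coloured points with larger orders, so the partial order structure on $x_n \subseteq x$ is unchanged.

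The main obstacle I anticipate is making the case (b) argument precise: showing rigorously that an indecomposable $y$ of size $>1$ which spans two distinct comparable intervals $I_i \supsetneq I_j$ coming from the root sum cannot exist unless it sits at a single level, and that at that level it meets each fragment $q_u$ in at most one point. This is really a statement that indecomposability is incompatible with being ``sliced'' by a nontrivial chain of intervals of the ambient order in a non-degenerate way — it should follow by taking $y \cap I_i$ for an appropriate $i$: this is an interval of $y$ (intervals of $x$ intersect $y$ in intervals of $y$), so by indecomposability it is a singleton or all of $y$; pushing this through all the $I_i$ and the within-level fragments $q_u$ pins $y$ down. I expect the bookkeeping — tracking which $t \in T$ the final embedding targets, and verifying the labelling compatibility between a subtree and $T$ — to be routine given Definition \ref{Defn:Tx} and Lemma \ref{Lemma:x(t,p)}, but the interval-slicing combinatorics is where the real content lies.
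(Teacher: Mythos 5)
Your overall strategy is the same as the paper's: write $x$ as the root $H_\eta$-sum $\sum_{u\in H_\eta}q_u$, slice $y$ by indecomposability against the chain of level-intervals and against the individual fragments, handle the case where $y$ lies in a single fragment by induction on rank, and pass to limits at the end. But two steps, as written, do not go through. The first is your case (b) claim that $y$ is \emph{confined to a single level} $a^\eta_i$ and hence embeds into the order on $a^\eta_i$: this is false. The interval argument only shows that the trace of $y$ on the union of the later blocks $\bigcup_{j>i}\sum_{u\in a^\eta_j}q_u$ is an interval of $y$, hence contains at most one point $v$ --- and such a point can genuinely occur. For instance, if $\arity(f_i)=N$ with $s_i=3$, take one point of $y$ in each fragment at positions $0,1,2$ of level $i$ and one point $v$ in any later block; then $y\cong N$ is indecomposable, meets two blocks, and does not embed into $a^\eta_i$, which has only three elements. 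The repair is exactly the paper's device: send the (at most one) point $v$ lying below level $i$ to the distinguished argument $s_i$, and each remaining point of $y$ to the position $u\in a^\eta_i$ of the unique fragment containing it; since a point in a later block relates to level-$i$ positions exactly as $s_i$ does inside $\arity(f_i)$, this gives an embedding of $y$ into $\arity(f_i)=\range(l_{\langle i\rangle})$, which does contain $s_i$. Your earlier phrase that $y$ ``maps onto a subset of $\arity(f_i)$'' is the correct target, but the detailed argument you give only produces a map into $a^\eta_i$ and would fail on the example above.

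The second gap is the limit case: you treat only \emph{finite} $y$, but the lemma has no finiteness hypothesis, and an infinite indecomposable $y\subseteq x=\bigcup_{n}x_n$ need not be contained in any single $x_n$. The paper closes this by producing an isomorphic copy $y'\subseteq x_n$: choose $n$ least with $|y\cap x_n|>1$; each fragment of $x$ that replaces a single point of $x_n$ meets $y$ in at most one point (otherwise its trace would be a proper non-trivial interval of $y$), so collapsing each such fragment to the point of $x_n$ it replaces maps $y$ order-isomorphically onto a subset $y'$ of $x_n$, and the scattered case applied to the decomposition tree $T_n\subseteq T$ finishes. As it stands, your argument establishes the lemma only for finite $y$, which is weaker than the statement being proved (even though the paper's later applications happen to use $|y|=2$).
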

\begin{proof}
First we suppose $x\in \amrs$. If $\rank(x)=0$ then $x$ is a single point, hence $|y|\leq |x|=1$ which gives the base case. %If $\rank(x)=1$ then $x=f^{\eta}(k)$ for some $k\in \RR(\PP(\AAA)))$. Thus $x$ is an $H_\eta$ sum of single points, since $y$ is indecomposable then, it must be that some $f_i$, a function in the composition sequence $\eta$, was a sum over some partial order $P$ into which $y$ embeds. Thus $H(x)$, used in the construction of $T$, contained a point $t$ such that $\range(l_t)=P$.
Suppose now that for some $\alpha\in \On$ we have the lemma for all $x_0\in \CC_{<\alpha}$ and that some $x\in\CC_\alpha$ has an indecomposable subset $y\subset x$ with $|y|>1$. Then we know that $x=f^{\eta}(k)=\sum_{u\in H_\eta}q_u$ for some $k=\langle q_u:u\in A^\eta\rangle\in \dom(f^\eta)_{<\alpha}$. %Thus $x=\sum_{u\in A^{\eta}}q_u$ so i
If any of these $q_u$ contains a subset isomorphic to a copy of $y$; then since a decomposition tree for $x$ is a $\zeta$-tree-sum of decomposition trees for the $q_u$ (for some chain $\zeta$), by the induction hypothesis we are done. 

So suppose for every $u\in A^\eta$, $q_u$ does not have a subset isomorphic to $y$. We claim that if $\eta=\langle \langle f_i,s_i\rangle:i\in r\rangle$ then for some $i\in r$, and some partial order $P$ with $y\leq P$, we have $f_i=\sum_P$. If for some $i\in r$ at least two points of $y$ were in $\sum_{u\in a^\eta_i}q_u$ and another point of $y$ was in $\sum_{u\in a^\eta_j}q_{u}$ for $j<i$, then the interval $$y\cap \bigcup_{j'>j}\sum_{u\in a^\eta_i}q_u\subset y$$ shows that $y$ is decomposable. So we can let $i$ be least such that $y\cap a^\eta_i\neq \emptyset$, and we know that there is at most one point $v\in y$ contained inside some $q_u$ for $u\in a^\eta_j$ with $j>i$; in this case we let $\varphi(v)=s_i$.
If two points of $y$ were in some $q_u\subset x$ then there is another point of $y$ not in here, so that the interval $q_u\cap y\subset y$ shows that $y$ is decomposable. Thus all $w\in y\setminus \{v\}$ are inside $\sum_{u\in a^\eta_i}q_u$, with at most one point in each $q_u$. So we let $\varphi(w)=u$ whenever $w\in q_u$. Thus we have defined $\varphi:y\rightarrow \arity(f_i)$ and it is simple to verify that $\varphi$ is an embedding. 
Now without loss of generality we have $\eta=\eta(\langle \rangle)$ so that $\arity(f_i)=\range(l_{\langle i\rangle})$. This completes the proof for $x\in \amrs$.
%
%
%
%
% Thus again $H(x)$, used in the construction of $T$, contained a point $t$ such that $\range(l_t)=P$.

Suppose that $x\in \amri$ is the limit of $(x_n)_{n\in \omega}$. Then we claim that for some $n\in \omega$ there is some $y'\subseteq x_n$ such that the underlying orders of $y'$ and $y$ are isomorphic. Let $n$ be least such that $|y\cap x_n|>1$. If $y$ is not a subset of $x_n$ then for some $m>n$, some points of $x_n$ were replaced by larger partial orders; equating the original point to a point of this partial order as in Remark \ref{Rk:PoLimits}. If at least two points of $y$ were in a partial order that replaced a single point, then these points form a proper interval of $y$ which contradicts that $y$ is indecomposable. So let $y'$ consist of those points of $x_n$ that are either in $y$ or are replaced by a partial order containing a point of $y$ in some $x_m$ $(m>n)$. Thus the underlying orders of $y'$ and $y$ are isomorphic. Hence for some $T_n$ a decomposition tree for $x_n$ we have $t\in T_{n}$ and the underlying partial order of $y'=y$ embeds into $\range(l_t)$. Hence also $t\in T$. This completes the proof.
%
% is a subset of some $x_n$, since otherwise we would have added $y$ throughout multiple nested sums, which is impossible because $y$ is indecomposable. Hence for some $T_n$ a decomposition tree for $x_n$ we have $t\in T_{n}$ and the underlying partial order of $y$ embeds into $\range(l_t)$. Hence also $t\in T$. This completes the proof.
\end{proof}

We next define the $\{\CH{2}, \AC{2}\}$-structured, $\{\sum_{\CH{2}}, \sum_{\AC{2}}\}$-coloured trees $\Btree^+$, $\Btree^-$, $\Qhattree$, $\Qtree$ and $\Atree$ as in Figure \ref{Fig:PthTrees}. These will be \emph{pathological decomposition trees} for the partial orders $2^{<\omega}$, $-2^{<\omega}$, $\Qhat$, $\mathbb{Q}$ and $\AC{\aleph_0}$ respectively (we deal with these in \ref{SubSubSection:PathPOs}). It will turn out that $\Qtree$ and $\Atree$ can never be decomposition trees under the assumption granted by Remark \ref{Rk:POsMaxEtas}.

\begin{figure}
\centering
    \begin{tikzpicture}[scale=0.4]

%B+TREE
\node [above] at (0,5) {$\Btree^+$};
\draw [fill] (0,5) circle [radius=0.06];
\draw (0,5) -- (-2,4);
\draw [fill] (-2,4) circle [radius=0.06];
\draw (0,5) -- (2,4);
\draw [fill] (2,4) circle [radius=0.06];
\draw (2,4) -- (3,3);
\draw [fill] (3,3) circle [radius=0.06];
\draw (2,4) -- (1,3);

\draw [fill] (1,3) circle [radius=0.06];
\draw (1,3) -- (1.5,2);
\draw [fill] (1.5,2) circle [radius=0.06];
\draw (1,3) -- (0.5,2);

\draw [fill] (0.5,2) circle [radius=0.06];
\draw (3,3) -- (3.5,2);
\draw [fill] (3.5,2) circle [radius=0.06];
\draw (3,3) -- (2.5,2);

\draw [fill] (2.5,2) circle [radius=0.06];
\draw (3.5,2) -- (3.25,1);
\draw [fill] (3.25,1) circle [radius=0.06];
\draw (3.5,2) -- (3.75,1);

\draw [fill] (3.75,1) circle [radius=0.06];
\draw (1.5,2) -- (1.25,1);
\draw [fill] (1.25,1) circle [radius=0.06];
\draw (1.5,2) -- (1.75,1);

\draw [fill] (1.75,1) circle [radius=0.06];
\draw (3.25,1) -- (3.125,0);
\draw (3.25,1) -- (3.375,0);
\draw (3.75,1) -- (3.625,0);
\draw (3.75,1) -- (3.875,0);

\draw (1.25,1) -- (1.125,0);
\draw (1.25,1) -- (1.375,0);
\draw (1.75,1) -- (1.625,0);
\draw (1.75,1) -- (1.875,0);
\node at (0,4) {$<$};
\node at (2,3) {$\perp$};
\node at (1,2) {$<$};
\node at (3,2) {$<$};
\node [scale=0.7] at (3.5,1) {$\perp$};
\node [scale=0.7] at (1.5,1) {$\perp$};
\end{tikzpicture}
\hspace{2pt}
\begin{tikzpicture}[scale=0.4]
%B-TREE
\node [above] at (9-0,5) {$\Btree^-$};
\draw [fill] (9-0,5) circle [radius=0.06];
\draw (9-0,5) -- (9--2,4);
\draw [fill] (9--2,4) circle [radius=0.06];
\draw (9-0,5) -- (9-2,4);
\draw [fill] (9-2,4) circle [radius=0.06];
\draw (9-2,4) -- (9-3,3);
\draw [fill] (9-3,3) circle [radius=0.06];
\draw (9-2,4) -- (9-1,3);
\draw [fill] (9-1,3) circle [radius=0.06];
\draw (9-1,3) -- (9-1.5,2);
\draw [fill] (9-1.5,2) circle [radius=0.06];
\draw (9-1,3) -- (9-0.5,2);
\draw [fill] (9-0.5,2) circle [radius=0.06];
\draw (9-3,3) -- (9-3.5,2);
\draw [fill] (9-3.5,2) circle [radius=0.06];
\draw (9-3,3) -- (9-2.5,2);
\draw [fill] (9-2.5,2) circle [radius=0.06];
\draw (9-3.5,2) -- (9-3.25,1);
\draw [fill] (9-3.25,1) circle [radius=0.06];
\draw (9-3.5,2) -- (9-3.75,1);
\draw [fill] (9-3.75,1) circle [radius=0.06];
\draw (9-1.5,2) -- (9-1.25,1);
\draw [fill] (9-1.25,1) circle [radius=0.06];
\draw (9-1.5,2) -- (9-1.75,1);
\draw [fill] (9-1.75,1) circle [radius=0.06];
\draw (9-3.25,1) -- (9-3.125,0);
\draw (9-3.25,1) -- (9-3.375,0);
\draw (9-3.75,1) -- (9-3.625,0);
\draw (9-3.75,1) -- (9-3.875,0);

\draw (9-1.25,1) -- (9-1.125,0);
\draw (9-1.25,1) -- (9-1.375,0);
\draw (9-1.75,1) -- (9-1.625,0);
\draw (9-1.75,1) -- (9-1.875,0);
\node at (9-0,4) {$<$};
\node at (9-2,3) {$\perp$};
\node at (9-1,2) {$<$};
\node at (9-3,2) {$<$};
\node [scale=0.7] at (9-3.5,1) {$\perp$};
\node [scale=0.7] at (9-1.5,1) {$\perp$};
\end{tikzpicture}
\hspace{2pt}
\begin{tikzpicture}[scale=0.4]

%QhatTREE
\node [above] at (0,5) {$\Qhattree$};
\draw [fill] (0,5) circle [radius=0.06];
\draw (0,5) -- (-2,4);
\draw [fill] (-2,4) circle [radius=0.06];
\draw (0,5) -- (2,4);
\draw [fill] (2,4) circle [radius=0.06];
\draw (2,4) -- (3,3);
\draw [fill] (3,3) circle [radius=0.06];
\draw (2,4) -- (1,3);

\draw [fill] (1,3) circle [radius=0.06];
\draw (1,3) -- (1.5,2);
\draw [fill] (1.5,2) circle [radius=0.06];
\draw (1,3) -- (0.5,2);

\draw [fill] (0.5,2) circle [radius=0.06];
\draw (3,3) -- (3.5,2);
\draw [fill] (3.5,2) circle [radius=0.06];
\draw (3,3) -- (2.5,2);

\draw [fill] (2.5,2) circle [radius=0.06];
\draw (3.5,2) -- (3.25,1);
\draw [fill] (3.25,1) circle [radius=0.06];
\draw (3.5,2) -- (3.75,1);

\draw [fill] (3.75,1) circle [radius=0.06];
\draw (1.5,2) -- (1.25,1);
\draw [fill] (1.25,1) circle [radius=0.06];
\draw (1.5,2) -- (1.75,1);

\draw [fill] (1.75,1) circle [radius=0.06];
\draw (3.25,1) -- (3.125,0);
\draw (3.25,1) -- (3.375,0);
\draw (3.75,1) -- (3.625,0);
\draw (3.75,1) -- (3.875,0);

\draw (1.25,1) -- (1.125,0);
\draw (1.25,1) -- (1.375,0);
\draw (1.75,1) -- (1.625,0);
\draw (1.75,1) -- (1.875,0);
\node at (0,4) {$\perp$};
\node at (2,3) {$<$};
\node at (1,2) {$\perp$};
\node at (3,2) {$\perp$};
\node [scale=0.7] at (3.5,1) {$<$};
\node [scale=0.7] at (1.5,1) {$<$};
\end{tikzpicture}
\hspace{2pt}
\begin{tikzpicture}[scale=0.4]
%QTREE
\node [above] at (0,5-6) {$\Qtree$};
\draw [fill] (0,5-6) circle [radius=0.06];
\draw (0,5-6) -- (-2,4-6);
\draw [fill] (-2,4-6) circle [radius=0.06];
\draw (0,5-6) -- (2,4-6);
\draw [fill] (2,4-6) circle [radius=0.06];
\draw (2,4-6) -- (3,3-6);
\draw [fill] (3,3-6) circle [radius=0.06];
\draw (-2,4-6) -- (-3,3-6);
\draw [fill] (-3,3-6) circle [radius=0.06];

\draw (2,4-6) -- (1,3-6);
\draw [fill] (1,3-6) circle [radius=0.06];
\draw (-2,4-6) -- (-1,3-6);
\draw [fill] (-1,3-6) circle [radius=0.06];

\draw (1,3-6) -- (1.5,2-6);
\draw [fill] (1.5,2-6) circle [radius=0.06];
\draw (-1,3-6) -- (-1.5,2-6);
\draw [fill] (-1.5,2-6) circle [radius=0.06];

\draw (1,3-6) -- (0.5,2-6);
\draw [fill] (0.5,2-6) circle [radius=0.06];
\draw (-1,3-6) -- (-0.5,2-6);
\draw [fill] (-0.5,2-6) circle [radius=0.06];

\draw (3,3-6) -- (3.5,2-6);
\draw [fill] (3.5,2-6) circle [radius=0.06];
\draw (-3,3-6) -- (-3.5,2-6);
\draw [fill] (-3.5,2-6) circle [radius=0.06];

\draw (3,3-6) -- (2.5,2-6);
\draw [fill] (2.5,2-6) circle [radius=0.06];
\draw (-3,3-6) -- (-2.5,2-6);
\draw [fill] (-2.5,2-6) circle [radius=0.06];

\draw (3.5,2-6) -- (3.25,1-6);
\draw [fill] (3.25,1-6) circle [radius=0.06];

\draw (2.5,2-6) -- (2.25,1-6);
\draw [fill] (2.25,1-6) circle [radius=0.06];
\draw (2.5,2-6) -- (2.75,1-6);
\draw [fill] (2.75,1-6) circle [radius=0.06];

\draw (-2.5,2-6) -- (-2.25,1-6);
\draw [fill] (-2.25,1-6) circle [radius=0.06];
\draw (-2.5,2-6) -- (-2.75,1-6);
\draw [fill] (-2.75,1-6) circle [radius=0.06];

\draw (0.5,2-6) -- (0.25,1-6);
\draw [fill] (0.25,1-6) circle [radius=0.06];
\draw (0.5,2-6) -- (0.75,1-6);
\draw [fill] (0.75,1-6) circle [radius=0.06];

\draw (-0.5,2-6) -- (-0.25,1-6);
\draw [fill] (-0.25,1-6) circle [radius=0.06];
\draw (-0.5,2-6) -- (-0.75,1-6);
\draw [fill] (-0.75,1-6) circle [radius=0.06];

\draw (-3.5,2-6) -- (-3.25,1-6);
\draw [fill] (-3.25,1-6) circle [radius=0.06];

\draw (3.5,2-6) -- (3.75,1-6);
\draw [fill] (3.75,1-6) circle [radius=0.06];
\draw (-3.5,2-6) -- (-3.75,1-6);
\draw [fill] (-3.75,1-6) circle [radius=0.06];

\draw (1.5,2-6) -- (1.25,1-6);
\draw [fill] (1.25,1-6) circle [radius=0.06];
\draw (-1.5,2-6) -- (-1.25,1-6);
\draw [fill] (-1.25,1-6) circle [radius=0.06];

\draw (1.5,2-6) -- (1.75,1-6);
\draw [fill] (1.75,1-6) circle [radius=0.06];
\draw (-1.5,2-6) -- (-1.75,1-6);
\draw [fill] (-1.75,1-6) circle [radius=0.06];

\draw (0.25,1-6) -- (0.125,0-6);
\draw (0.25,1-6) -- (0.375,0-6);
\draw (0.75,1-6) -- (0.625,0-6);
\draw (0.75,1-6) -- (0.875,0-6);

\draw (1.25,1-6) -- (1.125,0-6);
\draw (1.25,1-6) -- (1.375,0-6);
\draw (1.75,1-6) -- (1.625,0-6);
\draw (1.75,1-6) -- (1.875,0-6);

\draw (2.25,1-6) -- (2.125,0-6);
\draw (2.25,1-6) -- (2.375,0-6);
\draw (2.75,1-6) -- (2.625,0-6);
\draw (2.75,1-6) -- (2.875,0-6);

\draw (3.25,1-6) -- (3.125,0-6);
\draw (3.25,1-6) -- (3.375,0-6);
\draw (3.75,1-6) -- (3.625,0-6);
\draw (3.75,1-6) -- (3.875,0-6);

\draw (-0.25,1-6) -- (-0.125,0-6);
\draw (-0.25,1-6) -- (-0.375,0-6);
\draw (-0.75,1-6) -- (-0.625,0-6);
\draw (-0.75,1-6) -- (-0.875,0-6);

\draw (-1.25,1-6) -- (-1.125,0-6);
\draw (-1.25,1-6) -- (-1.375,0-6);
\draw (-1.75,1-6) -- (-1.625,0-6);
\draw (-1.75,1-6) -- (-1.875,0-6);

\draw (-2.25,1-6) -- (-2.125,0-6);
\draw (-2.25,1-6) -- (-2.375,0-6);
\draw (-2.75,1-6) -- (-2.625,0-6);
\draw (-2.75,1-6) -- (-2.875,0-6);

\draw (-3.25,1-6) -- (-3.125,0-6);
\draw (-3.25,1-6) -- (-3.375,0-6);
\draw (-3.75,1-6) -- (-3.625,0-6);
\draw (-3.75,1-6) -- (-3.875,0-6);

\node at (0,4-6) {$<$};
\node at (2,3-6) {$<$};
\node at (-2,3-6) {$<$};
\node at (1,2-6) {$<$};
\node at (3,2-6) {$<$};
\node at (-1,2-6) {$<$};
\node at (-3,2-6) {$<$};

\node [scale=0.7] at (3.5,1-6) {$<$};
\node [scale=0.7] at (1.5,1-6) {$<$};
\node [scale=0.7] at (0.5,1-6) {$<$};
\node [scale=0.7] at (2.5,1-6) {$<$};

\node [scale=0.7] at (-3.5,1-6) {$<$};
\node [scale=0.7] at (-1.5,1-6) {$<$};
\node [scale=0.7] at (-0.5,1-6) {$<$};
\node [scale=0.7] at (-2.5,1-6) {$<$};
\end{tikzpicture}
\hspace{2pt}
\begin{tikzpicture}[scale=0.4]
%ATREE
\node [above] at (9+0,5-6) {$\Atree$};
\draw [fill] (9+0,5-6) circle [radius=0.06];
\draw (9+0,5-6) -- (9+-2,4-6);
\draw [fill] (9+-2,4-6) circle [radius=0.06];
\draw (9+0,5-6) -- (9+2,4-6);
\draw [fill] (9+2,4-6) circle [radius=0.06];
\draw (9+2,4-6) -- (9+3,3-6);
\draw [fill] (9+3,3-6) circle [radius=0.06];
\draw (9+-2,4-6) -- (9+-3,3-6);
\draw [fill] (9+-3,3-6) circle [radius=0.06];

\draw (9+2,4-6) -- (9+1,3-6);
\draw [fill] (9+1,3-6) circle [radius=0.06];
\draw (9+-2,4-6) -- (9+-1,3-6);
\draw [fill] (9+-1,3-6) circle [radius=0.06];

\draw (9+1,3-6) -- (9+1.5,2-6);
\draw [fill] (9+1.5,2-6) circle [radius=0.06];
\draw (9+-1,3-6) -- (9+-1.5,2-6);
\draw [fill] (9+-1.5,2-6) circle [radius=0.06];

\draw (9+1,3-6) -- (9+0.5,2-6);
\draw [fill] (9+0.5,2-6) circle [radius=0.06];
\draw (9+-1,3-6) -- (9+-0.5,2-6);
\draw [fill] (9+-0.5,2-6) circle [radius=0.06];

\draw (9+3,3-6) -- (9+3.5,2-6);
\draw [fill] (9+3.5,2-6) circle [radius=0.06];
\draw (9+-3,3-6) -- (9+-3.5,2-6);
\draw [fill] (9+-3.5,2-6) circle [radius=0.06];

\draw (9+3,3-6) -- (9+2.5,2-6);
\draw [fill] (9+2.5,2-6) circle [radius=0.06];
\draw (9+-3,3-6) -- (9+-2.5,2-6);
\draw [fill] (9+-2.5,2-6) circle [radius=0.06];

\draw (9+3.5,2-6) -- (9+3.25,1-6);
\draw [fill] (9+3.25,1-6) circle [radius=0.06];

\draw (9+2.5,2-6) -- (9+2.25,1-6);
\draw [fill] (9+2.25,1-6) circle [radius=0.06];
\draw (9+2.5,2-6) -- (9+2.75,1-6);
\draw [fill] (9+2.75,1-6) circle [radius=0.06];

\draw (9+-2.5,2-6) -- (9+-2.25,1-6);
\draw [fill] (9+-2.25,1-6) circle [radius=0.06];
\draw (9+-2.5,2-6) -- (9+-2.75,1-6);
\draw [fill] (9+-2.75,1-6) circle [radius=0.06];

\draw (9+0.5,2-6) -- (9+0.25,1-6);
\draw [fill] (9+0.25,1-6) circle [radius=0.06];
\draw (9+0.5,2-6) -- (9+0.75,1-6);
\draw [fill] (9+0.75,1-6) circle [radius=0.06];

\draw (9+-0.5,2-6) -- (9+-0.25,1-6);
\draw [fill] (9+-0.25,1-6) circle [radius=0.06];
\draw (9+-0.5,2-6) -- (9+-0.75,1-6);
\draw [fill] (9+-0.75,1-6) circle [radius=0.06];

\draw (9+-3.5,2-6) -- (9+-3.25,1-6);
\draw [fill] (9+-3.25,1-6) circle [radius=0.06];

\draw (9+3.5,2-6) -- (9+3.75,1-6);
\draw [fill] (9+3.75,1-6) circle [radius=0.06];
\draw (9+-3.5,2-6) -- (9+-3.75,1-6);
\draw [fill] (9+-3.75,1-6) circle [radius=0.06];

\draw (9+1.5,2-6) -- (9+1.25,1-6);
\draw [fill] (9+1.25,1-6) circle [radius=0.06];
\draw (9+-1.5,2-6) -- (9+-1.25,1-6);
\draw [fill] (9+-1.25,1-6) circle [radius=0.06];

\draw (9+1.5,2-6) -- (9+1.75,1-6);
\draw [fill] (9+1.75,1-6) circle [radius=0.06];
\draw (9+-1.5,2-6) -- (9+-1.75,1-6);
\draw [fill] (9+-1.75,1-6) circle [radius=0.06];

\draw (9+0.25,1-6) -- (9+0.125,0-6);
\draw (9+0.25,1-6) -- (9+0.375,0-6);
\draw (9+0.75,1-6) -- (9+0.625,0-6);
\draw (9+0.75,1-6) -- (9+0.875,0-6);

\draw (9+1.25,1-6) -- (9+1.125,0-6);
\draw (9+1.25,1-6) -- (9+1.375,0-6);
\draw (9+1.75,1-6) -- (9+1.625,0-6);
\draw (9+1.75,1-6) -- (9+1.875,0-6);

\draw (9+2.25,1-6) -- (9+2.125,0-6);
\draw (9+2.25,1-6) -- (9+2.375,0-6);
\draw (9+2.75,1-6) -- (9+2.625,0-6);
\draw (9+2.75,1-6) -- (9+2.875,0-6);

\draw (9+3.25,1-6) -- (9+3.125,0-6);
\draw (9+3.25,1-6) -- (9+3.375,0-6);
\draw (9+3.75,1-6) -- (9+3.625,0-6);
\draw (9+3.75,1-6) -- (9+3.875,0-6);

\draw (9+-0.25,1-6) -- (9+-0.125,0-6);
\draw (9+-0.25,1-6) -- (9+-0.375,0-6);
\draw (9+-0.75,1-6) -- (9+-0.625,0-6);
\draw (9+-0.75,1-6) -- (9+-0.875,0-6);

\draw (9+-1.25,1-6) -- (9+-1.125,0-6);
\draw (9+-1.25,1-6) -- (9+-1.375,0-6);
\draw (9+-1.75,1-6) -- (9+-1.625,0-6);
\draw (9+-1.75,1-6) -- (9+-1.875,0-6);

\draw (9+-2.25,1-6) -- (9+-2.125,0-6);
\draw (9+-2.25,1-6) -- (9+-2.375,0-6);
\draw (9+-2.75,1-6) -- (9+-2.625,0-6);
\draw (9+-2.75,1-6) -- (9+-2.875,0-6);

\draw (9+-3.25,1-6) -- (9+-3.125,0-6);
\draw (9+-3.25,1-6) -- (9+-3.375,0-6);
\draw (9+-3.75,1-6) -- (9+-3.625,0-6);
\draw (9+-3.75,1-6) -- (9+-3.875,0-6);

\node at (9+0,4-6) {$\perp$};
\node at (9+2,3-6) {$\perp$};
\node at (9+-2,3-6) {$\perp$};
\node at (9+1,2-6) {$\perp$};
\node at (9+3,2-6) {$\perp$};
\node at (9+-1,2-6) {$\perp$};
\node at (9+-3,2-6) {$\perp$};

\node [scale=0.7] at (9+3.5,1-6) {$\perp$};
\node [scale=0.7] at (9+1.5,1-6) {$\perp$};
\node [scale=0.7] at (9+0.5,1-6) {$\perp$};
\node [scale=0.7] at (9+2.5,1-6) {$\perp$};

\node [scale=0.7] at (9+-3.5,1-6) {$\perp$};
\node [scale=0.7] at (9+-1.5,1-6) {$\perp$};
\node [scale=0.7] at (9+-0.5,1-6) {$\perp$};
\node [scale=0.7] at (9+-2.5,1-6) {$\perp$};

\end{tikzpicture}

\caption{The structured trees $\Btree^+$, $\Btree^-$, $\Qhattree$, $\Qtree$ and $\Atree$.}\label{Fig:PthTrees}
\end{figure}
\begin{defn}
$\Btree^+$ has underlying set consisting of all finite sequences $s=\langle s_i:i\leq n\rangle$ of elements of $\{0,1,2,3\}$ such that:
\begin{itemize}
	\item if $s\neq \langle \rangle$ then $s_0\in\{2,3\}$,
	\item if $s_i\in \{0,1\}$ and $i<n$ then $s_{i+1}\in \{2,3\}$,
	\item if $s_i=2$ and $i<n$ then $s_{i+1}\in \{0,1\}$,
	\item if $s_i=3$ then $i=n$.
\end{itemize}
Thus $\Btree^+$ is a tree under $\is$. If $n$ is even, then we set $\col(s)=\sum_{\CH{2}}$ and label so that $$l_s(s\con \langle 3\rangle)=\min(\CH{2})\mbox{ and }l_s(s\con \langle 2\rangle \con t)=\max(\CH{2})$$ for every possible sequence $t$. If $n$ is odd, the we set $\col(s)=\sum_{\AC{2}}$ and $\range(l_s)=\AC{2}$.
We define the tree $\Btree^-$ in the same way as $\Btree^+$, with the only difference that $$l_s(s\con \langle 3\rangle)=\max(\CH{2})\mbox{ and }l_s(s\con \langle 2\rangle \con t)=\min(\CH{2})$$ for every possible sequence $t$.
We also define the tree $\Qhattree$ in the same way as $\Btree^+$, but change the labels and colours as follows. If $n$ is even, then we set $\col(s)=\sum_{\AC{2}}$ and label so that $\range(l_s)=\AC{2}$. If $n$ is odd then we set $\col(s)=\sum_{\CH{2}}$ and for every possible sequence $t$
$$l_s(s\con \langle 0\rangle\con t)=\min(\CH{2})\mbox{ and }l_s(s\con \langle 1\rangle\con t)=\max(\CH{2}).$$

We now define $\Qtree$ as a copy of $2^{<\omega}$, coloured and labelled so that for each $t\in \Qtree$ we have $\col(t)=\sum_{\CH{2}}$ and $\range(l_t)=\CH{2}$.
Finally we define $\Atree$ as a copy of $2^{<\omega}$, coloured and labelled so that for each $t\in \Atree$ we have $\col(t)=\sum_{\AC{2}}$ and $\range(l_t)=\AC{2}$.
\end{defn}

\begin{lemma}\label{Lemma:AtreeA}
Let $x\in \amr$ have a decomposition tree $T$. Then if $\Btree^+,\Btree^-\not \leq T$ and $\Atree\leq T$, then there is a $\down$-closed subset $A\subseteq T$ such that $\Atree\leq A$ and for each $t\in A$, we have $\range(l_t)=\AC{2}$.
\end{lemma}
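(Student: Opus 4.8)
\emph{Proof plan.}

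The plan is to pin down the largest possible candidate for $A$ and reduce the whole statement to a single assertion about non‑scatteredness. Set
\[
A=\{\,t\in T:\ \range(l_{t'})=\AC{2}\ \text{for every non-leaf }t'\in T\text{ with }t'\geq t\,\}.
\]
By construction $A$ is $\down$-closed and every non-leaf $t\in A$ has $\range(l_t)=\AC{2}$, and the same holds for $\down t\cap A$ for any $t\in T$. Since the arities occurring along $\Atree$ are all $\AC{2}$ and $\Atree$ has no leaves, any order-theoretic embedding of $2^{<\omega}$ into $\down t\cap A$ is automatically a structured-tree embedding of $\Atree$ into $\down t\cap A$ (the maps induced on label-ranges are identities of $\AC{2}$, and all colours are $\sum_{\AC{2}}$, so the colour condition of Definition \ref{Defn:StructTrees} holds trivially). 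Hence it suffices to find some $t\in T$ for which $\down t\cap A$ is non-scattered, i.e.\ contains a copy of $2^{<\omega}$ (Definition \ref{Defn:WBScatteredTrees}). I will therefore assume, towards a contradiction, that $\down t\cap A$ is scattered for every $t\in T$, and build an embedding of $\Btree^+$ or of $\Btree^-$ into $T$, contradicting the hypothesis.

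First I would fix a witnessing embedding $\varphi\colon\Atree\to T$ and put $W=\varphi(2^{<\omega})$, a copy of $2^{<\omega}$ in $T$ each of whose nodes $\varphi(s)$ is \emph{antichain-rich} ($\AC{2}\leq\range(l_{\varphi(s)})$) and at which the two branches leaving $\varphi(s)$ carry incomparable $\varphi(s)$-labels; every sub-copy of $W$ of order type $2^{<\omega}$ retains this property. By Lemma \ref{Lemma:IndecompLabels} together with Remark \ref{Rk:POsMaxEtas}, every non-leaf node of $T$ has an indecomposable member of $\mathbb{P}$ as its arity. The only indecomposable partial orders of size at most two are $\AC{2}$, $\CH{2}$ and the one-point order, and there is no indecomposable partial order of size three; so every non-leaf node with arity $\ne\AC{2}$ either has the one-point order as arity — a node which can be absorbed into its parent, so I will assume none occurs — or contains a copy of $\CH{2}$; call the latter a \emph{chain-node}, choosing for it comparable labels $p<q$, and note ${}^{p}\downs t\ne\emptyset\ne{}^{q}\downs t$ since all labels of a decomposition tree are realised. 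Thus $T\setminus A$ is exactly the set of (non-strict) ancestors of chain-nodes.

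The main step is an interleaving recursion producing a copy of $2^{<\omega}$ in $T$ that realises $\Btree^+$ or $\Btree^-$. Working below the root of some fixed copy of $2^{<\omega}$ (available since $\Atree\leq T$), I maintain a node $t$ of $T$ such that $\down t$ contains a sub-copy of $W$. At an odd ($\AC{2}$-)level of $\Btree^\pm$ I send the level-node to a node of this sub-copy below $t$ both of whose branch-subtrees still contain sub-copies of $W$, and recurse into both. At an even ($\CH{2}$-)level I use that $\down t=(\down t\cap A)\cup(\down t\setminus A)$, that the first part is scattered by the contradiction hypothesis while $\down t$ is not, and the partition-regularity of non-scatteredness for trees (a non-scattered subset of $T$ is never a union of finitely many scattered subsets, equivalently every finite colouring of $2^{<\omega}$ admits a monochromatic sub-copy of $2^{<\omega}$): this forces $\down t\setminus A$ to be non-scattered, so descending one reaches a chain-node $c$ below which a sub-copy of $W$ still lives; a pigeonhole/descent argument over the finitely many (or, at worst, the two chosen) label-subtrees of $c$ locates the comparable pair $p_c<q_c$ so that one branch carries the continuation (a sub-copy of $W$) and the other — after a further short descent — carries an antichain-rich node onto which the $\Btree$-leaf of that level can be sent. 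A preliminary two-colouring recording, for each chain-node, which of its two comparable labels is the one carrying the surviving sub-copy of $W$, followed by one more application of partition-regularity to pass to a sub-copy on which this choice is constant, lets me fix once and for all whether I am building $\Btree^+$ (survivor $=q$) or $\Btree^-$ (survivor $=p$). Running the recursion through all of $\Btree^\pm$ yields $\Btree^+\leq T$ or $\Btree^-\leq T$, the desired contradiction; hence some $\down t\cap A$ is non-scattered, and $A':=\down t\cap A$ is the required $\down$-closed subset.

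The main obstacle is the combinatorial core of the last paragraph: establishing and then exploiting partition-regularity of non-scatteredness for trees while keeping, at every stage, a non-scattered region, a sub-copy of $W$ to supply antichain-rich nodes for the odd levels, and a uniform choice of surviving comparable label for the even levels, coherently all the way up the infinite tree $\Btree^\pm$ — in particular choosing the \emph{right} comparable pair at each chain-node so that both the continuation and the leaf can be placed. By comparison the reduction to ``$A$ is non-scattered'', the identification of arities via indecomposability, and the routine structured-tree bookkeeping present no difficulty.
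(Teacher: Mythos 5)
Your plan breaks down at the step that does all the work: producing the chain levels of $\Btree^{\pm}$ from the contradiction hypothesis. Knowing that $\down t\cap A$ is scattered while $\down t$ is not (granting your unproved partition fact) only tells you that $\down t\setminus A$ contains a copy of $2^{<\omega}$, i.e.\ a copy each of whose nodes has \emph{some} node above it whose label range is not $\AC{2}$. That is far weaker than what your recursion needs at an even level, namely a single node $c$ whose range contains a comparable pair $p<q$ such that the antichain-rich material (a sub-copy of $W=\im(\varphi)$, which you must keep for the next odd level) lies inside ${}^{p}\downs c$ or ${}^{q}\downs c$ and the other direction receives the leaf. The chain-range node witnessing $c'\notin A$ may sit in a direction entirely disjoint from $W$, and none of its labels need be comparable to the label of the direction in which $W$ continues; such a node cannot serve as a $\Btree^{\pm}$ chain node. (Also, in the generality of this section $\mathbb{P}$ is only assumed closed under non-empty subsets, so ranges need not be finite and your ``pigeonhole over the finitely many label-subtrees of $c$'' has no basis.) The orientation problem is equally serious: to embed $\Btree^{+}$ \emph{every} chain node must carry its continuation on the $\max$ side (dually for $\Btree^{-}$), but your chain nodes are chosen adaptively during the recursion rather than read off a fixed copy of $2^{<\omega}$, so ``one more application of partition-regularity to make the choice constant'' has no fixed object to thin out. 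The paper avoids both problems by anchoring the case analysis to the fixed copy $\im(\varphi)$: for $t=\varphi(s)$ it asks whether some $c_t\in\range(l_t)$ is comparable to one of the two \emph{branching} labels $a_t,b_t$; when this happens cofinally the continuation direction is itself a branching label, so the $\Atree$-image automatically persists above it, and the two comparability directions are handled by two successive cofinal/eventually-never dichotomies (giving $\Btree^{-}$, then $\Btree^{+}$), not by a Ramsey-type thinning; only after both fail does it set $A=\down u_{2}$ and invoke indecomposability of the ranges (Lemma \ref{Lemma:IndecompLabels}) to turn ``no label comparable to a branching label'' into ``range $=\AC{2}$''.

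A secondary but genuine flaw is the opening reduction: an order-theoretic embedding of $2^{<\omega}$ into $\down t\cap A$ is \emph{not} automatically a structured-tree embedding of $\Atree$. Definition \ref{Defn:StructTrees} also requires preservation of meets and that the induced maps on label ranges be embeddings; if two branches of your copy pass through the same label of an intermediate node (equivalently, if meets are not preserved), the induced map $\AC{2}\to\AC{2}$ is not injective. The usual repair is to close the copy under meets, but meets of elements of your $A$ need not lie in $A$ (membership in $A$ constrains only the nodes above a point, and the meet acquires new nodes above it), so the reduction ``it suffices that some $\down t\cap A$ be non-scattered'' needs an argument that the witnessing $t$ can be taken in $A$ itself. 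Neither of these gaps is cosmetic; as written the proposal does not yield the lemma.
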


\begin{proof}
Suppose $\Btree^+,\Btree^-\not \leq T$ and $\Atree\leq T$, so let $\varphi:\Atree\rightarrow T$ be a witnessing embedding.
For each $t=\varphi(s)\in \im(\varphi)$, we have that $P_t=\range(l_t)$ is some partial order that embeds $\AC{2}$. Let $a_t=l_t(\varphi(s\con \langle 0 \rangle))$ and $b_t=l_t(\varphi(s\con \langle 1 \rangle))$. Hence $a_t,b_t\in P_t$ with $a_t\perp b_t$. Now for each $t$, either $P_t=\{a_t,b_t\}$ or there is some $c_t\in P_t$ such that without loss of generality (swapping the names of $a_t$ and $b_t$ if necessary) one of the following cases occurs:
\begin{enumerate}%[(i)]
	\item \label{AtreeProof1}$a_t,b_t\perp c_t$,
%	\item $a_t<c_t$, $b_t\perp c_t$,
	\item \label{AtreeProof2}$a_t<c_t$,
	\item \label{AtreeProof3}$a_t>c_t$.
%	\item $a_t>c_t$, $b_t\perp c_t$,
%	\item $a_t,b_t<c_t$,
%	\item $a_t,b_t>c_t$.
\end{enumerate}
%Any other configuration would contradict $a_t\perp b_t$.

Suppose that there is some $u_0\in \im(\varphi)$ such that for every $u\in \im(\varphi)\cap \down u_0$ there is $t(u)\geq u$ and some $c_{t(u)}$ satisfying case \ref{AtreeProof2}, (i.e. $a_{t(u)}<c_{t(u)}$). %Suppose that for some $v_0\in A$, and any $v\geq v_0$ there is some $w_v\geq v$ and some $a\in U\cap \downs w_v$ and $b\in \downs w_v$ such that $l_{w_v}(a)<l_{w_v}(b)$. 
Set $\tau(\langle \rangle)=t(u_0)$. Suppose we have defined $\tau(s)$ for some $s\in \Btree^-$ of length $n$. To simplify notation, we let $s'$ be $s$ with its last element removed and $\pi=\tau(s)$. Now suppose further that whenever $s\neq s'\con \langle 3\rangle$, we have we have $\tau(s)\in \im(\varphi)$; and whenever $s=\langle \rangle$, or $s=s'\con \langle i \rangle$ for $i\in \{0,1\}$ we have that $\tau(s)$ satisfies case \ref{AtreeProof2}.

If $s=s'\con \langle 2 \rangle$ we let $\delta_0$ and $\delta_1$ be elements of $$\im(\varphi)\cap {}^{a_{\pi}}\downs \pi\hspace{5pt}\mbox{ and }\hspace{5pt}\im(\varphi)\cap {}^{b_{\pi}}\downs \pi$$ respectively, and set $\tau(s\con \langle 0 \rangle)=t(\delta_0)$ and $\tau(s\con \langle 1 \rangle)=t(\delta_1)$. These exist and are incomparable since $\Atree$ was a copy of $2^{<\omega}$. If $s=\langle \rangle$ or $s=s'\con \langle i \rangle$ for $i\in \{0,1\}$, then pick the values of $\tau(s\con \langle 2\rangle)$ and $\tau(s\con \langle 3\rangle)$ to be elements of 
$$\im(\varphi)\cap {}^{a_{\pi}} \downs \pi\hspace{5pt}\mbox{ and }\hspace{5pt}{}^{c_{\pi}} \downs \pi$$
respectively.
%\begin{itemize} 
%	\item $\tau(s\con \langle 2\rangle)\in \im(\varphi)\cap {}^{a_{t(\pi)}} \downs \pi$ and% such that $l_{\tau(s)}(\delta)=a_{ \tau(s)}$ and
%	\item $\tau(s\con \langle 3\rangle)\in {}^{c_{t(\pi)}} \downs \pi$.% such that $l_{\tau(s)}(\theta)=c_{ \tau(s)}$.
%\end{itemize}
So by construction, the map $\tau:\Btree^-\rightarrow T$ is an embedding, which is a contradiction. Thus no such $u_0$ exists and there is $u_1\in \im(\varphi)$ such that no $t>u_1$ satisfies case \ref{AtreeProof2}. 

Now using that $\Btree^+\not \leq T$, we can apply a similar argument to the tree $\down u_1$ (in place of $T$) and case \ref{AtreeProof3} (in place of case \ref{AtreeProof2}). So we find a $u_2\in \im(\varphi)$ such that for every $t>u_2$ we have that $t$ does not satisfy cases \ref{AtreeProof2} or \ref{AtreeProof3} for any choice of $c_t$. Let $A=\down u_2$, so that for each $t\in A$ we have either $P_t=\{a_t,b_t\}$ or $t$ satisfies case \ref{AtreeProof1}, for any choice of $c_t$. Hence $P_t$ is an antichain, and therefore by Lemma \ref{Lemma:IndecompLabels} we always have $P_t=\{a_t,b_t\}=\AC{2}$, since any indecomposable antichain is either $1$ or $\AC{2}$. It remains to check that $\Atree\leq A$, but this is clear since $A=\down u_2$, with $u_2\in \im(\varphi)$.
\end{proof}
\begin{lemma}\label{Lemma:Atree2}
Let $T$ be a decomposition tree for $x\in \amr\cap \pscat_\mathbb{P}^\mathbb{L}(Q)$. If $\Btree^+,\Btree^-\not \leq T$, then $\Atree \not \leq T$.
\end{lemma}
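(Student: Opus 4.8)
The plan is to argue by contradiction. Suppose $\Atree\leq T$. Since also $\Btree^+,\Btree^-\not\leq T$, Lemma \ref{Lemma:AtreeA} supplies a $\down$-closed subtree $A\subseteq T$ with $\Atree\leq A$ and $\range(l_t)=\AC{2}$ for every $t\in A$; in particular every non-leaf node of $A$ is coloured by $\sum_{\AC{2}}$ and every leaf of $A$ by a single point of $\AAA$. I will show such an $A$ cannot order-embed $\Atree\cong 2^{<\omega}$, which is the desired contradiction.

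First I would pin down the shape of $A$. Write $u=\Troot{A}$ and $u=\vec p\con\langle i\rangle$, so that $\zeta_0=\{\vec p\con\langle j\rangle : j\in r(\vec p),\ j\geq i\}$ is a chain lying in $A$, all of whose nodes are coloured $\sum_{\AC{2}}$. By the standing convention of Remark \ref{Rk:POsMaxEtas}, the composition sequence $\eta(\vec p)$ used in building $T$ at $\vec p$ is maximal, so the associated intervals $\langle I_j : j\in r(\vec p)\rangle$ of Definition \ref{Defn:MaxCompSeq} form a \emph{maximal} chain of intervals of $f^{\eta(\vec p)}(k^{\vec p})$ under $\supseteq$. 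For each $j\geq i$ and each coordinate $u'\in a^{\eta(\vec p)}_j$, Lemma \ref{Lemma:x(t,p)} identifies ${}^{u'}\downs(\vec p\con\langle j\rangle)$ with a decomposition tree of some $x(\vec p\con\langle j\rangle,u')\in\amr$; as $A$ is $\down$-closed this subtree lies inside $A$, so its colours are all $\sum_{\AC{2}}$ and hence $x(\vec p\con\langle j\rangle,u')$ is an antichain. The \textbf{key claim} is: every such $x(\vec p\con\langle j\rangle,u')$ is a single point, and $r(\vec p)$ has no maximum among the indices $\geq i$. Granting this, each side coordinate of $\zeta_0$ carries a single point, which is then a leaf of $A$, and nothing of $A$ lies below $\zeta_0$ off these leaves; thus $A$ is a ``comb'', namely the chain $\zeta_0$ with exactly one leaf attached above each of its nodes, and consequently the meet (in $A$, equivalently in $T$) of any two incomparable elements of $A$ lies on the chain $\zeta_0$.

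Now let $\psi$ be a witnessing morphism for $\Atree\leq A$. Being an embedding of structured trees it preserves meets, and being order-reflecting it sends incomparable elements to incomparable elements. Applying it to the pairwise incomparable nodes $\langle 00\rangle,\langle 01\rangle,\langle 10\rangle,\langle 11\rangle$ of $\Atree\cong 2^{<\omega}$, we obtain that $\psi(\langle 0\rangle)=\psi(\langle 00\rangle)\wedge\psi(\langle 01\rangle)$ and $\psi(\langle 1\rangle)=\psi(\langle 10\rangle)\wedge\psi(\langle 11\rangle)$ are meets of incomparable pairs, hence both lie on $\zeta_0$; but $\langle 0\rangle\perp\langle 1\rangle$ forces $\psi(\langle 0\rangle)\perp\psi(\langle 1\rangle)$, contradicting that $\zeta_0$ is a chain. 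This contradiction gives $\Atree\not\leq T$.

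The hard part will be the key claim, which rests entirely on maximality of $\langle I_j\rangle$. If some side piece $P=x(\vec p\con\langle j\rangle,u')$ had $\lvert P\rvert\geq 2$, then choosing a proper non-empty $P'\subsetneq P$ (an interval of the antichain $P$), the set $(I_j\setminus P)\sqcup P'$ would be an interval of $f^{\eta(\vec p)}(k^{\vec p})$ lying strictly between two consecutive members of $\langle I_j\rangle$, contradicting maximality; and if $r(\vec p)$ had a maximum $j_{\max}\geq i$ then $I_{j_{\max}}$, being a disjoint union of at least two non-empty pieces, would be a non-singleton minimum of the maximal chain, again permitting a strictly smaller interval below it. The one step requiring genuine care is the verification that $(I_j\setminus P)\sqcup P'$ (and the analogous sets) really are intervals of $f^{\eta(\vec p)}(k^{\vec p})$: this uses that $P$ is an antichain, that $P$ is incomparable to $I_j\setminus P$ (which follows from the description of $H_{\eta(\vec p)}$ when every $\arity(f^{\vec p}_j)$ is an antichain), that $I_j\setminus P$ is itself an interval (directly when $j$ has a successor in $r(\vec p)$, and via Lemma \ref{Lemma:UnionIntersectionIntervals} at limit indices), and the elementary fact that an interval of a sub-interval is an interval.
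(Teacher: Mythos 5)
Your proposal is correct, and it rests on the same pillars as the paper's proof: Lemma \ref{Lemma:AtreeA}, the standing maximality convention of Remark \ref{Rk:POsMaxEtas}, and the fact that every fragment whose decomposition subtree lies inside $A$ is an antichain (in the paper this last step is justified via Lemma \ref{Lemma:POinDcompAppearinT}; you should cite it, or run the rank induction, rather than treating it as immediate). Where you genuinely diverge is the endgame. The paper uses $\Atree\leq A$ to produce a side fragment $z$ with $|z|>1$ and then contradicts maximality of the interval chain by deleting a single point of $z$; you run the maximality argument in the opposite direction, showing that it forces every side fragment at the root block of $A$ to be a singleton and forces $r(\vec{p})$ to have no maximum, so that $A$ is a comb, and then you contradict $\Atree\leq A$ using the meet-preservation and order-reflection clauses of Definition \ref{Defn:StructTrees} (the images of $\langle 0\rangle$ and $\langle 1\rangle$ would be two incomparable points of the spine). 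The two arguments have the same mathematical content; the paper's is shorter because it needs neither the comb analysis nor the no-maximum observation, but it leans on reading Lemma \ref{Lemma:AtreeA} as saying $A$ contains no leaves in order to get $|z|>1$, whereas your route needs only the label ranges of non-leaf nodes plus the meet condition on embeddings, and it is correspondingly more explicit about the actual shape of $A$. One phrasing to tighten: your new interval $(I_j\setminus P)\sqcup P'$ need not lie between \emph{consecutive} members of the chain, since $r(\vec{p})$ may be densely ordered; what matters, and what your verification in fact establishes, is that it is an interval of the fragment comparable with, and distinct from, every $I_j$, which already contradicts maximality of the chain.
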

\begin{proof}
Suppose for contradiction that $\Btree^+,\Btree^-\not \leq T$ and $\Atree\leq T$.  %By the reasoning of Remark \ref{Rk:POsMaxEtas} we can assume without loss of generality that each chain used to construct $T$ was maximal. 
By Lemma \ref{Lemma:AtreeA} there is some $\down$-closed subset $A\subseteq T$ such that for each $t\in A$ we have $\range(l_t)=\AC{2}$.
Consider $y=x(t,p)$ for arbitrary $t\in A$ and $p\in \range(l_t)$.
We claim that $y$ is an antichain. Suppose not, then there is some chain $y'\subseteq y$ of size $2$. Hence by Lemma \ref{Lemma:POinDcompAppearinT}, since $y'$ is an indecomposable subset of $y$ with $|y'|>1$, there must be some $t\in T_y\subseteq A$ such that $y'$ embeds into $P_t=\range(l_t)$. But this is a contradiction since each $P_t$ was an antichain, therefore indeed we have that $y$ is an antichain.

Let $\vec{q}$ be shortest such that there is $t'=\vec{q}\con \langle i_0\rangle\in {}^{p}\downs t$ for some $i_0$, let $w\in a^{\eta(\vec{q})}_{i_0}$, then set $z=x(t',w)$. Then $z$ has decomposition tree ${}^{w}\downs t'$ which is a $\down$-closed subset of $A$ and hence $|T_z|>1$ and therefore $|z|>1$.
%
%
%Pick a leaf $b\in T^1_y\setminus S^1_y$, which exists since $T_y$ was a $\down$-closed subset of $A$, and therefore embeds $2^{<\omega}$. Then set $z=\col(b)\subseteq y$, so that $T_z$ is a $\down$-closed subset of $T_y$, and so in particular we have $$|z|\geq |T_y| >1. $$
Let $\eta(\vec{q})=\eta=\langle \langle f_i,s_i\rangle:i\in r\rangle$ and for $j\in r$ let $\eta_j=\langle \langle f_i,s_i\rangle :i\geq j\rangle$. By Remark \ref{Rk:POsMaxEtas} can assume that there is a maximal chain of intervals $\langle I_j:j\in r\rangle$ of $y$, and some $k=\langle q_u:u\in A^\eta\rangle\in \dom(f^\eta)_{<\alpha}$ such that $x=f^\eta(k)$ and $I_j=f^{\eta_j}(\langle q_u:u\in a^\eta_i, i\geq j\rangle)$ for each $j\in r$. % we can assume that %these $\eta$ and $k$ were used in the construction of $T_y$, i.e. that 
%$\eta=\eta(\vec{q})$. 

%Since $b\in T_y^1\setminus S^1_y$, it was the case that for some $j\in r$ and $\pi \in k^j$ we have $z=\col(\pi)$. %We also know that 
Thus we have that $z=q_w$ so that $z\subseteq I_{i_0}=\sum_{u\in H_{\eta_{i_0}}} q_u$, since $w\in H_{\eta_{i_0}}$. For each $i\in r$ we have $f_i=\sum_{\AC{2}}$, therefore $H_\eta$ is an antichain of size $|r|$. Let $I=I_j\setminus\{\rho\}$ for some $\rho\in z$ then since $|z|>1$ we have $$\bigcup_{i>i_0}I_i\subset I\subset I_{i_0}.$$
But $I\subseteq y$ is also an interval, since $y$ is an antichain. Therefore $\langle I_j:j\in r\rangle$ was not a maximal chain of intervals of $y$. This contradiction gives the lemma.
\end{proof}

\begin{lemma}\label{Lemma:QtreeC}
Let $x\in \amr$ have a decomposition tree $T$. Then if $\Qhattree\not \leq T$ and $\Qtree\leq T$, then there is a $\down$-closed subset $A\subseteq T$ such that $\Qtree\leq A$ and for each $t\in A$, we have $\range(l_t)=\CH{2}$.
\end{lemma}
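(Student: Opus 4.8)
The plan is to follow the proof of Lemma~\ref{Lemma:AtreeA}, carrying out a \emph{chain} analysis in place of the antichain one. Fix a witnessing embedding $\varphi:\Qtree\rightarrow T$. For $s\in\Qtree$ and $t=\varphi(s)\in\im(\varphi)$ write $P_t=\range(l_t)$ and put $a_t=l_t(\varphi(s\con\langle 0\rangle))$, $b_t=l_t(\varphi(s\con\langle 1\rangle))$; since $\varphi$ is a structured tree embedding and the two labels of $\Qtree$ at $s$ are the two points of $\CH{2}$ (with the $0$-branch receiving $\min(\CH{2})$), we have $a_t<b_t$ in $P_t$. By Lemma~\ref{Lemma:IndecompLabels} (using the standing assumption of Remark~\ref{Rk:POsMaxEtas} that $T$ is built from maximal composition sequences) every $P_t$ is indecomposable.

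The first step is the reduction: for each $t\in\im(\varphi)$, either $P_t=\{a_t,b_t\}=\CH{2}$, or there is $d_t\in P_t$ with $d_t\perp a_t$ or $d_t\perp b_t$. Indeed, if every element of $P_t$ were comparable to both $a_t$ and $b_t$, then $\{x\in P_t:x\leq a_t\}$, $\{x\in P_t:a_t<x<b_t\}$ and $\{x\in P_t:x\geq b_t\}$ would all be intervals of $P_t$, so indecomposability would force $P_t$ to be $\CH{2}$ or $\CH{3}$; as $\CH{3}$ is decomposable, $P_t=\CH{2}$. When the second alternative holds the incomparable pair $\{a_t,d_t\}$ (resp.\ $\{b_t,d_t\}$) has a member, namely $a_t$ (resp.\ $b_t$), over which the $0$-subtree ${}^{a_t}\downs t\ni\varphi(s\con\langle 0\rangle)$ (resp.\ the $1$-subtree) sits — in particular a member carrying a further copy of $\Qtree$.

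The second step is the cofinality argument, parallel to the treatment of cases~(\ref{AtreeProof2}) and~(\ref{AtreeProof3}) in Lemma~\ref{Lemma:AtreeA}. If for some fixed $u_0\in\im(\varphi)$ every $u\in\im(\varphi)\cap\down u_0$ admits $t(u)\in\im(\varphi)$ with $t(u)\geq u$ and $P_{t(u)}\neq\CH{2}$, I would recursively build an embedding $\tau:\Qhattree\rightarrow T$, contradicting the hypothesis: the $\sum_{\CH{2}}$-coloured levels of $\Qhattree$ (where $l(\cdot\con\langle 0\rangle\con\cdot)=\min(\CH{2})$ and $l(\cdot\con\langle 1\rangle\con\cdot)=\max(\CH{2})$) are realised by $\Qtree$-branchings $\varphi(s)$, since there the $0$-subtree lies at position $a_t$ strictly below the position $b_t$ of the $1$-subtree; the $\sum_{\AC{2}}$-coloured levels (each with a leaf $3$-branch) are realised at the nodes $t(u)$ via the incomparable pair of the first step, with the continuable member taking the $2$-branch and the other member the leaf $3$-branch; and one alternates between the two, always climbing within $\im(\varphi)$ by the cofinality hypothesis. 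If no such $u_0$ exists, there is $u_1\in\im(\varphi)$ with $P_t=\CH{2}$ for all $t\in\im(\varphi)\cap\down u_1$; then $A:=\down u_1$ is $\down$-closed with every label $\CH{2}$, and $\Qtree\leq A$ since $u_1\in\im(\varphi)$. I expect the main obstacle to be the explicit verification that the recursively defined $\tau$ satisfies clauses~(1)--(3) of Definition~\ref{Defn:StructTrees}: matching the precise recursive shape of $\Qhattree$, the strict alternation of its two colour types, the placement of the leaf branches, and — a feature with no analogue in the $\Atree$/$\Btree^\pm$ setting, and the reason only the single pathological tree $\Qhattree$ is needed here — the uniform $\min<\max$ orientation $\Qhattree$ imposes on its $\CH{2}$-labels, which forces those levels to come from the $\Qtree$-branchings rather than from the $d_t$'s. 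The remainder is a transcription of Lemma~\ref{Lemma:AtreeA}.
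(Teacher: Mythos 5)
Your proposal is essentially the paper's own proof: the paper disposes of this lemma in one line (``Similar to Lemma \ref{Lemma:AtreeA}, replacing $\perp$ with $\not\perp$ and $<$ with $\perp$''), and what you wrote is exactly that transcription --- the indecomposability dichotomy at each image node (either $\range(l_t)=\CH{2}$ or some element is incomparable to $a_t$ or $b_t$), the cofinality argument assembling a copy of $\Qhattree$ in $T$ for the contradiction, and $A=\down u_1$ otherwise, at the same level of detail (and with the same implicit step about non-image points of $A$) as the paper's Lemma \ref{Lemma:AtreeA}. Your remark that a single pathological tree suffices here, since incomparability is symmetric while the $<$/$>$ split in the antichain case required both $\Btree^+$ and $\Btree^-$, is correct and matches the paper's intent.
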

\begin{proof}
Similar to Lemma \ref{Lemma:AtreeA}, replacing $\perp$ with $\not \perp$ and $<$ with $\perp$.
\end{proof}
\begin{lemma}\label{Lemma:Qtree2}
Let $x\in \amr\cap \pscat^{\mathbb{L}}_{\mathbb{P}}(Q)$ have a decomposition tree $T$. Then if $\Qhattree\not \leq T$ then $\Qtree \not \leq T$.
\end{lemma}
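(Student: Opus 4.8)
The plan is to mirror the proof of Lemma \ref{Lemma:Atree2}, exchanging the roles of antichains and chains (just as Lemma \ref{Lemma:QtreeC} was obtained from Lemma \ref{Lemma:AtreeA} by swapping $\perp$ with $\not\perp$ and $<$ with $\perp$). So suppose for contradiction that $\Qhattree\not\leq T$ but $\Qtree\leq T$. By Lemma \ref{Lemma:QtreeC} there is a $\down$-closed subset $A\subseteq T$ with $\Qtree\leq A$ and $\range(l_t)=\CH{2}$ for every $t\in A$. As usual we may assume (Remark \ref{Rk:POsMaxEtas}) that $T$ was built from maximal composition sequences, so every relevant arity is indecomposable; since the only indecomposable linear orders are $1$ and $\CH{2}$, and the only indecomposable order embedding $\AC{2}$ is $\AC{2}$ itself, the salient fact is that $\AC{2}\not\leq\CH{2}$.

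First I would show that for any $t\in A$ and $p\in\range(l_t)$ the order $y:=x(t,p)$ (which lies in $\amr$ with decomposition tree ${}^{p}\downs t\subseteq A$, by Lemma \ref{Lemma:x(t,p)}) is a chain. Indeed, if $y$ contained a two-element antichain $y'$, then $y'$ is an indecomposable subset of $y$ with $|y'|>1$, so by Lemma \ref{Lemma:POinDcompAppearinT} the order $y'\cong\AC{2}$ embeds into $\range(l_s)$ for some $s$ in the decomposition tree of $y$; but that tree is contained in $A$, so $\range(l_s)=\CH{2}$, contradicting $\AC{2}\not\leq\CH{2}$. Next, exactly as in Lemma \ref{Lemma:Atree2}, let $\vec{q}$ be shortest such that $t'=\vec{q}\con\langle i_0\rangle\in{}^{p}\downs t$ for some $i_0$, pick $w\in a^{\eta(\vec{q})}_{i_0}$, and set $z=x(t',w)$; then $z$ has decomposition tree ${}^{w}\downs t'$, a $\down$-closed subset of $A$, so $|z|=|{}^{w}\downs t'|>1$. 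Write $\eta(\vec{q})=\langle\langle f_i,s_i\rangle:i\in r\rangle$ and $\eta_j=\langle\langle f_i,s_i\rangle:i\geq j\rangle$; since all the positions $\vec{q}\con\langle i\rangle$ ($i\in r$) lie in $A$, each $\arity(f_i)=\CH{2}$, i.e. $f_i=\sum_{\CH{2}}$, so $H_{\eta(\vec{q})}$ is a \emph{linear} order. By maximality of $\eta(\vec{q})$ there is a maximal chain of intervals $\langle I_j:j\in r\rangle$ of $y$ with $y=f^{\eta(\vec{q})}(k)$, $k=\langle q_u:u\in A^{\eta(\vec{q})}\rangle$, $I_j=f^{\eta_j}(\langle q_u:u\in a^{\eta(\vec{q})}_i,\ i\geq j\rangle)$, and $z=q_w\subseteq I_{i_0}$.

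It remains to contradict maximality of $\langle I_j:j\in r\rangle$. Since $a^{\eta(\vec{q})}_{i_0}=\{w\}$, we have $I_{i_0}=\sum_{u\in H_{\eta_{i_0}}}q_u$ with $H_{\eta_{i_0}}=\{w\}\sqcup H_{\eta_{i_0+1}}$, so $I_{i_0}$ is the concatenation of the linear orders $z=q_w$ and $J:=\bigcup_{i>i_0}I_i=\sum_{u\in H_{\eta_{i_0+1}}}q_u$, in one order or the other according as $w<_{\CH{2}}s_{i_0}$ or $w>_{\CH{2}}s_{i_0}$. Here the one place where care is needed beyond the antichain case is that, $y$ being a chain, one cannot simply delete an arbitrary point of $z$ and remain convex; instead, using $|z|>1$, one splits off a proper nonempty terminal segment $z_0\subsetneq z$ on the side \emph{away from} $J$ (any nonmaximal — resp. nonminimal — point of $z$, or an arbitrary point when $z$ has no maximum — resp. minimum — suffices), and sets $I=I_{i_0}\setminus z_0$. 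Then $I$ is convex in $y$ (hence an interval, as $I_{i_0}$ is an interval of the chain $y$ and $I$ is a final — resp. initial — segment of $I_{i_0}$), and $J\subsetneq I\subsetneq I_{i_0}$, so $\langle I_j:j\in r\rangle$ was not maximal. This contradiction proves the lemma. The main obstacle is precisely this last point-removal step: in Lemma \ref{Lemma:Atree2} every subset of the antichain $y$ is trivially an interval, whereas here convexity must be arranged by hand, which also forces a (harmless) case split on which side of $z$ the interval $J$ sits.
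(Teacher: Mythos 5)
Your proof is correct and takes essentially the same route as the paper, whose entire argument here is just ``proceed as in Lemma \ref{Lemma:Atree2} with chain and antichain interchanged''; the convexity repair you make (removing a proper nonempty convex end-piece of $z$ rather than an arbitrary point $\rho$) is precisely the detail that the literal swap needs, since subsets of chains, unlike subsets of antichains, need not be intervals. The only slips are cosmetic: ``$|z|=|{}^{w}\downs t'|$'' should just be the implication $|{}^{w}\downs t'|>1\Rightarrow|z|>1$, and the assertion $a^{\eta(\vec q)}_{i_0}=\{w\}$ can fail when $i_0=\max(r)$ (so that $\bigcup_{i>i_0}I_i$ may be empty), but in that corner case one simply deletes a proper nonempty convex piece at the outer end of the linear order $I_{i_0}$, and the contradiction with maximality of the interval chain goes through unchanged.
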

\begin{proof}
Suppose for contradiction that $\Qhattree\not \leq T$ and $\Qtree\leq T$. By the reasoning of Remark \ref{Rk:POsMaxEtas} we can assume without loss of generality that each chain used to construct $T$ was maximal. By Lemma \ref{Lemma:QtreeC} there is some $\down$-closed subset $A\subseteq T$ such that for each $t\in A$ we have $\range(l_t)=\CH{2}$. We can now proceed as in Lemma \ref{Lemma:Atree2}, replacing the word chain with antichain, and vice versa.
\end{proof}

\begin{lemma}\label{Lemma:POsBasicallyThmLimit}
If $x\in \amr \cap \pscat^{\mathbb{L}}_{\mathbb{P}}(Q)$ has a decomposition tree $T$ such that $\Btree^+\leq T$ $($resp. $\Btree^-$, $\Qhattree)$, then $2^{<\omega}\leq x$ $($resp. $-2^{<\omega}$, $\Qhat)$.
\end{lemma}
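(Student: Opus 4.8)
The plan is to convert a structured-tree embedding $\psi$ of $\Btree^+$ (resp.\ $\Btree^-$, $\Qhattree$) into $T$ directly into an embedding $\Phi$ of $2^{<\omega}$ (resp.\ $-2^{<\omega}$, $\Qhat$) into the partial order $x$. I would treat $\Btree^+$ in detail; the remaining two cases run on the same template, the differences being that in $\Btree^-$ the $\min$/$\max$ rôles attached to the chain-vertices are interchanged (which makes $\Phi$ order-reversing, hence an embedding of $-2^{<\omega}$), and in $\Qhattree$ the chain- and antichain-vertices trade places, with the $0$/$1$-children of a chain-vertex sent to $\min$/$\max$ (producing exactly the ordering of $\Qhat$).

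First I would set up the correspondence. Identify the underlying set of $2^{<\omega}$ with the ``skeleton'' of $\Btree^+$ by assigning to $\sigma=\langle i_1,\dots,i_k\rangle\in 2^{<\omega}$ the node $s_\sigma:=\langle 2,i_1,2,i_2,\dots,2,i_k\rangle\in\Btree^+$ (with $s_{\langle\rangle}=\langle\rangle$), and put $t_\sigma:=\psi(s_\sigma)$. Since $\col_{\Btree^+}(s_\sigma)=\sum_{\CH{2}}$ lies in $\MM$ while every leaf of $T$ is coloured from $\AAA$, no $\psi(s)$ is a leaf of $T$; hence each $P_\sigma:=\range(l_{t_\sigma})$ is an element of $\mathbb{P}$ and the structured embedding induces (via Definition \ref{Defn:StructTrees}(3)) an embedding $\theta_\sigma$ of $\CH{2}$ into $P_\sigma$. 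Put $a_\sigma:=l_{t_\sigma}(\psi(s_\sigma\con\langle 3\rangle))=\theta_\sigma(\min)$ and $b_\sigma:=l_{t_\sigma}(\psi(s_\sigma\con\langle 2\rangle))=\theta_\sigma(\max)$, so $a_\sigma<_{P_\sigma}b_\sigma$, and note that the labelling condition of Definition \ref{Defn:StructTrees} gives $l_{t_\sigma}(t)=b_\sigma$ for every $t\geq\psi(s_\sigma\con\langle 2\rangle)$. By Lemma \ref{Lemma:x(t,p)}, ${}^{a_\sigma}\downs t_\sigma$ is a decomposition tree for an element $x(t_\sigma,a_\sigma)\in\amr$; this partial order is non-empty (every element of $\amr$ is, since $\AAA=Q'^1$ and the sums involved are over non-empty index sets), so I fix an arbitrary point $\Phi(\sigma)\in x(t_\sigma,a_\sigma)$.

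To verify that $\Phi$ is an embedding I would use Lemma \ref{Lemma:ChangeOfPos} together with the identity $f^{\eta(\zeta)}=\sum_{H_{\eta(\zeta)}}$ (Lemma \ref{Lemma:PORiterable}): for a suitable $\up$-closed, leaf-free chain $\zeta$ of $T$, Lemma \ref{Lemma:ChangeOfPos} writes $x$ as the disjoint sum $\sum_{(i,u)}x(i,u)$ over $H_{\eta(\zeta)}$, so that the order and incomparability relations between the summands are precisely those of $H_{\eta(\zeta)}$, which are read off the label-sets $\range(l_i)$. For $\sigma\sis\tau$ (proper) I would take $\zeta=\up t_\tau$, which contains $t_\sigma$ and $\psi(s_\sigma\con\langle 2\rangle)$; choosing the witness $j_{t_\sigma}=\psi(s_\sigma\con\langle 2\rangle)$ gives $s^\zeta_{t_\sigma}=b_\sigma$, so $(t_\sigma,a_\sigma)$ and $(t_\tau,a_\tau)$ are both indices of the sum, with $\Phi(\sigma)\in x(t_\sigma,a_\sigma)$ and $\Phi(\tau)\in x(t_\tau,a_\tau)$, and $a_\sigma<_{P_\sigma}b_\sigma$ forces $(t_\sigma,a_\sigma)<_{H_{\eta(\zeta)}}(t_\tau,a_\tau)$, hence $\Phi(\sigma)<\Phi(\tau)$ in $x$. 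For incomparable $\sigma,\tau$ with tree-meet $\rho$ (say $\rho\con\langle 0\rangle\is\sigma$ and $\rho\con\langle 1\rangle\is\tau$), the structured embedding respects meets, so $t_\sigma\wedge t_\tau=\psi(s_\rho\con\langle 2\rangle)=:w_\rho$, an antichain-type vertex with $\AC{2}$ embedded in $\range(l_{w_\rho})$, with images $c=l_{w_\rho}(\psi(s_\rho\con\langle 2\rangle\con\langle 0\rangle))$ and $d=l_{w_\rho}(\psi(s_\rho\con\langle 2\rangle\con\langle 1\rangle))$ satisfying $c\perp d$; applying Lemma \ref{Lemma:x(t,p)} inside the decomposition tree ${}^{c}\downs w_\rho$ of $x(w_\rho,c)$ (legitimate since $l_{w_\rho}(t_\sigma)=c$) exhibits $x(t_\sigma,a_\sigma)$ as a subset of $x(w_\rho,c)$, and symmetrically $x(t_\tau,a_\tau)\subseteq x(w_\rho,d)$, so that decomposing $x$ along $\zeta=\up w_\rho$ and using $c\perp d$ yields $\Phi(\sigma)\perp\Phi(\tau)$ in $x$. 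Distinct values of $\Phi$ are thus always $<$-, $>$- or $\perp$-related, so $\Phi$ is injective and the relations computed above show it is an embedding $2^{<\omega}\hookrightarrow x$.

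The cases of $\Btree^-$ and $\Qhattree$ would be handled by exactly the same argument with the sign/rôle changes indicated in the first paragraph: for $\Btree^-$ one gets $b_\sigma<_{P_\sigma}a_\sigma$, so $\Phi$ reverses the tree order and witnesses $-2^{<\omega}\leq x$; for $\Qhattree$ the vertices $t_\sigma$ become antichain-type (so $a_\sigma\perp b_\sigma$, making tree-comparable nodes map to incomparable points) while the meets $w_\rho$ become chain-type with $0$- and $1$-directions equal to $\min,\max$, so $\Phi$ witnesses $\Qhat\leq x$. I expect the only real difficulty to be bookkeeping: keeping track, across the repeated applications of Lemma \ref{Lemma:ChangeOfPos} along different chains $\zeta$, of which summand of the resulting $H_{\eta(\zeta)}$-sum contains a given $\Phi(\sigma)$. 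This rests on the elementary nesting fact that $x(t',u')$ is (canonically a subset of) $x(t,u)$ whenever $t'\geq t$ in $T$ and $l_t(t')=u$, which follows at once from Lemma \ref{Lemma:x(t,p)} applied inside the decomposition tree ${}^{u}\downs t$ and the labelling condition of Definition \ref{Defn:StructTrees}.
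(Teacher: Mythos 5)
Your proposal is correct and takes essentially the same route as the paper: the paper also maps the binary skeleton $a\mapsto\langle 2,a_0,2,a_1,\dots\rangle$ into $T$, picks for each such vertex a point of $x(t_s,u_s)$ in the ``$3$''(i.e.\ $\theta(\min)$)-direction, and reads off the order and incomparability of these points from the labels, with the antichain-type meet vertices handling the incomparable case. Your explicit appeals to Lemma \ref{Lemma:ChangeOfPos}, the $H_{\eta(\zeta)}$-sum decomposition and the nesting of the $x(t,u)$ merely spell out steps the paper's terser verification leaves implicit.
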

\begin{proof}
Let $\varphi:\Btree^+\rightarrow T$ be an embedding. Let $W$ be the subset of $\Btree^+$ consisting of $\langle \rangle$, $u\con \langle 0 \rangle$ and $u\con \langle 1 \rangle$ for all possible sequences $u$. For each $s\in W$, let $t_s=\varphi(s)$, and $u_s=l_{t_s}(\varphi(s\con \langle 3 \rangle))$. Then let $y_s$ be an element of $$x(t_s,l_{t_s}(u_s))\subseteq x.$$
We now define an embedding $\psi:2^{<\omega}\rightarrow x$. Given $a=\langle a_0,a_1,...,a_{n-1}\rangle\in 2^{<\omega}$, if $a=\langle \rangle$ then let $a'=a$, and if $a\neq \langle \rangle$ let $a'=\langle 2,a_0,2,a_1,...,2,a_{n-1}\rangle$. Now set $\psi(a)=y_{a'}$,
 so if $a,b\in 2^{<\omega}$ are such that $a\leq b$ then $a\is b$ so that $a'\is b'$ and thus $t_{a'}\leq t_{b'}$ and $l_{t_{a'}}(u_{a'})\leq l_{t_{b'}}(u_{b'})$ which means that $\psi(a)\leq \psi(b)$. If $a\perp b$ then similarly $t_{a'}\leq t_{b'}$ and $l_{t_{a'}}(u_{a'})\perp l_{t_{b'}}(u_{b'})$ so that $\psi(a)\perp \psi(b)$. %We also have that $\psi$ is injective since if $t_{a'}\neq t_{b'}$ then $y_{a'}\neq y_{b'}$. 
Thus $\psi$ is an embedding and witnesses $2^{<\omega}\leq x$.
The cases for $-2^{<\omega}$ and $\Qhat$ are similar.
\end{proof}

\subsection{Pathological partial orders}\label{SubSubSection:PathPOs}
\begin{defn}
Set $\pth=\{2^{<\omega}, -2^{<\omega}, \Qhat\}$. We call elements of $\pth$ \emph{pathological} partial orders.
\end{defn}
\begin{lemma}\label{Lemma:SumsPathological}
Suppose we have $x\in \CC$ and for each $i\in x$, we have $x_i\in \CC$. Suppose that for $y\in \pth$ and all $z\in \{x\}\cup \{x_i\mid i\in x\}$ we have $y\not \leq z$. Then $y\not \leq \sum_{i\in x}x_i.$
\end{lemma}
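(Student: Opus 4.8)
The plan is to argue by contradiction: suppose $y \in \pth$ embeds into $Z := \sum_{i\in x} x_i$ via a witnessing embedding $\varphi : y \to Z$, while $y$ embeds into neither $x$ nor any $x_i$. The key structural fact I would exploit is that $Z$ is an $x$-sum, so it carries a natural partition into the "blocks" $x_i$ together with the coarse quasi-order inherited from $x$: for $a \in x_i$ and $b \in x_j$ with $i \neq j$ we have $a \leq b$ iff $i <_x j$ (and $a \perp b$ iff $i \perp_x j$), while within a block the order is that of $x_i$. I would first observe that for each $a \in y$ there is a well-defined block index $\iota(a) \in x$ with $\varphi(a) \in x_{\iota(a)}$, and that $\iota : y \to x$ is order-preserving in the weak sense that $a <_y b \Rightarrow \iota(a) \leq_x \iota(b)$ and $a \perp_y b \Rightarrow \iota(a) \perp_x \iota(b)$ or $\iota(a) = \iota(b)$.

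The heart of the argument is a dichotomy on the fibres of $\iota$. The main step is to show that either (a) some single block $x_i$ already contains a copy of $y$ — because the preimage $\iota^{-1}(\{i\})$ supports a sub-embedding of $y$ into $x_i$ — or (b) the block indices can be collapsed to produce a copy of $y$ inside $x$ itself, by picking one representative element of $y$ from each nonempty fibre. For (b) to work I need that whenever two elements of $y$ lie in the same block, quotienting them does not destroy a copy of $y$; this is exactly where the hypothesis that each $y \in \pth$ is "self-similar downward" (every principal up-set or an appropriate infinite fragment of $2^{<\omega}$, $-2^{<\omega}$, $\Qhat$ is again a copy of the same order) does the work. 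Concretely: if $\iota$ is injective on a copy of $y$, collapsing blocks gives $y \leq x$; if not, there are $a \neq b$ in (a copy of) $y$ with $\iota(a) = \iota(b) = i$, and then since $y$ is pathological one of the two "halves" of $y$ determined by $a$ (or by the meet/comparability pattern of $a,b$) is again a full copy of $y$ and maps entirely into the single block $x_i$, because the elements $\perp$- or $<$-related to $a$ through block $i$ must themselves land in $x_i$ (any element in a different block $x_j$ with $j \neq i$ would be comparable-or-incomparable-uniformly to all of $x_i$, which forces the wrong relation with $b$). Iterating this dichotomy — at each stage we either finish with a copy of $y$ in some $x_i$, or we pass to a strictly smaller fragment — cannot continue forever because $2^{<\omega}$, $-2^{<\omega}$, $\Qhat$ have no infinite strictly decreasing chain of such fragments that avoids producing a whole copy; so eventually we land in case (a) or case (b), contradicting the hypothesis.

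I expect the main obstacle to be handling $\Qhat$ cleanly: unlike $2^{<\omega}$ and $-2^{<\omega}$, which are "pure tree" orders where the comparable/incomparable bookkeeping is straightforward, $\Qhat$ mixes comparabilities along the tree in a more delicate way (its order is defined via the first place two sequences differ), so the claim that "a fragment of $\Qhat$ inside one block is again a copy of $\Qhat$" needs a careful case analysis of which sub-branches of the defining binary tree survive. I would isolate this as a short sublemma: any subset of $\Qhat$ of the form obtained by restricting to sequences extending a fixed node, or restricting to one of the two subtrees below a branching comparability, again contains an isomorphic copy of $\Qhat$. Given that sublemma, and the analogous (easier) ones for $2^{<\omega}$ and $-2^{<\omega}$, the block-collapsing argument goes through uniformly, and the lemma follows.
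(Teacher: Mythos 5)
Your argument is correct in substance but takes a genuinely different route from the paper's. The paper proves the lemma by a fusion argument: it first shows that above any $s\in y$ (in the $\is$ sense) there is an $s'$ all of whose extensions avoid a prescribed block $x_i$ — for otherwise one builds an embedding of $y$ into $x_i$ by repeatedly choosing extensions whose images land in $x_i$ — then iterates this over finite sets of blocks to produce a copy of $y$ whose points lie in pairwise distinct blocks, and finally passes to the block indices to get $y\leq x$. You instead exploit the fact that each block is an interval of the sum: if two points $a\neq b$ of the embedded copy land in the same block $x_i$, then every $c$ with $\neg \SSR{c}{a}{b}$ must also land in $x_i$ (a point of another block relates uniformly to all of $x_i$), and for each $y\in\pth$ and each configuration of $a,b$ (comparable or incomparable under $\is$) a suitable cone $\{c\in y\mid w\is c\}$ consists of such $c$ and is itself isomorphic to $y$, giving $y\leq x_i$; otherwise the block map is injective on the copy, and since the relation between distinct blocks is exactly that of their indices, projecting gives $y\leq x$. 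This is essentially the $\SSR{\cdot}{\cdot}{\cdot}$ technique the paper only deploys later (Proposition \ref{Prop:PthSSR}, Lemma \ref{Lemma:pthdontembed_Heta}), and it buys a shorter, non-iterative proof at the price of the cone sublemma you flag for $\Qhat$ — which does hold, since the order on $\Qhat$ depends only on where two sequences first branch, so the extensions of any fixed node form a copy of $\Qhat$. One clean-up is needed: your closing step of iterating the dichotomy and appealing to there being no infinite strictly decreasing chain of fragments is both unnecessary and, as stated, not a sound termination argument (each $y\in\pth$ contains infinite descending chains of copies of itself); but nothing is lost, because a single application of your dichotomy already produces one of the two contradictions.
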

\begin{proof}
Fix $y\in \pth$ and suppose that $y\leq \sum_{i\in x}x_i$, with $\varphi$ a witnessing embedding. For all $t\in y$, let $i_t$ be such that $\varphi(s)\in x_{i_t}$. 
Let $F$ be a finite subset of $x$, $i\in F$, and $s\in y$. 

We claim that there is some $s'\in y$ with $s\is s'$ such that for all $z\in y$ with $s'\is z$ we have $i_z\neq i$. So suppose for contradiction that for all $s'\in y$ with $s\is s'$ there is some $Z(s')\in y$ with $s'\is Z(s')$ and $i_{Z(s')}=i$. We define $\psi:y\rightarrow x_i$ inductively as follows. Let $\psi(\langle \rangle)=Z(s)$ and if for $t\in y$ we have defined $\psi(t)=\varphi(t')$ then for $m\in \{0,1\}$ let $\psi(t\con \langle m \rangle)=\varphi(Z(t'\con \langle m \rangle)).$
It is easily verified then that $\psi$ is an embedding, which is a contradiction and we have the claim. Applying the claim repeatedly for each $i\in F$, we then see that for all $s\in y$ there is $s_F\in y$ with $s\is s_F$ such that for all $i\in F$ and all $z\in y$ with $s_F\is z$ we have $i_z\neq i$.

Now let $\mu(\langle \rangle)=\varphi(\langle \rangle)$. Suppose inductively we have defined $\mu$ on some sequences $t\in y$ so that $\mu(t)=\varphi(t')$ for some $t'$, and let $G$ be the set of these $t$ such that $\mu(t)$ is already defined. Let $u\in y$ be the lexicographically least element of $y\setminus G$ and let $v\in y$ and $m\in \{0,1\}$ be such that $u=v\con \langle m\rangle$; so $\mu(v)$ is already defined. Now let $u'=v'\con \langle m\rangle$ and let $\mu(u)=\varphi(s_G)$. Now $\mu$ is an embedding and $i_{\mu(t)}$ is distinct for distinct $t$. So let $\mu':y\rightarrow x$ be given by $\mu'(t)=i_{\mu(t)}$, then $\mu'$ is an embedding, which is a contradiction.
\end{proof}

\begin{prop}\label{Prop:PthSSR}
Let $y\in \pth$, and $s,s',t\in y$ be such that $s\is s'$ and $s$ and $t$ are incomparable under $\is$. Then $\neg \SSR{s}{s'}{t}$ and $\neg \SSR{s'}{s}{t}$.
\end{prop}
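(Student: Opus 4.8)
The plan is to prove the statement by cases on which of the three partial orders $2^{<\omega}$, $-2^{<\omega}$, $\Qhat$ the order $y$ is, and within each case to reduce everything to the relation $\perp$. The guiding idea is that the relationship of $s$ to $s'$ inside $y$ is completely determined by the fact that $s$ is an initial segment of $s'$ — it is comparability when $y\in\{2^{<\omega},-2^{<\omega}\}$ and incomparability when $y=\Qhat$ — whereas the hypothesis that $s$ and $t$ are $\is$-incomparable forces the relationship of both $s$ and $s'$ to $t$ to be of the opposite kind. Taking $R={\perp}$ in the definition of $\mathrm{SSR}$ will then give the two desired negations at once.

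First I would record a purely $\is$-combinatorial fact: under the hypotheses, $s'$ and $t$ are also incomparable under $\is$. Indeed, $s'\is t$ would give $s\is s'\is t$, while $t\is s'$ would make $s$ and $t$ both initial segments of the sequence $s'$, hence $\is$-comparable; either alternative contradicts the hypothesis on $s$ and $t$. Next I would record the elementary facts about the three orders that are needed. In $2^{<\omega}$ and in $-2^{<\omega}$, two sequences are comparable in the order exactly when one is an initial segment of the other; consequently $a\is b$ implies that $a$ and $b$ are comparable, and $\is$-incomparable sequences satisfy $a\perp b$. In $\Qhat$, on the other hand, a proper initial segment $a$ of $b$ satisfies $a\perp b$ — a branching witness $a=u\con\langle 0\rangle\con a'$, $b=u\con\langle 1\rangle\con b'$ (or the reverse) is impossible, since $b$ agrees with $a$ on its first $|a|>|u|$ coordinates — whereas if $a$ and $b$ are $\is$-incomparable, their longest common initial segment $u$ is a proper initial segment of both and they differ in coordinate $|u|$, necessarily one having $0$ and the other $1$, so $a$ and $b$ are strictly comparable in $\Qhat$; and if moreover $a\is c$ then $c$ and $b$ still branch at $u$ (since $u$ followed by the $|u|$-th coordinate of $a$ is an initial segment of $c$), so $c$ and $b$ are strictly comparable as well.

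With these in hand the argument is short. Suppose first $s\sis s'$. If $y\in\{2^{<\omega},-2^{<\omega}\}$, then $s$ and $s'$ are comparable, so $\neg(s\perp s')$, while $s\perp t$; hence $R={\perp}$ witnesses $\neg\SSR{s}{s'}{t}$. Likewise $\neg(s'\perp s)$ while $s'\perp t$, using the combinatorial fact above together with the facts about $2^{<\omega}$/$-2^{<\omega}$, so $R={\perp}$ witnesses $\neg\SSR{s'}{s}{t}$. If $y=\Qhat$, then $s\perp s'$ and $s'\perp s$ by the facts about $\Qhat$, while $s,t$ and $s',t$ are each strictly comparable in $\Qhat$; again $R={\perp}$ gives both negations. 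Finally, if $s=s'$ the two assertions coincide: in $2^{<\omega}$ and $-2^{<\omega}$ one has $\neg(s\perp s)$ but $s\perp t$, so $R={\perp}$ suffices; in $\Qhat$, $s$ is strictly comparable to $t$, say $s<t$ (the case $s>t$ being symmetric), whereas $\neg(s<s)$, so $R={<}$ witnesses $\neg\SSR{s}{s}{t}$. This exhausts all cases.

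I do not anticipate a genuine obstacle here: the content is entirely the explicit definitions of the three pathological orders. The only points requiring a little care are keeping the degenerate possibility $s=s'$, which is allowed by ``$s\is s'$'', in view — handled by the last case above — and remembering that in $\Qhat$ one occasionally needs the relation $<$ or $>$ rather than $\perp$ to exhibit the disagreement.
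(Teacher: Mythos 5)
Your proof is correct and follows essentially the same route as the paper's: a case analysis over $2^{<\omega}$, $-2^{<\omega}$ and $\Qhat$, determining the relation between $s,s'$ from $s\is s'$ and the relation of each to $t$ from $\is$-incomparability, then exhibiting a distinguishing relation (usually $\perp$). You are in fact slightly more careful than the paper, which tacitly treats $s\sis s'$ (e.g.\ asserting $s\perp s'$ in the $\Qhat$ case); your separate handling of the degenerate case $s=s'$ closes that small gap.
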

\begin{proof}
Let $y\in \pth$ and $s,s',t$ be as described. Suppose that $y=2^{<\omega}$, then since $2^{<\omega}$ is just ordered by $\is$ we have $s\leq s'$ and $s\perp t$ hence $\neg \SSR{s}{s'}{t}$. We also have that $s'\perp t$, and therefore $\neg \SSR{s'}{s}{t}$.
If $y=-2^{<\omega}$ then we have $s\geq s'$, $s\perp t$ and $s'\perp t$, and again we can conclude that $\neg \SSR{s}{s'}{t}$ and $\neg \SSR{s'}{s}{t}$. If $y=\Qhat$ then we have $s\perp s'$, and either $t>s$ and $t>s'$ or $t<s$ and $t<s'$. Hence again we can conclude $\neg \SSR{s}{s'}{t}$ and $\neg \SSR{s'}{s}{t}$.
\end{proof}

\begin{lemma}\label{Lemma:pthdontembed_Heta}
Suppose that no element of $\pth$ embeds into any element of $\mathbb{P}$. Then for all $y\in \pth$, and for every composition sequence $\eta$ we have $y\not \leq H_\eta$.
\end{lemma}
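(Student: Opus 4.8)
The plan is to argue by contradiction. Suppose $y\in\pth$ and $\varphi\colon y\to H_\eta$ is an embedding, where $\eta=\langle\langle f_i,s_i\rangle:i\in r\rangle$; recall from Example~\ref{Ex:Heta} that in the present setting $H_\eta=\bigsqcup_{i\in r}a^\eta_i$ with the order described there, and that each non-empty $a^\eta_i$ is a subset of $\arity(f_i)\in\mathbb{P}$, hence lies in $\mathbb{P}$ because $\mathbb{P}$ is closed under non-empty subsets. For $t\in y$ write $i_t\in r$ for the block with $\varphi(t)\in a^\eta_{i_t}$. Directly from the definition of the order on $H_\eta$ (equivalently, this is an instance of Proposition~\ref{Prop:HetaSSR}), whenever $i_t<i_{t'}$ the relation of $\varphi(t)$ to $\varphi(t')$ equals the relation of $\varphi(t)$ to $s_{i_t}$ inside $\arity(f_{i_t})$; call this relation $\rho(t)\in\{<,>,\perp\}$. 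Thus $\rho(t)$ depends only on $t$, and $t\,\rho(t)\,t'$ holds in $y$ whenever $i_t<i_{t'}$.

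The first step is a localisation lemma, proved exactly as the claim in the proof of Lemma~\ref{Lemma:SumsPathological}: for every finite $F\subseteq r$ and every $s\in y$ there is $s'\in y$ with $s\is s'$ such that $i_z\notin F$ for all $z\in y$ with $s'\is z$. Indeed, otherwise, after shrinking $F$, some single block $i\in F$ recurs cofinally above an extension of $s$; copying the self-similar structure of the underlying binary tree that all members of $\pth$ share and pushing each image up to a point of block $i$ — this is precisely the construction in Lemma~\ref{Lemma:SumsPathological} — produces an embedding of $y$ into $a^\eta_i\in\mathbb{P}$, contradicting the hypothesis. Applying this repeatedly along an enumeration of $y$, processing each node after its parent, one replaces $\varphi$ by an embedding $\tilde\varphi\colon y\to H_\eta$ for which the block function $t\mapsto i_t$ is injective.

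Finally I derive a contradiction from such a $\tilde\varphi$. Colour each $t\in y$ whose block is not $\max(r)$ by $\rho(t)$. Since the block function is injective, for any two distinct $t,t'$ the one with the smaller block index, say $t$, satisfies $t\,\rho(t)\,t'$; hence of any two incomparable elements of $y$ at least one is coloured $\perp$, so $C=\{t:\rho(t)\neq\perp\}$ — together with the at most one point whose block is $\max(r)$, which one checks is comparable to every element of $C$ — is a chain, while $A=y\setminus C$ is an antichain. Thus $y$ would be the union of a chain and an antichain. The plan concludes by verifying that this is impossible for each of $2^{<\omega}$, $-2^{<\omega}$ and $\Qhat$: writing $y=C\cup A$, a chain meets each level of the underlying binary tree in at most one vertex, so at level $1$ some vertex lies in $A$ (in the $\Qhat$ case, in $C$), and then its two successors are both forced into the opposite part, which is impossible since they are incomparable siblings. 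The main obstacle is the localisation step of the second paragraph, but it is essentially already contained in Lemma~\ref{Lemma:SumsPathological}, so the remaining steps are routine.
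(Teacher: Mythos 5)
Your argument is correct, but it is genuinely different from the one in the paper. The paper's proof is local: after using the hypothesis (exactly as you do) to rule out the image staying cofinally inside a single block $a^\eta_i\leq\arity(f_i)$, it extracts just five image points lying in pairwise distinct blocks and forming a branching configuration $a,a_0,a_1,a_{00},a_{11}$, and then does a case analysis on how the five block indices are linearly ordered in $r$: in each case Proposition \ref{Prop:HetaSSR} yields a relation of the form $\mathrm{SSR}$ in $H_\eta$, which pulls back through the embedding to an $\mathrm{SSR}$ pattern in $y$ that Proposition \ref{Prop:PthSSR} forbids for every member of $\pth$. You instead globalise: you rerun the recursion from Lemma \ref{Lemma:SumsPathological} to replace the embedding by one whose block function is injective on all of $y$, and then exploit the fact that, across distinct blocks, the order of $H_\eta$ is governed by the single datum $\rho(t)$ (the relation of $\varphi(t)$ to $s_{i_t}$), so that $y$ would decompose into a chain ($\rho\neq\perp$, plus the possible $\max(r)$-point) and an antichain ($\rho=\perp$), which fails for $2^{<\omega}$, $-2^{<\omega}$ and $\Qhat$ by the sibling argument you give. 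The trade-off: the paper's route needs no transfinite re-embedding and leans directly on the two SSR propositions it has already set up, while yours isolates a clean structural statement (any suborder of $H_\eta$ meeting each block at most once is a union of a chain and an antichain) that handles all three pathological orders uniformly, at the cost of the localisation/injectivisation bookkeeping, which is legitimate here since it is exactly the construction already carried out in Lemma \ref{Lemma:SumsPathological}. Minor quibbles only: your localisation step should cite the fact that the order on $a^\eta_i$ inside $H_\eta$ is the restriction of $\arity(f_i)$ (so an embedding into a block really contradicts the hypothesis), and your key cross-block fact is really read off from the definition of $H_\eta$ (it strengthens Proposition \ref{Prop:HetaSSR} slightly rather than being an instance of it); both points are immediate.
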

\begin{proof}
Let $\eta=\langle \langle f_i,s_i\rangle:i\in r\rangle$ be a composition sequence, then %by Lemma \ref{Lemma:IndecompLabels}, 
for each $i\in r$, we have that $\arity(f_i)\in \PP$. %is indecomposable. 
Since $\PP=\mathbb{P}$ we know that $$(\forall i\in r)(\forall y\in \pth), y\not \leq \arity(f_i).$$
Suppose that for some $y\in \pth$, we have $y\leq H_\eta$ with $\varphi$ a witnessing embedding. 
We claim that for any $a=\varphi(s)\in a^\eta_i$, there are $a_0=\varphi(s_0)\in a^\eta_{i_0}$ and $a_1=\varphi(s_1)\in a^\eta_{i_1}$ with $s\is s_0,s_1$, and $s_0,s_1$ incomparable under $\is$, with $i\neq i_0\neq i_1 \neq i$.
Suppose not, then for some $s\in y$ and $i\in r$, we have $\varphi(s\con t)\subseteq a^\eta_i$ for every sequence $t$. This is a contradiction, since %for each $y\in \pth$ we can see that 
then $y$ embeds into $\{t'\in y\mid s\is t'\}\leq a^\eta_i\leq \arity(f_i)$. This gives the claim.

Let $a=\varphi(\langle \rangle)$ and choose $a_0=\varphi(s_0)\in a^\eta_{i_0}$ and $a_1=\varphi(s_1)\in a^\eta_{i_1}$ as in the claim. Now choose $a_{00}=\varphi(s_{00})\in a^\eta_{i_{00}}$ and $a_{11}=\varphi(s_{11})\in a^\eta_{i_{11}}$ by applying the claim to $a_0$ and $a_1$. Notice that by a similar argument to before, we can assume that every element of $I=\{i,i_0,i_1,i_{00},i_{11}\}$ is distinct, otherwise we would be able to embed $y$ into $\arity(f_j)$ for some $j\in I$.

We now use Proposition \ref{Prop:HetaSSR} in the following cases:
\begin{itemize}
	\item $i_0<i_1,i_{00}$ which implies $\SSR{a_0}{a_1}{a_{00}}$,
	\item $i_{00}<i_0,i_1$ which implies $\SSR{a_{00}}{a_0}{a_1}$,
	\item $i_1<i_0,i_{00}$ and $i_1<i_0,i_{11}$ which implies $\SSR{a_1}{a_0}{a_{11}}$,
	\item $i_1<i_0,i_{00}$ and $i_{11}<i_0,i_1$ which implies $\SSR{a_{11}}{a_0}{a_1}$,
	\item $i_1<i_0,i_{00}$ and $i_0<i_1,i_{11}$ which implies $i_0<i_1<i_0$ which is a contradiction.
\end{itemize}
Now note that any of the first four cases contradict Proposition \ref{Prop:PthSSR}. So we have a contradiction in every case, and our assumption that $y\leq H_\eta$ must have been false. This gives the lemma.
\end{proof}
\begin{lemma}\label{Lemma:PathologicalDontEmbedAmrs}
If $x\in \amrs$ and for all $y\in \pth$, $z\in \mathbb{P}$, we have $y\not \leq z$, then for all $y'\in \pth$,  $y'\not \leq x$.
\end{lemma}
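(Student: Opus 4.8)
The plan is to argue by induction on $\rank(x)$ in the sense of Remark \ref{Rk:amrs}. For the base case $\rank(x)=0$ we have $x\in\AAA=Q'^1$, i.e.\ $x$ is a single point; since every element of $\pth$ is infinite, trivially $y'\not\leq x$ for all $y'\in\pth$.

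For the inductive step, suppose the claim holds for every element of $\CC_{<\alpha}$ and let $\rank(x)=\alpha>0$. By Proposition \ref{Prop:Reduction} there is a composition sequence $\eta$ (of length $r\in\RR=\clos{\mathbb{L}}$) and some $k=\langle q_u:u\in A^\eta\rangle\in\dom(f^\eta)_{<\alpha}$ with $x=f^\eta(k)$. Since $\OA=\OA^Q_{\mathbb{L},\mathbb{P}}$, Example \ref{Ex:Heta} and Lemma \ref{Lemma:PORiterable} give $f^\eta=\sum_{H_\eta}$, so that $x$ is the genuine partial-order $H_\eta$-sum $\sum_{u\in H_\eta}q_u$ of the $q_u$, each of which lies in $\CC_{<\alpha}$.

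Now I would simply assemble the two relevant lemmas already proved. The induction hypothesis applied to each $q_u$ shows that no $y'\in\pth$ embeds into any $q_u$. Lemma \ref{Lemma:pthdontembed_Heta}, together with the hypothesis that no element of $\pth$ embeds into any element of $\mathbb{P}=\PP$, shows that no $y'\in\pth$ embeds into $H_\eta$. Regarding $H_\eta$ as an element of $\CC$ (with any $Q'$-colouring, e.g.\ constant $-\infty$), these are precisely the hypotheses of Lemma \ref{Lemma:SumsPathological} applied with index order $H_\eta$ and summands $q_u$; that lemma then yields $y'\not\leq\sum_{u\in H_\eta}q_u=x$ for every $y'\in\pth$, completing the induction.

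So the argument is essentially bookkeeping combining Lemmas \ref{Lemma:pthdontembed_Heta} and \ref{Lemma:SumsPathological}. The only points needing a little care are the identification of $f^\eta(k)$ with the honest partial-order sum $\sum_{u\in H_\eta}q_u$, and checking that the side condition defining $\dom(f^\eta)_{<\alpha}$ causes no trouble (it only forces certain $q_u\in\AAA$, which are single points and hence contain no pathological suborder). The genuinely substantive work — bounding embeddings of $\pth$ into arbitrary $H_\eta$ and into sums — has already been carried out in those two lemmas, so I do not expect a real obstacle here.
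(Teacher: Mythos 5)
Your proof is correct and follows essentially the same route as the paper: induction on $\rank(x)$, with the inductive step handled by writing $x=\sum_{u\in H_\eta}q_u$ and combining the induction hypothesis with Lemma \ref{Lemma:pthdontembed_Heta} and Lemma \ref{Lemma:SumsPathological}. The extra remarks about identifying $f^\eta(k)$ with the $H_\eta$-sum and about the $\AAA$-valued arguments being single points are fine and do not change the argument.
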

\begin{proof}
Clearly this holds when $\rank(x)=0$, since then $x$ is a single point and so does not embed any element of $\pth$. Suppose the statement holds for all $x'\in \CC_{<\alpha}$ for some $\alpha\in \On$, and that $x\in \CC_\alpha$.
Then $x=f^{\eta}(k)=\sum_{u\in H_\eta}q_u$ for some composition sequence $\eta$ and $k=\langle q_u:u\in A^\eta\rangle\in \dom(f^\eta)_{<\alpha}$. 
By the induction hypothesis, for each $u\in H_\eta$ and $y\in \pth$ we know that $y\not \leq q_u$, and by Lemma \ref{Lemma:pthdontembed_Heta} we know that $y\not \leq H_\eta$. So by Lemma \ref{Lemma:SumsPathological}, we have that $y\not \leq x$ as required.
\end{proof}

\subsection{Characterising the construction}

\begin{lemma}\label{Lemma:Indecomp1Sums}
Suppose that:
\begin{itemize}
	\item $U$ is an indecomposable partial order with $|U|>2$;
	\item $P=\sum_{u\in U}P_u$ for some non-empty partial orders $P_u (u\in U)$;
	\item $I\subseteq P$ is an interval of $P$ with $I\cap P_{v_0}\neq \emptyset$ and $I\cap P_{v_1}\neq \emptyset$ for some $v_0\neq v_1\in U$.
\end{itemize}
Then $I=P$.
\end{lemma}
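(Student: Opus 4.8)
The plan is to analyse the interval $I$ summand by summand. First I would record the elementary but crucial observation underlying everything: in $P=\sum_{u\in U}P_u$, the relationship between two points lying in \emph{distinct} summands is completely controlled by $U$. Explicitly, if $a\in P_u$, $b\in P_w$ and $u\neq w$, then by Definition \ref{Defn:Sums} we have, for each $R\in\{<,>,\perp\}$, that $a\mathbin{R}_P b$ holds if and only if $u\mathbin{R}_U w$ holds (for $<$ and $>$ this is immediate, and the $\perp$ case follows by negation).

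Next I would set $J=\{u\in U : I\cap P_u\neq\emptyset\}$ and show $J$ is an interval of $U$. Given $u\in U\setminus J$ and $w_0,w_1\in J$ (the case $w_0=w_1$ being trivial), pick $x_i\in I\cap P_{w_i}$ and any $p\in P_u$ (possible since $P_u\neq\emptyset$); as $u\notin J$ we have $p\notin I$, so the interval property of $I$ gives $\SSR{p}{x_0}{x_1}$. Since $u\neq w_0$ and $u\neq w_1$, the observation above turns this into $\SSR{u}{w_0}{w_1}$. Hence $J$ is an interval of $U$, and since $|J|\geq 2$ (it contains $v_0\neq v_1$) and $U$ is indecomposable, $J=U$; that is, $I$ meets every summand.

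It then remains to prove $I\cap P_u=P_u$ for every $u\in U$. Suppose not; since every summand is met, there is a $u$ with $\emptyset\neq I\cap P_u\subsetneq P_u$, so we may fix $p\in P_u\setminus I$ and $q\in I\cap P_u$. As $p\notin I\ni q$ we have $p\neq q$, so in the partial order $P_u$ there is a unique $R_0\in\{<,>,\perp\}$ with $p\mathbin{R_0}q$. Now for each $w\neq u$, pick (using $J=U$) some $x_w\in I\cap P_w$; since $p\notin I$ and $q,x_w\in I$, the interval property gives $\SSR{p}{q}{x_w}$, and by the observation this means $\{R : u\mathbin{R}_U w\}=\{R : p\mathbin{R}_P x_w\}=\{R : p\mathbin{R}_{P_u} q\}=\{R_0\}$. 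As the right-hand side does not depend on $w$, the single relation $R_0$ is borne by $u$ to \emph{every} $w\neq u$, and to no $w\neq u$ in any other way. Consequently $U\setminus\{u\}$ is an interval of $U$: its only outside point is $u$, and $u$ relates uniformly (via $R_0$) to all of its points. Since $|U|>2$ gives $2\leq|U\setminus\{u\}|<|U|$, this contradicts indecomposability of $U$. Therefore $I\cap P_u=P_u$ for all $u$, i.e.\ $I=P$.

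I do not expect a serious obstacle here; the only points needing care are the bookkeeping that keeps $\SSR{\cdot}{\cdot}{\cdot}$ applied only to genuinely distinct triples (handling $w_0=w_1$ and $p\neq q$, $p\neq x_w$ separately), and the observation that the hypothesis $|U|>2$ is used exactly once — to ensure $U\setminus\{u\}$ has at least two elements and hence really witnesses decomposability.
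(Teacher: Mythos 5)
Your proof is correct and follows essentially the same route as the paper: project $I$ to the set $J=\{u\in U : I\cap P_u\neq\emptyset\}$, show it is an interval of $U$, use indecomposability and $|J|\geq 2$ to get $J=U$, and then show a point of $P\setminus I$ would force $U$ minus one element to be a proper non-singleton interval, contradicting indecomposability via $|U|>2$. The only cosmetic difference is in the last step, where you compare the outside point with a point of $I$ in the \emph{same} summand (plus one in another), while the paper compares it with points of $I$ in two \emph{other} summands; both give the same contradiction.
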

\begin{proof}
First we claim that $$J=\{u\mid I\cap P_u\neq \emptyset\}$$ is an interval of $U$. To see this, we let $v\in U\setminus J$, $u_0,u_1\in J$, $a\in P_v$ and $b_0\in I\cap P_{u_0}$, $b_1\in I\cap P_{u_1}$. Then $a\notin I$ (otherwise $v\in J$), so we have $\SSR{a}{b_0}{b_1}$ because $I$ was an interval. But this implies $\SSR{v}{u_0}{u_1}$ since $P=\sum_{u\in U}P_u$, hence we have the claim that $J$ is an interval of $U$.

Since $U$ was indecomposable, we either have $|J|=1$ or $J=U$. If $|J|=1$ then this contradicts our assumption that $I\cap P_{v_0}\neq \emptyset$ and $I\cap P_{v_1}\neq \emptyset$ for some $v_0\neq v_1\in U$. Hence $J=U$.

Suppose for contradiction that there is some $a\in P\setminus I$, then $a\in P_v$ for some $v\in U$. For arbitrary $u_0,u_1\in U=I'$ with $v\notin \{u_0,u_1\}$ we have $b_0\in I\cap P_{u_0}$ and $b_1\in I\cap P_{u_1}$. Since $a\in P\setminus I$, $b_0,b_1\in I$ and $I$ is an interval, we then know that $\SSR{a}{b_0}{b_1}$. So since $P=\sum_{u\in U}P_u$ and $v\notin \{u_0,u_1\}$ we see that $\SSR{v}{u_0}{u_1}$. But then since $u_0$ and $u_1$ were arbitrary, it must be that $U\setminus \{v\}$ is an interval. Hence since $U$ was indecomposable we have $|U\setminus\{v\}|=1$ which means $|U|=2$ which is a contradiction.
\end{proof}
\begin{lemma}\label{Lemma:Indecomp2Sums}
Suppose that:
\begin{itemize}
	\item $U=\{0,1\}$ with $0R1$ for some $R\in \{<,>,\perp\}$, so $U\in \{\AC{2},\CH{2}\}$;
	\item $P=\sum_{u\in U}P_u$ for some non-empty partial orders $P_u, (u\in U)$;
	\item $I\subseteq P$ is an interval of $P$ with $I\cap P_0\neq \emptyset$ and $I\cap P_1\neq \emptyset$;
	\item there are no non-empty subsets $K_0,K_1$ of $P_0$ with $K_0\cap K_1 = \emptyset$ and $P_0=K_0\cup K_1$, such that for all $a_0\in K_0$ and $a_1\in K_1$ we have $a_0 R a_1$.
\end{itemize}
Then $P_0\subseteq I$.
\end{lemma}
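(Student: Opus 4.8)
The plan is to suppose for contradiction that some point $a\in P_0\setminus I$ exists, and derive a forbidden decomposition of $P_0$ of exactly the kind the last hypothesis rules out. First I would set up notation: pick $b_0\in I\cap P_0$ and $b_1\in I\cap P_1$, both guaranteed by the third hypothesis. The idea is that $I\cap P_0$ and $P_0\setminus(I\cap P_0)$ will be the two pieces $K_1$ and $K_0$ (in some order), so I need to show (i) both are non-empty, (ii) they partition $P_0$, and (iii) every point of the first relates to every point of the second by $R$. Non-emptiness of $I\cap P_0$ is the third hypothesis; non-emptiness of the complement is the contradiction assumption ($a$ witnesses it); that they partition $P_0$ is immediate. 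So the whole content is in establishing the uniform relation $R$ across the two pieces.

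The key step is this: take any $c\in P_0\setminus I$ and any $d\in I\cap P_0$. I want to show $c\mathrel{R}d$ or $d\mathrel{R}c$ in the direction dictated by $0\mathrel{R}1$ inside $U$. Here I would exploit that $b_1\in I\cap P_1$ and that $I$ is an interval: since $c\notin I$ while $d,b_1\in I$, we get $\SSR{c}{d}{b_1}$. But $c\in P_0$, $b_1\in P_1$, and $P=\sum_{u\in U}P_u$, so the relationship of $c$ to $b_1$ is exactly the relationship of $0$ to $1$ in $U$, namely $R$ (precisely: if $0<1$ then $c<b_1$, if $0>1$ then $c>b_1$, if $0\perp 1$ then $c\perp b_1$, by Definition \ref{Defn:Sums}). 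Hence $c$ relates to $d$ by the same $R$. This argument is symmetric enough to handle all three cases of $R$ uniformly, since $\SSR{c}{d}{b_1}$ transfers whichever of $<,>,\perp$ holds. Running this over all such $c$ and $d$ shows: writing $K_0=P_0\setminus I$ and $K_1=P_0\cap I$, for all $a_0\in K_0$, $a_1\in K_1$ we have $a_0\mathrel{R}a_1$ — where I should be a little careful about orientation, ordering the pair $(K_0,K_1)$ so that the direction matches $0\mathrel{R}1$; when $R$ is $<$ I take $K_0=P_0\setminus I$ as the "lower" block exactly because $c$ lies below $b_1\in P_1$ and $P_0$ sits below $P_1$, and dually for $>$, while for $\perp$ orientation is irrelevant.

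This contradicts the fourth hypothesis directly, so no such $a$ exists and $P_0\subseteq I$. The main (and only real) obstacle is the bookkeeping in the last step: making sure the two blocks are assigned to $K_0,K_1$ in the correct order so the clause "for all $a_0\in K_0$ and $a_1\in K_1$, $a_0\mathrel{R}a_1$" is literally what we derived rather than its reverse; this is a genuine subtlety only in the $R\in\{<,>\}$ cases and is handled by noting that every point of $P_0\setminus I$ lies strictly below (resp. above) the fixed point $b_1\in P_1$ together with every point of $P_0\cap I$, which forces the required orientation. Everything else is a short application of the interval property plus the definition of the $U$-sum, with no induction needed.
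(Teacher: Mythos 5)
Your proof is correct and takes essentially the same route as the paper: both arguments combine the interval property of $I$ (tested against a fixed point of $I\cap P_1$), the uniform relation $R$ between $P_0$ and $P_1$ coming from the sum, and the fourth hypothesis — the paper extracts a single violating pair from that hypothesis and contradicts the shared-relationship condition, while you run the same reasoning in contrapositive form, deriving the full forbidden partition $(P_0\setminus I,\,P_0\cap I)$ and contradicting the hypothesis directly. The orientation worry in your last paragraph resolves itself, since what you derive is exactly $a_0\,R\,a_1$ for $a_0\in P_0\setminus I$ and $a_1\in P_0\cap I$, which is literally the configuration the hypothesis forbids.
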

\begin{proof}
Suppose that $P_0\not \subseteq I$ so we can let $K_0=I\cap P_0$, $K_1=P_0\setminus I$. So there are $a\in I\cap P_0$ and $b\in P_0\setminus I$, such that $a\perp b$ if $R\neq \perp$ and $a\not \perp b$ if $R=\perp$, so in either case we have $\neg(bRa)$. Let $c\in P_1\cap I$, then $b Rc$ since $P=\sum_{u\in U}P_u$ and $b\in P_0$. So we have shown $\neg\SSR{b}{a}{c}$. But $a,c\in I$ and $b\notin I$ with $I$ an interval, hence we have $\SSR{b}{a}{c}$, which is a contradiction.
\end{proof}
In the next definition and following few lemmas (\ref{Defn:IntervalPs}, \ref{Lemma:IntervalAritySizes}, \ref{Lemma:IntervalClassification}, \ref{Lemma:SumsChainsInL} and \ref{Lemma:SumsIndecompSubsetsInP}) we fix a composition sequence $\eta=\langle \langle f_i,s_i\rangle:i\in r\rangle$ and $k\in \dom(f^\eta)$ so that $\langle \eta, k\rangle$ is maximal for $P\in \amrs$.
\begin{defn}\label{Defn:IntervalPs}
%For $i\in r$ and $u\in a^\eta_i$, let $P_{i,u}=\col_{k_i}(u)\in \CC$, and let 
Let $k=\langle P_{i,u}:i\in r, u\in a^\eta_i\rangle$ and define:
\begin{itemize}
	\item $P_i=\sum_{u\in a^\eta_i}P_{i,u}$;
	\item $P_{\geq i}=\bigcup_{j\geq u}P_j\subseteq f^\eta(k)$;
	\item $P_{>i}=P_{\geq i}\setminus P_i$.
\end{itemize}
\end{defn}
\begin{lemma}\label{Lemma:IntervalAritySizes}
Suppose that $I$ is an interval of $P$ and $i\in r$ be such that $I\cap P_i\neq \emptyset$ and $I\cap P_{>i}\neq \emptyset$. If $|\arity(f_i)|>2$ then $P_{\geq i}\subseteq I$, and if $|\arity(f_i)|=2$ then $P_i\subseteq I$.
\end{lemma}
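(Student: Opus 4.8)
The plan is to reduce the statement to the two structural lemmas \ref{Lemma:Indecomp1Sums} and \ref{Lemma:Indecomp2Sums} by first exhibiting $P_{\geq i}$ as a sum indexed by the (indecomposable, by maximality of $\langle\eta,k\rangle$ and Definition \ref{Defn:MaxCompSeq}) arity $\arity(f_i)$. First I would observe that the hypotheses force $i\neq\max(r)$: otherwise $P_{>i}=P_{\geq i}\setminus P_i=\emptyset$, contradicting $I\cap P_{>i}\neq\emptyset$; hence $a^\eta_i=\arity(f_i)\setminus\{s_i\}$. Next, recalling from Definition \ref{Defn:MaxCompSeq} that $P_{\geq i}$ is the interval $I_i$ given by $f^{\eta_i}$, I would apply property (\ref{Item:Riterable2}) of Definition \ref{Defn:Riterable} to split this composition at its least index $i$, together with property (\ref{Item:Riterable4}) and the fact that each $f_j$ is the $\arity(f_j)$-sum, to get
$$P_{\geq i}=\sum_{u\in\arity(f_i)}R_u,\qquad R_u=P_{i,u}\ (u\in a^\eta_i),\quad R_{s_i}=P_{>i},$$
with every summand non-empty (the $R_u$ with $u\neq s_i$ lie in $\amrs$, and $R_{s_i}=P_{>i}\neq\emptyset$ by hypothesis). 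Setting $I'=I\cap P_{\geq i}$, since $I$ and $P_{\geq i}=I_i$ are both intervals of $P$, $I'$ is an interval of $P_{\geq i}$ (for $p\in P_{\geq i}\setminus I'$ we have $p\notin I$, so $\SSR{p}{x}{y}$ for $x,y\in I'$); moreover $I'$ meets $R_{s_i}$ (as $I\cap P_{>i}\neq\emptyset$) and meets some $R_v$ with $v\in a^\eta_i$ (as $I\cap P_i\neq\emptyset$ and $P_i=\bigcup_{u\in a^\eta_i}R_u$).

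For the case $|\arity(f_i)|>2$, I would simply apply Lemma \ref{Lemma:Indecomp1Sums} to the indecomposable poset $U=\arity(f_i)$, the sum $P_{\geq i}=\sum_{u\in U}R_u$, the interval $I'$, and the two distinct indices $v,s_i$ it meets; this yields $I'=P_{\geq i}$, i.e. $P_{\geq i}\subseteq I$.

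For the case $|\arity(f_i)|=2$, the set $a^\eta_i$ is a single point $v$, so $P_i=R_v=P_{i,v}$, and $\arity(f_i)=\{v,s_i\}$ with $vRs_i$ for a unique $R\in\{<,>,\perp\}$. Here I would invoke Lemma \ref{Lemma:Indecomp2Sums} on the sum $P_{\geq i}=R_v+R_{s_i}$ (with $R_v$ in the role of $P_0$) and the interval $I'$, which meets both summands. The only non-routine hypothesis to check is that $R_v=P_{i,v}$ admits no partition $P_{i,v}=K_0\sqcup K_1$ into non-empty pieces with $a_0Ra_1$ for all $a_0\in K_0$, $a_1\in K_1$ — and this I expect to be the main obstacle. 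I would argue it by contradiction from maximality of the chain of intervals: given such a partition, put $J=K_1\cup P_{>i}$; for any $p\in K_0$ and any $x\in J$ one has $pRx$ (using the partition property if $x\in K_1$, and $vRs_i$ if $x\in R_{s_i}=P_{>i}$), so $J$ is an interval of $P_{\geq i}$, hence of $P$, with $P_{>i}\subsetneq J\subsetneq P_{\geq i}$. Since $J$ is $\subseteq$-comparable with every $I_j=P_{\geq j}$ (indeed $J\subsetneq P_{\geq i}\subseteq P_{\geq j}$ for $j\leq i$, and $P_{\geq j}\subseteq P_{>i}\subsetneq J$ for $j>i$) and distinct from each of them, adjoining $J$ to $\langle I_j:j\in r\rangle$ contradicts its maximality. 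Therefore Lemma \ref{Lemma:Indecomp2Sums} applies and gives $P_i=R_v\subseteq I'\subseteq I$, completing the proof. Everything apart from this last verification is bookkeeping with the $\RR$-iterable structure of $P_{\geq i}$ and direct appeals to the two indecomposability lemmas.
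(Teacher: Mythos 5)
Your proposal is correct and follows essentially the same route as the paper: exhibit $P_{\geq i}$ as an $\arity(f_i)$-sum with $P_{>i}$ in position $s_i$, use indecomposability of $\arity(f_i)$ from maximality, apply Lemma \ref{Lemma:Indecomp1Sums} when $|\arity(f_i)|>2$, and when $|\arity(f_i)|=2$ apply Lemma \ref{Lemma:Indecomp2Sums}, discharging its last hypothesis by showing that a splitting $K_0\cup K_1$ would make $J=K_1\cup P_{>i}$ an interval strictly between $P_{>i}$ and $P_{\geq i}$, contradicting maximality of the chain of intervals. Your extra bookkeeping (restricting to $I'=I\cap P_{\geq i}$, noting $i\neq\max(r)$, and checking $J$ is comparable with every $I_j$) only makes explicit steps the paper leaves implicit.
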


\begin{proof}
We have that $P=f^\eta(k)$ is an $H_\eta$-sum of the non-empty partial orders $P_{i,a}$.
By definition of $H_\eta$ we have that $P_{\geq i}$ is an $\arity(f_{i})$-sum of the $P_{i,u}$ in position $u\in a^\eta_{i}$, and $P_{>i}$ in position $s_i$. We know that $\arity(f_{i})$ is indecomposable since $\langle \eta,k\rangle$ was maximal. So since $I\cap P_{i}\neq \emptyset$, we can apply Lemma \ref{Lemma:Indecomp1Sums} to see that either $|\arity(f_{i})|\leq 2$ or $P_{\geq i}\subseteq I$.

%If $|\arity(f_{j_0})|=1$ then the sum over $j_0$ makes no difference to the sum over $H_\eta$ so without loss of generality assume this does not occur.

If $|\arity(f_i)|=2$ then set $R\in\{<,>,\perp\}$ such that for $a\in a^\eta_i$ we have $a R s_i$. Then using Lemma \ref{Lemma:Indecomp2Sums}, we either have either that $P_i\subseteq I$ or there are non-empty $K_0,K_1\subseteq P_i$ with $K_0\cap K_1=\emptyset$ and $P_i=K_0\cup K_1$ such that for all $a_0\in K_0$ and $a_1\in K_1$ we have $a_0 R a_1$. Consider $J=K_1\cup P_{>i}$, let $a\in P\setminus J$ and $z_0,z_1\in J$ then either $a\in P\setminus P_{\geq j}$ in which case we have $\SSR{a}{z_0}{z_1}$ or $a\in K_0$ in which case $a R z_0$ and $a R z_1$, so that $\SSR{a}{z_0}{z_1}$. Thus $J$ is an interval. But we have that $P_{>i}\subset J\subset P_{\geq i}$, which contradicts that $\eta$ was maximal. So it must be that $P_i\subseteq I$.
\end{proof}

\begin{lemma}\label{Lemma:IntervalClassification}
Let $I$ be an interval of $P$, then there are $j_0,j_1\in r$ and an $X\subseteq P_{j_1}$ either empty or an interval of $P_{j_1}$ such that $I=P_{\geq j_0}$ or $I=(P_{\geq j_0}\setminus P_{\geq j_1})\cup X.$
\end{lemma}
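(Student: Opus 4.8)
\emph{Setup.} Throughout I use the notation $P_i$, $P_{\ge i}$, $P_{>i}$ of Definition~\ref{Defn:IntervalPs}. The key consequence of $\langle\eta,k\rangle$ being maximal for $P$ that the plan exploits is that $\langle P_{\ge i}:i\in r\rangle$ is a \emph{maximal} chain of intervals of $P$ under $\supseteq$ (Definition~\ref{Defn:MaxCompSeq}), together with the (easily checked, cf.\ Lemma~\ref{Lemma:UnionIntersectionIntervals}) fact that the intersection of two intervals of $P$, and the union of two overlapping intervals of $P$, is again an interval of $P$. Lemma~\ref{Lemma:IntervalAritySizes} will govern how $I$ can meet a single block $P_i$, and Lemma~\ref{Lemma:r'infimum} will be used to take infima and suprema of subsets of $r$.

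\emph{Locating $I$ between two members of the chain.} Since $I\ne\emptyset$ and $P=\bigsqcup_i P_i$, the set $\{i\in r:I\cap P_i\ne\emptyset\}$ is non-empty, so by Lemma~\ref{Lemma:r'infimum} it has an infimum $j_0\in r$ and a supremum $j_1\in r$. As every block met by $I$ has index between $j_0$ and $j_1$, we get $I\subseteq P_{\ge j_0}$ and $I\cap P_{>j_1}=\emptyset$, hence $I\cap P_{\ge j_1}\subseteq P_{j_1}$. If $I=P_{\ge j_0}$ we are in the first alternative of the conclusion, so assume $I\subsetneq P_{\ge j_0}$. Put $X:=I\cap P_{j_1}$: for $p\in P_{j_1}\setminus X\subseteq P\setminus I$ and $x,y\in X$ we have $\SSR{p}{x}{y}$, and this relation is unchanged on passing to the suborder $P_{j_1}$, so $X$ is empty or an interval of $P_{j_1}$. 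Since $I\subseteq P_{\ge j_0}=(P_{\ge j_0}\setminus P_{\ge j_1})\cup P_{\ge j_1}$, the whole statement now reduces to the single claim that $P_i\subseteq I$ whenever $j_0\le i<j_1$: granting this, $I=(P_{\ge j_0}\setminus P_{\ge j_1})\cup(I\cap P_{\ge j_1})=(P_{\ge j_0}\setminus P_{\ge j_1})\cup X$, the second alternative.

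\emph{Full coverage of the intermediate blocks.} Fix $i$ with $j_0\le i<j_1$. As $i$ lies below the supremum there is $i'>i$ with $I\cap P_{i'}\ne\emptyset$, so $I\cap P_{\ge i}$ is a non-empty interval of $P$ meeting $P_{>i}$; if moreover $I$ meets $P_i$, then Lemma~\ref{Lemma:IntervalAritySizes} applied at index $i$ gives $P_i\subseteq I\cap P_{\ge i}\subseteq I$, as desired. Thus the entire difficulty is to exclude the possibility that $I$ \emph{skips} the block $P_i$ entirely (and, as a boundary instance, that $I$ misses $P_{j_0}$ while $I\cap P_{>j_0}\ne\emptyset$). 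I would do this by contradiction with maximality of $\langle P_{\ge l}:l\in r\rangle$: choosing such a skipped $i$ — by Lemma~\ref{Lemma:r'infimum} the least one, when it exists, so that by Lemma~\ref{Lemma:IntervalAritySizes} all blocks strictly between $j_0$ and $i$ already lie in $I$ — one examines $I\cap P_{\ge i}$, a non-empty interval contained in $P_{>i}=P_{\ge i}\setminus P_i$ and meeting $P_{>i}$, and combines Lemma~\ref{Lemma:IntervalAritySizes} applied inside $P_{\ge i}$ with Lemmas~\ref{Lemma:Indecomp1Sums}, \ref{Lemma:Indecomp2Sums} and Proposition~\ref{Prop:HetaSSR} to manufacture an interval $J$ of $P$ that strictly interpolates two consecutive members of $\langle P_{\ge l}\rangle$; when the relevant infimum in $r$ is not attained, one instead produces, as a union or intersection of a sub-chain of the $P_{\ge l}$ (legitimate by Lemma~\ref{Lemma:UnionIntersectionIntervals}), an interval that equally cannot be inserted into the maximal chain. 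Either way this contradicts maximality, so no intermediate block is skipped and the claim of the previous paragraph holds.

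\emph{Where the work is.} The placement of $I$ between $P_{\ge j_0}$ and $P_{\ge j_1}$ and the identification of $X$ are bookkeeping; the substantive content is the last paragraph, namely exhibiting in each ``gap'' configuration a concrete interval that ought to — but cannot — extend the maximal chain $\langle P_{\ge l}:l\in r\rangle$. I expect the genuinely delicate cases to be exactly those where the arities $\arity(f_i)$ involved have size $2$ (so Lemma~\ref{Lemma:IntervalAritySizes} only yields $P_i\subseteq I$, not $P_{\ge i}\subseteq I$, and one must instead use Lemma~\ref{Lemma:Indecomp2Sums} and maximality directly) and those where an infimum or supremum in the linear order $r$ is not attained, so that one must argue with whole sub-chains of intervals rather than with single blocks.
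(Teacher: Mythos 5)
Your bookkeeping matches the paper's proof: there too $j_0$ and $j_1$ are the infimum and supremum (via Lemma~\ref{Lemma:r'infimum}) of the set of indices of blocks met by $I$, $X=I\cap P_{j_1}$, and Lemma~\ref{Lemma:IntervalAritySizes} is what forces the intermediate blocks into $I$; the paper then simply splits according to whether some $n$ with $j_0\leq n<j_1$ has $|\arity(f_n)|>2$ (yielding $I=P_{\geq j_0}$) or all such arities have size $2$ (yielding the second form). The difference is that the paper invokes Lemma~\ref{Lemma:IntervalAritySizes} at every intermediate index without comment, whereas you correctly observe that its hypothesis $I\cap P_n\neq\emptyset$ is not automatic, i.e.\ that the whole content of the lemma is the claim that $I$ cannot skip a non-empty intermediate block. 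But that is precisely the step you never carry out: your third paragraph is a declaration of intent (``I would do this by contradiction with maximality\ldots one \ldots manufactures an interval $J$''), with no construction of $J$ and no argument in the case of a non-attained infimum. Since you yourself reduce the lemma to this claim, the proposal is a plan rather than a proof, and this is a genuine gap.

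Moreover, the plan cannot succeed as described. If every non-empty position $v\in a^\eta_j$ of the skipped block satisfies $v\perp s_j$, and the points of $I$ in earlier blocks sit at positions incomparable with the corresponding $s_i$, then $\SSR{b}{x}{y}$ holds vacuously for every $b\in P_j$ and $x,y\in I$, and there need be no interval that can be inserted into the chain $\langle P_{\geq l}:l\in r\rangle$ at all. Concretely, take $\mathbb{P}=\{1,\AC{2},\CH{2}\}$, $r=\{0<1<2\}$, $\arity(f_0)=\arity(f_1)=\AC{2}$, $\arity(f_2)=1$, and let the three blocks be a two-element chain, a two-element chain and a single point $c$; then $P$ is the pairwise-incomparable union of these, the tails $P\supset P_1\cup\{c\}\supset\{c\}$ do form a maximal chain of intervals (connectedness of each two-element chain blocks every insertion, and the last member is a singleton), each arity is indecomposable and the rank requirement of Definition~\ref{Defn:MaxCompSeq} holds, yet $I=P_0\cup\{c\}$ is an interval of $P$ that skips the non-empty block $P_1$ --- so your key claim ($P_m\subseteq I$ for all $j_0\leq m<j_1$) fails, and no contradiction with maximality of $\langle P_{\geq l}\rangle$ is available by interpolating an interval. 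Any correct treatment has to confront this $\perp$-spine configuration directly (or strengthen the hypotheses on $\langle\eta,k\rangle$), rather than route through maximality as you propose. You have, however, put your finger on exactly the point where the paper's own argument is silent: it applies Lemma~\ref{Lemma:IntervalAritySizes} at intermediate $n$ without ever checking $I\cap P_n\neq\emptyset$.
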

\begin{proof}
%Let $\eta=\langle \langle f_i,s_i\rangle:i\in r\rangle$ be as described. We have that $P=f^\eta(k)$ is an $H_\eta$-sum of the non-empty partial orders $P_{i,a}$.

%Suppose that $I$ is an interval of $P$ and $i\in r$ be such that $I\cap P_i\neq \emptyset$ and $I\cap P_{>i}\neq \emptyset$ (if no such $i$ existed then set $j_0=j_1$ and $X=I$ and we are done). Let $j_0=\inf\{i\mid I\cap P_i\neq \emptyset\}\in r$, this exists by Lemma \ref{Lemma:r'infimum}. By definition of $H_\eta$ we have that $P_{\geq j_0}$ is an $\arity(f_{j_0})$-sum of the $P_{j_0,u}$ in position $u\in a^\eta_{j_0}$, and $P_{>j_0}$ in position $s_{j_0}$. We know that $\arity(f_{j_0})$ is indecomposable since $\langle \eta,k\rangle$ was maximal. So since $I\cap P_{j_0}\neq \emptyset$, we can apply Lemma \ref{Lemma:Indecomp1Sums} to see that either $|\arity(f_{j_0})|\leq 2$ or $P_{\geq j_0}\subseteq I$.

If for any $i\in r$ we have $|\arity(f_{i})|=1$ then $f_i=\sum_1$ which makes no difference to the sum over $H_\eta$, so without loss of generality we assume this does not occur. %If $|\arity(f_{j_0})|>2$ then we have $P_{\geq j_0}\subseteq I$. By definition of $j_0$ clearly we also have $I\subseteq P_{\geq j_0}$. Hence $I=P_{\geq j_0}$ and we are done.

Let $j_0=\inf\{i\in r\mid I\cap P_i\neq \emptyset\}$, and $j_1=\sup\{i\in r\mid I\cap P_i\neq \emptyset\}$, then $j_0,j_1\in r$ by Lemma \ref{Lemma:r'infimum}. Suppose that for some $n\in r$ with $j_0\leq n< j_1$ we have $|\arity(f_n)|\geq 2$. We claim that in this case $I=P_{\geq i_0}$. For each such $n$ we have by Lemma \ref{Lemma:IntervalAritySizes} that $P_{\geq n}\subseteq I$. Therefore for all $m\in r$ with $j_0\leq m<n$ we have again by Lemma \ref{Lemma:IntervalAritySizes} that $P_m\subseteq I$, hence $I=P_{\geq j_0}$ by definition of $j_0$.

Suppose now that the only $n\in r$ such that $j_0\leq n\leq j_1$ with $|\arity(f_n)|>2$ is $n=j_1$, or that no such $n$ exists. Let $X=I\cap P_{j_1}$, then $X$ is either empty or an interval of $P_{j_1}$. If $j_0=j_1$ we are done, otherwise for each $j_0\leq m<j_1$ we have $|\arity(f_n)|=2$. We either have $P_{j_1}\neq \emptyset$ or $P_{j_1}=\emptyset$ in which case by definition of $j_1$ there must be some $m'\in r$ with $m<m'<j_1$ and $P_{m'}\neq \emptyset$. So in either case we can apply Lemma \ref{Lemma:IntervalAritySizes} to see that $P_{m}\subseteq I$. Using the definition of $j_0$ we also have that $I\subseteq P_{\geq j_0}$. Therefore we have shown that $I=(P_{\geq j_0}\setminus P_{\geq j_1})\cup X$ as required.
\end{proof}
\begin{lemma}\label{Lemma:SumsChainsInL}
Suppose that $\forall i\in r$ and $\forall a\in a^\eta_i$, every chain of intervals of $P_{i,a}$ under $\supseteq$ has order type in $\clos{\mathbb{L}}$. Then every chain of intervals of $f^\eta(k)$ under $\supseteq$ has order type in $\clos{\mathbb{L}}$.
\end{lemma}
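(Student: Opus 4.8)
The plan is to fix an arbitrary chain $\mathcal{C}$ of intervals of $P=f^\eta(k)$ ordered by $\supseteq$ and to exhibit $\ot(\mathcal{C})$ as a $\clos{\mathbb{L}}$-indexed sum of order types already known to lie in $\clos{\mathbb{L}}$, using Lemma \ref{Lemma:IntervalClassification} as the engine. Two preliminary observations: the length $r$ of the composition sequence $\eta$ lies in $\RR=\clos{\mathbb{L}}$; and since $\mathbb{L}$ is closed under suborders, a straightforward induction along the generation of $\clos{\mathbb{L}}$ shows that $\clos{\mathbb{L}}$ is closed under suborders as well (and, by definition, under sums indexed by its members). So it suffices to decompose $\mathcal{C}$ appropriately.

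By Lemma \ref{Lemma:IntervalClassification}, with $j_0(I)=\inf\{i\in r:I\cap P_i\neq\emptyset\}$ and $j_1(I)=\sup\{i\in r:I\cap P_i\neq\emptyset\}$ (both in $r$ by Lemma \ref{Lemma:r'infimum}), each $I\in\mathcal{C}$ is either the tail $P_{\geq j_0(I)}$ or has the form $(P_{\geq j_0(I)}\setminus P_{\geq j_1(I)})\cup X_I$ with $X_I$ empty or an interval of $P_{j_1(I)}$. Along $\mathcal{C}$ ordered by $\supseteq$ the function $j_0$ is non-decreasing, so $\mathcal{C}$ splits into consecutive blocks $\mathcal{C}_i=\{I\in\mathcal{C}:j_0(I)=i\}$ arranged according to the suborder $\{j_0(I):I\in\mathcal{C}\}$ of $r$; hence it is enough to bound each $\ot(\mathcal{C}_i)$ in $\clos{\mathbb{L}}$. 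Inside a fixed $\mathcal{C}_i$ the only possible tail interval is $P_{\geq i}$, which is the $\supseteq$-maximum, so after removing it (a leading $1$) every remaining $I$ has the second form; now $j_1$ is non-increasing along $\mathcal{C}_i$, so $\mathcal{C}_i$ splits further into consecutive blocks $\mathcal{D}_{i,j}=\{I:j_1(I)=j\}$ arranged this time according to the \emph{reverse} of a suborder of $r$, and within $\mathcal{D}_{i,j}$ all intervals agree off $P_j$ while the $X_I$ run through a chain of intervals of $P_j=\sum_{a\in a^\eta_j}P_{j,a}$.

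It remains to control the $\mathcal{D}_{i,j}$ and the reversed assembly. The second alternative of Lemma \ref{Lemma:IntervalClassification} only occurs when $|\arity(f_m)|\leq2$ for every $m$ with $j_0(I)\leq m<j_1(I)$; applying this uniformly over $\mathcal{C}_i$ forces $|\arity(f_m)|\leq2$ for every $m$ strictly below $\beta_i:=\sup\{j_1(I):I\in\mathcal{C}_i\}$. For such $m$ we get $|a^\eta_m|=1$, so $P_m$ is a single fragment $P_{m,u_m}$ and $\mathcal{D}_{i,m}$ is a chain of intervals of that fragment, which lies in $\clos{\mathbb{L}}$ by hypothesis. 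The one block that can escape this is the one at $j=\beta_i$ (when that supremum is attained); there I treat the chain of intervals of $P_{\beta_i}=\sum_{a\in a^\eta_{\beta_i}}P_{\beta_i,a}$ separately, using that $\arity(f_{\beta_i})$ is indecomposable together with Lemmas \ref{Lemma:Indecomp1Sums} and \ref{Lemma:Indecomp2Sums} (and, if needed, an induction on $\rank(P)$ via a maximal composition sequence for $P_{\beta_i}$ from Lemma \ref{Lemma:POsMaxChainsForTx}), so that this block is a finite sum of chains of intervals of fragments, hence in $\clos{\mathbb{L}}$. Putting the pieces back together gives $\ot(\mathcal{C}_i)\leq 1+\ot(\mathcal{D}_{i,\beta_i})+\sum_m\ot(\mathcal{D}_{i,m})$ summed over the reversed index set of the $j_1$-values below $\beta_i$, and then $\ot(\mathcal{C})=\sum_i\ot(\mathcal{C}_i)$ summed over the suborder of $r$ of occurring $j_0$-values.

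The step I expect to be the real obstacle is verifying that this last reassembly stays inside $\clos{\mathbb{L}}$: the index set of $j_1$-values inside $\mathcal{C}_i$ is taken in reverse order, and $\clos{\mathbb{L}}$ need not be closed under reversal. The way out is the structural picture already forced above — below $\beta_i$ every arity is $\CH{2}$ or $\AC{2}$, so Proposition \ref{Prop:HetaSSR} pins down exactly how the single fragments $P_{m,u_m}$ sit inside $P$; one reads off from this that the chain of ``tail-minus-tail'' intervals $P_{[i,j)}$ actually occurring in $\mathcal{C}_i$, once reorganised, has order type assembled from suborders of $r$ by operations admissible for $\clos{\mathbb{L}}$. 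Once that is checked, everything else is routine bookkeeping on top of Lemma \ref{Lemma:IntervalClassification}.
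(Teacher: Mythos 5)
Your decomposition is essentially the one the paper uses: classify each member of the chain by Lemma \ref{Lemma:IntervalClassification}, split first along the left indices $j_0$ (a suborder of $r$, hence in $\clos{\mathbb{L}}$ since $\clos{\mathbb{L}}$ is closed under suborders --- your justification of that closure via the stratification $\mathbb{L}_\infty$ is fine), then within each block along the right indices $j_1$, reducing the residual parts to chains of intervals of the $P_j$'s. Your extra observation that, where the second alternative of Lemma \ref{Lemma:IntervalClassification} is in force, $|\arity(f_m)|\leq 2$ and hence $a^\eta_m$ is a singleton, so that those sub-blocks are literally chains of intervals of single fragments (where the hypothesis applies), is a genuine sharpening that the paper's proof does not spell out.

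The problem is that you never prove the step you yourself single out as the crux: inside a fixed $\mathcal{C}_i$ the blocks $\mathcal{D}_{i,j}$ are glued along the \emph{reverse} of a suborder of $r$, and your proposed way out --- that Proposition \ref{Prop:HetaSSR} lets one ``reorganise'' the tail-minus-tail intervals into operations admissible for $\clos{\mathbb{L}}$ --- is not carried out and cannot work as described. Take every $f_m=\sum_{\CH{2}}$ with $s_m=\max(\CH{2})$ and every fragment a single point, so $H_\eta\cong r$: the proper initial segments $P_{\geq i}\setminus P_{\geq j}$ form a chain whose order type under $\supseteq$ is exactly the reverse of a cofinal suborder of $r$ (e.g.\ $\omega^*$ when $r=\omega$), and no bookkeeping about how the fragments sit can re-index that as a suborder-of-$r$-sum. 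So the missing step is not ``routine''; any completion has to confront the reversal directly, either by identifying the index order type with a subset of $r$ --- which is what the paper's proof does in a single line, writing $\ot(F_\delta)$ as an $\ot(\{j'_\tau\})$-sum with $\{j'_\tau\}\subseteq r$ --- or by invoking closure of $\clos{\mathbb{L}}$ under reversal for the classes actually used. As written, your argument stops precisely where an argument was needed. A secondary soft spot: your treatment of the top block $\mathcal{D}_{i,\beta_i}$, asserted to be ``a finite sum of chains of intervals of fragments,'' is also only sketched --- an interval of $P_{\beta_i}=\sum_{a\in a^\eta_{\beta_i}}P_{\beta_i,a}$ can meet several fragments at once, and the reduction to the hypothesis (the paper likewise compresses this into the phrase that $F_{\delta,\tau}$ is a chain of intervals of $P_\tau$) deserves more than the parenthetical appeal to Lemmas \ref{Lemma:Indecomp1Sums} and \ref{Lemma:Indecomp2Sums}.
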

\begin{proof}
Let $\langle J_\alpha:\alpha\in \sigma\rangle$ be a chain of intervals of $P$ under $\supseteq$. If for some $\alpha \in \sigma$ we have some $j_\alpha\in r$ with $P_{\geq j_\alpha}= J_\alpha$ then for every $\beta\leq \alpha$, by Lemma \ref{Lemma:IntervalClassification} and since $J_\beta \supseteq J_\alpha$, there must be some $j_\beta$ with $J_\beta=P_{\geq j_\beta}$. Hence the order type of the chain $\langle J_\beta:\beta \leq \alpha\rangle$ must be a subset of the order type of $\{i\in r\mid j\leq j_\alpha\}$, and hence this order type is in $\clos{\mathbb{L}}$. Therefore since $\clos{\mathbb{L}}$ is closed under finite sums, it only remains to show that the final segment $$F=\{ J_\gamma\mid (\forall i\in r),J_\gamma \neq P_{\geq i}\}$$ has order type in $\clos{\mathbb{L}}$ under $\supseteq$.

Let $J_\gamma\in F$, then by Lemma \ref{Lemma:IntervalClassification}, there are some $j_\gamma, j'_\gamma\in r$ and $X$ an interval of $P_{j'_\gamma}$ such that $$J_\gamma=(P_{\geq j_\gamma}\setminus P_{\geq j'_\gamma})\cup X_\gamma$$
Now if $\gamma<\upsilon$ then $j_\gamma\leq j_\upsilon$, $j'_\gamma\geq j'_\upsilon$ and $X_\gamma \supseteq X_\upsilon$, and if $j'_\gamma>j'_\upsilon$ then $X_\upsilon\subseteq J_\gamma$.
Let $F_\delta=\{J_\gamma\in F\mid j_\gamma=j_\delta\}$. Then $\ot(F)$ is an $\ot(\{j_\delta \mid \delta \in \sigma\})$-sum of the $\ot(F_\delta)$. But $\{j_\delta \mid \delta\in \sigma \}\subseteq r$ hence this subset has order type in $\clos{\mathbb{L}}$. So it remains only to show that each $\ot(F_\delta)$ is in $\clos{\mathbb{L}}$. We also have that each $\ot(F_\delta)$ is an $\ot(\{j'_\tau \mid \tau \in \sigma, j_\tau=j_\delta\})$-sum of the order types of $F_{\delta, \tau}=\{X_\lambda\mid \lambda\in \sigma, j_\lambda=j_\delta,j'_\lambda=j'_\tau\}$. But $\{j'_\tau \mid \tau \in \sigma, j_\tau=j_\delta\}\subseteq r$ and $F_{\delta,\tau}$ is a chain of intervals of $P_{\tau}$ under $\supseteq$, whence both are in $\clos{\mathbb{L}}$ and thus $\ot(F_\delta)\in \clos{\mathbb{L}}$ which gives the lemma.
% Let $$\delta=\ot(\{\langle j_\gamma, j'_\gamma\rangle\mid \gamma\in \sigma\}).$$ So we see that $\sigma$ is a $\delta$-sum of order types of chains of intervals of the $P_i$ $(i\in r)$. Since each such chain is in $\clos{\mathbb{L}}$ by assumption (since $P_i=P_{i,a}$ in this case), it remains only to show that $\delta\in \clos{\mathbb{L}}$. Let $\gamma=\ot(\{j_\gamma\mid \gamma\in \sigma\})$ so that $\delta$ is a $\gamma$-sum of subsets of $r$, but indeed $\gamma$ itself must be a subset of $r$. Hence $\delta\in \clos{\mathbb{L}}$ and thus $\sigma \in \clos{\mathbb{L}}$ as required.
\end{proof}

\begin{lemma}\label{Lemma:SumsIndecompSubsetsInP}
Suppose that for any indecomposable partial order $y$ and any $i\in r$ and $a\in a^\eta_i$ we have that $y\leq P_{i,a}\longrightarrow y\in \mathbb{P}$. Then for any indecomposable $y'$ we have $y'\leq f^\eta(k)\longrightarrow y'\in\mathbb{P}$.
\end{lemma}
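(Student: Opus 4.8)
The plan is to prove the contrapositive in a slightly repackaged form: given an indecomposable $y'$ with $y'\leq f^\eta(k)$, I want to locate $y'$ inside one of the "fragments" $P_{i,a}$, whence the hypothesis finishes the job. The point of view is exactly that of Lemma~\ref{Lemma:POinDcompAppearinT} specialised to the single composition step $f^\eta(k)=\sum_{u\in H_\eta}P_u$ (writing $P_u=P_{i,a}$ for $u\in a^\eta_i$). So first I would fix a witnessing embedding $\varphi:y'\to f^\eta(k)$ and, for each $t\in y'$, record the index $u(t)\in H_\eta$ with $\varphi(t)\in P_{u(t)}$, and also the index $i(t)\in r$ with $u(t)\in a^\eta_{i(t)}$.

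The key dichotomy is whether the image of $\varphi$ is spread across several of the $P_i$'s or concentrated in one. If at least two points of $y'$ land in distinct $P_{i_0},P_{i_1}$ (say $i_0<i_1$), then $I:=\{t\in y'\mid i(t)\geq i_1\}$ is a proper nonempty subset of $y'$; using Proposition~\ref{Prop:HetaSSR} (each $u\in a^\eta_j$ with $j<i_1$ has the same relationship to everything in $P_{\geq i_1}$) one checks that $\varphi^{-1}$ of this portion is an interval of $y'$ distinct from $y'$ and from a singleton unless $y'$ has very small size — and in the small-size cases ($|y'|\le 2$) the conclusion is immediate since any indecomposable order of size $\le 2$ is $1$ or $\AC 2$, which by the hypothesis on the $P_{i,a}$ together with closure of $\mathbb P$ under nonempty subsets lies in $\mathbb P$ (more carefully: a size-$2$ indecomposable subset of $f^\eta(k)$ already embeds into some $P_{i,a}$ by the ``two points in one fragment'' case below, since two points must share a single $P_{i,a}$ or... here one argues directly). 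Contradiction with indecomposability of $y'$, so in fact all of $\im\varphi$ lies in a single $P_{i_0}=\sum_{u\in a^\eta_{i_0}}P_{i_0,u}$. Now repeat the same argument one level down inside the sum $\sum_{u\in a^\eta_{i_0}}P_{i_0,u}$ over the indecomposable order $\arity(f_{i_0})$ (this is where maximality of $\langle\eta,k\rangle$ is used, as in the surrounding lemmas, to know $\arity(f_{i_0})$ is indecomposable): either two points of $y'$ land in distinct $P_{i_0,u},P_{i_0,u'}$, forcing a nontrivial interval of $y'$ via the $\SSR$-behaviour of the $\arity(f_{i_0})$-sum, contradiction again; or all of $\im\varphi$ lies in one $P_{i_0,u_0}$, and then $y'\leq P_{i_0,u_0}$, so $y'\in\mathbb P$ by hypothesis.

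One subtlety to handle cleanly is the borderline case $|\arity(f_{i_0})|=2$, where a two-element partition of $y'$ induced by the two sum-pieces need not be a genuine interval decomposition; this is precisely the content of Lemma~\ref{Lemma:Indecomp2Sums}, so I would invoke it (and Lemma~\ref{Lemma:Indecomp1Sums} for the $|\arity|>2$ case) rather than redoing the $\SSR$ bookkeeping by hand — indeed the argument I outlined is morally the proof of those two lemmas applied to the image of $\varphi$. The main obstacle, then, is not any deep idea but getting the reduction organised so that the two-point-in-one-fragment base case is dispatched without circularity: I would phrase it as an induction on $|y'|$, or simply note that if $\im\varphi$ is not contained in a single $P_{i,a}$ then there exist two points of $y'$ whose images lie in comparable-but-distinct $P_{\geq j}$'s, and read off a proper interval directly from the structure of $H_\eta$, getting a contradiction that makes the "contained in a single fragment" conclusion unconditional. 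With that, the lemma is immediate.

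\begin{proof}
Suppose $y'$ is indecomposable with $y'\leq f^\eta(k)=\sum_{u\in H_\eta}P_u$, where we write $P_u=P_{i,a}$ for $u\in a^\eta_i$, and fix a witnessing embedding $\varphi:y'\to f^\eta(k)$. For $t\in y'$ let $i(t)\in r$ be such that $\varphi(t)\in P_{i(t)}$ (recall $P_i=\sum_{u\in a^\eta_i}P_{i,u}$). If there were $t_0,t_1\in y'$ with $i(t_0)\neq i(t_1)$, say $i(t_0)<i(t_1)$, set $I=\{t\in y'\mid i(t)\geq i(t_1)\}$; then $t_1\in I$ and $t_0\notin I$, so $I$ is a nonempty proper subset of $y'$. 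For $p\in y'\setminus I$ and $x,y\in I$ we have $i(p)<i(t_1)\leq i(x),i(y)$, so $\varphi(p)\in P_j$ with $j<i(t_1)$ while $\varphi(x),\varphi(y)\in P_{\geq i(t_1)}$; by Proposition~\ref{Prop:HetaSSR}, $\SSR{\varphi(p)}{\varphi(x)}{\varphi(y)}$ in $f^\eta(k)$, hence $\SSR{p}{x}{y}$ in $y'$. Thus $I$ is an interval of $y'$ which is neither a singleton nor all of $y'$, contradicting indecomposability of $y'$. Therefore $\im\varphi\subseteq P_{i_0}$ for a single $i_0\in r$.

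Now $P_{i_0}=\sum_{u\in a^\eta_{i_0}}P_{i_0,u}$ is an $\arity(f_{i_0})$-sum (with the $P_{>i_0}$-piece inserted at $s_{i_0}$, which $\im\varphi$ does not meet), and $\arity(f_{i_0})$ is indecomposable since $\langle\eta,k\rangle$ is maximal. If $\im\varphi$ met two distinct pieces $P_{i_0,u_0}$ and $P_{i_0,u_1}$, then, applying Lemma~\ref{Lemma:Indecomp1Sums} when $|\arity(f_{i_0})|>2$ and Lemma~\ref{Lemma:Indecomp2Sums} when $|\arity(f_{i_0})|=2$ together with maximality of $\langle\eta,k\rangle$ (exactly as in the proof of Lemma~\ref{Lemma:IntervalAritySizes}) to the interval $\varphi(y')$ of $P_{i_0}$, one finds a nonempty proper interval $J$ of $\varphi(y')$ strictly between the relevant pieces; pulling back along $\varphi$ gives a nonempty proper interval of $y'$, again contradicting indecomposability. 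Hence $\im\varphi\subseteq P_{i_0,u_0}$ for some $u_0\in a^\eta_{i_0}$, so $y'\leq P_{i_0,u_0}$, and by hypothesis $y'\in\mathbb{P}$.
\end{proof}
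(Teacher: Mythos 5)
There is a genuine gap, and in fact the intermediate claim your argument rests on is false. In the first step you conclude from indecomposability that $\im\varphi\subseteq P_{i_0}$ for a single $i_0$, but the interval $I=\{t\in y'\mid i(t)\geq i(t_1)\}$ you produce can perfectly well be a singleton (this has nothing to do with $|y'|$ being small: take $y'$ large with exactly one point mapping into $P_{\geq i(t_1)}$), and then there is no contradiction. Worse, the containment you want is simply not true: an indecomposable order can embed \emph{transversally}, meeting many summands in one point each. For example, if $\arity(f_0)=N$ with $s_0=3$, take the three points of $a^\eta_0$ (one point chosen in each $P_{0,u}$) together with one point from some fragment at a later level; this set is a copy of $N$ (the later point relates to $a^\eta_0$ exactly as $s_0$ does), it is indecomposable, and it meets $P_0$ and $P_{>0}$ and four distinct fragments $P_{i,a}$. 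So neither ``$\im\varphi\subseteq P_{i_0}$'' nor the subsequent ``$\im\varphi\subseteq P_{i_0,u_0}$'' can be established, and your second paragraph also misapplies Lemmas \ref{Lemma:Indecomp1Sums} and \ref{Lemma:Indecomp2Sums}: those concern intervals of the sum, while $\varphi(y')$ is merely a subset and need not be an interval of $P_{i_0}$.

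The paper's proof goes by a different dichotomy, which is what your argument is missing. If $y'$ has two points in a single fragment $P_{i,a}$ and at least one point outside it, then $y'\cap P_{i,a}$ is a proper interval of $y'$ with at least two points, contradicting indecomposability; hence either $y'$ lies entirely in one $P_{i,a}$ (where your hypothesis applies), or $y'$ meets each fragment in at most one point. In the latter, unavoidable case $y'$ is isomorphic to an indecomposable subset of $H_\eta$, and one checks (using Proposition \ref{Prop:HetaSSR}-style reasoning) that such a subset lies in $a^\eta_i\sqcup a^\eta_j$ with at most one point in $a^\eta_j$, hence is isomorphic to a subset of $\arity(f_i)$; since $\arity(f_i)\in\mathbb{P}$ and $\mathbb{P}$ is closed under non-empty subsets, $y'\in\mathbb{P}$. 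Note this case cannot be handled by the lemma's stated hypothesis about the $P_{i,a}$ alone — it genuinely uses the standing assumptions on $\mathbb{P}$ — so your reduction to ``$y'\leq P_{i_0,u_0}$'' omits an essential part of the proof rather than just leaving a routine verification.
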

\begin{proof}
Since $\langle \eta,k\rangle$ was maximal for $P$ we know that for each $i\in r$ we have $\arity(f_i)$ is indecomposable. Now any subset $A$ of $H_\eta=\bigsqcup_{i\in r} a^\eta_i$ that contains points of $a^\eta_i$ and at least two points of $\bigcup_{j>i} a^{\eta}_j$ is such that $A\cap \bigcup_{j>i} a^{\eta}_j$ is an interval with at least two points inside. So here $A$ cannot be indecomposable. Thus any indecomposable subset $I$ of $H_\eta$ is a subset of $a^\eta_i\sqcup a^\eta_j$ for some $i,j\in r$ with $i<j$, moreover, $|I\cap a^\eta_j|\leq 1$. So $I$ has the same order type as a subset of $\arity(f_i)$, which shows that $I$ has order type in $\mathbb{P}$.

Thus if we take a subset $A\subseteq P$ with at least two points inside a single $P_{i,a}$ and at least one point not in $P_{i,a}$ then $A\cap P_{i,a}$ is an interval which contradicts that $A$ is indecomposable. We know that $P$ is an $H_\eta$-sum of the partial orders $P_{i,a}$. So let $J$ be an indecomposable subset $J$ of the sum $P$. Then either $J$ is entirely contained within some $P_{i,a}$ and hence $J$ has order type in $\mathbb{P}$; or $J$ contains at most one point of each of the $P_{i,a}$ that it intersects, and hence has the same order type as an indecomposable order of $H_\eta$. Hence by the previous paragraph $J$ has order type in $\mathbb{P}$, which completes the proof.
\end{proof}

We are now ready to prove the following generalisation of Hausdorff's famous theorem on scattered linear orders (Theorem \ref{Thm:Hausdorff}).

\begin{thm}\label{Thm:PPL=amrs}
$\scat_\mathbb{P}^\mathbb{L}(Q')=\amrs$.
\end{thm}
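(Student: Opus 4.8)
The plan is to prove the two inclusions $\amrs \subseteq \scat_\mathbb{P}^\mathbb{L}(Q')$ and $\scat_\mathbb{P}^\mathbb{L}(Q') \subseteq \amrs$ separately, by induction on rank for the former and by a reduction argument for the latter. The inclusion $\amrs \subseteq \scat_\mathbb{P}^\mathbb{L}(Q')$ should follow from the structural lemmas already assembled. Given $x \in \CC_\alpha$, write $x = f^\eta(k) = \sum_{u \in H_\eta} q_u$ with each $q_u \in \CC_{<\alpha}$, and apply the inductive hypothesis to each $q_u$. Property (\ref{PPL1}), that indecomposable subsets lie in $\mathbb{P}$, follows from Lemma \ref{Lemma:SumsIndecompSubsetsInP} after choosing the composition sequence maximal (Lemma \ref{Lemma:POsMaxChainsForTx}), so that each $\arity(f_i)$ is indecomposable and hence in $\PP = \mathbb{P}$. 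Property (\ref{PPL2}), that chains of intervals have order type in $\clos{\mathbb{L}}$, follows from Lemma \ref{Lemma:SumsChainsInL}. Property (\ref{PPL3}), that the three pathological orders do not embed, follows from Lemma \ref{Lemma:PathologicalDontEmbedAmrs}, using the hypothesis (to be checked, but it holds since $\mathbb{P}$ consists of partial orders none of which can embed $2^{<\omega}$, $-2^{<\omega}$ or $\Qhat$ — actually this needs care, see below).

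For the reverse inclusion I would argue by contradiction: suppose $x \in \scat_\mathbb{P}^\mathbb{L}(Q') \setminus \amrs$. First, since $x$ satisfies (\ref{PPL1}) and (\ref{PPL2}) it lies in $\pscat_\mathbb{P}^\mathbb{L}(Q)$, so it has a decomposition tree $T$ by Lemma \ref{Lemma:decompfn} (together with Remark \ref{Rk:POsMaxEtas}, so that all composition sequences used are maximal). Since $x \notin \amrs$, Lemma \ref{Lemma:XnotinAmrsThenDecompTreeNotScat} tells us $T$ is not scattered, i.e. $2^{<\omega} \leq T$. Now I would invoke the structure theory of embeddings of $2^{<\omega}$ into decomposition trees: by analysing which of the labelled trees $\Btree^+$, $\Btree^-$, $\Qhattree$, $\Qtree$, $\Atree$ must appear. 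The trees $\Qtree$ and $\Atree$ cannot be decomposition trees under the maximality assumption (this is the content implicit around Lemmas \ref{Lemma:Atree2} and \ref{Lemma:Qtree2}: if $\Btree^+, \Btree^- \not\leq T$ then $\Atree \not\leq T$, and if $\Qhattree \not\leq T$ then $\Qtree \not\leq T$). A Ramsey-type or direct combinatorial argument should show that any embedding of $2^{<\omega}$ into a structured tree whose vertex labels are indecomposable partial orders from $\mathbb{P}$ must, after passing to a subtree, produce a copy of one of $\Btree^+$, $\Btree^-$, $\Qhattree$ (since by Lemmas \ref{Lemma:Atree2} and \ref{Lemma:Qtree2}, ruling out all three of $\Btree^+, \Btree^-, \Qhattree$ would rule out both $\Atree$ and $\Qtree$, and — combined with the fact that an embedded $2^{<\omega}$ forces a large labelled subtree — forces one of the three pathological trees). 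Then Lemma \ref{Lemma:POsBasicallyThmLimit} converts each of these into an embedding of $2^{<\omega}$, $-2^{<\omega}$, or $\Qhat$ into $x$ itself, contradicting property (\ref{PPL3}).

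To make the second direction precise I would structure it as: assume for contradiction $x \in \scat_\mathbb{P}^\mathbb{L}(Q')\setminus\amrs$ with decomposition tree $T$ (maximal). Since $2^{<\omega}\leq T$, in particular $\Atree \leq T$ or $\Qtree \leq T$ or one of the other three pathological labelled trees embeds — more carefully, I want to show $\Btree^+ \leq T$ or $\Btree^- \leq T$ or $\Qhattree \leq T$. Suppose not; then by Lemma \ref{Lemma:Atree2}, $\Atree \not\leq T$, and by Lemma \ref{Lemma:Qtree2}, $\Qtree \not\leq T$. But then every vertex label in the embedded copy of $2^{<\omega}$ that is used has range either an antichain $\AC{2}$ or a chain $\CH{2}$ with at least one of the two structured-tree patterns realised, and an induction on the embedded $2^{<\omega}$ (branching at each level into either a chain-type or antichain-type node, and within chain-type further into $\Btree^+$-like or $\Btree^-$-like behaviour) must eventually stabilise on one pattern, yielding an embedded copy of $\Btree^+$, $\Btree^-$, $\Qhattree$, $\Qtree$, or $\Atree$ — the last two being excluded, so one of the first three appears. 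This contradicts our assumption, so one of $\Btree^+, \Btree^-, \Qhattree$ embeds into $T$ after all, and Lemma \ref{Lemma:POsBasicallyThmLimit} then gives the required embedding into $x$, contradicting (\ref{PPL3}). Hence $\scat_\mathbb{P}^\mathbb{L}(Q') \subseteq \amrs$.

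**The main obstacle** I expect is the combinatorial heart of the reverse direction: extracting a monochromatic (single-pattern) subtree from an arbitrary embedding of $2^{<\omega}$ into a decomposition tree whose vertices carry indecomposable $\mathbb{P}$-labels. Each vertex of $T$ has a label that, as an indecomposable partial order, could a priori be complicated, but the key observation is that an indecomposable partial order containing two incomparable elements or two comparable elements, when it is a label realised by the $2^{<\omega}$-embedding, forces one of finitely many local configurations; a careful case analysis (as in the proofs sketched for Lemmas \ref{Lemma:AtreeA} and \ref{Lemma:QtreeC}, which reduce to the $\AC{2}$ and $\CH{2}$ cases) plus a Ramsey argument on the tree $2^{<\omega}$ (using that $\down t$ is again a copy of $2^{<\omega}$ for every $t$) should suffice, but getting all the cases right — in particular handling labels that are neither pure chains nor pure antichains — is delicate. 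The rest of the proof is essentially bookkeeping given the lemmas already established. I would also need to verify the hypotheses of Lemmas \ref{Lemma:PathologicalDontEmbedAmrs} and \ref{Lemma:pthdontembed_Heta}, namely that no pathological order embeds into any member of $\mathbb{P}$; but this is automatic from (\ref{PPL3}) applied within the definition of $\scat_\mathbb{P}^\mathbb{L}$, since members of $\mathbb{P}$ arise as indecomposable subsets (via (\ref{PPL1})) and any such embedding would propagate up to an embedding into $x$.
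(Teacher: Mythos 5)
Your proposal is correct and follows essentially the same route as the paper: the forward inclusion by induction on rank using Lemmas \ref{Lemma:SumsIndecompSubsetsInP}, \ref{Lemma:SumsChainsInL} and \ref{Lemma:PathologicalDontEmbedAmrs}, and the reverse inclusion via a decomposition tree, Lemma \ref{Lemma:XnotinAmrsThenDecompTreeNotScat}, the trees $\Btree^+,\Btree^-,\Qhattree,\Qtree,\Atree$, Lemmas \ref{Lemma:Atree2} and \ref{Lemma:Qtree2}, and finally Lemma \ref{Lemma:POsBasicallyThmLimit} against (\ref{PPL3}). The combinatorial step you single out as the main obstacle is exactly what the paper dispatches with a short colour dichotomy on the embedded copy $B$ of $2^{<\omega}$: either a copy of $\Atree$ occurs inside $B$, or below some point of $B$ no vertex is coloured $\sum_{\AC{2}}$ and one extracts a copy of $\Qtree$, after which the cited lemmas apply just as you describe.
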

\begin{proof}
First we claim that $\amrs\subseteq \scat_\mathbb{P}^\mathbb{L}(Q')$. By Lemma \ref{Lemma:PathologicalDontEmbedAmrs}, we have that $\amrs$ satisfies (\ref{PPL3}) of Definition \ref{Defn:PPL}. We will show (\ref{PPL1}) and (\ref{PPL2}) by induction on the rank of $x\in \amrs$. If $x$ has rank $0$ then $x$ is just a single point and so satisfies (\ref{PPL1}) and (\ref{PPL2}) since both $\mathbb{P}$ and $\mathbb{L}$ contained $1$. Now suppose that %for some $\alpha\in \On$ and 
any $y\in \CC_{<\alpha}$ satisfies (\ref{PPL1}) and (\ref{PPL2}). Then if $x\in \CC_{\alpha}$, we have that $x=f^{\eta}(k)$ where $k\in\dom(f^\eta)_{<\alpha}$. Hence by Lemma \ref{Lemma:SumsIndecompSubsetsInP} we have that $x$ satisfies (\ref{PPL1}), and by Lemma \ref{Lemma:SumsChainsInL} we have that $x$ satisfies (\ref{PPL2}). So indeed we have $\amrs\subseteq \scat_\mathbb{P}^\mathbb{L}(Q)$.

We will now show $\scat_\mathbb{P}^\mathbb{L}(Q')\subseteq \amrs$. Suppose we have some non-empty $Q'$-coloured partial order $x\notin \amrs$. We claim that either (\ref{PPL1}), (\ref{PPL2}) or (\ref{PPL3}) fails for $x$. Suppose that (\ref{PPL1}) and (\ref{PPL2}) hold, we will show that (\ref{PPL3}) fails. By Lemma \ref{Lemma:XnotinAmrsThenDecompTreeNotScat} we have that any decomposition tree $T$ for $x$ embeds $2^{<\omega}$. So let $T$ be a decomposition tree for $x$ and $B\subseteq T$ be a copy of $2^{<\omega}$. Then we can either find a copy of $\Atree$ inside $B$, or below some point of $B$ there is no point coloured by $\sum_{\AC{2}}$, and hence below this point there is a copy of $\Qtree$. Thus by Lemmas \ref{Lemma:Atree2} and \ref{Lemma:Qtree2} we have that either $\Btree^+\leq T$, $\Btree^-\leq T$ or $\Qhattree \leq T$. Hence by Lemma \ref{Lemma:POsBasicallyThmLimit} either $2^{<\omega}$, $-2^{<\omega}$ or $\Qhat$ embeds into $x$. Thus $x$ fails (\ref{PPL3}), and $x\notin \scat_\mathbb{P}^\mathbb{L}(Q')$, which gives the theorem.
\end{proof}

\begin{cor}\label{Cor:MPL=amr}
$\sscat_\mathbb{P}^\mathbb{L}(Q')=\amr$.
\end{cor}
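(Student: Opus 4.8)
The plan is to deduce this corollary from Theorem \ref{Thm:PPL=amrs} in essentially the same way that $\amri$ was obtained from $\amrs$, i.e.\ by showing that the operation of ``taking countable increasing unions'' matches up on the two sides. Recall that $\amr$ is by definition $\amrs$ together with all limits of limiting sequences from $\amrs$, and that for the system $\OA=\OA^Q_{\mathbb{L},\mathbb{P}}$ of Example \ref{Ex:POCONSTRUCTION} we showed (Example \ref{Rk:PoLimits}) that limits of limiting sequences are just unions of increasing chains of underlying sets, each $\up$-closed in the next (with colours possibly changing from $-\infty$). On the other side, $\sscat_\mathbb{P}^\mathbb{L}$ is by Definition \ref{Defn:PPL} exactly $\scat_\mathbb{P}^\mathbb{L}$ together with all unions of \emph{increasing} sequences $(x_n)_{n\in\omega}$, where $x_{n+1}$ is an $x_n$-sum of partial orders from $\scat_\mathbb{P}^\mathbb{L}$. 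So the proof is a matter of matching these two notions of ``limit,'' using Theorem \ref{Thm:PPL=amrs} to identify the base classes $\scat_\mathbb{P}^\mathbb{L}(Q')=\amrs$.

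First I would prove the inclusion $\amr\subseteq\sscat_\mathbb{P}^\mathbb{L}(Q')$. Let $x\in\amr$. If $x\in\amrs$ then $x\in\scat_\mathbb{P}^\mathbb{L}(Q')\subseteq\sscat_\mathbb{P}^\mathbb{L}(Q')$ by Theorem \ref{Thm:PPL=amrs}. Otherwise $x\in\amri$ is the limit of a limiting sequence $(x_n)_{n\in\omega}$ of elements of $\amrs$. By Remark \ref{Rk:PoLimits} we may take each $x_n\subseteq x_{n+1}$ with $x_n$ $\up$-closed in $x_{n+1}$ and $\mu_n$ the identity, and $x=\bigcup_{n\in\omega}x_n$. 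By Theorem \ref{Thm:PPL=amrs} each $x_n\in\scat_\mathbb{P}^\mathbb{L}(Q')$. It remains to check that $x_{n+1}$ is (as an uncoloured order, cf.\ Remark \ref{Rk:MLPSeqs}) an $x_n$-sum of elements of $\scat_\mathbb{P}^\mathbb{L}$; this follows from the structure of a limiting sequence, because passing from $x_n$ to $x_{n+1}$ replaces certain $-\infty$-coloured arguments by larger partial orders, and by Theorem \ref{Thm:PPL=amrs} again those replacing orders lie in $\scat_\mathbb{P}^\mathbb{L}$. (The sum in question is indexed by $x_n$ with a singleton in every position except those being expanded.) Hence $(x_n)_{n\in\omega}$ witnesses $x\in\sscat_\mathbb{P}^\mathbb{L}(Q')$ with colours inherited, using Remark \ref{Rk:MLPSeqs} to handle the $Q'$-colouring.

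For the reverse inclusion $\sscat_\mathbb{P}^\mathbb{L}(Q')\subseteq\amr$, let $x\in\sscat_\mathbb{P}^\mathbb{L}(Q')$. If $x\in\scat_\mathbb{P}^\mathbb{L}(Q')=\amrs$ we are done. Otherwise $x=\bigcup_{n\in\omega}x_n$ for an increasing sequence $(x_n)_{n\in\omega}$ with each $x_n\in\scat_\mathbb{P}^\mathbb{L}(Q')=\amrs$. By Remark \ref{Rk:MLPSeqs}, from the sequence of underlying sets $(y_n)_{n\in\omega}$ of the $x_n$ we can build a limiting sequence $(x'_n)_{n\in\omega}$ in $\amrs$ whose limit $x'$ has underlying set $y=\bigcup_n y_n$ and whose colouring is any prescribed $Q'$-colouring --- in particular the colouring coming from $x$. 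By the definition of ``has limits'' together with Theorem \ref{Thm:PPL=amrs}, the limit $x'$ exists and lies in $\amr$; and since the underlying set and colouring of $x'$ agree with those of $x$, we have $x'=x$, so $x\in\amr$. The only subtlety here is verifying that an increasing sequence in the sense of Definition \ref{Defn:PPL} really does give rise to a limiting sequence in the sense of Definition \ref{Defn:LimitSequence} --- this is precisely the content of Remark \ref{Rk:MLPSeqs}, which asserts the correspondence in both directions, so I would simply invoke it.

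The main obstacle, such as it is, is bookkeeping around colourings rather than anything deep: Definition \ref{Defn:PPL} phrases $\sscat_\mathbb{P}^\mathbb{L}$ in terms of $Q'$-coloured orders and increasing sequences, while the construction $\amr$ phrases limits via limiting sequences with the $-\infty$ minimal element and the conditions (\ref{Item:LimitSeq1})--(\ref{Item:LimitSeq5}) of Definition \ref{Defn:LimitSequence}; one must be careful that the class of unions produced by Definition \ref{Defn:PPL}'s increasing sequences coincides with the class of limits of limiting sequences, and that $\up$-closedness of each $x_n$ in $x_{n+1}$ is automatic from the $x_n$-sum structure. All of this is encapsulated in Remark \ref{Rk:MLPSeqs} (``for any union $y$ of an increasing sequence, we could construct a limiting sequence $(x_n)_{n\in\omega}$ with a limit $x$ whose underlying set is $y$ and has any $Q'$-colouring that we desire''), so the corollary follows formally from Theorem \ref{Thm:PPL=amrs}, Remark \ref{Rk:PoLimits}, Remark \ref{Rk:MLPSeqs} and Definitions \ref{Defn:PPL} and \ref{Defn:LimitSequence}.
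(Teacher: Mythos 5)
Your proposal is correct and follows essentially the same route as the paper: identify the base classes via Theorem \ref{Thm:PPL=amrs} and match countable unions of increasing sequences with limits of limiting sequences via Remark \ref{Rk:MLPSeqs} (with Remark \ref{Rk:PoLimits} describing limits in this $\OA$). The paper's own proof is just a terser statement of exactly this argument.
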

\begin{proof}
We have that $\sscat_\mathbb{P}^\mathbb{L}(Q')$ is the class containing $\scat_\mathbb{P}^\mathbb{L}(Q')$ and countable unions of increasing sequences and $\amr$ is the class containing $\amrs$ and countable unions of limiting sequences. The result then follows from Theorem \ref{Thm:PPL=amrs}, considering Remark \ref{Rk:MLPSeqs}.%By Theorem \ref{Thm:PPL=amrs} we have that $\scat_\mathbb{P}^\mathbb{L}(Q')=\amrs$, hence $\sscat_\mathbb{P}^\mathbb{L}(Q')=\amr$.
\end{proof}

%
%\subsection{The class $\sscat_\mathbb{P}^\mathbb{L}$ of partial orders is well-behaved}

\begin{thm}\label{Thm:MLPisWB2}
If $\mathbb{L}$ and $\mathbb{P}$ are well-behaved, then $\sscat^\mathbb{L}_\mathbb{P}$ is well-behaved.
\end{thm}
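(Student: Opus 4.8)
## Proof proposal for Theorem \ref{Thm:MLPisWB2}

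The plan is to reduce everything to Theorem \ref{Thm:MLPisWB} via Corollary \ref{Cor:MPL=amr}, using Theorem \ref{Thm:TLPWell-Behaved} as the missing ingredient about structured trees. First I would fix the setup of the current section: given well-behaved classes $\mathbb{L}$ and $\mathbb{P}$, I may harmlessly replace $\mathbb{L}$ by $\clos{\mathbb{L}}$ (it is still well-behaved by Theorem \ref{Thm:LbarWB}) and intersect $\mathbb{P}$ with its closure under non-empty subsets, and likewise close $\mathbb{L}$ under subsets; neither operation changes $\scat^{\mathbb{L}}_{\mathbb{P}}$, $\clos{\mathbb{L}}$, or well-behavedness — passing to a subclass of a well-behaved class is harmless here because any bad array into the subclass is a bad array into the whole class. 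With these normalisations in place, the hypotheses on $\mathbb{L}$ and $\mathbb{P}$ listed at the start of Section \ref{Section:PO} are satisfied, and $\OA = \OA^Q_{\mathbb{L},\mathbb{P}}$ is as in Example \ref{Ex:POCONSTRUCTION}.

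Next I would assemble the three facts needed to invoke Theorem \ref{Thm:MLPisWB}. (1) Corollary \ref{Cor:MPL=amr} gives $\sscat^{\mathbb{L}}_{\mathbb{P}}(Q') = \amr$ for every quasi-order $Q$, so a bad $\sscat^{\mathbb{L}}_{\mathbb{P}}(Q)$-array is, after adjoining $-\infty$, a bad $\amr$-array. (2) Since $\mathbb{P}$ is well-behaved it is in particular bqo (Proposition \ref{Prop:WB->Preserves}), so $\mathbb{P}$ is a bqo class of partial orders as Theorem \ref{Thm:MLPisWB} requires. (3) Since $\mathbb{L}$ and $\mathbb{P}$ are well-behaved, Theorem \ref{Thm:TLPWell-Behaved} gives that $\sscatt^{\clos{\mathbb{L}}}_{\mathbb{P}}$ is well-behaved. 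Now Theorem \ref{Thm:MLPisWB} applies verbatim: any bad $\amr$-array admits a witnessing bad $Q$-array.

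Finally I would spell out the conclusion as a statement about well-behavedness. Let $Q$ be an arbitrary quasi-order and $f : [\omega]^\omega \to \sscat^{\mathbb{L}}_{\mathbb{P}}(Q)$ a bad array. Colouring instead by $Q'$ (adding the minimal $-\infty$), $f$ becomes a bad $\sscat^{\mathbb{L}}_{\mathbb{P}}(Q') = \amr$-array, so by the previous paragraph there is $M \in [\omega]^\omega$ and a bad $Q$-array $g : [M]^\omega \to Q$ with, for each $X$, $g(X) = \col_{f(X)}(v)$ for some $v \in f(X)$; this is exactly a witnessing bad array for $f$ in the sense of Definition \ref{Defn:WellBehaved}. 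Hence $\sscat^{\mathbb{L}}_{\mathbb{P}}$ is well-behaved.

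I do not expect a genuine obstacle here: the theorem is essentially the composition of Theorem \ref{Thm:MLPisWB} (the construction side) with Theorem \ref{Thm:TLPWell-Behaved} (the structured-tree side) and Corollary \ref{Cor:MPL=amr} (the characterisation). The only point requiring a little care is the bookkeeping around $Q$ versus $Q'$ and the harmless normalisations of $\mathbb{L}$ and $\mathbb{P}$ so that the standing assumptions of Section \ref{Section:PO} and of Theorem \ref{Thm:MLPisWB} are literally met; all the real work has already been done in the preceding sections.
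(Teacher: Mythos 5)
Your proposal is correct and follows essentially the same route as the paper: identify $\sscat^{\mathbb{L}}_{\mathbb{P}}(Q')$ with $\amr$ via Corollary \ref{Cor:MPL=amr}, then apply Theorem \ref{Thm:TLPWell-Behaved} together with Theorem \ref{Thm:MLPisWB} to obtain a witnessing bad $Q$-array. Your extra remarks on closing $\mathbb{L}$ and $\mathbb{P}$ under subsets and the $Q$ versus $Q'$ bookkeeping are harmless elaborations of what the paper leaves implicit.
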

\begin{proof}
By Corollary \ref{Cor:MPL=amr} we have that $\sscat_\mathbb{P}^\mathbb{L}(Q')=\amr$. Suppose there is a bad $\sscat_\mathbb{P}^\mathbb{L}(Q)$-array; so by theorems \ref{Thm:TLPWell-Behaved} and \ref{Thm:MLPisWB}, we have a witnessing bad $Q$-array. Hence $\sscat^\mathbb{L}_\mathbb{P}$ is well-behaved.
\end{proof}

\section{Corollaries and conclusions}\label{Section:Cors}
We now present some applications of Theorem \ref{Thm:MLPisWB2}.

\begin{thm}[K\v{r}\'{i}\v{z} \cite{Kriz}]\label{Thm:sscatWB}
$\sscat$, the class of $\sigma$-scattered linear orders is well-behaved.
\end{thm}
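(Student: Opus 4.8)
The plan is to apply Theorem \ref{Thm:MLPisWB2} with the parameters $\mathbb{L} = \On \cup \On^*$ and $\mathbb{P} = \{1, \CH{2}\}$, and then to identify the resulting class $\sscat^{\mathbb{L}}_{\mathbb{P}}$ with $\sscat$. Both parameters satisfy the standing hypotheses of Section \ref{Section:PO}: up to isomorphism the only non-empty subsets of $\CH{2}$ are $\CH{2}$ and $1$, so $\{1,\CH{2}\}$ is closed under non-empty subsets, and any subset of an ordinal is an ordinal while any subset of a reversed ordinal is a reversed ordinal, so $\On\cup\On^*$ is closed under subsets.

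First I would check that both parameter classes are well-behaved. By Lemma \ref{Lemma:FinitePosWB} the finite set of finite partial orders $\{1,\CH{2}\}$ is well-behaved. For $\mathbb{L}$: $\On$ is well-behaved by Theorem \ref{Thm:OnWB}, and reversing all the orders shows $\On^*$ is well-behaved as well, since for $Q$-coloured reversed ordinals $\gamma,\delta$ one has $\gamma\le_{\On^*(Q)}\delta$ exactly when the underlying coloured ordinals satisfy $\le_{\On(Q)}$, and a witnessing bad array only refers to the colours of points. Finally, given a bad $\mathbb{L}(Q)$-array $f$, since $\mathbb{L}(Q) = \On(Q)\cup\On^*(Q)$ we may apply Theorem \ref{Thm:GalvinPrikry} to $B = f^{-1}(\On(Q))$ (exactly as in the proof of Theorem \ref{Thm:U bqo}) to restrict $f$ to a bad $\On(Q)$-array or a bad $\On^*(Q)$-array; either admits a witnessing bad $Q$-array, which is witnessing for $f$. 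Hence $\mathbb{L}$ is well-behaved and Theorem \ref{Thm:MLPisWB2} yields that $\sscat^{\On\cup\On^*}_{\{1,\CH{2}\}}$ is well-behaved.

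It then remains to show $\sscat^{\On\cup\On^*}_{\{1,\CH{2}\}} = \sscat$, and I would begin with $\scat^{\On\cup\On^*}_{\{1,\CH{2}\}} = \scat$. Any partial order with two incomparable points has $\AC{2}$ as an indecomposable subset, so condition (\ref{PPL1}) of Definition \ref{Defn:PPL} forces members of $\scat^{\On\cup\On^*}_{\{1,\CH{2}\}}$ to be linear orders; and $2^{<\omega}$, $-2^{<\omega}$, $\Qhat$ each contain a $2$-element antichain, so condition (\ref{PPL3}) is vacuous for linear orders. Conversely the only indecomposable linear orders are $1$ and $\CH{2}$ (any linear order with at least three elements has a proper convex subset with more than one element), so (\ref{PPL1}) holds for every scattered linear order. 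By Hausdorff's Theorem \ref{Thm:Hausdorff}, $\clos{\On\cup\On^*} = \scat$, so (\ref{PPL2}) asks that every $\supseteq$-chain of intervals (i.e.\ convex subsets) of $X$ be scattered; such a chain embeds order-preservingly into the lexicographic product $\clos{X}\times\clos{X}^{*}$ of the Dedekind completion of $X$ with its reverse, which is scattered whenever $X$ is, while a non-scattered linear order $X$ with $\iota\colon\mathbb{Q}\hookrightarrow X$ carries the $\supseteq$-chain of intervals $\{x : x > \iota(q)\}$ ($q\in\mathbb{Q}$) of order type $\mathbb{Q}\notin\scat$; thus (\ref{PPL2}) holds for a linear order iff it is scattered. (Alternatively, by Theorem \ref{Thm:PPL=amrs} the class $\scat^{\On\cup\On^*}_{\{1,\CH{2}\}}(Q')$ equals the class $\amrs$ of the construction of Example \ref{Ex:POCONSTRUCTION}, which is the least class of $Q'$-coloured partial orders containing the coloured singletons and closed under $r$-sums for $r \in \clos{\On\cup\On^*} = \scat$, and by Hausdorff's theorem this is exactly the $Q'$-coloured scattered linear orders.) Passing to limits, Definition \ref{Defn:PPL} together with Corollary \ref{Cor:MPL=amr} shows $\sscat^{\On\cup\On^*}_{\{1,\CH{2}\}}$ consists of $\scat$ together with unions of increasing sequences of scattered linear orders, and one checks, using that a finite union of scattered subsets of a linear order is scattered and that a scattered-indexed sum of scattered orders is scattered, that these are precisely the $\sigma$-scattered linear orders.

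The routine but slightly delicate point of the argument is this final identification, matching the ``union of an increasing sequence'' notion of Definition \ref{Defn:PPL} with the classical notion of $\sigma$-scattered (in both directions); everything else is a direct application of Theorem \ref{Thm:MLPisWB2} together with Hausdorff's theorem.
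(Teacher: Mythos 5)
Your proposal is correct and follows essentially the same route as the paper: the paper's proof likewise sets $\mathbb{L}=\On\cup\On^*$ and $\mathbb{P}=\{1,\CH{2}\}$, cites Theorem \ref{Thm:OnWB}, Theorem \ref{Thm:U bqo}, Lemma \ref{Lemma:FinitePosWB} and Hausdorff's Theorem \ref{Thm:Hausdorff}, applies Theorem \ref{Thm:MLPisWB2}, and then identifies $\scat^{\mathbb{L}}_{\mathbb{P}}$ with $\scat$ (arguing that a $\mathbb{Q}$-chain of intervals would give $\mathbb{Q}\leq X$, and that the chain of final segments has the same order type as $X$). If anything you are more explicit than the paper, which passes from $\scat^{\mathbb{L}}_{\mathbb{P}}=\scat$ to $\sscat^{\mathbb{L}}_{\mathbb{P}}=\sscat$ without comment — precisely the step you rightly flag as the delicate one.
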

\begin{proof}
Set $\mathbb{L}=\On\cup \On^*$, set $\mathbb{P}=\{1,\CH{2}\}$. Then $\clos{\mathbb{L}}=\scat$ by Theorem \ref{Thm:Hausdorff}. $\mathbb{L}$ is well-behaved, by theorems \ref{Thm:U bqo} and \ref{Thm:OnWB}. We also have that $\mathbb{P}$ is well-behaved by Lemma \ref{Lemma:FinitePosWB}. Hence by Theorem \ref{Thm:MLPisWB2} we have that $\sscat^{\mathbb{L}}_{\mathbb{P}}$ is well-behaved. 

We claim that $\sscat^{\mathbb{L}}_{\mathbb{P}}=\sscat$. Recall the definition of $\scat^{\mathbb{L}}_{\mathbb{P}}$, \ref{Defn:PPL}. Let $X\in \scat$, then clearly $X$ satisfies (\ref{PPL1}) and (\ref{PPL3}) since if either failed then $X$ would contain two incomparable elements. A chain of intervals of $X$ does not embed $\mathbb{Q}$, otherwise $X$ would embed $\mathbb{Q}$. Hence every chain of intervals of $X$ under $\supseteq$ has scattered order type. If we take the chain of intervals of $X$ under $\supseteq$ consisting of final segments, this chain has order type precisely the same as $X$. So we have that $\scat^{\mathbb{L}}_{\mathbb{P}}=\scat$ which implies that $\sscat^{\mathbb{L}}_{\mathbb{P}}=\sscat$, and therefore $\sscat$ is well-behaved.
\end{proof}

\begin{defn}
Let $\mathbb{P}$ be a set of countable indecomposable partial orders. Then $\mathscr{C}_\mathbb{P}$ is the class of countable partial orders such that every indecomposable subset is in $\mathbb{P}$.
\end{defn}

\begin{thm}\label{Thm:CPsubsetMLP}
If $\ctbl \subseteq \clos{\mathbb{L}}$ then $\mathscr{C}_\mathbb{P}\subseteq \sscat^\mathbb{L}_\mathbb{P}$.
\end{thm}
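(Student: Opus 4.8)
The plan is to exhibit every $X\in\mathscr{C}_\mathbb{P}$ as the union of an \emph{increasing sequence} of members of $\scat^\mathbb{L}_\mathbb{P}$, which by Definition \ref{Defn:PPL} is exactly what is needed to place $X$ in $\sscat^\mathbb{L}_\mathbb{P}$. Fix $X\in\mathscr{C}_\mathbb{P}$; using countability enumerate its underlying set as $\{a_i:i\in\omega\}$ and set $F_n=\{a_0,\dots,a_n\}$. For each $n$ I would form the partition $\pi_n$ of $X$ into intervals consisting of the singletons $\{a_i\}$ for $i\le n$ together with the \emph{maximal} intervals of $X$ that are disjoint from $F_n$. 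That these really are intervals and that they partition $X$ follows from Lemma \ref{Lemma:UnionIntersectionIntervals}: the family of all intervals through a fixed point $x\notin F_n$ avoiding $F_n$ is directed under $\subseteq$ (an overlapping union of two intervals is again an interval, by transitivity of ``same relationship''), so its union is the unique maximal such interval through $x$, and two distinct maximal ones are disjoint.

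Since the classes of $\pi_n$ are intervals, any transversal $X_n$ of $\pi_n$ containing $F_n$ is, with the induced order, isomorphic to the interval quotient $X/\pi_n$. Choosing the $X_n$ coherently so that $X_n\subseteq X_{n+1}$ (when $\pi_{n+1}$ refines $\pi_n$ one keeps each old representative and adjoins $a_{n+1}$ and representatives of the newly created classes) and noting that $\pi_n$ separates $a_0,\dots,a_n$, we get $\bigcup_n X_n=X$. Then I would check the two requirements of an increasing sequence. For the first, passing from $F_n$ to $F_{n+1}$ the only $\pi_n$-class that splits is the maximal interval $C$ containing $a_{n+1}$, which becomes $\{a_{n+1}\}$ together with the maximal intervals of $C$ avoiding $a_{n+1}$; these remain intervals of $X$ because $C$ is one, so $\pi_{n+1}$ refines $\pi_n$ and hence $X_{n+1}=\sum_{p\in X_n}Y_p$ with $Y_p=\{p\}$ except for $Y_{[C]}$, the sub-poset of $X_{n+1}$ on the classes inside $C$. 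For the second, condition (\ref{PPL1}) is immediate: an indecomposable $Y\le X_n$ lifts through a transversal to an isomorphic indecomposable subset of $X$, hence lies in $\mathbb{P}$; and when $X_n$ is \emph{finite}, conditions (\ref{PPL2}) and (\ref{PPL3}) hold automatically, since a finite chain of intervals has order type in $\omega\subseteq\clos{\mathbb{L}}$ and the pathological orders $2^{<\omega}$, $-2^{<\omega}$, $\Qhat$ are infinite. The same remarks show each blow-up $Y_p$ lies in $\scat^\mathbb{L}_\mathbb{P}$, so feeding $(X_n)_n$ into Definition \ref{Defn:PPL} yields $X\in\sscat^\mathbb{L}_\mathbb{P}$.

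The step I expect to be the real obstacle is controlling the quotients $X_n=X/\pi_n$, specifically showing they are finite — note one cannot simply take $X_n$ to be the induced sub-poset of $X$ on $F_n$, since adjoining a single point to a finite partial order need not produce a sum over it, and the passage to maximal intervals disjoint from $F_n$ is exactly what repairs this, but it can a priori produce an infinite quotient when the part of $X$ ``around $F_n$'' is indecomposable. The tool here is that $\mathscr{C}_\mathbb{P}$ is closed under interval quotients, so $X/\pi_n\in\mathscr{C}_\mathbb{P}$: when the members of $\mathbb{P}$ are finite this forbids any infinite indecomposable sub-configuration, and a pigeonhole argument over the finitely many possible relationships of the classes to the finite set $F_n$ forces $X/\pi_n$ to be finite. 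In the general case, where the members of $\mathbb{P}$ are merely countable, one must instead verify directly that $X/\pi_n$ satisfies the conditions of Definition \ref{Defn:PPL}; this is more delicate, and it is precisely here that the countability built into $\mathscr{C}_\mathbb{P}$ and the hypothesis $\ctbl\subseteq\clos{\mathbb{L}}$ (used to absorb the order types of the chains of intervals of $X/\pi_n$) are essential.
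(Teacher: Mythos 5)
Your overall strategy (approximate $X$ by interval quotients and invoke the increasing-sequence clause of Definition \ref{Defn:PPL}) is reasonable, but the decisive step is missing: you never establish that the approximants $X_n=X/\pi_n$ (nor the blow-up summands $Y_p$) actually lie in $\scat^\mathbb{L}_\mathbb{P}$, and the shortcut you propose for this --- that when the members of $\mathbb{P}$ are finite the quotients are forced to be finite by a pigeonhole over types over $F_n$ --- is false. Two classes with the same relationships to every point of $F_n$ need not merge, since they can be separated by \emph{other classes}, and this really happens. Concretely, let $X=\{d_i\mid i\in\omega\}\cup\{e_i\mid i\in\omega\}$ with $e_0<e_1<\cdots$, $d_i<e_j$ iff $i\leq j$, and the $d_i$ pairwise incomparable. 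No point of $X$ has two incomparable points above it, so $X$ contains no copy of $N$ and every indecomposable subset of $X$ is $1$, $\CH{2}$ or $\AC{2}$; thus $X\in\mathscr{C}_\mathbb{P}$ for a finite $\mathbb{P}$. Now take $a_0=e_0$. Any interval $I$ avoiding $e_0$ with $|I|\geq 2$ must lie in one uniform zone of $e_0$, i.e.\ in $\{d_0\}$, $\{d_j\mid j\geq1\}$ or $\{e_j\mid j\geq1\}$; but $d_i,d_j\in I$ with $i<j$ is impossible (the witness $e_i\notin I$ has $d_i<e_i$, $d_j\perp e_i$, so $\neg\SSR{e_i}{d_i}{d_j}$), and likewise $e_i,e_j\in I$ is impossible (witness $d_j$). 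Hence every maximal interval avoiding $F_0$ is a singleton, $\pi_0$ is the partition into singletons, and $X_0\cong X$ is infinite. So your finite-case argument collapses at the very first stage, and for the general case you explicitly defer the verification of (\ref{PPL1})--(\ref{PPL3}) for $X/\pi_n$ as ``delicate'' --- but that verification is precisely the content of the theorem, so the proof has a genuine gap at its core. (A smaller issue: even granted $X_n\in\scat^\mathbb{L}_\mathbb{P}$, you must also show each $Y_p\in\scat^\mathbb{L}_\mathbb{P}$, and your justification again rests on the false finiteness claim.)

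For comparison, the paper avoids quotienting by finite point sets altogether. It decomposes $X$ recursively along \emph{maximal chains of intervals}: at each piece $X_{\vec t}$ it picks a maximal chain $\langle I^{\vec t}_i\rangle$ of intervals through the least unprocessed point, quotients each $I^{\vec t}_i$ by the unions of maximal chains of its subintervals so that the resulting arity $a_i$ is indecomposable (hence in $\mathbb{P}$), and records this as a composition sequence whose length is a countable order type, which lies in $\clos{\mathbb{L}}$ exactly because $\ctbl\subseteq\clos{\mathbb{L}}$. The stage-$m$ approximants are then outputs of the operator construction applied to single points, so they lie in $\amrs=\scat^\mathbb{L}_\mathbb{P}(Q')$ automatically by Theorem \ref{Thm:PPL=amrs} --- no separate verification of conditions (\ref{PPL1})--(\ref{PPL3}) for the approximants is needed --- and the resulting limiting sequence has union $X$. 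If you want to salvage your approach, you would need to replace ``separate finitely many points'' by a decomposition that produces indecomposable quotients at every level, which is essentially the paper's maximal-chain-of-intervals device.
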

\begin{proof}
Suppose that $\ctbl \subseteq \clos{\mathbb{L}}$. Let $X\in \mathscr{C}_\mathbb{P}$, we claim that $X\in \sscat^\mathbb{L}_\mathbb{P}$. Fix an enumeration of $X$ in order type $\omega$, so that $X=\{x_n:n\in \omega\}$. (If $|X|<\aleph_0$ then the argument is essentially the same.) We will write $X$ as the countable union of some limiting sequence $(X_n)_{n\in \omega}$ that we will define.

Let $X_{\langle\rangle}=X$, $\PS_0=\{\langle \rangle\}$ and suppose for some sequence ${\vec{t}}$ of elements of $\bigcup \clos{\mathbb{L}}\times \bigcup \mathbb{P}$ we have defined some $X_{\vec{t}}\subseteq X$. 
When $|X_{\vec{t}}|>1$, pick a maximal chain $\langle I^{\vec{t}}_i:i\in r_{\vec{t}}\rangle$ of intervals of $X_{\vec{t}}$ that contains $\{x_n\}$, where $n$ is least such that $x_n\in X_{\vec{t}}$. %For $i\in r_s$, let $P^s_i=I^s_i\setminus \bigcup_{j>i}I^s_j$, so the $P^s_i$ $(i\in r_s)$ form a partition of $X_s$.
Since $X_{\vec{t}}\subseteq X$ we have that $r_{\vec{t}}\in \clos{\mathbb{L}}$. For each $i\in r_{\vec{t}}$, let $\mathcal{D}_i^{\vec{t}}$ be the set of unions of maximal chains of intervals of $I^{\vec{t}}_i$ that do not contain $I^{\vec{t}}_i$.

We claim that $a_i=I^{\vec{t}}_i/\mathcal{D}_i^{\vec{t}}$ is indecomposable. If $Z\neq a_i$ is a non-singleton interval of $a_i$, then let $Z'\subseteq I^{\vec{t}}_i$ be the union of the sets of $\mathcal{D}_i^{\vec{t}}$ that contain points of $Z$. Then $Z'$ is an interval of $I^{\vec{t}}_i$ not equal to $I^{\vec{t}}_i$, and not in any of the maximal chains used to form $\mathcal{D}_i^{\vec{t}}$; this is a contradiction which gives the claim. Now, we see that $a_i$ is an indecomposable subset of $X$ and thus $a_i\in \mathbb{P}$.

Now let $f^{\vec{t}}_i=\sum_{a_i}$ and $s^{\vec{t}}_i$ be the point of $a_i$ that was also contained in $\bigcup_{j>i}I^{\vec{t}}_j\in \mathcal{D}_i^{\vec{t}}$. We set $\eta({\vec{t}})=\langle \langle f^{\vec{t}}_i,s^{\vec{t}}_i\rangle:i\in r_{\vec{t}}\rangle$. For $i\in r_{\vec{t}}$ and $u\in a^\eta_i\subseteq a_i$, let $X_{{\vec{t}}\con \langle i,u\rangle}$ be the element of $\mathcal{D}_i^{\vec{t}}$ that contains the point $u$. Then if $k_{\vec{t}}=\langle X_{{\vec{t}}\con \langle i,u\rangle}:i\in r_{\vec{t}}, u\in a^\eta_i\rangle$ we have by construction that $\langle \eta, k_{\vec{t}}\rangle$ is maximal for $X_{\vec{t}}$.

For $m\in \omega$ we let $\PS_{m+1}=\PS_m\cup \{{\vec{t}}\con \langle i,u\rangle\mid {\vec{t}}\in \PS_m, i\in r_{\vec{t}}, u\in a^\eta_i, |X_{{\vec{t}}}|>1\}$, $\DS_m$ be the set of leaves of $\PS_m$ and $\FS_m=\{\eta({\vec{t}})\mid {\vec{t}}\in \PS_m\setminus \DS_m\}$. Then $\FS_m$ is an admissible composition set and $\PS_m$ is a set of position sequences for $\FS_m$. Now let $d_m=\langle d_m^{\vec{p}}:\vec{p}\in \DS_m\rangle \in \dom(f^{\FS_m})$ be such that each $d_m^{\vec{p}}$ is a single point, coloured either by $-\infty$ if $|X_{\vec{p}}|>1$, and coloured by $\col_{X_{\vec{p}}}(x)$ when $X_{\vec{p}}=\{x\}$. Now define $X_m=g^{\FS_m}(d_m)$, so that by construction $(X_m)_{m\in \omega}$ is a limiting sequence. Moreover, for each $m\in \omega$ we can consider the underlying set of each $X_m$ as a subset of the underlying set of $X$. Note also that the underling set of $X_m$ contains the point $x_m$, and since the relevant chain of intervals contained $\{x_m\}$ the colour of this point in $X_m$ will be the same as from $X$. Thus the limit of $(X_m)_{m\in \omega}$ is precisely $X$, so that $X\in \sscat^\mathbb{L}_\mathbb{P}$ as required.
\end{proof}

\begin{cor}\label{Cor:C_PisWB}
If $\mathbb{P}$ is a well-behaved set of countable indecomposable partial orders, then $\mathscr{C}_\mathbb{P}$ is well-behaved.\footnote{This result was obtained independently by Christian Delhomme in as yet unpublished work. The author thanks him for his private communication.}
\end{cor}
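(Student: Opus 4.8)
The plan is to deduce the corollary from Theorem~\ref{Thm:MLPisWB2} and Theorem~\ref{Thm:CPsubsetMLP}, together with the elementary fact that well-behavedness is inherited by full subcategories. First take $\mathbb{L}=\sscat$, which is well-behaved by Theorem~\ref{Thm:sscatWB}; since every countable linear order is $\sigma$-scattered (partition it into singletons) we have $\ctbl\subseteq\sscat=\clos{\sscat}$, so the hypothesis $\ctbl\subseteq\clos{\mathbb{L}}$ of Theorem~\ref{Thm:CPsubsetMLP} holds. (Equally one could use $\mathbb{L}=\ctbl$, which is well-behaved as a full subcategory of $\sscat$ and has $\clos{\ctbl}=\ctbl$.) One mismatch must be dealt with: the running hypotheses of Section~\ref{Section:PO} require $\mathbb{P}$ to be closed under non-empty subsets, whereas a set of indecomposable partial orders generally is not. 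So replace $\mathbb{P}$ by its closure $\tilde{\mathbb{P}}$ under non-empty subsets; since the only clause of Definition~\ref{Defn:PPL} mentioning the arity class refers to \emph{indecomposable} subsets, and since $\mathbb{P}\subseteq\tilde{\mathbb{P}}$, we still have $\mathscr{C}_\mathbb{P}\subseteq\mathscr{C}_{\tilde{\mathbb{P}}}$.

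The point that needs checking is that $\tilde{\mathbb{P}}$ remains well-behaved. Given a bad $\tilde{\mathbb{P}}(Q)$-array $f$, each value $f(X)$ is a $Q$-coloured subset of some member $Z_X\in\mathbb{P}$; after fixing canonical choices of $Z_X$ and of the inclusion, colour the points of $Z_X$ lying in $f(X)$ by their $Q$-colour and all remaining points of $Z_X$ by a new colour $\bullet$ incomparable to everything, i.e. work in $\mathbb{P}(Q\cup 1)$. The resulting map $g$ is continuous, and it is bad: an embedding witnessing $g(X)\leq g(X\setminus\{\min X\})$ must send $\bullet$-free (hence $f(X)$-) points to $\bullet$-free points, so it restricts to a witnessing embedding for $f(X)\leq f(X\setminus\{\min X\})$, contradicting badness of $f$. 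As $\mathbb{P}$ is well-behaved, $g$ admits a witnessing bad $(Q\cup 1)$-array, which by Theorem~\ref{Thm:U bqo} restricts to a bad array into $Q$ (a bad array into the singleton $1$ being impossible); by construction this is a witnessing bad $Q$-array for $f$. Hence $\tilde{\mathbb{P}}$ is well-behaved.

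Now Theorem~\ref{Thm:CPsubsetMLP}, applied with $\mathbb{L}=\sscat$ and $\tilde{\mathbb{P}}$, gives $\mathscr{C}_\mathbb{P}\subseteq\mathscr{C}_{\tilde{\mathbb{P}}}\subseteq\sscat^{\sscat}_{\tilde{\mathbb{P}}}$, and Theorem~\ref{Thm:MLPisWB2} gives that $\sscat^{\sscat}_{\tilde{\mathbb{P}}}$ is well-behaved. Finally, $\mathscr{C}_\mathbb{P}$ and $\sscat^{\sscat}_{\tilde{\mathbb{P}}}$ are classes of partial orders with the embeddability morphisms of $\mathcal{P}$, so $\mathscr{C}_\mathbb{P}$ is a full subcategory of $\sscat^{\sscat}_{\tilde{\mathbb{P}}}$, and for every quasi-order $Q$ the coloured category $\mathscr{C}_\mathbb{P}(Q)$ sits inside $\sscat^{\sscat}_{\tilde{\mathbb{P}}}(Q)$ with the same objects, colourings and morphisms. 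Therefore every bad $\mathscr{C}_\mathbb{P}(Q)$-array is a bad $\sscat^{\sscat}_{\tilde{\mathbb{P}}}(Q)$-array, hence admits a witnessing bad $Q$-array, and $\mathscr{C}_\mathbb{P}$ is well-behaved. All the real content is already packed into Theorems~\ref{Thm:MLPisWB2} and~\ref{Thm:CPsubsetMLP}; the only genuine work is the passage to the subset-closed $\tilde{\mathbb{P}}$ and the verification above that this closure stays well-behaved, which is the mild obstacle in the argument.
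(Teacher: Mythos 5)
Your proposal is correct, and its core is exactly the paper's own proof, which consists of nothing more than citing Theorem~\ref{Thm:sscatWB} (to get $\mathbb{L}=\sscat$ well-behaved with $\ctbl\subseteq\clos{\mathbb{L}}$), Theorem~\ref{Thm:CPsubsetMLP}, and Theorem~\ref{Thm:MLPisWB2}, together with the implicit fact that a subclass of a well-behaved class of partial orders (same embeddings) is well-behaved. The one place you genuinely go beyond the paper is the passage to the subset-closure $\tilde{\mathbb{P}}$: you are right that the standing hypotheses of Section~\ref{Section:PO} require the arity class to be closed under non-empty subsets, which a set of indecomposable orders need not be (the paper elides this both here and in Theorem~\ref{Thm:POTHM}, where $\mathscr{I}_n$ is likewise not literally subset-closed, and the closure hypothesis is really used, e.g.\ in Lemma~\ref{Lemma:SumsIndecompSubsetsInP}, where indecomposable suborders of arities must again lie in $\mathbb{P}$). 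Your verification that $\tilde{\mathbb{P}}$ stays well-behaved is sound: padding each value out to a chosen $Z_X\in\mathbb{P}$ with a new colour incomparable to everything in $Q$ forces any witnessing embedding to carry $Q$-coloured points to $Q$-coloured points, so badness transfers, and then well-behavedness of $\mathbb{P}$, Theorem~\ref{Thm:U bqo}, and the impossibility of a bad array into a one-element quasi-order hand back a witnessing bad $Q$-array for the original array; the monotonicity $\mathscr{C}_{\mathbb{P}}\subseteq\sscat^{\sscat}_{\tilde{\mathbb{P}}}$ and the subcategory step are also fine. So the two arguments differ only in that yours makes the application of Theorem~\ref{Thm:MLPisWB2} literally legitimate at the cost of one short extra lemma, whereas the paper's three-theorem citation silently assumes the closure issue away.
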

\begin{proof}
By Theorems \ref{Thm:sscatWB}, \ref{Thm:CPsubsetMLP} and \ref{Thm:MLPisWB2}.
\end{proof}
\begin{remark}
If $\mathbb{P}$ is a set of finite partial orders then let $\tilde{\mathbb{P}}$ be the class of countable partial orders whose every finite restriction is in $\mathbb{P}$. In \cite{PouzetApps}, Pouzet asked: if $\mathbb{P}$ preserves bqo, then is $\tilde{\mathbb{P}}$ bqo? As we have seen, well-behaved is a more useful concept than preserving bqo, so we modify the question to if $\mathbb{P}$ well-behaved. Corollary \ref{Cor:C_PisWB} brings us closer to a result of this kind, however fails to account for possible infinite indecomposable subsets of orders in $\tilde{\mathbb{P}}$. If we could prove that for any infinite indecomposable order $X$ the set of finite indecomposable subsets of $X$ is not well-behaved, then we would answer this version of Pouzet's question positively.
\end{remark}

\begin{defn}
For $n\in \omega$ let $\mathscr{I}_n$ denote the set of indecomposable partial orders whose cardinality is at most $n$.
\end{defn}

\begin{thm}\label{Thm:POTHM}
For any $n\in \omega$, the class $\sscat^\sscat_{\mathscr{I}_n}$ is well-behaved.
\end{thm}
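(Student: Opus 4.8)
The plan is to read off Theorem~\ref{Thm:POTHM} from Theorem~\ref{Thm:MLPisWB2} by taking $\mathbb{L}=\sscat$ and letting $\mathbb{P}$ be (essentially) $\mathscr{I}_n$. Thus the only real work is verifying that both parameter classes are well-behaved and satisfy the standing hypotheses on $\mathbb{L}$ and $\mathbb{P}$ from Section~\ref{Section:PO}; Theorems~\ref{Thm:TLPWell-Behaved} and~\ref{Thm:MLPisWB} (packaged as Theorem~\ref{Thm:MLPisWB2}) then do everything else.

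For the linear-order parameter: $\sscat$ is well-behaved by Theorem~\ref{Thm:sscatWB}, and it is a class of linear orders closed under subsets, so it satisfies the standing hypothesis on $\mathbb{L}$. For the partial-order parameter: there are only finitely many isomorphism types of partial orders on at most $n$ points, so $\mathscr{I}_n$ is a \emph{finite} set of finite partial orders. It is not literally closed under non-empty subposets — for instance $N\in\mathscr{I}_4$ has the decomposable subposet $\CH{2}\sqcup 1$ — so I would instead pass to its closure $\overline{\mathscr{I}_n}$ under non-empty subposets. Since every subposet of a poset on $\le n$ points again has $\le n$ points, $\overline{\mathscr{I}_n}$ is contained in the finite set of all posets on $\le n$ points, hence is still finite; by Lemma~\ref{Lemma:FinitePosWB} it is well-behaved, and as a set of partial orders under embeddability it has injective morphisms and is closed under non-empty subsets, so it meets the standing hypothesis on $\mathbb{P}$.

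The key bookkeeping observation is that replacing $\mathscr{I}_n$ by $\overline{\mathscr{I}_n}$ does not enlarge the constructed class: the indecomposable members of $\overline{\mathscr{I}_n}$ are precisely those of $\mathscr{I}_n$ (an indecomposable poset on $\le n$ points lies in $\mathscr{I}_n$, and conversely $\mathscr{I}_n\subseteq\overline{\mathscr{I}_n}$), and condition~(\ref{PPL1}) of Definition~\ref{Defn:PPL} constrains only the indecomposable subsets $Y\le X$. Hence $\scat_{\overline{\mathscr{I}_n}}^{\sscat}=\scat_{\mathscr{I}_n}^{\sscat}$, and taking the same increasing unions gives $\sscat_{\overline{\mathscr{I}_n}}^{\sscat}=\sscat_{\mathscr{I}_n}^{\sscat}$. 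Now Theorem~\ref{Thm:MLPisWB2}, applied with $\mathbb{L}=\sscat$ and $\mathbb{P}=\overline{\mathscr{I}_n}$, yields that $\sscat_{\overline{\mathscr{I}_n}}^{\sscat}$ is well-behaved, and by the previous sentence this is exactly $\sscat_{\mathscr{I}_n}^{\sscat}$.

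There is no serious obstacle here — the statement is a corollary — so I would not expect a hard step. The one point that needs care is exactly the reconciliation just described: either one passes to $\overline{\mathscr{I}_n}$ as above, or, alternatively, one checks directly that the only place closure of $\mathbb{P}$ under subsets is used in the proof of Theorem~\ref{Thm:MLPisWB2} is Lemma~\ref{Lemma:SumsIndecompSubsetsInP}, which in fact needs only the weaker property that every indecomposable subposet of a member of $\mathbb{P}$ again lies in $\mathbb{P}$ — a property $\mathscr{I}_n$ enjoys trivially — whereupon one may take $\mathbb{P}=\mathscr{I}_n$ verbatim.
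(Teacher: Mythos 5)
Your proof is correct and follows essentially the same route as the paper, whose entire argument is: $\mathscr{I}_n$ is a finite set of finite partial orders, hence well-behaved by Lemma~\ref{Lemma:FinitePosWB}; $\sscat$ is well-behaved by Theorem~\ref{Thm:sscatWB}; now apply Theorem~\ref{Thm:MLPisWB2}. Your additional care about the standing hypothesis of Section~\ref{Section:PO} that $\mathbb{P}$ be closed under non-empty subsets (which $\mathscr{I}_n$ fails for $n\geq 4$) -- either passing to the finite subset-closure and noting that condition~(\ref{PPL1}) only sees indecomposable subsets, so the generated class $\sscat^{\sscat}_{\mathscr{I}_n}$ is unchanged, or observing that closure under indecomposable subsets suffices where the hypothesis is used -- is a legitimate patch of a point the paper's two-line proof passes over silently, not a different argument.
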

\begin{proof}
$\mathscr{I}_n$ is a finite set of finite partial orders so by Lemma \ref{Lemma:FinitePosWB}, is well-behaved. Furthermore, $\sscat$ is well-behaved by Theorem \ref{Thm:sscatWB}, so using Theorem \ref{Thm:MLPisWB2} completes the proof.
\end{proof}
\begin{cor}
For any $n\in \omega$, the class $\ctbl_{\mathscr{I}_n}$ is well-behaved.
\end{cor}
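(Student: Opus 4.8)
The plan is to observe that $\ctbl_{\mathscr{I}_n}$ is precisely the class $\mathscr{C}_{\mathscr{I}_n}$ of countable partial orders all of whose indecomposable subsets lie in $\mathscr{I}_n$, so that the statement is a direct instance of Corollary \ref{Cor:C_PisWB}. To invoke that corollary I would check its two hypotheses for $\mathbb{P}=\mathscr{I}_n$: that $\mathscr{I}_n$ is a set of countable indecomposable partial orders, and that it is well-behaved. The first is immediate from the definition of $\mathscr{I}_n$, since every partial order of cardinality at most $n$ is finite, hence countable, and $\mathscr{I}_n$ by definition consists only of indecomposable such orders. For the second, note that up to isomorphism there are only finitely many partial orders on at most $n$ points, so $\mathscr{I}_n$ is a finite set of finite partial orders; Lemma \ref{Lemma:FinitePosWB} then gives that $\mathscr{I}_n$ is well-behaved.

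With both hypotheses verified, Corollary \ref{Cor:C_PisWB} yields directly that $\mathscr{C}_{\mathscr{I}_n}=\ctbl_{\mathscr{I}_n}$ is well-behaved. Alternatively one could route through Theorem \ref{Thm:POTHM}: since every countable linear order is trivially $\sigma$-scattered we have $\ctbl\subseteq\sscat\subseteq\clos{\sscat}$, so Theorem \ref{Thm:CPsubsetMLP} gives $\ctbl_{\mathscr{I}_n}\subseteq\sscat^{\sscat}_{\mathscr{I}_n}$; the latter is well-behaved by Theorem \ref{Thm:POTHM}, and well-behavedness is inherited by full subcategories, because any bad array into the subcategory is also a bad array into the ambient category and hence admits a witnessing bad $Q$-array. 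There is no real obstacle here, as this corollary is just a specialisation of results already established; the only point requiring care is matching the notation $\ctbl_{\mathscr{I}_n}$ with the class $\mathscr{C}_{\mathscr{I}_n}$ of the preceding definition.
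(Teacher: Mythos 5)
Your proposal is correct and matches the paper, whose entire proof is the one-line citation of Theorems \ref{Thm:CPsubsetMLP} and \ref{Thm:POTHM} --- exactly your ``alternative'' route, with the same implicit use of the fact that a bad array into a subclass is a bad array into the ambient well-behaved class. Your primary route through Corollary \ref{Cor:C_PisWB} together with Lemma \ref{Lemma:FinitePosWB} is just an equivalent repackaging of the same results, since that corollary is itself proved from Theorems \ref{Thm:sscatWB}, \ref{Thm:CPsubsetMLP} and \ref{Thm:MLPisWB2}.
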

\begin{proof}
By Theorems \ref{Thm:CPsubsetMLP} and \ref{Thm:POTHM}.
\end{proof}
Note that $\bigcup_{n\in \omega}\mathscr{I}_n(\AC{2})$ contains an antichain as in Figure \ref{Fig:Antichain}, and as described by Pouzet in \cite{PouzetApps}. Hence $\bigcup_{n\in \omega}\mathscr{I}_n$ does not preserve bqo and is certainly not well-behaved. Thus Theorem \ref{Thm:POTHM} is maximal in some sense.
\begin{figure}
\centering
\begin{tikzpicture}[thick]
\draw [->-] (0,0) -- (0.5,1);
\draw [-<-] (0.5,1) -- (1,0);
\draw [fill] (0,0) circle [radius=0.06];
\draw [fill] (0.5,1) circle [radius=0.06];
\draw [fill] (1,0) circle [radius=0.06];
\node [below] at (0,0) {$1$};
\node [above] at (0.5,1) {$0$};
\node [below] at (1,0) {$1$};
\end{tikzpicture}
\hspace{14pt}
\begin{tikzpicture}[thick]
\draw [->-] (0,0) -- (0.5,1);
\draw [-<-] (0.5,1) -- (1,0);
\draw [->-] (1,0) -- (1.5,1);
\draw [fill] (0,0) circle [radius=0.06];
\draw [fill] (0.5,1) circle [radius=0.06];
\draw [fill] (1,0) circle [radius=0.06];
\draw [fill] (1.5,1) circle [radius=0.06];
\node [below] at (0,0) {$1$};
\node [above] at (0.5,1) {$0$};
\node [below] at (1,0) {$0$};
\node [above] at (1.5,1) {$1$};
\end{tikzpicture}
\hspace{14pt}
\begin{tikzpicture}[thick]
\draw [->-] (0,0) -- (0.5,1);
\draw [-<-] (0.5,1) -- (1,0);
\draw [->-] (1,0) -- (1.5,1);
\draw [-<-] (1.5,1) -- (2,0);
\draw [fill] (0,0) circle [radius=0.06];
\draw [fill] (0.5,1) circle [radius=0.06];
\draw [fill] (1,0) circle [radius=0.06];
\draw [fill] (1.5,1) circle [radius=0.06];
\draw [fill] (2,0) circle [radius=0.06];
\node [below] at (0,0) {$1$};
\node [above] at (0.5,1) {$0$};
\node [below] at (1,0) {$0$};
\node [above] at (1.5,1) {$0$};
\node [below] at (2,0) {$1$};
\end{tikzpicture}
\hspace{14pt}
\begin{tikzpicture}[thick]
\draw [->-] (0,0) -- (0.5,1);
\draw [-<-] (0.5,1) -- (1,0);
\draw [->-] (1,0) -- (1.5,1);
\draw [-<-] (1.5,1) -- (2,0);
\draw [->-] (2,0) -- (2.5,1);
\draw [fill] (0,0) circle [radius=0.06];
\draw [fill] (0.5,1) circle [radius=0.06];
\draw [fill] (1,0) circle [radius=0.06];
\draw [fill] (1.5,1) circle [radius=0.06];
\draw [fill] (2,0) circle [radius=0.06];
\draw [fill] (2.5,1) circle [radius=0.06];
\node [below] at (0,0) {$1$};
\node [above] at (0.5,1) {$0$};
\node [below] at (1,0) {$0$};
\node [above] at (1.5,1) {$0$};
\node [below] at (2,0) {$0$};
\node [above] at (2.5,1) {$1$};
\node at (3.5,0.5) {$...$};
\end{tikzpicture}
\caption{An antichain of $\bigcup_{n\in \omega}\mathscr{I}_n(\AC{2})$.}
\label{Fig:Antichain}
\end{figure}
In order to improve this result we would like to know the answers to questions such as:
\begin{enumerate}
	\item Is there consistently a well-behaved class of linear orders larger than $\sscat$? E.g. is the class of Aronszajn lines from \cite{Aron} well-behaved under PFA?
	\item Is there an infinite well-behaved class of indecomposable partial orders?
	\item Is there an infinite indecomposable partial order $P$ such that $\{P\}$ is well-behaved?
\end{enumerate}
A positive answer to any of these questions would immediately improve Theorem \ref{Thm:POTHM}. 
%\newpage
\section*{Acknowledgements}
The author thanks Mirna D\v{z}amonja, Yann Pequignot and Rapha\"{e}l Carroy for extremely helpful discussions and remarks.


\begin{thebibliography}{99}
  \bibitem{Nfree}
  S. Thomass\'{e},
  \emph{On better-quasi-ordering countable series-parallel orders}.
  Trans. Amer. Math. Soc., Vol. 352, 6,
  (1999),
  pp 2491-2505.

\bibitem{Corominas}
  E. Corominas,
  \emph{On better-quasi-ordering countable countable trees}.
  Discrete Mathematics 53,
  North-Holland,
  (1985),
  pp 35-53.

\bibitem{Kriz}
  I. K\v{r}\'{i}\v{z},
  \emph{Proving a witness lemma in better-quasiordering theory: the method of `extensions'}.
  Math. Proc. Camb. Phil. Soc. 106,
  (1989),
  pp 253-262.
  

  
\bibitem{wqoforbpl}
  R. Thomas,
  \emph{Well-quasi-ordering infinite graphs with forbidden finite planar minor}.
  Trans. Amer. Math. Soc., Vol. 312, 1,
  (1989),
  pp 279-313.

\bibitem{PouzetApps}
  M. Pouzet,
  \emph{Applications of well-quasi-ordering and better-quasi-ordering}.
  Graphs and Orders, 
  (I. Rival ed.), D. Reidel Publishing Company,
  (1985),
  pp 503-519.
  

\bibitem{Kruskal}
  J. B. Kruskal,
  \emph{The theory of well-quasi-ordering: a frequently discovered concept}.
  J. Comb. Th. (A) 13,
  (1972),
  pp 297-306.
  
\bibitem{VMS}
  F. van Engelen, A. W. Miller and J. Steel,
  \emph{Rigid Borel sets and better quasiorder theory}.
  Contemp. Math. 65,
  (1987),
  pp 119-222.
  
\bibitem{LouveauStR}
  A. Louveau and J. Saint-Raymond,
  \emph{On the quasi-ordering of borel linear orders under embeddability}. 
  J. Symbolic Logic, Vol. 55, 2
  (1990),
  pp 449-896.
  
\bibitem{NWInfTrees}
  C. St. J.A. Nash-Williams,
  \emph{On well-quasi-ordering infinite trees}.
  Proc. Camb. Phil. Soc. 61,
  (1965),
  pp 697-720. 
%\bibitem{nashwilliams}
%  C. St. J.A. Nash-Williams,
%  \emph{On well-quasi-ordering infinite trees}.
%  Proc. Phil. Soc. 61,
%  (1965),
%  697-720.  
%  
\bibitem{nashwilliamsseqs}
  \bysame,
  \emph{On well-quasi-ordering transfinite sequences}.
  Proc. Camb. Phil. Soc. 64,
  (1968),
  273-290.
  
\bibitem{simpson}
  S. G. Simpson,
  \emph{BQO theory and Fra\"{i}ss\'{e}'s conjecture}.
  Chapter 9 of Recursive aspects of descriptive set theory,
  Oxford Univ. Press, New York,
  (1985),
  pp. 124-138.  
  
%\bibitem{shelah}
%  S. Shelah,
%  \emph{Better quasi-orders for uncountable cardinals}.
%  Israel J. Math. 42, 
%  (1982),
%  1{}77-226. 
  
\bibitem{LaverFrOTconj}
  R. Laver,
  \emph{On Fra\"{i}ss\'{e}'s order type conjecture}.
  Ann. of Math. 93,
  (1971),
  pp 89-111.

\bibitem{LaverClassOfTrees}
  \bysame,
  \emph{Better-quasi-orderings and a class of trees}.
  Studies in Foundations and Combinatorics,
  Adv. in Math. Supplementary Studies 1
  (1978),
  pp 31-48.
  
%\bibitem{laver7}
%  R. Laver,
%  \emph{On Fra\"{i}ss\'{e}'s order type conjecture}.
%  Ann. of Math. 93,
%  (1971),
%  89-111.
%\bibitem{FraisseTOR}
%  R. Fra\"{i}ss\'{e},
%  \emph{Theory of Relations}.
%  North-Holland,
%  Amsterdam,
%  (1986).

%\bibitem{bqoint}
%  Alberto Marcone,
%  \emph{Foundations of BQO theory}.
%  Transactions of the American Mathematical Society, Vol. 345, No. 2 
%  (1994),
%  641-660.

\bibitem{Hausdorff} 
  F. Hausdorff,
  \emph{Grundz\"{u}ge einer Theorie der geordneten Mengen}.
  Math. Ann. 65 
  (1908),
  pp 435-505.
  
\bibitem{GalvinPrikry} 
  F. Galvin and K. Prikry,
  \emph{Borel sets and Ramsey's theorem}.
  J. Symb. Logic 38
  (1973),
  pp 193-198.
  
\bibitem{Aron}
  C. Martinez-Ranero,
  \emph{Well-quasi-ordering Aronszajn lines}.
  Fund. Math. 213
  (2011),
  pp 197-211.
    
%\bibitem{Tod} 
%  S. Todor\u{c}evi\'{c},
%  \emph{Introduction to Ramsey Spaces}.
%  Princeton University Press
%  (2010).
  
%\bibitem{TodVaan} 
%  S. Todor\u{c}evi\'{c} and J. V\"{a}\"{a}n\"{a}nen,
%  \emph{Trees and Ehrenfeucht-Fra\"{i}ss\'{e} games}.
%  Ann. Pure Appl. Logic 100(1-3)
%  (1999),
%  69-97.

%\bibitem{Huusk} 
%  T. Huuskonen,
%  \emph{Observations about Scott and Karp Trees}.
%  Ann. Pure Appl. Logic 76(1-3)
%  (1995),
%  201-230.
  
\end{thebibliography}
\end{document}